\titleclass{\subsubsubsection}{straight}[\subsection]
\newcounter{subsubsubsection}
\titleformat{\subsubsubsection}{\normalfont\normalsize\bfseries}{\thesubsubsubsection}{1em}{}
\titlespacing*{\subsubsubsection}{0pt}{3.25ex plus 1ex minus .2ex}{1.5ex plus .2ex}
\renewcommand{\thesubsubsubsection}{\thesubsection.\arabic{subsubsection}.\arabic{subsubsubsection}}
\newcounter{AssDD}
\newcounter{AssM}
\numberwithin{equation}{section}
\begin{document}
\title{Robust, fast, and adaptive splitting schemes for nonlinear doubly-degenerate diffusion equations}
\author[1]{A. Javed}
\author[2]{K. Mitra}
\author[1]{I.S. Pop}
\affil[1]{Hasselt University, Belgium}
\affil[2]{Eindhoven University of Technology, The Netherlands}

\date{\today}

\def \a  {\alpha}
\def \oa {\overline{\alpha}}
\def \b  {\beta}
\def \g  {\gamma}
\def \G  {\Gamma}
\def \d  {\delta}
\def \D  {\Delta}
\def \e  {\varepsilon}
\def \eps {\epsilon}
\def \f  {\varphi}
\def \j {\ell}
\def \vr  {\varrho}
\def \vs  {\omega}
\def \k  {\kappa}
\def \l  {\lambda}
\def \om {\omega}
\def \Om {\Omega}
\def \r  {\rho}
\def \s  {\sigma}
\def \t  {\tau}
\def \th {\theta}
\def \z  {\zeta}
\def \del {\nabla}
\def \p  {\partial}
\def \N  {{\mathbb{N}}}
\def \R  {{\mathbb{R}}}
\def \dd {\mathrm{d}}
\def \m {\mathrm{m}}
\def \x {{\bm{x}}}

\def \calA {\mathfrak{A}}
\def \calB {\mathfrak{B}}
\def \calE {\mathcal{E}}
\def \calF {\mathfrak{F}}
\def \calZ {\mathcal{Z}}
\def \calV {\mathcal{V}}
\def \calR {\mathfrak{R}}
\def \calT {\mathcal{T}}
\def \fli {\bm{\varsigma}^{i-1}_n}

\def \Maxi {{\mathrm{M}}}
\def \Mini {{\mathrm{m}}}

\newcommand{\linErr}[1]{\calE_{\!_{\,{\rm lin},n}}^{#1}}
\newcommand{\linEst}[1]{\eta_{\!_{\,{\rm lin},n}}^{#1}}
\newcommand{\normErr}[1]{\tilde{\calE}_{\!_{\,{\rm lin},n}}^{\,#1}}
\newcommand{\normEst}[1]{{\eta}_{\!_{\,{\rm lin},n}}^{\,#1}}
\newcommand{\lp}[3]{\big(\!\big( #2,\,#3\big)\!\big)_{#1}}
\newcommand{\norm}[1]{{\left\vert\kern-0.25ex\left\vert\kern-0.25ex\left\vert #1
    \right\vert\kern-0.25ex\right\vert\kern-0.25ex\right\vert}}
    \newcommand{\efl}[1]{\bm{\sigma}_{n,\calT}^{#1}}
\newcommand{\snorm}[1]{{\left[\kern-0.25ex\left[ #1
    \right]\kern-0.25ex\right]}}

\newtheorem{lemma}{Lemma}[section]
\newtheorem{theorem}[lemma]{Theorem}
\newtheorem{corollary}[lemma]{Corollary}
\newtheorem{proposition}[lemma]{Proposition}
\newtheorem{assumption}[lemma]{Assumption}
\newtheorem{definition}[lemma]{Definition}
\newtheorem{remark}[lemma]{Remark}
\newtheorem{example}[lemma]{Example}
\newtheorem{conjecture}[lemma]{Conjecture}
\newtheorem{problem}{Problem}

\newcommand\KM[1]{%
  \protect\leavevmode
  \begingroup
        \color{red!55!yellow}%
 #1%
  \endgroup
}

\newcommand\AJ[1]{%
  \protect\leavevmode
  \begingroup
        \color{red!30!blue}%
 #1%
  \endgroup
}

\newcommand\SP[1]{%
  \protect\leavevmode
  \begingroup
        \color{red!30!green}%
 #1%
  \endgroup
}

\maketitle

\begin{abstract}
We consider linear iterative schemes for the time-discrete equations stemming from a class of nonlinear, doubly-degenerate parabolic equations. More precisely, the diffusion is nonlinear and may vanish or become multivalued for certain values of the unknown, so the parabolic equation becomes hyperbolic or elliptic, respectively. After performing an Euler implicit time-stepping, a splitting strategy is applied to the time-discrete equations. This leads to a formulation that is more suitable for dealing with the degeneracies. Based on this splitting, different iterative linearization strategies are considered, namely the Newton scheme, the L-scheme, and the modified L-scheme. We prove the convergence of the latter two schemes even for the double-degenerate case. In the non-degenerate case, we prove that the scheme is contractive, and the contraction rate is proportional to a non-negative exponent of the time-step size. Moreover, following \cite{stokke2023adaptive}, an a posteriori estimator-based adaptive algorithm is developed to select the optimal parameters for the M-scheme, which accelerates its convergence. Numerical results are presented, showing that the M- and the M-adaptive schemes are more stable than the Newton scheme, as they converge irrespective of the mesh. Moreover, the adaptive M-scheme consistently out-competes not only the M/L-schemes, but also the Newton scheme showing quadratic convergence behavior. 
\end{abstract}

\section{Introduction} \label{introduction}
This paper discusses a linearization approach for the time-discrete equations related to doubly-degenerate, parabolic advection-diffusion equations. With $\Om$ being a bounded domain in $\R^{d}$ having a Lipschitz boundary $\p\Om$ and for some $T > 0$, letting $Q=(0,T] \times  \Om$ we consider the following equation
\begin{align}\label{eq:1}
    &\p_t u+ \del\cdot \bm{F}(u)=\D w + f, \\
    &w\in \Phi(u) , \nonumber
\end{align}
which holds almost everywhere (a.e.) in $Q$. This equation is completed with e.g. homogeneous Dirichlet boundary conditions in $w$ and an initial condition $u_0$ for $u$. 
The function $\Phi:[0,\vs) \to [0,\infty)$ is increasing and locally Lipschitz continuous with $\vs=1$ (for $u$ representing a concentration) or $\vs=\infty$ ($u$ representing some density). As $u \nearrow \vs$, $\Phi$ can become either multivalued or infinite. Two types of degeneracies can arise when either $\Phi'=0$ (the slow diffusion case), or when $\Phi'= \infty$ (the fast diffusion case). In this case, \eqref{eq:1} changes its type from parabolic to hyperbolic, respectively, elliptic.
In equation $\eqref{eq:1}$, $f$ is a source/sink term and $\bm{F}(u)$ is an advective flux. The functions $\Phi$, $\bm{F}$ can also be heterogeneous, which means that they may depend explicitly on $\x\in \Om$, see \Cref{rem:heterogeneity}. The exact properties of the auxiliary functions are discussed in \Cref{section2}.

Equation $\eqref{eq:1}$ is a mathematical model for many real-world applications. A first example in this sense is the porous medium equation (PME, see \cite{vazquez2007porous}) modeling gas flow in a porous medium, where $\Phi(u)=[u]_+^m$ for some $m > 1$ (here $[u]_+ = \max\{0, u\}$), and $\bm{F}=\bm{0}$. In this case, $\vs=\infty$, and the degeneracy appears when $u=0$. Another example is the Richards equation modeling unsaturated flow through a porous medium where $\vs=1$ and $\Phi$ becomes multivalued at $u=\vs$ (the details being given in  \Cref{section2}), while $\Phi^\prime \to 0$ whenever $u \to 0$, see \cite{mitra2024posteriori}.  
In the same sense we mention biofilm growth models \cite{van2002mathematical} (with $\Phi(u)=\int_0^u \frac{\rho^a}{(1-\rho)^c}$ exploding as $u\nearrow \vs=1$), or the Stefan problem and permafrost models \cite{Malgo}.  Figure $\eqref{fig:Richards_Biofilm_phi_function}$ presents the nonlinear function $\Phi$ for the Richards equation (left) and for the biofilm growth model (right), which are representative for the cases when $\Phi$ becomes either multivalued, or singular at $\vs=1$. Also, note that in both cases one has $\Phi^\prime = 0$ for certain values of $u$. 
\begin{figure}[H]
    \centering
    \begin{subfigure}[b]{0.33\textwidth}
        \centering
        \includegraphics[width=\textwidth, height=4cm]{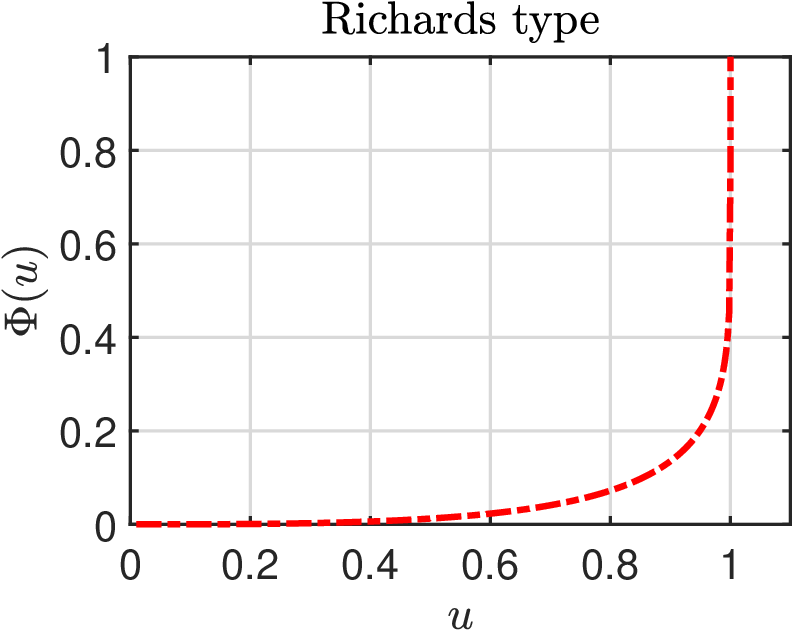} 
    \end{subfigure}
    \begin{subfigure}[b]{0.33\textwidth}
        \centering
        \includegraphics[width=\textwidth, height=4cm]{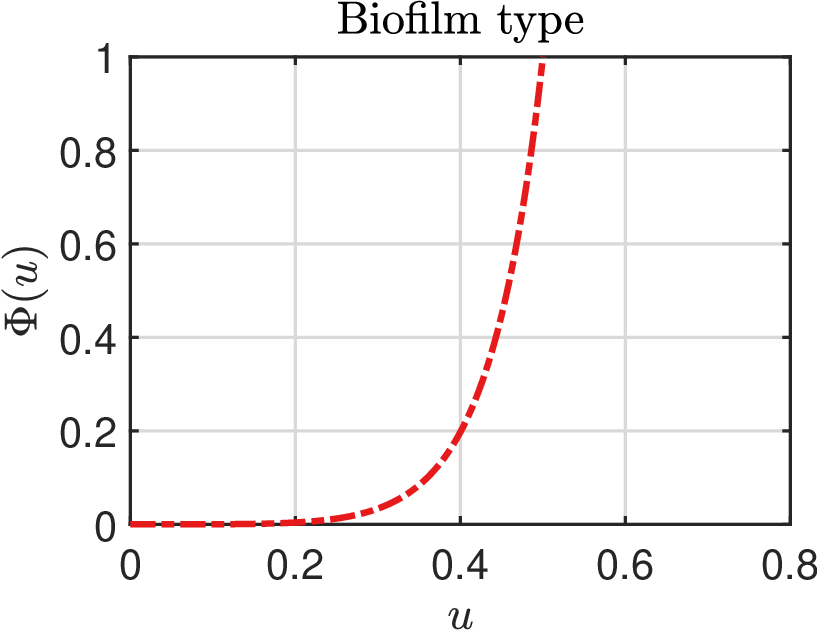} 
    \end{subfigure}
    \caption{Examples of double degenerate $\Phi$: (left) $\Phi$ becomes multivalued at $u=1$ (Richards equation type degeneracy). (right) $\Phi$ becomes infinite at $u=1$ (biofilm type singularity).}
    \label{fig:Richards_Biofilm_phi_function}
\end{figure}
To deal with the double degeneracy discussed above, one can follow the ideas in  \cite{Carrillo2004, brenner2017improving} and reformulate \eqref{eq:1} in terms of a new unknown $s$ and of two increasing functions $b,\, B\in C^1(\R)$ satisfying
\begin{align}\label{eq:condition}
    B =\Phi \circ b, \text{ and }\; 0\leq b',B'\leq 1, \text{ and }  b'+B'\geq 1.
\end{align}
With this choice, whenever $w \in \Phi(u)$, if $u = b(s)$, one immediately gets $w = B(s)$.  In this way, \eqref{eq:1} becomes 
\begin{align}\label{eq:2}
    &\p_t u+\del\cdot \bm{F}(u)=\D w + f,\\
    &u=b(s), \quad w \in B(s). \nonumber
\end{align}
The advantage of this formulation is that the functions $b$, $B$ are differentiable in $\R$. The two degeneracies appear when either $b'\searrow 0$ (originally, the fast diffusion case) or $B'\searrow 0$ (the slow diffusion). 
Such decomposition is always possible and an explicit formula to compute the $b$, $B$ functions is presented in  \Cref{sec:2.2}. For example, this is used to determine the functions $b$ and $B$ corresponding to the function $\Phi$ in the left plot of \Cref{fig:Richards_Biofilm_phi_function}. The graphs of these functions are presented in \Cref{fig:graph1}. 
\begin{figure}[h]
    \centering
 \includegraphics[width=0.35\textwidth]{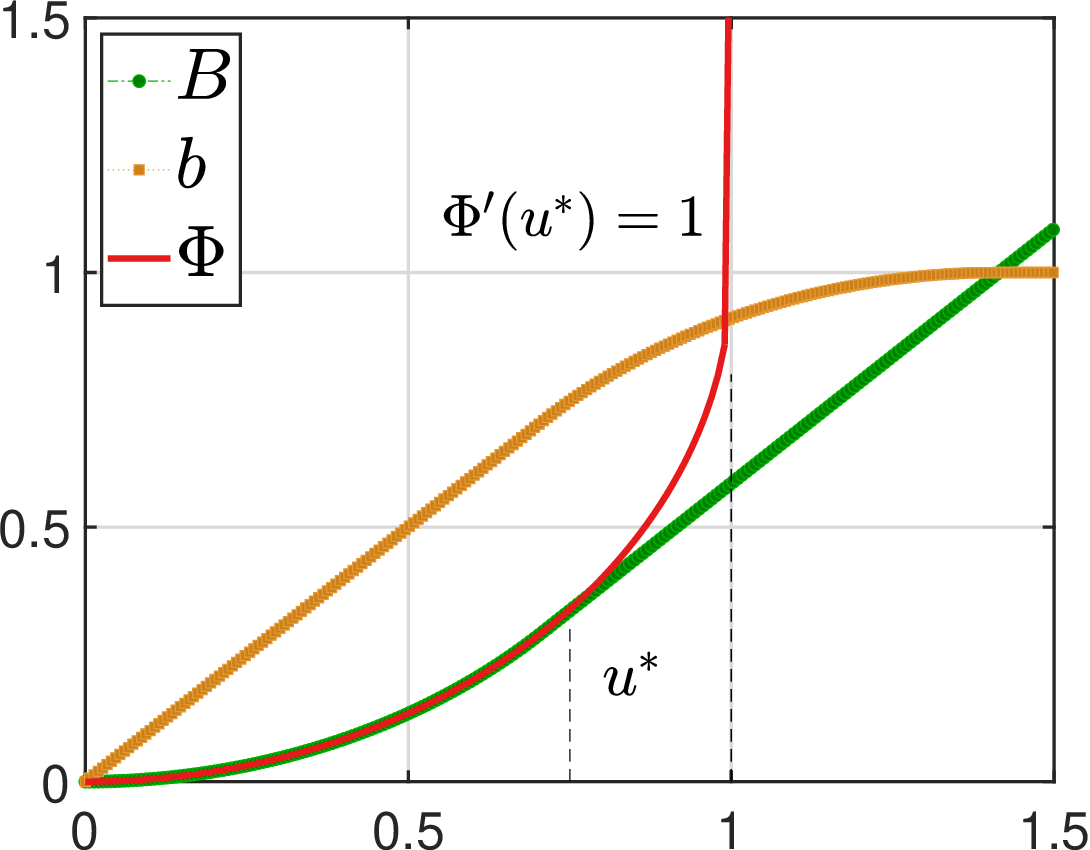}
 \caption{The function $\Phi$ in the left picture of \Cref{fig:Richards_Biofilm_phi_function} and the corresponding functions $b$ and $B$, given by \eqref{eq:bBexpression} and satisfying \eqref{eq:condition}.}
        \label{fig:graph1}
\end{figure}

\subsection{Well-posedness and discretization} \label{sec:wellposedness}
The existence and uniqueness of solutions for doubly-degenerate equations are obtained, e.g., in \cite{alt1983quasilinear, ANDREIANOV20173633,CarilloEntropy,CARRILLO199993,DroniouEymardTalbot,otto1996l1, pop2011regularization,ZouExistence}. For the time-discretization, implicit schemes are quite often used for such problems due to the lack of regularity of the solutions, see \cite{duvnjak2006time,pop2002error,radu2008error}, where error estimates are obtained for implicit time discretization of doubly-degenerate problems. Specifically, to define the Euler implicit  discretization of \eqref{eq:1}, respectively \eqref{eq:2}, we let $N \in \N$ be strictly positive  and consider a (fixed) time-step size $\t = T/N$. With $n\in \{0, \dots, N\}$, we define $t_n = n \t)$ and let $z_n$ approximate the function $z(t_n)$, where $z \in \{s, u, w\}$ is one component of the solution triple $(s, u, w)$. With this, the time discretization of 
\eqref{eq:2} requires solving at each time step $t_n$ ($n > 0$) the system 
\begin {align}\label{eq:t}
  \left\{\begin{array}{ll} \frac{1}{\tau}(u_n-u_{n-1})+ \nabla \cdot \bm{F}(u_n) =  \Delta w_n+f,  \\
 u_n=b(s_n), \text{ and }  w_n =B(s_n), 
\end{array}\right. 
\end{align}
defined in $\Om$. This is complemented by (homogeneous Dirichlet) boundary conditions for $w_n$. For $n = 1$ one uses the initial condition $u_0$. Observe that \eqref{eq:t} is a system involving a linear elliptic partial differential equation and two nonlinear, algebraic ones. Inserting the last two algebraic equations into the first one, it becomes the nonlinear elliptic equation 
\begin{equation}
    \label{eq:t000}
\frac{1}{\tau}(b(s_n)-b(s_{n-1}))+ \nabla \cdot \bm{F}(u_n) =  \Delta B(s_n)+f .
\end{equation}
If $B$ can be inverted, this can be further rewritten in terms of the variable $w_n$, see Section \ref{sec:direct}. 

For the spatial discretization of double degenerate advection-diffusion equations, various methods have been proposed. Here we restrict to porous media flow models, and namely to mathematically rigorous results. The convergence by compactness arguments or by a priori error estimates has been proved for the finite volume method in \cite{Andreianov,Bassetto,baughman1993co,eymard2002convergence,Quenjel,RaduKlausen}, the finite difference method in \cite{karlsen2002upwind}, the mixed or conforming finite element method in \cite{Arbogast,CancesNabet,radu2008error,Woodw}, the discontinuous Galerkin method in \cite{Clint,Dolejsi,Beatrice} and, in general gradient discretization schemes in \cite{cances2021error,droniou2020high,DroniouEymardUniform,DroniouLe,EymardGradient},
to name a few. A posteriori estimates for elliptic problems that are similar to \eqref{eq:t} have been derived in \cite{ahmed2024efficient}, and for the doubly-degenerate  parabolic system in \cite{CancesNabet,CancesPop,MartinRegularization,mitra2024posteriori}. 

Observing that the time-discrete problems \eqref{eq:t} are nonlinear, in what follows we discuss different linear iterative schemes for the numerical approximation of the (time-discrete) solutions. The analysis is done in a continuous-in-space setting, but for the numerical test we shall use a finite volume scheme. 

\subsection{Linear iterative schemes}
In this part we discuss different linear iterative schemes for the numerical approximation of the solutions to \eqref{eq:t}. Before discussing these methods in detail, we mention that, since the analysis below is done at the time-discrete level and not at the fully discrete one, the convergence results do not depend on the discretization method and mesh. As will be seen below, this is advantageous as it provides flexibility w.r.t. the choice of the time step size. 
More precisely, whenever the convergence can be guaranteed at the level of the time-discrete problems, this will extend to any spatial discretization and mesh. 

As mentioned above, the linear iteration schemes discussed here use the reformulation of \eqref{eq:1} in terms of a new variable. In consequence, one works with the time-discrete problems in \eqref{eq:t}. Since this involves two additional (algebraic) equations, the corresponding schemes will be called \emph{double-splitting}. For a better positioning of the present contribution in the existing literature, we start by considering \eqref{eq:1}, and even in a simplified, one-equation form. The linear iterative schemes in this case will be called \emph{direct/no-splitting}. 

\subsubsection{Direct/no-splitting method}\label{sec:direct}
 To discuss existing linear iterative schemes for doubly-degenerate equations, we assume that the inverse $\b = \Phi^{-1}$ makes sense, and rewrite \eqref{eq:1} in terms of $w$, with $u = \b(w)$. This is similar to the Kirchhoff transform used e.g. in \cite{alt1983quasilinear}. The Euler implicit discretization leads to the nonlinear elliptic, possibly degenerate equation  
\begin{equation}
\label{eq:t00}
  \frac{1}{\tau}(\b(w_n)-\b(w_{n-1}))+ \nabla \cdot \bm{F}(\b(u_n)) =  \Delta w_n+f, 
\end{equation}
defined in $\Om$, and completed by homogeneous Dirichlet boundary conditions. 

For fixed $n \in \{1, \dots, N\}$, to define the linear iterative scheme, we let $i \in \N$ stand for the iteration index and $w_n^i$ the $i^{\rm th}$ iteration at the $n^{\rm th}$ time step $t_n$. Choosing $w_n^0:=w_{n-1}$ as the initial guess, the sequence $\left(w_n^i\right)_{i \in \N}$ is the solution to 
\begin{align}\label{eq:L}
    \frac{L_{\b,n}^i}{\tau} \left(w_n^i-w_n^{i-1}\right) -\D w^i_n =-\left[\frac{\beta(w_n^{i-1})-u_{n-1}}{\tau}+ \del\cdot\bm{F}^{i-1}_n\right]+f,
\end{align}
in $\Om$. The factors $L_{\b,n}^i$  
are bounded, and may depend on the previous iterations, which explains the presence of the indices $i$ and $n$. For the schemes considered here, in case of convergence, i.e. if $w^i_n\to \Tilde{w}$ and, consequently,  $\b(w^i_n)\to \b(\Tilde{w})$, then $(w^i_n-w^{i-1}_n)\to 0$, and therefore $\Tilde{w}$ solves \eqref{eq:t00}.  

The choice of $L_{\b,n}^i$ and of $\bm{F}^{i-1}_n$ leads to in different iterative schemes, e.g. Newton, Picard, the L-and M-schemes, or combinations thereof. For the Newton scheme (NS), all nonlinearities are replaced by linear Taylor approximations around the previous iteration. In  \eqref{eq:L}, this gives the choice $L_{\b,n}^i=\b^{\prime}(w_n^{i-1})$, and $\bm{F}^{i-1}_n = \bm{F}(\b(w^{i-1}_n)) + L_{\b,n}^i \nabla \bm{F}(\b(w^{i-1}_n)) \cdot (w^i_n - w^{i-1}_n)$ \AJ{\cite{bergamaschi1999mixed,lehmann1998comparison}}. NS stands out due to its quadratic convergence rate, but this quadratic convergence property is valid only under specific conditions. For example, the iterations converge if the initial guess is close enough to the exact solution. For time-dependent problems, a natural initial guess is, as mentioned, $w_n^0:=w_{n-1}$, or some combination of the solutions of previous time-steps \cite{petrosyants2024speeding}. Having $w_n^0$ sufficiently close to $w_n$ may impose a severe restriction on the time-step size, which can be dependent on the spatial discretization and mesh, or even the spatial dimension \cite{radu2006newton}. 
This negates the advantages of using a time-implicit scheme, which grants stability to the time-discretization even for larger $\t$ values. Moreover, NS may not guarantee convergence for degenerate problems, if either $\b'=0$ or $\infty$ (correspondingly $\Phi'=\infty$ or $0$). 

An alternative to the NS is the modified Picard scheme \cite{celia1990general}, where $L_{\b,n}^i=\b'(w^{i-1}_n)$, but for the advective term one uses the previous iteration, $\bm{F}(\b(w^{i-1}_n))$. In \cite{eymard2000finite} it is shown that this scheme is quite fast despite having linear convergence. However, it also suffers from the same stability issues as the Newton scheme, \cite{radu2006newton}. In the same spirit we mention the schemes in \cite{jager1991solution,jager1995solution}, where the linearization is perturbed so that the derivatives of the nonlinearities appearing in the iterations are bounded away from zero and infinity. This ensures the convergence of the scheme in the doubly-degenerate case, but, only under restrictions for the time step size that are like for NS \cite{radu2006newton}. Further, in \cite{Casulli} a Jordan decomposition of the nonlinearity $\b$ is used to define nested Newton iterations, in which the solution is approximated successively by quadratically convergent sequences of sub- and supersolutions. However, this approach makes use of the monotonicity of the approximation, which is not suited for any spatial discretization. In this category, we mention the trust-region Newton scheme in \cite{WANG2013114}, which is tightly connected to the finite volume discretization, and therefore cannot be extended straightforwardly to general discretization schemes and meshes. 

The L-scheme (LS) is a fixed-point iteration, in which $L^i_{\b,n}$ is a sufficiently large constant, to ensure stability. In terms of \eqref{eq:L}, this leads to the choice $L_{\b,n}^i=L\geq \sup \b'$. As shown in \cite{PopYong} and later in \cite{list2016study,pop2004mixed,Slodicka}, the scheme converges in $H^1$--sense convergence to the time-discrete solution $w_n$, regardless of the initial guess, spatial dimension, discretization, or mesh, and under a mild restriction for the time step size, at least for the fast diffusion case when $\b'=0$. This convergence result is extended in \cite{RaduIMA} for H{\"o}lder-continuous $\b$ (thus, $\b'$ not necessarily bounded), but a regularization step is needed. However, as seen in \cite{list2016study,mitra2019modified,stokke2023adaptive}, LS needs significantly more iterations than NS, or Newton-like schemes. To resolve this, the modified L-scheme (MS) was introduced in \cite{mitra2019modified}, envisioned to combine LS and NS in a way to preserve both stability and speed. In context of \eqref{eq:L1}, this is given by the choice $L_{\b,n}^i=\text{max}({\b'(u_n^{i-1})+M\tau,2M\tau})$, for a constant $M>0$. Observe that for $M=0$, the MS is nothing but the NS, while for large $M$, the changes in $L_{\b,n}^i$ are  small and the MS is close to the LS. As proved in  \cite{mitra2019modified}, for this class of problems the MS was as stable as the LS, while being much faster. In fact, the convergence is linear provided $\b'$ is bounded, and the contraction rate even scales with time-step  $\t$ for non-degenerate problems $\b'>0$. Closely related are the iterative schemes in \cite{Albuja}, using a semi-implicit discretization of the nonlinear diffusion term. There, the $L_{\b,n}^i$ is chosen s.t. it decreases form one time step to another, and the problem is regularized. The convergence is proved in $L_{\b,n}^i$-weighted norms and under the assumption that the diffusion is nondegenerate, but for H{\"o}lder-continuous $\b$.   

It is to mention that the direct formulation in \eqref{eq:t00} is possible whenever $\b = \Phi^{-1}$ exists, and $\b$ is bounded. Otherwise, $L^i_{\b,n}$ for schemes discussed above might become infinite, which either excludes the slow-diffusion case, or requires a regularization step. For small regularization parameters, the factors $L^i_{\b,n}$ become very large, which reduces the efficiency of the iterations. We mention in this respect \cite{MartinRegularization}, where the regularization is done so that the induced error is in balance with the errors that are due to the discretization, linearization, or the algebraic solver.

\subsubsection{Double-splitting approach}
For doubly-degenerate problems, $\Phi'$ in \ref{eq:1} can vanish, or become unbounded, or even become multivalued. Particularly in the latter case, constructing linear iterative schemes, not to speak about obtaining mathematically rigorous convergence results is a challenging task. schemes discussed in Section \ref{sec:direct} are either restricted to the case when $\Phi$ is bijective, or rely on regularization. The approach discussed below is inspired by two works. First, we mention \cite{brenner2017improving}, where the problem is first reformulated in terms of a new unknown, so that the resulting nonlinear functions are Lipschitz. For this, a Newton-type scheme is proposed, and the local quadratic convergence is proved for the fully discrete case, for the Euler implicit - a finite-volume discretization. The second work we refer to is \cite{cances2021error}, where, as in \eqref{eq:1}, the nonlinearity is defined as a new unknown, and the linear iterations are defined at the level of such algebraic dependencies. In this context, for the slow-diffusion case (e.g. the porous medium equation), it was shown in \cite{smeets2024robust} that the MS is more stable than the NS. 

To be precise, we refer to \eqref{eq:t}, and use the functions $b,\,B$ satisfying \eqref{eq:condition}. Similar to \cite{brenner2017improving}, we consider the new unknown $s$, while $u = b(s)$ and $w = B(s)$. From now on, this approach will be called below \emph{double-splitting} (DS). As in Section \ref{sec:direct}, for fixed $n \in \{1, \dots, N\}$ and with $i \in \N$ we let $(s_n^i, u_n^i, w_n^i)$ be the $i^{\rm th}$ iteration triple at time $t_n$. Then, with the initial guess  $\left(s_n^0, u_n^0, w_n^0\right) := \left(s_{n-1}, u_{n-1}, w_{n-1}\right)$ and given  $\left(s_n^{i-1}, u_n^{i-1}, w_n^{i-1}\right)$, $\left(s_n^i, u_n^i, w_n^i\right)$ solves
\begin{subequations}\label{eq:3}
\begin{align}
    &\frac{1}{\tau}\left(u^i_n-u_{n-1}\right)+ \nabla \cdot \bm{F}(u_n^{{i-1}}) =  \Delta w_n^i+ f,\label{eq:3a}\\ 
    & L^i_{b,n}(s^i_n-s^{i-1}_n)= u^i_n-b(s^{i-1}_n), \label{eq:3b}\\ 
    & L^i_{B,n}(s^i_n-s^{i-1}_n)= w^i_n-B(s^{i-1}_n), \label{eq:3c}
\end{align}
\end{subequations}
in $\Om$. As in \eqref{eq:L}, the factors $L^i_{b,n},  L^i_{B,n}$ are computed from the $(i-1)^{th}$ step, and their choice determine the type of the scheme (NS, LS, or MS). The details are presented later in Table \eqref{tab:1}. Clearly, if the scheme converge, the limit triple is a solution to \eqref{eq:t}. 


For the LS and MS, the choice of the parameters $L$ and $M$ is important and can improve the convergence rates significantly. In the present context, the convergence is guaranteed for the LS if $L^i_{b,n}  = L^i_{B,n} = 1$. However, this may be sub-otimal, since in one iteration $s$ may take values for which $b^\prime$ and $B^\prime$ are less. Therefore, in \cite{list2016study,mitra2019modified,seu2018linear}, a a parametric study is carried out beforehand. As for the LS and MS the convergence does not depend on the spatial discretization and mesh, this study can be done on a coarse mesh, which reduces the computational complexity significantly. A method to adaptively select the linearization schemes and parameter values at each iteration step was proposed in \cite{stokke2023adaptive}, based on a posteriori error estimation ideas from \cite{mitra2019modified}. Inspired by this, in \Cref{sec:Adaptive} a similar kind of estimator is proposed, adaptively providing a nearly optimal value of $M$. 

The \textbf{main results} for the DS are as follows. First, \textit{the LS converges unconditionally, even for doubly-degenerate cases, and the convergence is linear if it has at most one degeneracy. The MS behaves in the same manner, provided that a regularity assumption is satisfied. In this case, the linear convergence rate is proportional to the time-step size. This makes MS faster especially for smaller time-step sizes. Numerical results reveal that MS is, indeed, more robust than NS. Moreover, in many cases MS outperforms NS in reaching a pre-determined error threshold, whenever the parameter $M>0$ is well chosen.} 

The aspect of how to choose $M$ is resolved adaptively, based on a posteriori estimators. \textbf{Main results} in this case are:  \textit{given $M>0$, we find the a posteriori estimator $\linEst{i,M}$ that is fully computable from the $(i-1)^{th}$ iteration step. It gives an upper bound for the linearization error $\linErr{i}$ at the $i^{th}$ iteration, namely $\linErr{i}\leq \linEst{i,M}$. Note that this estimator depends on the parameter $M>0$, used in the iteration. Inspired by this, we select the parameter $M$ that minimizes $\linEst{i,M}$, which minimizes the upper bound of the linearization error $ \linErr{i}$ in the $i^{th}$ step.} This approach is presented in \Cref{fig:flow-chart}, and the corresponding scheme will be called MAdap. As will be seen in the following, MAdap consistently outperforms NS in a wide variety of cases. 
\begin{figure}[h]
    \centering
  \includegraphics[width=0.9\textwidth]{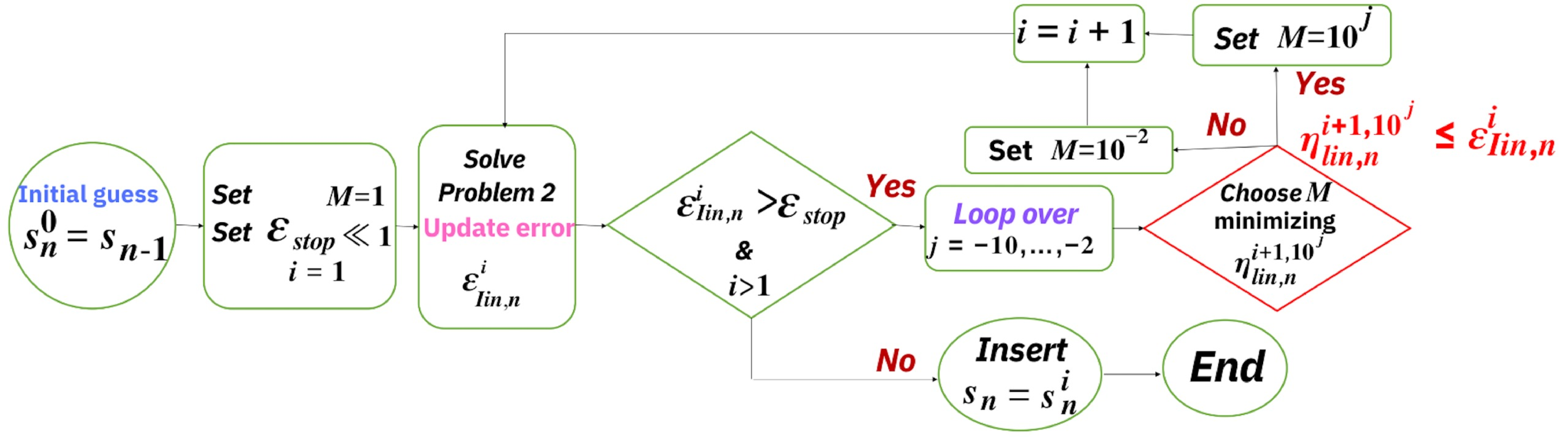}
    \caption{The flow-chart of the MAdap algorithm (\Cref{alg:M-Adap}).}
    \label{fig:flow-chart}
\end{figure}

The outline of the paper is as follows: In \Cref{section2}, we state the main assumptions and notations used throughout the paper, define the time-discrete solution, and discuss different linear iterative methods. \Cref{section3} is devoted to the mathematically rigorous convergence analysis of the schemes. In \Cref{sec:Adaptive}, a posteriori estimates are obtained for the linearization error. Based on this, the adaptive algorithm MAdap is proposed. \Cref{section5} presents numerical results for four test problems and three different linear iterative schemes, which clearly illustrate the robustness of the approach proposed here, as well as the effectiveness of the derived estimates. Our findings are summarized and discussed in \Cref{sec:conclusion}.

\section{Mathematical preliminaries}\label{section2}
\subsection{Notations and basic definitions}\label{sec:notations}
In what follows $\Om\subset \R^d$ denotes a bounded, Lipschitz $d$-dimensional domain ($d\in\N, d > 0$). 

\textbf{Functional spaces and norms}: $L^2(\Om)$ is the space of square-integrable functions defined on $\Om$, the corresponding inner product and norm being $\langle \cdot, \cdot \rangle
$ and $\|\cdot\|$.  $H^1(\Om)$ stands for the $L^2$ functions having weak derivatives in $L^2(\Om)$. $H^1_0(\Om)$ contains the functions in $H^1(\Om)$ having a vanishing trace on $\partial \Om$, and $H^{-1}(\Om)$ is the dual of $H^1_0(\Om)$ with the dual norm
\begin{align}
\|u\|_{H^{-1}(\Omega)} := \sup_{\phi\in H^1_0(\Omega)} \frac{\langle u, \phi \rangle }{\|\nabla\phi\|}
.
\end{align}
The analysis below will use the compositional Hilbert space 
\begin{equation} \label{eq:Z}
    \mathcal{Z}:=L^2(\Om)\times L^2(\Om)\times H^1_0(\Om).
\end{equation} 
Moreover, $\bm{H}(\rm{div};\Om)$  denotes the space of vector fields $\bm{\sigma}\in \bm{L}^2(\Om;\R^d)$ such that $\del\cdot \bm{\sigma}$ exists and lies in $L^2(\Om)$. In general, the norm and the duality pairing in a Banach space $\calV$ are $\|\cdot\|_{\calV}$, respectively$\langle \cdot, \cdot\rangle_{\calV^*\times \calV}$. For a Lipschitz continuous function $u$, $\snorm{u}_{\rm Lip}$ denotes its Lipschitz constant, if not specified differently.
\\[1em]
\textbf{Relevant (in)equalities:} The Poincar\'e inequality states the existence of a $C_\Om>0$ such that 
\begin{align}\label{eq:P}
   \|w\|
   \leq C_{\Om}\, h_\Om\,\| \del w\|
   , 
\end{align}
for all $w\in H^1_0(\Om)$, where $h_\Om>0$ is the diameter of $\Om$.  

The following algebraic (in)equalities, holding for all  $a,b \in \R$ and $\r>0$, will be used
\begin{align}
  &(a-b)a=\frac{1}{2}(a^2-b^2+(a-b)^2) \label{eq:equality} \\
  &ab \leq \frac{1}{2\r}a^2+ \frac{\r}{2}  b^2 \;\;\text{  (Young's inequality). }\label{young}
\end{align}
 
Finally, for any given $a,b\in \R$, we use $I(a,b):=[a,b]\cup [b,a]$ to denote the closed interval between them. Note that by this one avoids distinguishing between cases $a < b$ and $a > b$. 

\subsection{Assumptions} \label{sec:2.2}
 The functions appearing in \eqref{eq:1} satisfy the following assumptions.
\begin{enumerate}[label=(A.1\alph*)]
\item \label{ass:A1a}
Let $\vs=1$, or $\vs=\infty$. The 
function $\Phi:[0,\vs) \to [0,\infty)$ 
is locally Lipschitz, almost everywhere differentiable, and strictly increasing in $(0,\vs)$. 
Moreover, $\Phi(0)=0$ and either $\Phi'(0)>0$, or $\Phi$ is convex in a right neighbourhood of 0. Further, the limit $\Phi_\Maxi:=\lim_{u\nearrow \vs}\Phi$ is either infinite, or, if $\Phi_\Maxi<\infty$, then we extend $\Phi$ to the set $[\Phi_\Maxi,\infty)$ at $u=\vs$. 
\item \label{ass:A1b}
The functions $b, B\in C^1(\R)$ exist such that $b(0)=B(0)=0$ and they satisfy \eqref{eq:condition}.
\end{enumerate}
\noindent Assumption \ref{ass:A1a} is sufficient to prove the convergence of LS. For MS, the additional Assumption \ref{ass:Bphi} on the regularity of $\Phi$ is made below. In this situation, the functions $b$ and $B$ can be constructed explicitly, see 
\Cref{lemma:decomspoition} for the details.

\begin{enumerate}[label=(A.\theAssDD)]
\stepcounter{AssDD}
\item \label{A.2} The source term $f$ lies in $H^{-1}(\Om)$. For the advection term $\bm{F}: [0,\vs) \rightarrow \R^d$, a constant $L_F>0$ exists such that, for all $u,v\in (0,\vs)$, 
\begin{align}
\big| \bm{F}(u)-\bm{F}(v)\big|^2 \leq L_F|u-v| \big|\Phi(u)-\Phi(v)\big| . 
\end{align}
\stepcounter{AssDD}
\item \label{ass:u0} For the initial condition $u_0 \in L^\infty(\Om)$ one has that $0 \leq u_0(\x) \leq \vs-\epsilon$ almost everywhere, for some $\epsilon>0$ if $\Phi_\Maxi=\infty$ in \ref{ass:A1a}, or $\epsilon=0$ if $\Phi_\Maxi<\infty$.
\end{enumerate}
With the practical applications in mind, the nonlinear functions $\Phi$ and $F$ are defined only for positive arguments. However, they can be extended by constants for negative arguments, without affecting the theoretical results.

\begin{remark}[Generality of Assumption \ref{ass:A1a} and \ref{ass:A1b}]
    Assumption \ref{ass:A1a} states that $\Phi$ is only locally Lipschitz, and does not impose any convexity conditions. It allows $\Phi^\prime$ to vanish at $u=0$, and $\Phi$ to blow up at $u=\vs$.  Therefore, the problem can be doubly degenerate. 
    
    Further, \eqref{eq:condition} gets $B'(s)=\Phi'(b(s)) b'(s)$, so, under the assumptions \ref{ass:A1a} and \ref{ass:A1b}, either $B'(0)>0$ when $\Phi'(0)>0$, or $B$ is convex in a right neighborhood of $0$. \label{rem:Aprop}
\end{remark}

\begin{remark}[Heterogeneous $\Phi$, $\bm{F}$] \label{rem:heterogeneity}
The functions $\Phi$ and $\bm{F}$ can also depend explicitly on $\x\in \Om$, i.e., $\Phi:\Om\times\R\to \R$ and $\bm{F}:\Om\times \R\to \R^d$. In this case, $\Phi(\x,u)$ and $\bm{F}(\x,u)$ are required to be Carath\'eodory functions implying that they need to be measurable in $\x$. Then for each $\x\in \Om$, the decomposition functions $b_\x=b(\x,\cdot)$, $B_\x=B(\x,u)$ need to satisfy \eqref{eq:condition}. With this change, all the subsequent results remain valid, and hence they can be applied to heterogeneous problems as found in porous domains \cite{seu2018linear}.
\end{remark}

\begin{remark}[Boundary conditions]
    For simplicity, the boundary conditions are assumed homogeneous Dirichlet for \eqref{eq:1}. Non-homogeneous Dirichlet boundary condition, like $w=h$ at $\p\Om$ can also be considered provided $h$ is the trace of an $H^1(\Om)$ function that is bounded by $0$ and $\Phi(\vs-\epsilon)]$ a.e. in $\Om$ for some $\epsilon>0$. Moreover, it is possible to assume a homogeneous Neumann condition for the problem. However, for the case when $\lim_{v\nearrow \vs}\Phi(v)=\infty$, it has to be ensured that the solution $u$ stays bounded away from $\vs$. We refer to \cite{mitra2023wellposedness}, where necessary and sufficient conditions are given to avoid the singular value. Since discussing the intricacies of the boundary condition is not the focus of this work, we limit the future discussions to the case of homogeneous Dirichlet conditions.
\end{remark}


\subsubsection{Extra regularity assumptions on $\Phi$ and the construction of $b,\,B$ functions}\label{sec:extra_regul}
For the convergence of the M-scheme we additionally assume that,
\begin{enumerate}[label=(B.\theAssM)]
    \item $\Phi$ has locally Lipschitz continuous derivatives in $(0,\vs)$. Moreover, there exists $u^*\in (0,\vs)$ such that (after possible rescaling of $\Phi$),\label{ass:Bphi}
\begin{equation*}
    \Phi'(u)\begin{cases}
        \leq 1 &\text{ for } u\in (0,u^*),\\
        =1&\text{ for } u=u^*,\\
        \geq 1 &\text{ for } u\in (u^*,\vs).
    \end{cases}
\end{equation*}    
\end{enumerate}

Observe that \ref{ass:Bphi} is trivially satisfied if $\Phi\in C^2([0,\vs))$ and convex.
\begin{lemma}[Decomposition of $\Phi$]\label{lemma:decomspoition}
Under the assumptions \ref{ass:A1a}, \ref{ass:A1b} and \ref{ass:Bphi}, the functions
    \begin{subequations}\label{eq:bBexpression}
    \begin{align}
            &b(s):=\int_0^s \min\left\{1, \frac{1}{\Phi'(b(\rho))}\right\}d\rho=\begin{cases}
             s &\text{ if } s\leq u^*,\\
             \Phi^{-1}(\Phi(u^*) + s-u^*) &\text{ if } s\geq u^*.
         \end{cases}\\
         &B(s):=\int_0^s \min \left\{1, \Phi'(b(\rho))\right\}d\rho=\begin{cases}
             \Phi(s) &\text{ if } s\leq u^*,\\
             \Phi(u^*) + s-u^*&\text{ if } s\geq u^*.
         \end{cases}
    \end{align}   
\end{subequations}
have Lipschitz continuous derivatives, and satisfy \eqref{eq:condition}.
\end{lemma}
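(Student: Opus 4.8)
The plan is to verify each of the three claims in turn: the two explicit piecewise formulas, the Lipschitz continuity of $b'$ and $B'$, and the three conditions in \eqref{eq:condition}. I would begin by establishing the explicit formulas. Define $b$ via the implicit relation $b(s) = \int_0^s \min\{1, 1/\Phi'(b(\rho))\}\,d\rho$; since $\Phi'$ is locally Lipschitz and bounded below away from $0$ on compact subsets of $(0,\vs)$ by \ref{ass:Bphi} combined with the strict monotonicity in \ref{ass:A1a}, the integrand is Lipschitz in $b$, so Picard–Lindelöf gives a unique $C^1$ solution. For $s \le u^*$, \ref{ass:Bphi} gives $\Phi'(u) \le 1$ on $(0,u^*)$, hence $1/\Phi'(b(\rho)) \ge 1$ and the min equals $1$, so $b(s) = s$; this is consistent since then $b(\rho) = \rho \le u^*$ throughout the integration. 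For $s \ge u^*$, once $b$ has reached $u^*$ we are in the region where $\Phi' \ge 1$, so the min equals $1/\Phi'(b(\rho))$; the ODE $b' = 1/\Phi'(b)$ with $b(u^*) = u^*$ integrates (via $\frac{d}{ds}\Phi(b(s)) = \Phi'(b(s)) b'(s) = 1$) to $\Phi(b(s)) = \Phi(u^*) + s - u^*$, i.e. $b(s) = \Phi^{-1}(\Phi(u^*)+s-u^*)$, using that $\Phi$ is invertible on $(0,\vs)$ (and its extension past $\Phi_\Maxi$ handles the case $\Phi_\Maxi < \infty$). The formula for $B$ then follows either by the same case split on its own integral or directly from $B = \Phi \circ b$: for $s \le u^*$, $B(s) = \Phi(b(s)) = \Phi(s)$, and for $s \ge u^*$, $B(s) = \Phi(b(s)) = \Phi(u^*) + s - u^*$ by the computation just made.

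Next I would check $B = \Phi \circ b$ and the bounds $0 \le b', B' \le 1$, $b' + B' \ge 1$. By construction $b'(s) = \min\{1, 1/\Phi'(b(s))\}$ and $B'(s) = \min\{1, \Phi'(b(s))\}$, both of which lie in $(0,1]$ since $\Phi' > 0$ on $(0,\vs)$. For the sum: writing $t = \Phi'(b(s)) > 0$, we have $b' + B' = \min\{1,1/t\} + \min\{1,t\}$, which equals $1 + 1/t \ge 1$ when $t \le 1$ and $1 + t \ge 1$ when $t \ge 1$, so $b' + B' \ge 1$ always. The identity $B' = \Phi'(b)\,b'$ (needed to conclude $B = \Phi \circ b$ after integrating, since $B(0) = \Phi(b(0)) = \Phi(0) = 0$) follows from $\min\{1,t\} = t \cdot \min\{1/t, 1\}$ for $t > 0$. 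For arguments outside $(0,\vs)$ one extends $\Phi$ by constants (as remarked after \ref{ass:u0}), so $b(s) = s$, $B(s) = \Phi(s)$ type behavior extends trivially to $s \le 0$ and the formulas extend to $s \ge u^*$ with $b(s) \to \vs$, which is where the extension of $\Phi$ past $\Phi_\Maxi$ matters when $\Phi_\Maxi < \infty$.

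Finally, the Lipschitz continuity of the derivatives. On $(-\infty, u^*]$ we have $b'(s) = 1$ (constant, trivially Lipschitz) and $B'(s) = \Phi'(s)$, which is locally Lipschitz by \ref{ass:Bphi}; one needs global Lipschitz continuity, which holds on $[0,u^*]$ (a compact set) and extends as a constant past $0$. On $[u^*,\infty)$, $b'(s) = 1/\Phi'(b(s))$ and $B'(s) = \Phi'(b(s))$; here $\Phi' \circ b$ is a composition of the locally Lipschitz $\Phi'$ with the $1$-Lipschitz $b$, and since $b$ maps $[u^*,\infty)$ into $[u^*,\vs)$ where $\Phi'\ge 1$ is bounded away from zero, $1/\Phi'(b(\cdot))$ is also Lipschitz on this range. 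The one point requiring care is the matching at $s = u^*$: there $b'$ jumps from the constant $1$ to $1/\Phi'(b(s))$, but since $\Phi'(u^*) = 1$ by \ref{ass:Bphi}, both one-sided values of $b'$ (and of $B'$) agree at $u^*$, so $b'$ and $B'$ are continuous across $u^*$ and, being Lipschitz on each side with a common value, Lipschitz globally. The main obstacle I anticipate is not any single estimate but the bookkeeping around $s = u^*$ and around the singular endpoint $\vs$ — specifically ensuring that the derivative bound $\Phi' \ge 1$ persists for \emph{all} $s \ge u^*$ (so that $b$ never leaves the region $[u^*,\vs)$ and the ODE analysis stays valid), and that the extension of $\Phi$ beyond $\Phi_\Maxi$ in \ref{ass:A1a} is chosen with slope $\ge 1$ so the formulas and the Lipschitz property remain consistent there.
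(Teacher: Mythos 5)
The paper states Lemma \ref{lemma:decomspoition} without proof, so there is no authors' argument to compare against; judged on its own, your verification takes the natural (and presumably intended) route --- check that the piecewise formulas solve the defining identities, read off $b'(s)=\min\{1,1/\Phi'(b(s))\}$ and $B'(s)=\min\{1,\Phi'(b(s))\}$, deduce \eqref{eq:condition} from $\min\{1,t\}=t\,\min\{1,1/t\}$, and match the two branches at $s=u^*$ using $\Phi'(u^*)=1$ --- and it is correct in substance. Two minor slips: in the bound $b'+B'\geq 1$ the cases are swapped (for $t\leq 1$ the sum is $1+t$, for $t\geq 1$ it is $1+1/t$), which is harmless; and for the Picard--Lindel\"of step you neither need nor obtain from strict monotonicity a positive lower bound on $\Phi'$ --- it suffices to note that $\min\{1,1/t\}=1/\max\{1,t\}$ is $1$-Lipschitz in $t$, so the right-hand side of the defining relation is locally Lipschitz wherever $\Phi'$ is.

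The one place where your write-up genuinely overreaches is the endpoint regularity. Assumption \ref{ass:Bphi} makes $\Phi'$ locally Lipschitz only on the open interval $(0,\vs)$, so compactness of $[0,u^*]$ does not give a Lipschitz bound for $B'=\Phi'$ up to $s=0$: for instance $\Phi(u)=u^{3/2}$ satisfies \ref{ass:A1a} and \ref{ass:Bphi}, yet $B'$ is not Lipschitz at $0$. Similarly, if $\Phi_\Maxi<\infty$ and $\Phi'$ stays bounded as $u\nearrow\vs$ (e.g.\ $\Phi(u)=u$ on $[0,1)$ extended multivaluedly at $u=1$), then $b'$ jumps at the value of $s$ where $b$ reaches $\vs$; continuity of $b'$ there requires $\Phi'(u)\to\infty$ as $u\nearrow\vs$, a point you flag as a concern but do not resolve. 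Strictly speaking these are imprecisions in the lemma as stated (the Lipschitz-derivative claim holds in the interior, and globally only under tacit extra regularity at the endpoints, which all of the paper's examples satisfy), but since your argument asserts the global property, the compactness step should be corrected or explicitly conditioned on that extra endpoint regularity.
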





\subsection{Weak Formulations}
In this section, $n \in \{1, \dots, N\}$ is fixed and $u_{n-1}, s_{n-1} \in L^2(\Om)$ are assumed known, satisfying $u_{n-1} = b(s_{n-1})$. Here we give the weak forms of the time-discrete problems and their linearization at the time step $t_n$. To do so, we use the time-discrete problems given formally in Section \ref{sec:wellposedness}, and the space $\mathcal{Z}$ in \eqref{eq:Z}. 
\subsubsection{Time discretization}   
The weak formulation of the time-discrete, nonlinear problem at time step $t_n$ is given below.  
  \begin{problem}[{Weak formulation of system \eqref{eq:t}}]\label{prob:time-disc}
Find the triple $(s_n, u_n, w_n) \in \mathcal{Z}$ such that  for all $(\psi,\phi,\f)\in\calZ$, the  following holds
\begin{subequations}\label{eq:L1}
  \begin{align}
    \left(\frac{1}{\tau}(u_n-u_{n-1}), \varphi\right) + (\nabla w_n, \nabla \varphi) &= (\bm{F}(b(s_n)),\nabla \varphi) + \langle f, \varphi\rangle, \label{eq:L1a}\\
    (u_n, \phi) &= (b(s_n),\phi), \\[.5em]
    (w_n,\psi) &= (B(s_n),\psi).
\end{align}   
\end{subequations}

\begin{proposition}[Well-posedness of Problem \ref{prob:time-disc}] If $\t\in (0,L_F^{-1})$, Problem \eqref{prob:time-disc} has a unique solution $(s_n, u_n, w_n) \in \calZ$. Moreover, if $u_{n-1}, f\geq 0$ a.e., then $u_n\geq 0$ a.e. in $\Om$.\label{prop:well-posedness}
\end{proposition}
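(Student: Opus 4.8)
The plan is to reduce Problem \ref{prob:time-disc} to a single nonlinear elliptic equation in the variable $s_n$ and then apply a monotone-operator / Browder--Minty argument, after which uniqueness and the sign property follow by testing with suitable functions. First I would eliminate $u_n$ and $w_n$ using the algebraic relations $u_n = b(s_n)$ and $w_n = B(s_n)$ (these hold in $L^2(\Om)$ since $b,B$ map $\R$ to $\R$ with globally bounded derivatives by \eqref{eq:condition}, so $b(s_n),B(s_n)\in L^2(\Om)$ whenever $s_n\in L^2(\Om)$). This turns \eqref{eq:L1} into: find $s_n\in H^1_0(\Om)$ — note that $w_n=B(s_n)\in H^1_0(\Om)$ forces $s_n$ into $H^1_0(\Om)$ because $B$ is bi-Lipschitz on the relevant range once $B'\geq 1-b'>0$ away from degeneracy; more carefully, one works with $w_n$ as the primary unknown where $B$ is invertible, or keeps $s_n$ and uses that $B'(s)\geq \tfrac12$ on the set where $b'\leq\tfrac12$, handling the general case by noting $\del B(s_n) = B'(s_n)\del s_n$ — such that
\begin{equation}\label{eq:reduced}
\left(\tfrac{1}{\tau}\big(b(s_n)-u_{n-1}\big),\varphi\right) + \big(\del B(s_n),\del\varphi\big) = \big(\bm{F}(b(s_n)),\del\varphi\big) + \langle f,\varphi\rangle
\end{equation}
for all $\varphi\in H^1_0(\Om)$. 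Define the operator $\mathcal{A}: H^1_0(\Om)\to H^{-1}(\Om)$ as the left-hand side minus the advection and source, i.e. $\langle\mathcal{A}(s),\varphi\rangle = \tfrac1\tau(b(s),\varphi) + (\del B(s),\del\varphi) - (\bm{F}(b(s)),\del\varphi)$.

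The key step is to show $\mathcal{A}$ is monotone, coercive, and hemicontinuous (bounded, demicontinuous) on $H^1_0(\Om)$, so that the Browder--Minty theorem applies to $\mathcal{A}(s)=\tfrac1\tau u_{n-1}+f$. Monotonicity: for $s,\hat s\in H^1_0(\Om)$, writing $\varphi = s-\hat s$, the term $\tfrac1\tau(b(s)-b(\hat s),s-\hat s)\geq 0$ since $b$ is nondecreasing; the diffusion term $(\del B(s)-\del B(\hat s),\del(s-\hat s))$ is handled by the Kirchhoff-type observation that $\del B(s)-\del B(\hat s)$ paired with $\del(s-\hat s)$ is not directly monotone in $s$, so instead I would recast \eqref{eq:reduced} in the variable $v := B(s)$ (valid since $B$ is a strictly increasing $C^1$ bijection of $\R$ onto $\R$ — here one uses $B' \geq 1 - b' $ and that $b'\leq 1$, but $B'$ can vanish, so strictly speaking $B$ is a bijection only if $B'>0$ a.e.; if $B'$ vanishes on an interval one perturbs $B$ to $B_\delta = B + \delta\,\mathrm{id}$, solves, and passes $\delta\to 0$ using the a priori bounds). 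In the $v$-variable the diffusion becomes the linear $(\del v,\del\varphi)$, genuinely monotone, and $b(s) = b(B^{-1}(v)) =: \tilde b(v)$ is still nondecreasing in $v$, while the advection obeys the bound $|\bm{F}(b(s))-\bm{F}(b(\hat s))|^2 \leq L_F|b(s)-b(\hat s)|\,|B(s)-B(\hat s)|$ from \ref{A.2} together with \eqref{eq:condition} (since $\Phi\circ b = B$). Coercivity and the control of the advection term then use this bound with Young's inequality \eqref{young} and the Poincaré inequality \eqref{eq:P}, and this is exactly where the restriction $\tau\in(0,L_F^{-1})$ enters: the advection term is absorbed into the diffusion and the $\tfrac1\tau$-mass term, leaving a strictly positive quadratic form precisely when $\tau L_F<1$. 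The main obstacle is this coupling of the degenerate diffusion, the Lipschitz structure of $\bm{F}$ tied to $\Phi$ rather than to $u$, and the two algebraic constraints: getting the functional-analytic setting right (which variable to solve in, how to handle the possible vanishing of $B'$ or $b'$ by regularization and limit passage) is the delicate part, whereas the monotone-operator machinery itself is then routine.

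For uniqueness, suppose $(s_n,u_n,w_n)$ and $(\hat s_n,\hat u_n,\hat w_n)$ are two solutions; subtract the equations, test \eqref{eq:L1a} with $\varphi = w_n-\hat w_n = B(s_n)-B(\hat s_n)\in H^1_0(\Om)$, and use $u_n-\hat u_n = b(s_n)-b(\hat s_n)$. This yields
\begin{equation*}
\tfrac1\tau\big(b(s_n)-b(\hat s_n),\,B(s_n)-B(\hat s_n)\big) + \|\del(w_n-\hat w_n)\|^2 = \big(\bm{F}(u_n)-\bm{F}(\hat u_n),\,\del(w_n-\hat w_n)\big).
\end{equation*}
The first term is $\geq 0$ because $b$ and $B$ are both nondecreasing (so the product $(b(a)-b(\hat a))(B(a)-B(\hat a))\geq 0$ pointwise). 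Bounding the right-hand side by Cauchy--Schwarz and then \ref{A.2} gives $|\bm{F}(u_n)-\bm{F}(\hat u_n)|^2\leq L_F|b(s_n)-b(\hat s_n)|\,|B(s_n)-B(\hat s_n)|$; applying Young's inequality \eqref{young} with a suitable weight absorbs $\|\del(w_n-\hat w_n)\|^2$ on the left and leaves $(1-\tau L_F)\,$ times a nonnegative term $\leq 0$, forcing $b(s_n)=b(\hat s_n)$, $B(s_n)=B(\hat s_n)$, and $\del(w_n-\hat w_n)=0$; since $w_n-\hat w_n\in H^1_0(\Om)$, Poincaré gives $w_n=\hat w_n$, hence $u_n=\hat u_n$, and finally $b'+B'\geq 1$ from \eqref{eq:condition} implies $b(s_n)=b(\hat s_n)$ and $B(s_n)=B(\hat s_n)$ together force $s_n=\hat s_n$ (their difference cannot change without one of $b,B$ strictly changing). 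For the nonnegativity of $u_n$, test \eqref{eq:L1a} with $\varphi = [w_n]_- := \min\{B(s_n),0\}$, which lies in $H^1_0(\Om)$; then $(\del w_n,\del\varphi) = \|\del[w_n]_-\|^2\geq 0$, the advection term $(\bm{F}(b(s_n)),\del[w_n]_-)$ is supported where $w_n<0$ i.e. $s_n<0$ i.e. $u_n = b(s_n)\leq b(0)=0$, so on that set $\bm{F}(b(s_n))=\bm{F}(0)$ (the extension by a constant for negative arguments, per the remark after \ref{ass:u0}) and its gradient-pairing vanishes, and $\langle f,[w_n]_-\rangle\leq 0$ since $f\geq0$ and $[w_n]_-\leq0$; what remains is $\tfrac1\tau(b(s_n)-u_{n-1},[w_n]_-)\geq 0$, and since $u_{n-1}\geq0$ while $[w_n]_-\leq0$ this forces $(b(s_n),[w_n]_-)\geq0$, but $b(s_n)$ and $[w_n]_-$ have the same sign pattern (both nonpositive exactly where $s_n\leq0$), giving $\int b(s_n)[w_n]_- \leq 0$ as well, hence $b(s_n)[w_n]_-=0$ a.e., which combined with monotonicity yields $[w_n]_-=0$, i.e. $w_n\geq0$, hence $u_n=b(s_n)\geq b(0)=0$ a.e. in $\Om$. $\hfill\square$
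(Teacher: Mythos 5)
Your overall reduction strategy has a genuine gap exactly at the point that makes this problem hard. Rewriting the diffusion in the variable $v=B(s_n)$ re-introduces the multivalued inverse that the double-splitting formulation is designed to avoid: wherever $B'$ vanishes on an interval (which happens here, e.g.\ for $s\le 0$, since $\Phi$ is extended by a constant for negative arguments, so by \eqref{eq:bBexpression} and \eqref{eq:condition} one has $B\equiv 0$ and $b(s)=s$ there), the map $\tilde b=b\circ B^{-1}$ is not a function, so the Browder--Minty argument does not apply directly. You acknowledge this and propose $B_\delta=B+\delta\,\mathrm{id}$ with a passage $\delta\to 0$, but that limit passage (uniform estimates, a Minty-type identification of the limit of $\tilde b_\delta(v_\delta)$, and the recovery of $s_n\in L^2(\Om)$ with $u_n=b(s_n)$, $w_n=B(s_n)$) is precisely the crux and is not carried out; moreover your formulation ``find $s_n\in H^1_0(\Om)$'' is not the setting of Problem \ref{prob:time-disc}, where $s_n,u_n$ are only in $L^2(\Om)$ and only $w_n\in H^1_0(\Om)$. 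The paper takes a different route: it freezes the advection at a given $s\in L^2(\Om)$, invokes known well-posedness results for the resulting degenerate elliptic system (Theorem A.1 of \cite{droniou2020high}, Theorem 3.1 of \cite{smeets2024robust}), and then closes the advection coupling by a contraction estimate in the pairing $(b(\cdot)-b(\cdot),B(\cdot)-B(\cdot))$ using \ref{A.2}, Young's inequality and $\t L_F\le 1$ --- the same structural inequality you use for coercivity/uniqueness, but deployed as a fixed-point argument so that the degenerate elliptic core never has to be re-proved.

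Two further points. First, your nonnegativity argument only yields $w_n\ge 0$: testing with $[w_n]_-$ is fine (and the constant-extension trick for $\bm{F}$ does kill the advection term), but the final implication ``$w_n\ge 0$ hence $u_n=b(s_n)\ge 0$'' fails exactly in the degenerate situation above, since $w_n=B(s_n)=0\ge 0$ is compatible with $s_n<0$ and $u_n=b(s_n)=s_n<0$ on a set of positive measure; an additional argument is needed (in the paper this property is inherited from the cited results for the frozen problem). Second, in the uniqueness step, the vanishing of $\int_\Om (b(s_n)-b(\hat s_n))(B(s_n)-B(\hat s_n))$ does not force both factors to vanish pointwise; the correct chain is to first get $w_n=\hat w_n$ from $\nabla(w_n-\hat w_n)=0$ and Poincar\'e, then return to \eqref{eq:L1a} and use \ref{A.2} (the advection difference vanishes once $B(s_n)=B(\hat s_n)$) to conclude $u_n=\hat u_n$, and only then use $b'+B'\ge 1$ to get $s_n=\hat s_n$. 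That slip is recoverable, but the existence and positivity gaps are substantive.
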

\end{problem}

\noindent
Since proving \Cref{prop:well-posedness} is not the main focus of this work, we postpone the proof to \Cref{Appendix:proof}

\subsubsection{Linearization} 
The weak form of the double-splitting linearization  in \eqref{eq:3} is as follows
\begin{problem}[{Weak formulation of system $\eqref{eq:3}$}]\label{prob:3}
Let $i \in \N$, $i > 0$ and assume $s_{n}^{i-1} \in L^2(\Om)$ known. Find the triple $\left(s_n^i, u_n^i, w_n^i\right) \in \mathcal{Z}$ such that for all $(\psi,\phi,\f)\in\calZ$, the  following holds 
\begin{subequations}\label{eq:linearization_gen}
\begin{align}
\left(\frac{1}{\tau}(u^i_n-u_{n-1}), \f\right)+ (\del w^i_n, \del \f) &=(\bm{F}(b(s_n^{i-1})),\del \f)+\langle f, \f\rangle,\label{eq:linearization_gen.a}\\
 (u^i_n-b(s^{i-1}_n),\phi)&=(L^i_{b,n}(s^i_n-s^{i-1}_n), \phi), \label{eq:linearization_gen.b}\\[.5em]
(w^i_n-B(s^{i-1}_n),\psi)&=(L^i_{B,n}(s^i_n-s^{i-1}_n),\psi). \label{eq:linearization_gen.c}
\end{align}
\end{subequations}
The bounded functions $L^i_{b,n}, L^i_{B,n}: \R \rightarrow \R^+$ are given in Section \ref{sec:commonLin}, depending on $s^{i-1}_n$.
\end{problem}
A natural choice for the starting point is $s_n^0 = s_{n-1}$, but this is not compulsory for LS.  

\begin{proposition}[Well-posedness and consistency of Problem \ref{prob:3}]
    Assume that $L^i_b$ and $L^i_B$ are bounded above and below by positive constants, uniformly in $i\in \N$. Then, Problem \ref{prob:3} has a unique solution. If $\{\left(s_n^i, u_n^i, w_n^i\right)\}_{i\in \N} \subset \mathcal{Z}$ is a Cauchy sequence, then it converges in $\mathcal{Z}$ to $(s_n,u_n,w_n)$ as $i\to \infty$.
\end{proposition}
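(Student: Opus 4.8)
\emph{Plan for the existence and uniqueness part.} The plan is to reduce \eqref{eq:linearization_gen} to a single linear elliptic equation for $w^i_n\in H^1_0(\Om)$ and invoke Lax--Milgram. Since $L^i_{B,n}$ is bounded below by a positive constant, \eqref{eq:linearization_gen.c} can be solved pointwise for the algebraic increment, $s^i_n-s^{i-1}_n=(L^i_{B,n})^{-1}\big(w^i_n-B(s^{i-1}_n)\big)$; substituting this into \eqref{eq:linearization_gen.b} gives $u^i_n=b(s^{i-1}_n)+(L^i_{b,n}/L^i_{B,n})\big(w^i_n-B(s^{i-1}_n)\big)$, and inserting both into \eqref{eq:linearization_gen.a} yields: find $w^i_n\in H^1_0(\Om)$ with
\begin{equation*}
\tfrac1\t\Big(\tfrac{L^i_{b,n}}{L^i_{B,n}}\,w^i_n,\f\Big)+(\del w^i_n,\del\f)=\tfrac1\t\Big(u_{n-1}-b(s^{i-1}_n)+\tfrac{L^i_{b,n}}{L^i_{B,n}}B(s^{i-1}_n),\f\Big)+(\bm F(b(s^{i-1}_n)),\del\f)+\langle f,\f\rangle
\end{equation*}
for all $\f\in H^1_0(\Om)$. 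The left-hand bilinear form is bounded and, because $L^i_{b,n}/L^i_{B,n}$ is bounded above and below by positive constants and by the Poincar\'e inequality \eqref{eq:P}, coercive on $H^1_0(\Om)$. The right-hand side is a bounded functional: $b,B$ are globally Lipschitz with constants at most $1$ and vanish at $0$ (by \ref{ass:A1b}), so $b(s^{i-1}_n),B(s^{i-1}_n)\in L^2(\Om)$, while Assumption \ref{A.2} together with $B=\Phi\circ b$ (and the convention that $\bm F,\Phi$ are extended by constants) gives $|\bm F(b(s^{i-1}_n))-\bm F(0)|^2\le L_F|b(s^{i-1}_n)|\,|B(s^{i-1}_n)|$, so $\bm F(b(s^{i-1}_n))\in L^2(\Om;\R^d)$. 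Lax--Milgram then provides a unique $w^i_n$; defining $s^i_n,u^i_n$ by the two algebraic relations gives a triple in $\calZ$ solving \eqref{eq:linearization_gen}, and conversely any solution has its third component solving the same elliptic problem, which gives uniqueness.

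\emph{Plan for the consistency part.} Since $\calZ$ is complete, the Cauchy sequence $\{(s^i_n,u^i_n,w^i_n)\}_i$ converges in $\calZ$ to some $(s^*,u^*,w^*)\in\calZ$; the reindexed sequence $\{s^{i-1}_n\}_i$ converges to the same $s^*$ in $L^2(\Om)$, and $\{s^i_n-s^{i-1}_n\}_i\to 0$ in $L^2(\Om)$. Lipschitz continuity of $b,B$ gives $b(s^{i-1}_n)\to b(s^*)$ and $B(s^{i-1}_n)\to B(s^*)$ in $L^2(\Om)$, and, exactly as above, Assumption \ref{A.2} combined with Cauchy--Schwarz yields $\|\bm F(b(s^{i-1}_n))-\bm F(b(s^*))\|^2\le L_F\|s^{i-1}_n-s^*\|^2\to 0$; moreover $\|L^i_{b,n}(s^i_n-s^{i-1}_n)\|\le C\|s^i_n-s^{i-1}_n\|\to 0$ and likewise for $L^i_{B,n}$, using the uniform bounds on the factors. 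Passing to the limit $i\to\infty$ term by term in \eqref{eq:linearization_gen.a}--\eqref{eq:linearization_gen.c} (each term is linear or bilinear in the convergent quantities) shows that $(s^*,u^*,w^*)$ satisfies \eqref{eq:L1}, i.e.\ it solves Problem \ref{prob:time-disc}; by the uniqueness statement in \Cref{prop:well-posedness} it equals $(s_n,u_n,w_n)$.

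\emph{Anticipated main obstacle.} The only genuinely non-routine step is the passage to the limit (and the $L^2$-bound) for the advection term: continuity of $\bm F\circ b:L^2(\Om)\to L^2(\Om;\R^d)$ is not a consequence of the Lipschitz bounds on $b,B$ alone and must be extracted from the structural growth condition in Assumption \ref{A.2} via the identity $B=\Phi\circ b$ and a Cauchy--Schwarz estimate on the product of the two small factors. Everything else — the elimination of $u^i_n$ and of the increment $s^i_n-s^{i-1}_n$ (which crucially needs $L^i_{B,n}$ bounded away from $0$), the coercivity/boundedness bookkeeping for Lax--Milgram, and the term-by-term limit — is standard. A minor point to record is that the consistency conclusion presupposes the unique solvability of Problem \ref{prob:time-disc}, which holds under the standing restriction $\t\in(0,L_F^{-1})$ of \Cref{prop:well-posedness}; absent that, the limit is still \emph{a} solution of Problem \ref{prob:time-disc}.
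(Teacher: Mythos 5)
Your proof is correct and follows essentially the same route as the paper: use the uniform positive upper and lower bounds on the $L$-factors to eliminate the algebraic unknowns, solve the resulting linear elliptic problem by Lax--Milgram (the paper does this via \Cref{lemma:equivalence}, keeping the pair $(s,w)\in L^2(\Om)\times H^1_0(\Om)$, while you reduce all the way to a scalar problem in $w^i_n$), and then pass to the limit using the completeness of $\calZ$ and the Lipschitz continuity of $b$ and $B$. Your explicit verification that $\bm{F}\circ b$ is $L^2$-continuous via \ref{A.2}, and the remark that identifying the limit with $(s_n,u_n,w_n)$ invokes the uniqueness of \Cref{prop:well-posedness} (hence $\t\in(0,L_F^{-1})$), are details the paper leaves implicit but do not change the argument.
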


The well-posedness follows from \Cref{lemma:equivalence} in \Cref{sec:Adaptive} below. Specifically, \eqref{eq:linearization_gen} involves a bilinear functional on $\mathcal{Z} \times \mathcal{Z}$ and a linear functional on $\mathcal{Z}$. If $L^i_{b,n}$ and $L^i_{B,n}$ are bounded as above, these are bounded and the former is also elliptic. By the strong convergence of Cauchy sequences in $\mathcal{Z}$ and the Lipschitz continuity of $b$ and $B$, the limit $i \to\infty$ of the solution to Problem \ref{prob:3} solves Problem \ref{prob:time-disc}.


\subsection{Commonly used linearization schemes}\label{sec:commonLin}
This work will focus on three different linearization schemes: Newton, L, and  M. These schemes can be written in a unified framework, for both no-splitting and double-splitting approaches. For the former, the choice of $L^i_{\beta,n}$ in \eqref{eq:L} is discussed in Section \ref{sec:direct}. For the latter, the choice of $L^i_{b,n}$ and $L^i_{b,n}$ in \eqref{eq:3} is presented in \Cref{tab:1}.


\begin{table}[h]
  \begin{center}
    \begin{tabular}{|l|c|c|}
      \hline
      \textbf{Scheme} & \textbf{$L^i_{b,n}$} & \textbf{$L^i_{B,n}$} \\
      \hline\hline
      Newton & $b'(s_n^{i-1})$ & $B'(s_n^{i-1})$ \\
      \hline
      L-scheme & $1 + \epsilon \geq \sup b'$ & $1 + \epsilon \geq \sup B'$ \\
      \hline
      M-scheme &
      \begin{tabular}{@{}l@{}}
        $\min\big( \max( b'(s_n^{i-1}) + M\tau,$\\
        \quad $2M\tau ),\ 1 + \epsilon \big)$
      \end{tabular} &
      \begin{tabular}{@{}l@{}}
        $\min\big( \max( B'(s_n^{i-1}) + M\tau,$\\
        \quad $2M\tau ),\ 1 + \epsilon \big)$
      \end{tabular} \\
      \hline
    \end{tabular}
  \end{center}
  \caption{Choices of $L^i_{b,n}$ and $L^i_{B,n}$ in the double-splitting formulation \eqref{eq:3}, leading to different linearization schemes. Here, $\epsilon > 0$ is an arbitrarily small constant.}
  \label{tab:1}
\end{table}

The parameter $\epsilon > 0$ appearing in  \Cref{tab:1} can be chosen freely. The parameter $M > 0$ is subject to restrictions depending on the nonlinear functions $b$ and $B$, see \cite{mitra2019modified} and \Cref{M_ineq_LB} below. Observe that, in the case of the Newton and the M-scheme, the factors $L^i_{b,n}$ and $L^i_{B,n}$ depend on the previous iteration $s_n^{i-1}$ and therefore they are changing spatially and with iteration. For the L-scheme instead, the factors are constant. The value $L^i_{b,n}=L^i_{B,n}=1 + \epsilon$ is due to the fact that, as stated in \eqref{eq:condition}, $b^\prime$ and $B^\prime$ are bounded by $1$. Also, note that the M-scheme is conceptualized as the combination of the NS and the LS. Taking $M > \frac 2 \tau {(1+\epsilon)}$ yields precisely the L-scheme. On the other hand, by choosing $M=0$ one obtains the Newton scheme.

\begin{remark}[Relation between the no-splitting/ double-splitting  formulations]
The no-splitting formulation \eqref{eq:L} can be thought of as special case of the double-splitting formulation corresponding to cases when either $B$ or $b$ is the identity function. In this case, inserting $L^i_{B,n}=1$ or $L^i_{b,n}=1$ respectively, one obtains either $w^i_n=s^i_n$ or $u^i_n=s^i_n$ from Problem \ref{prob:3} which converts them to the no-splitting approach.
\end{remark}



\section{Convergence of the double-splitting schemes}\label{section3}
This section contains the rigorous proof for the convergence of the L-scheme and the M-scheme. The main results are summarized in the following two theorems.
\begin{theorem}[Convergence of the L-scheme] \label{theo:LS}
Let $(s_n,u_n,w_n)\in \calZ$ be a weak solution to  Problem \ref{prob:time-disc}, and $\{(s_n^i, u_n^i, w_n^i)\}_{i\in \N}\subset \calZ$ the array of solutions to Problem \ref{prob:3},  with the choice $L^i_{b,n}=L^i_{B,n}=1+\epsilon$ for a given $\epsilon\in (0, 1-\t L_F)$ (see \Cref{tab:1}). 
Under the assumptions \ref{ass:A1a}, \ref{ass:A1b} and \ref{ass:u0}, for $\tau \in(0,L_F^{-1})$ one has
\begin{align}\label{eq:conv-LS}
\|s_n^i - s_n\|_{L^1(\Omega)} + \|u_n^i - u_n\|_{L^1(\Omega)} + \|w_n^i - w_n\|_{H^1(\Om)} &\to 0 \text{ as } i \to \infty.
\end{align}
Moreover, in the  \underline{single degenerate case} when $\ell_B := \inf B' > 0$, there exists constants $\theta,\vartheta>0$ not depending ofn $\ell_B$ or $\t$ such that
\begin{align}\label{eq:linear-conv-LS}
\|s_n^i - s_n\|^2 + \t\vartheta\|\nabla(w_n^i - w_n)\|^2 +\epsilon\vartheta\|(s_n^i - s_n^{i-1})\|^2\leq (1-\t\ell_B^2 \theta)\|s_{n}^{i-1} - s_n\|^2.
\end{align} 
\end{theorem}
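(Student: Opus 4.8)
The plan is to test the error equations with suitable test functions and derive a recursive inequality. Writing $e_s^i := s_n^i - s_n$, $e_u^i := u_n^i - u_n$, $e_w^i := w_n^i - w_n$, subtracting Problem \ref{prob:time-disc} from Problem \ref{prob:3} gives the error system
\begin{subequations}
\begin{align}
\left(\tfrac{1}{\tau}e_u^i, \f\right) + (\nabla e_w^i, \nabla \f) &= (\bm{F}(b(s_n^{i-1})) - \bm{F}(b(s_n)), \nabla\f), \\
(e_u^i, \phi) &= (b(s_n^{i-1}) - b(s_n), \phi) + ((1+\epsilon)(e_s^i - e_s^{i-1}), \phi), \\
(e_w^i, \psi) &= (B(s_n^{i-1}) - B(s_n), \psi) + ((1+\epsilon)(e_s^i - e_s^{i-1}), \psi),
\end{align}
\end{subequations}
using $L^i_{b,n}=L^i_{B,n}=1+\epsilon$ and that $s_n^i - s_n^{i-1} = e_s^i - e_s^{i-1}$. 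First I would choose $\f = \tau e_w^i$ in the first equation, and then eliminate $e_u^i$ and $e_w^i$ via the algebraic relations, so that everything is expressed in terms of $e_s^i$, $e_s^{i-1}$ and the differences of the nonlinearities evaluated at $s_n^{i-1}$ and $s_n$. The key structural point, exactly as in \cite{mitra2019modified}, is that $b$ and $B$ together ``see'' all of $s$: writing $\delta b := b(s_n^{i-1})-b(s_n)$ and $\delta B := B(s_n^{i-1})-B(s_n)$, the condition $b'+B'\geq 1$ from \eqref{eq:condition} yields a lower bound of the form $\delta b\,(e_s^{i-1}) + \delta B\,(e_s^{i-1}) \geq$ (something controlling $(e_s^{i-1})^2$ up to the monotone contributions), while $0\le b',B'\le 1$ gives $|\delta b|\le |e_s^{i-1}|$ and $|\delta B|\le|e_s^{i-1}|$ and also $\delta b\cdot e_s^{i-1}\ge (\delta b)^2$, $\delta B\cdot e_s^{i-1}\ge(\delta B)^2$ (monotonicity with slope $\le 1$).

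The second step is to combine the algebraic identities to get the $s$-only inequality. Adding the $\phi$- and $\psi$-tested identities appropriately, and using the telescoping identity \eqref{eq:equality} on the $(1+\epsilon)(e_s^i-e_s^{i-1})$ terms against $e_s^i$, produces the $\|e_s^i\|^2 - \|e_s^{i-1}\|^2 + \epsilon\|e_s^i-e_s^{i-1}\|^2$ combination on the left. The diffusion term contributes $\tau\|\nabla e_w^i\|^2$ after using \eqref{eq:equality} again, and the advective term is absorbed using Assumption \ref{A.2}: $|\bm{F}(b(s_n^{i-1}))-\bm{F}(b(s_n))|^2 \le L_F |e_u^{\text{proj}}|\,|\delta B|$-type bound — more precisely $|\bm{F}(b(s_n^{i-1}))-\bm{F}(b(s_n))|^2\le L_F\,|\delta b|\,|\delta B|\le L_F |e_s^{i-1}|^2$ — combined with a Young's inequality \eqref{young} against $\|\nabla e_w^i\|$, which is why the restriction $\tau < L_F^{-1}$ (equivalently $\epsilon < 1-\tau L_F$) appears. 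For the qualitative convergence \eqref{eq:conv-LS} I would argue that the recursion forces $\|e_s^i - e_s^{i-1}\|\to 0$, $\|\nabla e_w^i\|\to 0$, and the monotone defect terms $\to 0$, then use a monotonicity/Minty-type argument together with the uniqueness from \Cref{prop:well-posedness} and the $L^1$-type estimate (taking $\phi = \mathrm{sign}$-like test functions, or rather passing through the a.e.-convergent subsequence) to upgrade to the stated $L^1\times L^1\times H^1$ convergence. The $L^1$ norms on $s$ and $u$ (rather than $L^2$) presumably come from controlling $\|e_u^i\|_{L^1}$ via $|\delta b| + (1+\epsilon)|e_s^i - e_s^{i-1}|$ and the fact that we only control $e_s$ in $L^2$ minus the monotone part.

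For the linear rate \eqref{eq:linear-conv-LS} under $\ell_B := \inf B' > 0$, the point is that now $\delta B\cdot e_s^{i-1} \ge \ell_B\, |e_s^{i-1}|\,|\delta B| \ge \ell_B^2 |e_s^{i-1}|^2$ is no longer the right bottleneck; instead the monotone term gives $\delta B \cdot e_s^{i-1} \ge \ell_B (e_s^{i-1})^2$ directly, which after being carried through the elimination produces a genuine $-\tau \ell_B^2\theta \|e_s^{i-1}\|^2$ deficit on the right-hand side (the extra $\ell_B$ and the $\tau$ enter through the $\frac1\tau e_u^i$ scaling and the $\nabla e_w^i$ absorption). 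The main obstacle I anticipate is bookkeeping the constants so that $\theta, \vartheta$ come out independent of both $\ell_B$ and $\tau$: one has to be careful that the Young's-inequality splittings used to absorb the advection term and to convert $e_w^i$-contributions into $e_s^i$-contributions do not secretly consume a $\tau$ or an $\ell_B$ factor, and that the coefficient $1-\tau\ell_B^2\theta$ stays below $1$ only using the assumed $\tau$-range — this is the delicate part and follows the template of \cite[Theorem~]{mitra2019modified} and \cite{smeets2024robust}, adapted to the double-splitting algebraic pair \eqref{eq:linearization_gen.b}--\eqref{eq:linearization_gen.c}.
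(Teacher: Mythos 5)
Your overall strategy is essentially the paper's: subtract the two weak formulations, test the elliptic equation with $e_w^i$, eliminate $e_u^i,e_w^i$ through the algebraic relations \eqref{eq:linearization_gen.b}--\eqref{eq:linearization_gen.c}, exploit $0\le b',B'\le1$ and $b'+B'\ge1$ from \eqref{eq:condition} (the paper packages this as the coefficient functions $G_1^i,G_2^i,G_3^i$ in \eqref{eq:G-coeff} and the criterion of \Cref{lemma:criteria}), telescope in $i$, and, for the single-degenerate rate, gain the $\tau\ell_B^2$ factor by trading the spare $\tfrac{\tau}{2}\|\nabla e_w^i\|^2$ through the Poincar\'e inequality after testing \eqref{eq:b1} with $e_s^i$; this is also how $\theta,\vartheta$ end up depending only on $C_\Om h_\Om$ and $T$. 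So the skeleton is right, but one step, as you wrote it, would fail.

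The problematic step is the relaxation of the advection estimate to $|\bm F(b(s_n^{i-1}))-\bm F(b(s_n))|^2\le L_F|e_s^{i-1}|^2$. In the doubly degenerate case the whole argument hinges on the fact that the loss caused by advection is pointwise proportional to the product $b[s_n^{i-1},s_n]\,B[s_n^{i-1},s_n]$, because the only deficit of $G_3^i$ relative to $G_1^i$ is the cross term $-(2-\tau L_F)\,b[s_n^{i-1},s_n]B[s_n^{i-1},s_n]$ generated by $(e_u^i,e_w^i)$; this is exactly why the paper keeps $\tau L_F\, b[\cdot,\cdot]B[\cdot,\cdot]\,|e_s^{i-1}|^2$ in \eqref{eq:F-manipulation}, so that $G_3^i\le 1+\epsilon\le G_1^i$ follows from $\epsilon\le 1-\tau L_F$ in \eqref{eq:G3i_LS}. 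If instead you bound the advective contribution by $\tau L_F|e_s^{i-1}|^2$, then at points where one of $b[\cdot,\cdot],\,B[\cdot,\cdot]$ vanishes (precisely the degeneracies the theorem addresses, where $b[\cdot,\cdot]+B[\cdot,\cdot]$ can equal $1$) the right-hand coefficient becomes $1+\epsilon+\tau L_F$ while the left-hand coefficient is only $1+\epsilon$; no uniform constant $C$ with $G_1^i\ge C\ge G_3^i$ exists, the telescoping of \Cref{lemma:criteria} breaks down, and the per-iteration inequality is no longer a contraction. Keep the sharp form $L_F|\,b(s_n^{i-1})-b(s_n)|\,|B(s_n^{i-1})-B(s_n)|$ throughout. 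A secondary, smaller point: your $L^1$-upgrade is under-specified. The paper's route is $\|\nabla e_w^i\|\to0\Rightarrow w_n^i\to w_n$ in $H^1_0(\Om)$ by Poincar\'e, then $B(s_n^{i-1})\to B(s_n)$ in $L^2(\Om)$ via \eqref{eq:3c} and $\|e_s^i-e_s^{i-1}\|\to0$, and finally Lemma 3.10 of \cite{smeets2024robust} (using $B'(0)>0$ or local convexity of $B$ at $0$, cf.\ \Cref{rem:Aprop}) to pass from convergence of $B(s_n^i)$ to $L^1$-convergence of $s_n^i$; a Minty-type identification alone yields only weak convergence of $s_n^i$, so you need strict monotonicity of $B$ plus an a.e./Vitali argument, or the cited lemma, to reach \eqref{eq:conv-LS}.
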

\begin{remark}[Linear convergence of the L-scheme]\label{remark:LS}
In Section \ref{sec:proofLscheme} we show that $\ell_B = \inf B^\prime$, while $\theta$ does not depend on $b$ or $B$. Therefore, if $B^\prime$ is bounded away from 0, \eqref{eq:linear-conv-LS} implies that the L-scheme converges linearly, with a contraction rate $\a=(1-\t \ell_B^2 \theta)^\frac{1}{2} $. 
This is similar to the convergence results in \cite{pop2004mixed, list2016study, RaduIMA}, obtained for the no-splitting scheme \eqref{eq:L}, first for a situation in which $B$ is linear, but $b^\prime$ non-negative but bounded, and then for the case that $b$ is H{\"o}lder continuous. 
For the double-splitting scheme studied here, we have slightly extended the convergence result to the case when $0 \leq b^\prime \leq 1$, but $B$ is s.t. an $\ell_B > 0$ exists so that $\ell_B\leq B'\leq 1$. 
Despite unconditional convergence, as reported in \cite{list2016study,seu2018linear,mitra2019modified,smeets2024robust} the L-scheme has one major drawback. If either $\ell_B$ or the time-step size $\t$ is small, the contraction rate $\a$ approaches 1, which slows the convergence. 
\end{remark}
\begin{theorem}[Convergence of the M-scheme]\label{theo:MS}
Let $(s_n,u_n,w_n)\in \calZ$ be a weak solution to  Problem \ref{prob:time-disc}, and $\{(s_n^i, u_n^i, w_n^i)\}_{i\in \N}\subset \calZ$ the array of solutions to Problem \ref{prob:3},  with $L^i_{b,n},\,L^i_{B,n}$ chosen as for the M-scheme  in \Cref{tab:1}. 
Assume that $\Lambda>0$ exists such that for all $i\in \N$,
\begin{align}\label{boundedness}
   \|s_n^i - s_n\|_{L^\infty(\Omega)} \leq \Lambda \t.  
\end{align}
Let $M_0 := \Lambda \max\{\snorm{b'}_{\rm Lip},\snorm{B'}_{\rm Lip}\} > 0$. If $M > M_0+ L_F$, $0<\t<\min(1/(M+M_0), L_F^{-1})$, and $0<\epsilon<(M-M_0-L_F)\tau$, then 
under the assumptions \ref{ass:A1a}, \ref{ass:A1b}, \ref{ass:u0} and \ref{ass:Bphi}, one has
\begin{align}\label{eq:conv-MS}
    \|s_n^i - s_n\|_{L^1(\Omega)} + \|u_n^i - u_n\|_{L^1(\Omega)} + \|w_n^i - w_n\|_{H^1(\Om)} \to 0 \text{ as } i \to \infty.
\end{align}
Moreover, in the \underline{single-degenerate case} when $\ell_B := \inf B' > 0$, there exists $\Theta,\vr>0$, such that
\begin{align}\label{eq:linear-conv-MS}
\hspace{-0.5em}\|s_n^i - s_n\|^2 + \vr\|\nabla(w_n^i - w_n)\|^2 +4M\epsilon\vr\|s_n^i - s_n^{i-1}\|^2\leq (1-\ell_B^2 \Theta)\|s_{n}^{i-1} - s_n\|^2.
\end{align} 
Additionally, in the \underline{non-degenerate case} when $\ell := \min\{\inf b', \inf B'\} > 0$, then
\begin{align}\label{eq:lin-conv-MS}
\ell \|s_n^i - s_n\|^2 + \frac{\t}{2}\|\nabla(w^i_n - w_n)\|^2 +\epsilon M\tau\|s_n^i - s_n^{i-1}\|^2\leq M\t\|s_{n}^{i-1} - s_n\|^2.
\end{align}
\end{theorem}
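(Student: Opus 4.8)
\textbf{Proof strategy for Theorem~\ref{theo:MS}.} The plan is to derive an energy inequality by testing the error equations in Problem~\ref{prob:3} against suitable functions, exactly as in the L-scheme proof, but now tracking the iteration-dependent coefficients $L^i_{b,n}$ and $L^i_{B,n}$ and exploiting the lower bound $L^i_{b,n},L^i_{B,n}\geq 2M\t$ (from Table~\ref{tab:1}) together with the key upper bounds
\begin{align*}
    L^i_{b,n}(s^i_n - s) - (b(s^i_n) - b(s)) &\geq -\big(M\t + \tfrac12\snorm{b'}_{\rm Lip}|s^{i-1}_n - s|\big)|s^i_n - s|,\\
    L^i_{B,n}(s^i_n - s) - (B(s^i_n) - B(s)) &\geq -\big(M\t + \tfrac12\snorm{B'}_{\rm Lip}|s^{i-1}_n - s|\big)|s^i_n - s|,
\end{align*}
which follow from a Taylor expansion of $b,B$ (using \ref{ass:Bphi} for the $C^{1,1}$ regularity) around $s^{i-1}_n$ and the definition of the M-scheme coefficients. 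Subtracting the time-discrete weak form (Problem~\ref{prob:time-disc}) from the linearized one, I would set $e^i_s := s^i_n - s_n$, $e^i_u := u^i_n - u_n$, $e^i_w := w^i_n - w_n$, choose $\varphi = \tau e^i_w$ in \eqref{eq:linearization_gen.a}, and combine with \eqref{eq:linearization_gen.b}--\eqref{eq:linearization_gen.c} tested against $e^i_w$ and $e^i_u$ respectively, so that the $\Delta w$ term produces $\tau\|\nabla e^i_w\|^2$ and the advection term is controlled by assumption~\ref{A.2} and Young's inequality \eqref{young} against $\tfrac{\t}{2}\|\nabla e^i_w\|^2$ and a term $\lesssim L_F\t|e^i_s||e^i_w|$ absorbed using $e^i_u - b(s^{i-1}_n)$-type identities.

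The crucial structural step is to handle the algebraic equations: from \eqref{eq:linearization_gen.b}, $e^i_u = L^i_{b,n}(e^i_s - e^{i-1}_s + (s_n - s^{i-1}_n))\cdots$ — more precisely one rewrites $u^i_n - u_{n-1}$ and uses $u_n = b(s_n)$, $w_n = B(s_n)$ to express $e^i_u = L^i_{b,n}(s^i_n - s^{i-1}_n) + b(s^{i-1}_n) - b(s_n)$ and similarly for $e^i_w$. Substituting the identity \eqref{eq:equality} in the form $(s^i_n - s^{i-1}_n)(s^i_n - s_n) = \tfrac12(|e^i_s|^2 - |e^{i-1}_s|^2 + |s^i_n - s^{i-1}_n|^2)$ generates the telescoping structure, the $+\,4M\epsilon\vr\|s^i_n - s^{i-1}_n\|^2$ stabilization term, and the contraction factor. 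The terms $b(s^{i-1}_n) - b(s_n)$ and $B(s^{i-1}_n) - B(s_n)$ are where the Taylor bounds above enter: they contribute $\tfrac12\snorm{b'}_{\rm Lip}|e^{i-1}_s|^2$-type corrections, and this is precisely where the hypothesis \eqref{boundedness} is indispensable — it lets me bound $\snorm{b'}_{\rm Lip}|e^{i-1}_s| \leq \snorm{b'}_{\rm Lip}\Lambda\t = M_0\t$ pointwise, so the troublesome quadratic-in-error terms are majorized by $M_0\t|e^i_s|^2$ rather than something uncontrolled. Collecting everything yields
\begin{equation*}
    \big(\ell_B^2\theta' + (M - M_0 - L_F)\t - \tfrac{\epsilon}{\t}\cdot(\cdots)\big)\|e^i_s\|^2 + \tfrac{\t}{2}\|\nabla e^i_w\|^2 + (\text{stab.}) \leq M\t\|e^{i-1}_s\|^2 + (\text{lower-order}),
\end{equation*}
and the sign conditions $M > M_0 + L_F$, $\t < 1/(M+M_0)$, $\epsilon < (M - M_0 - L_F)\t$ are exactly what makes all coefficients on the left positive; dividing through gives \eqref{eq:linear-conv-MS}, \eqref{eq:lin-conv-MS}, and the non-degenerate case replaces $\ell_B^2\theta'$ by $\ell\|e^i_s\|^2$ on the left directly since $B'\geq\ell$ makes the $B$-Taylor remainder controllable without a Poincaré-type estimate on $e^i_s$.

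For the plain $L^1$-convergence \eqref{eq:conv-MS} without a spectral gap, I would argue as in Theorem~\ref{theo:LS}: the energy inequality (before using $\ell_B>0$) shows $\sum_i \|s^i_n - s^{i-1}_n\|^2 < \infty$ and $\sum_i \t\|\nabla e^i_w\|^2$ is summable-against-a-decreasing-quantity, hence $\{(s^i_n,u^i_n,w^i_n)\}$ is Cauchy in $\mathcal{Z}$ (using $u^i_n - u^{i-1}_n$, $w^i_n - w^{i-1}_n$ controlled by $L^i_{\cdot,n}(s^i_n - s^{i-1}_n)$ plus Lipschitz increments of $b,B$), and by the consistency part of Proposition~\ref{prob:3}'s well-posedness statement the limit solves Problem~\ref{prob:time-disc}, which is unique, so the whole sequence converges; the $L^1$ rather than $L^2$ norm on $s$ and $u$ appears because passing from the $H^1$-convergence of $w^i_n$ back to $s^i_n$ uses monotonicity of $B$ and an $L^1$-contraction-type estimate (as $B'$ may vanish, no $L^2$ control on $e^i_s$ is available in the doubly-degenerate case). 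The main obstacle I anticipate is bookkeeping the competition between the destabilizing Taylor-remainder terms ($\propto M_0\t$), the advection terms ($\propto L_F\t$), and the $\epsilon$-perturbation from capping $L^i_{\cdot,n}$ at $1+\epsilon$, and verifying that the stated three-way smallness conditions on $(M,\t,\epsilon)$ simultaneously keep every coefficient of the quadratic form on the left-hand side nonnegative — in particular that capping at $1+\epsilon$ (which can only happen when $b'$ or $B'$ is already near $1$, i.e.\ in the benign regime) does not destroy the lower bound $L^i_{\cdot,n}\geq 2M\t$ needed for the stabilization term, which forces the constraint $2M\t \leq 1+\epsilon$ implicitly and must be checked against $\t < 1/(M+M_0)$.
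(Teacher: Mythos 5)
Your overall strategy (subtract the weak forms, test with $e^i_w$, rewrite $e^i_u$ and $e^i_w$ through the algebraic relations, use \eqref{eq:equality}, absorb the advection term via \ref{A.2} and Young, and use \eqref{boundedness} to compare $L^i_{b,n},L^i_{B,n}$ with the difference quotients up to $M_0\tau$) is indeed the paper's route: your ``key upper bounds'' are essentially \Cref{M_ineq_LB}. But the decisive step is hidden in your phrase ``collecting everything yields'', and it does not follow from the bookkeeping you describe. After testing, one arrives at \eqref{eq:main-ineq} with the coefficients $G_1^i,G_2^i,G_3^i$ of \eqref{eq:G-coeff}, and the whole difficulty is to show that a single constant separates them, namely $G_1^i\geq 2M\t\geq G_3^i$ and $G_2^i\geq \epsilon M\t$ (\Cref{3.7}), so that \Cref{lemma:criteria} applies. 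Your Taylor/Lipschitz bounds alone give only $G_3^i\leq 2(M+M_0)\t\,(b[s_n^{i-1},s_n]+B[s_n^{i-1},s_n])+\t L_F$, which can exceed the guaranteed lower bound $2M\t$ of $G_1^i$ in the degenerate region; the cross term $-(2-\t L_F)\,b[s_n^{i-1},s_n]B[s_n^{i-1},s_n]$ must cancel the contribution of the non-degenerate factor, and this works in the paper only through a case analysis relative to the threshold $u^*$ of \ref{ass:Bphi}: the explicit decomposition of \Cref{lemma:decomspoition} gives $b'\equiv 1$ on one side of $u^*$ and $B'\equiv 1$ on the other, so that the corresponding $L$-factor is capped at $1+\epsilon$ and the cancellation is exact, while the hypothesis $\t<1/(M+M_0)$ enters precisely in the straddling case $s_n^{i-1}<u^*<s_n$ to keep $G_1^i\geq 2M\t$. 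You invoke \ref{ass:Bphi} only for $C^{1,1}$ regularity and never use this structural dichotomy, so the core of the proof is missing; your worry about the cap at $1+\epsilon$ is real but is resolved exactly by this case analysis, not by an implicit constraint $2M\t\leq 1+\epsilon$.

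The convergence argument \eqref{eq:conv-MS} is also flawed as stated: square-summability of $\|s^i_n-s^{i-1}_n\|^2$ does not make $(s^i_n)_i$ Cauchy in $L^2(\Om)$ (increments being square-summable is compatible with divergence), and in the doubly degenerate case no $L^2$ control on $e^i_s$ is available at all, so the route ``Cauchy in $\mathcal{Z}$ plus consistency of Problem \ref{prob:3}'' cannot work. The paper instead telescopes the energy inequality to get $\|\del e^i_w\|\to 0$ and $\|e^i_s-e^{i-1}_s\|\to 0$, which identifies the limit of $w^i_n$ as $w_n$ directly (Poincar\'e) and gives $B(s^i_n)\to B(s_n)$ in $L^2(\Om)$; the passage to $\|s^i_n-s_n\|_{L^1(\Om)}\to 0$ then relies on a separate result (Lemma 3.10 of \cite{smeets2024robust}), which uses that either $B'(0)>0$ or $B$ is convex near $0$ (\Cref{rem:Aprop}) — this is the ingredient your ``monotonicity plus $L^1$-contraction-type estimate'' gestures at but does not supply. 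The single-degenerate estimate \eqref{eq:linear-conv-MS} additionally requires testing \eqref{eq:b1} with $e^i_s$ and a Poincar\'e step before recombining with \eqref{eq:main-ineq}, as in the L-scheme proof; your sketch is compatible with this but does not carry it out.
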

\begin{remark}[Linear convergence of the M-scheme]\label{remark:MS}
As in \Cref{remark:LS}, if $\inf B^\prime =\ell_B>0$ (thus the problem is at most single degenerate), the M-scheme converges linearly independent of $\t$, with a contraction rate $\alpha=(1-\ell_B^2\Theta)^{\frac{1}{2}}$. 
In the non-degenerate case when $\ell>0$, for $\tau\leq \ell/M$ and using \eqref{eq:lin-conv-MS} one obtains that the M-scheme converges linearly with the  contraction rate $\a=\sqrt{\frac{M\t}{\ell}}$. In fact, one gets $\a= \min\left\{\sqrt{\frac{M\t}{\ell}}, \sqrt{(1-\ell_B^2\Theta)}\right\}$. 
This improves the convergence speed of the M-scheme when compared to the L-scheme, as, the contraction rate reduces for small values of $\t$.
\end{remark}  

\begin{remark}[Boundedness assumption \eqref{boundedness}]
The $L^\infty(\Om)$ boundedness assumed in  \eqref{boundedness} was used in \cite{mitra2019modified} to prove the convergence of the scheme for the no-splitting case, and in \cite{smeets2024robust} for the case that resembles the single splitting in \cite{cances2021error}. This assumption is motivated by the choice $s^0_n=s_{n-1}$. In a more general setting, if the solution is H{\"older}-continuous in time with the exponent $\mu\in (0,1)$, one needs the existence of a $\Lambda>0$ such that
\begin{align}\label{eq:ini-guess}
    \|s_n-s_{n}^0\|_{L^\infty(\Om)}= \|s_n-s_{n-1}\|_{L^\infty(\Om)}\leq \Lambda \t^\mu. 
\end{align}
In particular, $\mu=1$ was used in \cite{mitra2019modified}, and $\mu<1$ in \cite{smeets2024robust}. Furthermore, as follows from  Lemmata 3.1 and 4.1 in \cite{mitra2019modified}, and Proposition 4.9 in \cite{smeets2024robust}, \eqref{eq:ini-guess} implies \eqref{boundedness} for either the no-splitting M-scheme, or the single-splitting variant. Since the proof relies on elaborate arguments, for conciseness we take here \eqref{boundedness} as an assumption.
\end{remark}
\subsection{A generic convergence criterion}\label{sec:conv}
Before giving the proofs for  \Cref{theo:LS,theo:MS},  we derive a sufficient criterion for the convergence of any linearization scheme having the form given in Problem \ref{prob:3}, with $L^i_{b\slash B,n}$ bounded and strictly positive. To this end, we assume $n \in \{1, \dots, N\}$ fixed and for any $i \in \N$ we let $e_{\zeta}^{i}$ denote the errors of the $i^{\rm th}$ iterate in $\zeta \in \{s, u, w\}$, at the $n^{\rm th}$ time-step. Further, $e^i_b$, $e^i_B$ denote the errors involving the $b$, $B$ functions, namely  
\begin{subequations}\label{eq:def_errors}
\begin{align}
&e_{s}^{i} = s_{n}^{i} - s_{n}\label{eq:def_errors_a},\quad  e_{u}^{i} = u_{n}^{i} - u_{n},\quad  e_{w}^{i} = w_{n}^{i} - w_{n},\\
&e^i_b:=b(s^i_n)-b(s_n),\quad e^i_B:=B(s^i_n)-B(s_n)\label{eq:def_errors_b}.
\end{align}
Moreover, for $\rho \in \{b,B\}$, $\rho[\cdot,\cdot]: \R^2 \to \R$ denotes the difference quotient
\begin{align}\label{eq:def_errors_c}
  \rho[t,v]=\begin{cases}
      \frac{\rho(t)-\rho(v)}{t-v} &\text{ if } t\not =v\\
      \rho'(t) &\text{ if } t=v . 
  \end{cases} 
\end{align}
\end{subequations}
Obviously, by Assumption \ref{ass:A1a} one has $
\rho[t,v] \in [0,1]$, while $e_\rho^i=\rho[s^i_n,s_n]\,e^i_s.    
$

Subtracting $\eqref{eq:L1a}$ from $\eqref{eq:linearization_gen.a}$ and rearranging the terms yields
\begin{align}
(e_u^{i},\varphi)+\t (\del e_w^{i},  \del \varphi)=\t \left( \bm{F}(b(s_n^{i-1}))-\bm{F}(b(s_n)), \del \varphi\right).
\end{align}
Inserting the test function $\varphi=e_w^i\in H^1_0(\Om)$ one has
\begin{align}\label{eq:sort}
(e_u^{i},e_w^{i})+\t \|\del e_w^{i}\|^2=\t \left( \bm{F}(b(s_n^{i-1}))-\bm{F}(b(s_n)), \del e^i_w\right).
\end{align}
First let us try to estimate the $(e_u^{i},e_w^{i})$ term above. 
Observe that from $\eqref{eq:linearization_gen}$, using the shorthand notations in \eqref{eq:def_errors}, one has a.e. in $\Om$ that
\begin{subequations}
\begin{align}
    &e_u^{i}=(b(s_n^{i-1})-b(s_n))+L^{i}_{b,n}(s^{i}_n-s^{i-1}_n)\overset{\eqref{eq:def_errors_a},\eqref{eq:def_errors_b}}{=}e^{i-1}_b + L^{i}_{b,n}(e_s^{i}-e_s^{i-1}), \label{eq:b0}\\
    &e_w^{i}=(B(s_n^{i-1})-B(s_n))+L^{i}_{B,n}(s^{i}_n-s^{i-1}_n)\overset{\eqref{eq:def_errors_a},\eqref{eq:def_errors_b}}{=}e^{i-1}_B + L^{i}_{B,n}(e_s^{i}-e_s^{i-1}). \label{eq:b1}
\end{align}
\end{subequations}
Integrating the product of the above over  $\Om$,  one obtains
\begin{align*}
 (e_u^i,e_w^i)&=\int_\Om \left(e^{i-1}_bL^{i}_{B,n}+L^{i}_{b,n} e^{i-1}_B\right)(e_s^{i}-e_s^{i-1})  \nonumber +\int_\Om \left(e^{i-1}_b e^{i-1}_B\right)+ \int_\Om \left(L^{i}_{B,n} L^{i}_{b,n}\right) (e_s^{i}-e_s^{i-1})^2\nonumber\\
&\overset{\eqref{eq:def_errors_c}}=\int_\Om \bigg(b[s_n^{i-1},s_n]\, L^{i}_{B,n}+L^{i}_{b,n}\, B[s_n^{i-1},s_n]\bigg)(e_s^{i}-e_s^{i-1})(e_s^{i-1}) \nonumber \\
&+\int_\Om \bigg(b[s_n^{i-1},s_n]\,B[s_n^{i-1},s_n] \bigg) (e_s^{i-1})^2+ \int_\Om \left(L^{i}_{B,n} L^{i}_{b,n}\right) (e_s^{i}-e_s^{i-1})^2.
\end{align*}
Applying \eqref{eq:equality} in the first term on the right
\begin{align}\label{eq:big}
(e_u^i,e_w^i)&=\frac{1}{2}\int_\Om \bigg(b[s_n^{i-1},s_n]L^{i}_{B,n}+L^{i}_{b,n} B[s_n^{i-1},s_n]\bigg)|e_s^{i}|^2 \nonumber\\
&-\frac{1}{2} \int_\Om  \bigg(b[s_n^{i-1},s_n]L^{i}_{B,n}+L^{i}_{b,n} B[s_n^{i-1},s_n]-2 \big( b[s_n^{i-1},s_n]B[s_n^{i-1},s_n]\big)\bigg)|e_s^{i-1}|^2 \nonumber\\
&+\int_\Om \left(L^{i}_{B,n} L^{i}_{b,n}-\frac{1}{2} \left(   b[s_n^{i-1},s_n]\,L^{i}_{B,n}+L^{i}_{b,n} B[s_n^{i-1},s_n]\right) \right) |e_s^{i}-e_s^{i-1}|^2.
\end{align}
Next, we estimate the last term in \eqref{eq:sort}. Observe from \ref{A.2} and $B=\Phi\circ b$ from \eqref{eq:condition} that
\begin{align}
    |\bm{F}(b(s_1))-\bm{F}(b(s_2))|^2\leq L_F |b(s_1)-b(s_2)||B(s_1)-B(s_2)|.
\end{align}
Then, from Cauchy-Schwarz and Young's inequalities, one has
\begin{align}\label{eq:F-manipulation}
& \t\left( \bm{F}(b(s_n^{i-1}))-\bm{F}(b(s_n)), \nabla e_w^i\right) \leq \t \left(\int_\Om |\bm{F}(b(s_n^{i-1}))-\bm{F}(b(s_n))|^2 \right)^{\frac{1}{2}} ||\del e_w^i||  \nonumber\\
&\overset{\ref{A.2}}\leq \t \left(L_F\int_\Om |e^{i-1}_b||e^{i-1}_B| \right)^{\frac{1}{2}} ||\del e_w^i|| \overset{\eqref{eq:def_errors_c}}\leq \t\left(\int_\Om L_F\, b[s^{i-1}_n,s_n]\, B[s^{i-1}_n,s_n]\,|e^{i-1}_s|^2 \right)^{\frac{1}{2}} \|\del e_w^i||\nonumber\\
&\overset{\eqref{young}}{\leq} \frac{\t L_F}{2}\int_\Om \, b[s^{i-1}_n,s_n]\, B[s^{i-1}_n,s_n]\,|e^{i-1}_s|^2 +\frac{\t}{2}||\del e_w^i||^2.
\end{align}
Inserting $\eqref{eq:big}$ and \eqref{eq:F-manipulation} in $\eqref{eq:sort}$, after rearranging and canceling terms one gets
\begin{align*}\label{eq:main-ineq}
&\int_\Om \bigg( b[s_n^{i-1},s_n]L^{i}_{B,n}+L^{i}_{b,n} B[s_n^{i-1},s_n]\bigg)|e_s^{i}|^2 +\t \|\del e_w^{i}\|^2+\nonumber \\
& \int_\Om \bigg(2 L^{i}_{B,n} L^{i}_{b,n}-\big(b[s_n^{i-1},s_n]L^{i}_{B,n}+L^{i}_{b,n}  B[s_n^{i-1},s_n]\big)\bigg) |e_s^{i}-e_s^{i-1}|^2\nonumber \\
&\leq \int_\Om \bigg(b[s_n^{i-1},s_n]L^{i}_{B,n}+L^{i}_{b,n} B[s_n^{i-1},s_n] -(2-\t L_F) b[s_n^{i-1},s_n]\,B[s_n^{i-1},s_n]\bigg)|e_s^{i-1}|^2.
\end{align*}
We define the coefficient functions 
\begin{subequations}\label{eq:G-coeff}
    \begin{align}
    &G_1^i:=  b[s_n^{i-1},s_n]L^{i}_{B,n}+L^{i}_{b,n} B[s_n^{i-1},s_n],\\
    &G_2^i:= 2 L^{i}_{B,n} L^{i}_{b,n}-\big(b[s_n^{i-1},s_n]L^{i}_{B,n}+L^{i}_{b,n}  B[s_n^{i-1},s_n]\big),\\
    &G_3^i:= b[s_n^{i-1},s_n]L^{i}_{B,n}+L^{i}_{b,n} B[s_n^{i-1},s_n] -(2-\t L_F) b[s_n^{i-1},s_n]\,B[s_n^{i-1},s_n].
\end{align}
\end{subequations}
Observe that, by \eqref{eq:condition}, if the factors $L^{i}_{b,n}$ and $L^{i}_{B,n}$ are chosen as in Table \ref{tab:1}, the functions in \eqref{eq:G-coeff} are all positive and one has $G_1^i > G_3^i$. With this, the inequality above becomes
\begin{equation}\label{eq:main-ineq}
\int_\Om G_1^i |e_s^{i}|^2 +\t \|\del e_w^{i}\|^2 + 
\int_\Om G_2^i |e_s^{i}-e_s^{i-1}|^2 \leq \int_\Om G_3^i |e_s^{i-1}|^2.
\end{equation}
We can now state a generic criterion guaranteeing the convergence of the linearization schemes. 
\begin{lemma}[Sufficient condition for convergence] \label{lemma:criteria} 
Let $L > 0$ be an upper bound for $L^i_{b,n}$ and $L^i_{B,n}$. Assume the existence of the constants 
$C, \xi >0$  such that 
\begin{align}\label{eq:conv-criteria}
    G_1^i\geq C\geq G_3^i, \text{ and } G^i_2\geq \xi,
\end{align}
uniformly w.r.t. $i\in \N$. Then, as $i \to \infty$, one has 
$$
\|s_n^i - s_n\|_{L^1(\Omega)}+\|u_n^i - u_n\|_{L^1(\Omega)} + \|w_n^i - w_n\|_{H^1(\Om)} \to 0. 
$$  
\end{lemma}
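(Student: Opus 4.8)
The plan is to start from the master inequality \eqref{eq:main-ineq}, which already holds pointwise-integrated at each iteration $i$, and turn the structural bounds \eqref{eq:conv-criteria} into a telescoping argument. Under the hypothesis $G_1^i \geq C \geq G_3^i$, inequality \eqref{eq:main-ineq} immediately gives
\begin{equation*}
C\|e_s^i\|^2 + \t\|\nabla e_w^i\|^2 + \xi\|e_s^i - e_s^{i-1}\|^2 \leq \int_\Om G_1^i|e_s^i|^2 + \t\|\nabla e_w^i\|^2 + \int_\Om G_2^i|e_s^i-e_s^{i-1}|^2 \leq \int_\Om G_3^i|e_s^{i-1}|^2 \leq C\|e_s^{i-1}\|^2,
\end{equation*}
so first I would record that $\bigl(\|e_s^i\|^2\bigr)_{i}$ is a nonincreasing, nonnegative sequence, hence convergent to some limit $\ell_\star \geq 0$. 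Summing the displayed inequality over $i = 1,\dots,I$ and telescoping the $C\|e_s^i\|^2$ terms yields $\sum_{i=1}^I \bigl(\t\|\nabla e_w^i\|^2 + \xi\|e_s^i-e_s^{i-1}\|^2\bigr) \leq C\|e_s^0\|^2 < \infty$, which forces $\|\nabla e_w^i\| \to 0$ and $\|e_s^i - e_s^{i-1}\|\to 0$ as $i\to\infty$. The convergence $\|w_n^i - w_n\|_{H^1(\Om)} \to 0$ then follows from $\|\nabla e_w^i\|\to 0$ together with the Poincaré inequality \eqref{eq:P}, since $e_w^i \in H^1_0(\Om)$.

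Next I would recover the convergence of $e_s^i$ and $e_u^i$ in $L^1(\Om)$. The natural route is to go back to the pointwise identities \eqref{eq:b0}–\eqref{eq:b1}. From \eqref{eq:b1}, $e_w^i = e_B^{i-1} + L^i_{B,n}(e_s^i - e_s^{i-1}) = B[s_n^{i-1},s_n]\,e_s^{i-1} + L^i_{B,n}(e_s^i-e_s^{i-1})$ a.e.; since $e_s^i - e_s^{i-1} \to 0$ in $L^2(\Om)$ and $L^i_{B,n}$ is bounded by $L$, while $e_w^i \to 0$ in $L^2(\Om)$, we get $B[s_n^{i-1},s_n]\,e_s^{i-1} \to 0$ in $L^2(\Om)$, and by the same token (using \eqref{eq:b0}) $b[s_n^{i-1},s_n]\,e_s^{i-1}\to 0$ in $L^2(\Om)$. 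Adding these and invoking the structural constraint $b' + B' \geq 1$ from \eqref{eq:condition} — more precisely the difference-quotient version $b[s_n^{i-1},s_n] + B[s_n^{i-1},s_n] \geq 1$, which holds because $b,B$ are the integrals in \eqref{eq:bBexpression} of densities summing to at least $1$, or directly from convexity/monotonicity considerations — we conclude $|e_s^{i-1}| \leq b[\cdot,\cdot]|e_s^{i-1}| + B[\cdot,\cdot]|e_s^{i-1}| \to 0$ in $L^1(\Om)$ (or even $L^2$). Then $\|e_s^i\|_{L^1(\Om)}\to 0$, which incidentally pins down $\ell_\star = 0$. Finally $e_u^i = e_b^{i-1} + L^i_{b,n}(e_s^i - e_s^{i-1}) = b[s_n^{i-1},s_n]\,e_s^{i-1} + L^i_{b,n}(e_s^i-e_s^{i-1})$, and both terms tend to $0$ in $L^1(\Om)$, giving $\|e_u^i\|_{L^1(\Om)}\to 0$.

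The step I expect to be the main obstacle is the passage from the $L^2$-smallness of the weighted quantities $b[s_n^{i-1},s_n]\,e_s^{i-1}$ and $B[s_n^{i-1},s_n]\,e_s^{i-1}$ back to smallness of $e_s^{i-1}$ itself: the individual difference quotients can degenerate (vanish) on large portions of $\Om$, so neither alone controls $e_s^{i-1}$, and one genuinely needs the lower bound $b[s_n^{i-1},s_n] + B[s_n^{i-1},s_n]\geq 1$ and must justify this difference-quotient inequality carefully from \eqref{eq:condition} and \ref{ass:Bphi} (the $u^*$ split in \eqref{eq:bBexpression} makes it transparent: on $I(s_n^{i-1},s_n)$ at least one of the two densities is identically $1$). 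A secondary technical point is bookkeeping the shift in iteration index — the summable quantities are indexed by $i$ while the error that must vanish is $e_s^{i-1}$ — but since we are taking $i\to\infty$ this is harmless. Everything else is a routine telescoping-plus-Poincaré argument, so I would keep the write-up short and concentrate the detail on the $b+B\geq 1$ mechanism.
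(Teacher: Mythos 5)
The first half of your argument (telescoping \eqref{eq:main-ineq} under \eqref{eq:conv-criteria}, absolute convergence of the series, hence $\|\nabla e_w^i\|\to 0$ and $\|e_s^i-e_s^{i-1}\|\to 0$, then Poincar\'e for the $H^1$-convergence of $w_n^i$) is exactly the paper's proof. The divergence, and the genuine gap, is in how you recover the $L^1$ convergence of $e_s$ and $e_u$. From \eqref{eq:b1} you correctly deduce $B[s_n^{i-1},s_n]\,e_s^{i-1}=e_w^i-L^i_{B,n}(e_s^i-e_s^{i-1})\to 0$ in $L^2(\Om)$, because $e_w^i\to 0$ in $L^2$ via Poincar\'e. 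But the claim that ``by the same token'' \eqref{eq:b0} gives $b[s_n^{i-1},s_n]\,e_s^{i-1}\to 0$ in $L^2(\Om)$ has no justification: the analogous step would need $e_u^i\to 0$ in $L^2(\Om)$, and nothing in the energy estimate controls $e_u^i$ --- \eqref{eq:main-ineq} only bounds weighted norms of $e_s^i$, $\nabla e_w^i$ and $e_s^i-e_s^{i-1}$, and the error equation obtained from \eqref{eq:linearization_gen.a} gives at best dual (weak) control of $e_u^i$. In fact $e_u^i\to 0$ is precisely (a stronger, $L^2$ version of) part of the conclusion, and via \eqref{eq:b0} it is \emph{equivalent} to the statement you are trying to extract from it, so the step is circular. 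Since the whole point of the inequality $b[\cdot,\cdot]+B[\cdot,\cdot]\geq 1$ (which itself is fine, by the mean value theorem applied to $b+B$) is to compensate the regions where $B[\cdot,\cdot]$ degenerates, losing the $b$-channel is fatal: with only $B[\cdot,\cdot]\,e_s^{i-1}\to 0$ you cannot conclude anything about $e_s^{i-1}$ on the set where $B'$ is small, which is exactly the slow-diffusion degeneracy the lemma must handle.

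The paper closes this step differently: from $\|w_n^i-B(s_n^{i-1})\|\leq L\|e_s^i-e_s^{i-1}\|\to 0$ and $w_n^i\to w_n$ it gets $B(s_n^i)\to B(s_n)$ in $L^2(\Om)$, and then invokes Lemma 3.10 of \cite{smeets2024robust}, which converts $L^2$ convergence of $B(s_n^i)$ into $L^1$ convergence of $s_n^i$ using the structural property recorded in \Cref{rem:Aprop} (either $B'(0)>0$ or $B$ convex near $0$); finally $\|u_n^i-u_n\|_{L^1(\Om)}$ is handled by the triangle inequality, the bound $\|L^i_{b,n}(e_s^i-e_s^{i-1})\|_{L^1(\Om)}\to 0$, and the $1$-Lipschitz continuity of $b$. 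If you want to keep your two-channel idea, you would have to first prove some strong convergence of $e_u^i$ (or of $b(s_n^{i-1})-b(s_n)$), which the available estimates do not provide; otherwise you should follow the paper and route the $s$-convergence entirely through the $B$-channel plus the cited lemma.
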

Before giving the proof we note that, if $L^i_{b, n}$ and $L^i_{B, n}$ are chosen as in Table \ref{tab:1}, then $L = 1 + \epsilon$. Also, although $G_1^i > G_3^i$ a.e. in $\Om$, the condition on $C$ is not superfluous, as it has to be fulfilled for a.e. $x \in \Om$, and uniformly w.r.t. $i$. Similarly, the lower bound on $G_2^i$ should be strictly positive, in the same uniform way. 
\begin{proof}
    From \eqref{eq:main-ineq} applying \eqref{eq:conv-criteria} one obtains $C\|e_s^i\|^2 + \xi \|e_s^i-e_s^{i-1}\|^2+\t\|\del e_w^i\|^2\leq C\|e_s^{i-1}\|^2$. Adding from $i=1$ to $i=k$ yields after cancellation of common terms
\begin{align}
C\|e_s^k\|^2 + \xi\sum_{i=1}^k \|e_s^i-e_s^{i-1}\|^2+\t\sum_{i=1}^k \|\del e_w^i\|^2 \leq C\|e_s^{0}\|^2.
\end{align}
    The series $\sum_{i=1}^{k} \|\del e_w^i\|^2$ and $\sum_{i=1}^{k} \|e_s^i-e_s^{i-1}\|^2$ are absolutely convergent, which implies
\begin{align}\label{eq:inq}
    \|\del e_w^i\| \to 0 \quad   \text{and} \quad \|e_s^i-e_s^{i-1}\| \to 0 \quad \text{as} \quad i \rightarrow \infty
\end{align}
By the Poincar\'e inequality, we find that $e^i_w\to 0$ in $H^1(\Om)$, and consequently $w_n^i\rightarrow w_n$. Using the second term of $\eqref{eq:inq}$ together with $\eqref{eq:3c}$, we find that 
\begin{align*}
   0\leq \left\|w^i_n-B(s^{i-1}_n)\right\|=\|L^i_{B,n}(s_n^i-s_n^{i-1})\|=\|L^i_{B,n}(e_s^i-e_s^{i-1})\| \leq L \|e_s^i-e_s^{i-1}\|\rightarrow 0.
\end{align*}
This gives $w^i_n-B(s^{i-1}_n)\rightarrow 0$ as $i \rightarrow \infty$ in $L^2(\Om)$. Hence, we have that
\begin{align}
w_n^i\rightarrow w_n \text{ strongly in } H^1(\Om), \text{ and } B(s_n^i) \rightarrow w_n=B(s_n) \text{ strongly in } L^2(\Om).
\end{align}
The convergence of  $s^i_n\to s_n$ in $L^1(\Om)$  follows the convergence of $B(s^i_n)\to B(s_n)$ in $L^2(\Om)$, by applying Lemma 3.10 of \cite{smeets2024robust}. This is possible since, as discussed in \Cref{rem:Aprop}, either $B'(0)> 0$, or $B$ is locally convex at 0. We conclude the proof of the lemma by noticing that
\begin{align}
    \|u^i_n-u_n\|_{L^1(\Om)}&= \|u^i_n-b(s_n)\|_{L^1(\Om)}  \leq  \|u^i_n-b(s^{i-1}_n)\|_{L^1(\Om)}+ \|b(s^{i-1}_n) -b(s_n)\|_{L^1(\Om)}\\
    &\overset{\eqref{eq:linearization_gen}}\leq \|L^i_{b,n}(e_s^i -e_s^{i-1})\|_{L^1(\Om)} \overset{\eqref{eq:condition}}+ \|e^{i-1}_s\|_{L^1(\Om)}\to 0.\nonumber
\end{align}
\end{proof}
Now we show that LS and MS both satisfy the criterion in \eqref{eq:conv-criteria}.

\subsection{Convergence proof for the L-scheme, \Cref{theo:LS}
}\label{sec:proofLscheme}

We show here that the  convergence criterion discussed in \Cref{sec:conv} applies for LS. One takes  $L^{i}_{B,n}=L^{i}_{b,n}=1+\epsilon$ as in \Cref{tab:1}, for some  $\epsilon \in (0, 1)$ that will be mentioned below. Using the mean value theorem, from \eqref{eq:G-coeff} one gets 
\begin{subequations}\label{eq:G_LS}
    \begin{align}
        G_1^i&= b[s_n^{i-1},s_n]L^{i}_{B,n}+L^{i}_{b,n} B[s_n^{i-1},s_n]=(1+\epsilon)(b[s_n^{i-1},s_n] +  B[s_n^{i-1},s_n])\nonumber\\
        &\overset{\eqref{eq:def_errors_c}}= (1+\epsilon)\frac{(b+B)(s_n^{i-1})-(b+B)(s_n)}{s_n^{i-1}-s_n}=(1+\epsilon)(b'+B')(\Upsilon)\overset{\eqref{eq:condition}}\geq 1+\epsilon, \\
        G_2^i&= (1+\epsilon)\left[2(1+\epsilon)-\big(b[s_n^{i-1},s_n]+ B[s_n^{i-1},s_n])\right]=(1+\epsilon)\left[2 \epsilon + 2 -(b'+B')(\Upsilon)\right]\nonumber\\
        &\overset{\eqref{eq:condition}}\geq 2\epsilon (1+\epsilon),\\
        G_3^i&= (1+\epsilon)(b[s_n^{i-1},s_n] +  B[s_n^{i-1},s_n])- (2-\t L_F)b[s_n^{i-1},s_n]\, B[s_n^{i-1},s_n]\nonumber\\
        &= 1+\epsilon +\bigg(\tau L_F-(1-\epsilon)\bigg) b[s_n^{i-1},s_n] B[s_n^{i-1},s_n] -(1+\epsilon)\left(1-b[s_n^{i-1},s_n]\right) \left(1- B[s_n^{i-1},s_n]\right) \nonumber\\
        &\overset{\eqref{eq:def_errors_c}}\leq 1+\epsilon. \label{eq:G3i_LS}
    \end{align}
The argument (function) $\Upsilon$ in the above is defined almost everywhere by the mean value theorem for the function $b+B$. One has $\Upsilon \in I(s_n^{i-1},s_n)$, the interval with endpoints $s_n^{i-1}$ and $s_n$, as defined in \Cref{sec:notations}.  The last inequality in \eqref{eq:G3i_LS} holds since $\t L_F < 1$ (as stated in \Cref{theo:LS}), so there exists $\epsilon > 0$ so that  $\tau L_F-(1-\epsilon) \leq 0$. Further, by \eqref{eq:def_errors_c}, $0\leq b[s_n^{i-1},s_n],\, B[s_n^{i-1},s_n]\leq 1$. Hence, taking $C=1+\epsilon$ and $\delta = 2 \epsilon(1 + \epsilon)$ in \Cref{lemma:criteria} proves the convergence of LS.

In the non-degenerate case when $\inf B'=\ell_B>0$, multiplying \eqref{eq:b1} by $e^i_s$ gives
 \begin{align}
     e^i_w\, e^i_s&=(B[s^{i-1}_n,s_n]\, e^{i-1}_s + (1+\epsilon)(e^i_s-e^{i-1}_s))e^i_s\nonumber\\
&\overset{\eqref{eq:equality}}=\frac{1+\epsilon + B[s^{i-1}_n,s_n]} {2}|e^i_s|^2 -\frac{1+\epsilon-B[s^{i-1}_n,s_n]}{2}|e^{i-1}_s|^2 +\frac{1+\epsilon-B[s^{i-1}_n,s_n]}{2}|e^i_s-e^{i-1}_s|^2.\nonumber
 \end{align}
 Since $\ell_B\leq B[s^{i-1}_n,s_n]\leq 1 $, using  Young's inequality \eqref{young} $e^i_w e^i_s\leq \frac{1}{2\ell_B}|e^i_w|^2 + \frac{\ell_B}{2}|e^i_s|^2$  gets 
 \begin{align*}
     (1+\epsilon)\|e^i_s\|^2 -(1+\epsilon-\ell_B)\|e^{i-1}_s\|^2 \leq \frac{1}{\ell_B}\|e^i_w\|^2\overset{\eqref{eq:P}}\leq \frac{C^2_\Om h_\Om^2}{\ell_B}\|\del e^i_w\|^2. 
 \end{align*}
The last inequality follows is the Poincare inequality, where $h_\Om$ is the diameter of $\Om$. Inserting this into \eqref{eq:main-ineq} and using \eqref{eq:G_LS} gives 
\begin{align*}
    &(1+\epsilon)\left(1 +\frac{\t \ell_B}{2C_\Om^2 h_\Om^2} \right)\|e^i_s\|^2 + 2\epsilon(1+\epsilon) \|e^i_s-e^{i-1}_s\|^2 + \frac{\t}{2}\|\del e^i_w\|^2\\
    &\leq \left((1+\epsilon)\left(1 +\frac{\t \ell_B}{2C_\Om^2 h_\Om^2} \right) -\frac{\t\ell_B^2}{2C_\Om^2 h_\Om^2}\right)\|e^{i-1}_s\|^2.
\end{align*}
Since $\epsilon < 1$, $\t \leq T$ (the final time) and   $0<\ell_B \leq 1$, one gets \eqref{eq:linear-conv-LS} with $\theta:=\big[2(2C_\Om^2 h_\Om^2+T)\big]^{-1}$ and $\vartheta:=\theta C_\Om^2 h_\Om^2$.
\end{subequations}

\subsection{Convergence proof for the M-scheme, \Cref{theo:MS}}
For MS, $L^i_{B,n}$ and $L^i_{b,n}$ are given in \Cref{tab:1}, 
\begin{subequations}\label{Mimp}
  \begin{align}\label{imp}
   L_{b,n}^i &= \min\left\{\max\left(b'(s_n^{i-1}) + M\tau, 2M\tau\right),1+\epsilon\right\},  \\
\label{imp1}
   L_{B,n}^i &=\min \left\{\max\left(B'(s_n^{i-1}) + M\tau, 2M\tau\right),1+\epsilon\right\}.
\end{align}  
\end{subequations}

\begin{proposition}[Useful inequalities]\label{M_ineq_LB}
Under Assumptions \ref{ass:A1a}, \ref{ass:A1b}, \ref{ass:u0} and \ref{ass:Bphi}, let $\eqref{boundedness}$ hold for some $\Lambda>0$. Then, for $M>M_0 = \Lambda \max\{\snorm{b'}_{\rm Lip},\snorm{B'}_{\rm Lip}\}$, and with $B[\cdot,\cdot]$ and $b[\cdot,\cdot]$ defined in  \eqref{eq:def_errors_c}, it holds almost everywhere in $\Om$ that
\begin{subequations}
    \begin{align}
 &0<(M-M_0)\tau \leq L_{B,n}^i-B[s_n^{i-1},s_n]\leq 2M\tau, \label{M_Cong_ineq1} \\[1em]
&0<(M-M_0)\tau \leq L_{b,n}^i-b[s_n^{i-1},s_n]\leq 2M\tau.\label{M_Cong_ineq2}
\end{align} 
\end{subequations}
\end{proposition}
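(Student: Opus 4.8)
The plan is to prove the two-sided bounds in \eqref{M_Cong_ineq1} and \eqref{M_Cong_ineq2} by combining two facts: first, an estimate on the difference between the derivative $\rho'(s_n^{i-1})$ and the difference quotient $\rho[s_n^{i-1},s_n]$ obtained from the local Lipschitz continuity of $b'$ and $B'$ (Assumption \ref{ass:Bphi}, \Cref{lemma:decomspoition}), together with the boundedness assumption \eqref{boundedness}; and second, a case analysis on which branch of the $\min$/$\max$ in \eqref{Mimp} is active. Since the two statements \eqref{M_Cong_ineq1} and \eqref{M_Cong_ineq2} are symmetric in $(B,\snorm{B'}_{\rm Lip})$ versus $(b,\snorm{b'}_{\rm Lip})$, I would prove the statement for a generic $\rho \in \{b,B\}$ with Lipschitz constant $\snorm{\rho'}_{\rm Lip}$ and $M_0 = \Lambda\max\{\snorm{b'}_{\rm Lip},\snorm{B'}_{\rm Lip}\} \geq \Lambda \snorm{\rho'}_{\rm Lip}$, and then specialize.

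The first step is the key mean-value estimate. By the definition of the difference quotient \eqref{eq:def_errors_c} and the mean value theorem applied to $\rho$, there is a point $\zeta \in I(s_n^{i-1},s_n)$ with $\rho[s_n^{i-1},s_n] = \rho'(\zeta)$. Then by the Lipschitz continuity of $\rho'$,
\begin{align*}
\big| \rho[s_n^{i-1},s_n] - \rho'(s_n^{i-1}) \big| = \big| \rho'(\zeta) - \rho'(s_n^{i-1}) \big| \leq \snorm{\rho'}_{\rm Lip}\, |\zeta - s_n^{i-1}| \leq \snorm{\rho'}_{\rm Lip}\, |s_n^{i-1} - s_n|.
\end{align*}
Now $|s_n^{i-1} - s_n| = |e_s^{i-1}| \leq \|s_n^{i-1}-s_n\|_{L^\infty(\Om)} \leq \Lambda\t$ by \eqref{boundedness} (used with iteration index $i-1$; note $s_n^0 = s_{n-1}$ is covered as the $i=1$ case), so almost everywhere in $\Om$
\begin{align*}
\big| \rho[s_n^{i-1},s_n] - \rho'(s_n^{i-1}) \big| \leq \snorm{\rho'}_{\rm Lip}\,\Lambda\t \leq M_0\t.
\end{align*}
Equivalently, $\rho'(s_n^{i-1}) - M_0\t \leq \rho[s_n^{i-1},s_n] \leq \rho'(s_n^{i-1}) + M_0\t$ a.e.

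The second step is the case analysis on $L^i_{\rho,n} = \min\{\max(\rho'(s_n^{i-1})+M\t,\,2M\t),\,1+\epsilon\}$. For the \emph{lower} bound $L^i_{\rho,n} - \rho[s_n^{i-1},s_n] \geq (M-M_0)\t$: since $0 \leq \rho' \leq 1$ and $2M\t \leq 1+\epsilon$ (which holds because $\t < 1/(M+M_0) \leq 1/(2M)$ when $M>M_0$, so $2M\t<1<1+\epsilon$), the outer $\min$ is inactive whenever the inner $\max$ does not exceed $1+\epsilon$; one checks $\rho'(s_n^{i-1})+M\t \leq 1+M\t \leq 1+\epsilon$ is not guaranteed, so one must keep the $\min$. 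Either $L^i_{\rho,n} = 1+\epsilon$, in which case $L^i_{\rho,n} = 1+\epsilon \geq \rho'(s_n^{i-1})+M\t \geq \rho[s_n^{i-1},s_n] - M_0\t + M\t$ giving the bound; or $L^i_{\rho,n} = \max(\rho'(s_n^{i-1})+M\t,\,2M\t) \geq \rho'(s_n^{i-1})+M\t \geq \rho[s_n^{i-1},s_n] + (M-M_0)\t$. Strict positivity $(M-M_0)\t>0$ follows since $M>M_0$ and $\t>0$. For the \emph{upper} bound $L^i_{\rho,n} - \rho[s_n^{i-1},s_n] \leq 2M\t$: since $L^i_{\rho,n} \leq \max(\rho'(s_n^{i-1})+M\t,\,2M\t)$, if the first argument is the max then $L^i_{\rho,n} \leq \rho'(s_n^{i-1})+M\t \leq \rho[s_n^{i-1},s_n] + M_0\t + M\t \leq \rho[s_n^{i-1},s_n] + 2M\t$ (using $M_0 < M$); if $2M\t$ is the max then $L^i_{\rho,n} \leq 2M\t \leq \rho[s_n^{i-1},s_n] + 2M\t$ since $\rho[s_n^{i-1},s_n] \geq 0$.

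Combining Steps 1 and 2 and specializing $\rho = B$ with $\snorm{B'}_{\rm Lip} \leq M_0/\Lambda$ yields \eqref{M_Cong_ineq1}, and $\rho = b$ yields \eqref{M_Cong_ineq2}. I do not anticipate a serious obstacle here — this is essentially a bookkeeping argument — but the one point requiring care is making sure the $\min$ with $1+\epsilon$ in \eqref{Mimp} does not spoil the \emph{lower} bound; one has to verify that capping $L^i_{\rho,n}$ at $1+\epsilon$ still leaves it above $\rho[s_n^{i-1},s_n]+(M-M_0)\t$, which uses $1+\epsilon \geq \rho'(s_n^{i-1}) + M\t$ — and this in turn relies on the hypothesis $\epsilon < (M-M_0-L_F)\t$ being compatible, or more simply on $\rho' \le 1$ together with $\epsilon \ge M_0\t$ not being assumed, so the cleanest route is to note $1+\epsilon \ge 2M\t$ is false in general and instead argue directly as above that in the capped case $L^i_{\rho,n}=1+\epsilon \geq \max(\rho'+M\t,2M\t) \geq \rho'+M\t$, which is exactly the inequality defining when the cap is active. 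That keeps the argument self-consistent.
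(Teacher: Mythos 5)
Your core argument coincides with the paper's: the mean value theorem, the Lipschitz continuity of $b'$ and $B'$ from \ref{ass:Bphi}, and \eqref{boundedness} give $|\rho[s_n^{i-1},s_n]-\rho'(s_n^{i-1})|\le \snorm{\rho'}_{\rm Lip}\Lambda\t\le M_0\t$ a.e., and a case analysis on the two branches of the inner $\max$ in \eqref{Mimp} then yields both the lower and the upper bound. Your treatment of these two branches is correct, and for the lower bound even slightly cleaner than the paper's (you use $\max(\rho'(s_n^{i-1})+M\t,2M\t)\ge\rho'(s_n^{i-1})+M\t\ge\rho[s_n^{i-1},s_n]+(M-M_0)\t$ in one stroke, whereas the paper splits into the two cases).

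The trouble is your handling of the outer $\min(\cdot,1+\epsilon)$, where two inequalities are reversed. First, $M>M_0$ gives $1/(M+M_0)>1/(2M)$, so $\t<1/(M+M_0)$ does \emph{not} imply $2M\t<1$. Second, and decisively, the cap is active precisely when $1+\epsilon\le\max(\rho'(s_n^{i-1})+M\t,2M\t)$, so in the capped case you may not write $1+\epsilon\ge\rho'(s_n^{i-1})+M\t$; your ``cleanest route'' asserts the defining inequality with the wrong sign. This is not merely cosmetic: if $\rho'(s_n^{i-1})$ and $\rho[s_n^{i-1},s_n]$ are both close to $1$ (e.g.\ both arguments on the branch where $b'=1$, or beyond $u^*$ for $B$) and $\epsilon<(M-M_0)\t$ --- exactly the regime of \Cref{theo:MS} --- then $L^i_{\rho,n}=1+\epsilon$ and $L^i_{\rho,n}-\rho[s_n^{i-1},s_n]\approx\epsilon<(M-M_0)\t$, so the lower bound in \eqref{M_Cong_ineq1}--\eqref{M_Cong_ineq2} cannot be established on the capped branch (the upper bound does survive there). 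The paper's own proof sidesteps this by tacitly treating only the two uncapped values $\rho'(s_n^{i-1})+M\t$ and $2M\t$; the regions where the cap bites are handled later in \Cref{3.7} by working directly with $L^i_{\rho,n}=1+\epsilon$ rather than through \Cref{M_ineq_LB}. So your extra care about the cap is well aimed, but as written it rests on a false inequality and cannot be repaired without either restricting the statement to the uncapped branches (as the paper implicitly does) or imposing a relation between $\epsilon$ and $(M-M_0)\t$ that would clash with the choice $\epsilon<(M-M_0-L_F)\t$ used in \Cref{theo:MS}.
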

\begin{proof}
    We prove \eqref{M_Cong_ineq1}, noting that the proof of \eqref{M_Cong_ineq2} is identical. Observe that,
    \begin{align}\label{def_MVT}
    B[s_n^{i-1},s_n]=B^{\prime}(\Upsilon) \text{ for some } \Upsilon\in  I[s_n^{i-1}, s_n].
    \end{align}
This implies $|\Upsilon - s_n^{i-1}|\leq |s^i_n - s_n^{i-1}|\leq \Lambda \t$ from \eqref{boundedness} which gives
\begin{align}\label{eq:B-min-B}
\left|B[s_n^{i-1},s_n] - B^\prime(s_n^{i-1}) \right| &= \left| B'(\Upsilon) - B'(s_n^{i-1})\right|\nonumber\\
&\leq \snorm{B^{\prime}}_{\rm Lip} |\Upsilon - s_n^{i-1}| \overset{\eqref{boundedness}}{\leq}\snorm{B^{\prime}}_{\rm Lip}\Lambda\tau \leq  M_0\tau.
\end{align}
For $M \geq M_0$ if $L_{B,n}^i =M\tau+B^{\prime}(s_n^{i-1}) $ then $L_{B,n}^i-B^{\prime}(\Upsilon)\geq (M-M_0)\t$. Moreover, if $L_{B,n}^i =2M\tau$ then $B^{\prime}(s_n^{i-1})\leq M\tau$ which means that $ B^{\prime}(\Upsilon)\leq  B^{\prime}(s_n^{i-1})+M_0 \tau\leq (M+M_0)\tau$, giving  $L_{B,n}^i-B^{\prime}(\Upsilon)\geq (M-M_0)\t$. Hence, for $M> M_0$ one has 
\begin{align}\label{inq1}
 L_{B,n}^i-B[s_n^{i-1},s_n]\geq(M-M_0)\tau >0.
\end{align}
Using similar arguments, if $L_{B,n}^i =M\tau+B^{\prime}(s_n^{i-1}) $ and $M> M_0$,  then 
\begin{align*}
  L_{B,n}^i- B^{\prime}(\Upsilon)\overset{\eqref{eq:B-min-B}}{\leq} M\tau + M_0\tau\leq 2M\tau. 
\end{align*}
If $L_{B,n}^i=2M\tau$, then $L_{B,n}^i- B^{\prime}(\Upsilon)\leq 2M\tau$.
Combining this with \eqref{inq1} gives \eqref{M_Cong_ineq1}.
\end{proof}
\begin{lemma} \label{3.7}
Under the assumption of \Cref{theo:MS}, the coefficient functions in \eqref{eq:G-coeff} satisfy 
\[G^i_1\geq 2M\t\geq G^i_3, \text{ and } G^i_2\geq \epsilon M\t.\]
Moreover, with $\ell:=\min\{\inf b',\inf B'\}$, one has $G^i_1\geq 2\ell$.
\end{lemma}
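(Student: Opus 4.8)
The plan is to reduce everything to pointwise algebraic estimates. For a fixed but arbitrary $x\in\Om$, each of $G_1^i,G_2^i,G_3^i$ in \eqref{eq:G-coeff} is an algebraic expression in the four numbers $b[s_n^{i-1},s_n](x)$, $B[s_n^{i-1},s_n](x)$, $L_{b,n}^i(x)$, $L_{B,n}^i(x)$, so it suffices to establish the three bounds at $x$ from the following ingredients, all valid a.e.\ in $\Om$: by \eqref{eq:def_errors_c} and \ref{ass:A1a}, $b[s_n^{i-1},s_n],B[s_n^{i-1},s_n]\in[0,1]$; the mean value theorem applied to $b+B$ on the interval $I(s_n^{i-1},s_n)$, together with $b'+B'\ge 1$ from \eqref{eq:condition}, gives $b[s_n^{i-1},s_n]+B[s_n^{i-1},s_n]\ge 1$; from the M-scheme row of \Cref{tab:1}, $L_{b,n}^i,L_{B,n}^i\ge 2M\t$; and \Cref{M_ineq_LB} provides $0<(M-M_0)\t\le L_{b,n}^i-b[s_n^{i-1},s_n]\le 2M\t$ together with the analogous two-sided bound for $B$.

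With these in hand (and abbreviating $b[\cdot,\cdot]:=b[s_n^{i-1},s_n]$, $B[\cdot,\cdot]:=B[s_n^{i-1},s_n]$), the bound $G_1^i=b[\cdot,\cdot]\,L_{B,n}^i+L_{b,n}^i\,B[\cdot,\cdot]\ge 2M\t\big(b[\cdot,\cdot]+B[\cdot,\cdot]\big)\ge 2M\t$ is immediate. For the sharper bound $G_1^i\ge 2\ell$ I would additionally use that $b[\cdot,\cdot]$ and $B[\cdot,\cdot]$, being values of $b'$ and $B'$, are both $\ge \ell=\min\{\inf b',\inf B'\}$, and combine this with $L_{b,n}^i\ge b[\cdot,\cdot]$, $L_{B,n}^i\ge B[\cdot,\cdot]$ and $b[\cdot,\cdot]+B[\cdot,\cdot]\ge 1$. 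For $G_2^i$ I would first rewrite it as $G_2^i=L_{B,n}^i\big(L_{b,n}^i-b[\cdot,\cdot]\big)+L_{b,n}^i\big(L_{B,n}^i-B[\cdot,\cdot]\big)$; both summands are nonnegative, and the first one alone already obeys $L_{B,n}^i\big(L_{b,n}^i-b[\cdot,\cdot]\big)\ge 2M\t\,(M-M_0)\t\ge \epsilon M\t$, the last inequality holding because the hypotheses of \Cref{theo:MS} force $\epsilon<(M-M_0)\t$.

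The only genuinely delicate point is the upper bound $G_3^i\le 2M\t$. I would first rewrite, using $B=\Phi\circ b$ and expanding the product in \eqref{eq:G-coeff}, $G_3^i=b[\cdot,\cdot]\big(L_{B,n}^i-B[\cdot,\cdot]\big)+B[\cdot,\cdot]\big(L_{b,n}^i-b[\cdot,\cdot]\big)+\t L_F\,b[\cdot,\cdot]\,B[\cdot,\cdot]$. The crude estimates $L_{B,n}^i-B[\cdot,\cdot]\le 2M\t$, $L_{b,n}^i-b[\cdot,\cdot]\le 2M\t$ and $b[\cdot,\cdot],B[\cdot,\cdot]\le 1$ only yield $G_3^i\le 4M\t+\t L_F$, which is too weak. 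The gain comes from observing that whenever a difference quotient — say $b[\cdot,\cdot]$ — is not $O(\t)$-small, $L_{b,n}^i$ cannot equal the $2M\t$-branch of \Cref{tab:1}, so $L_{b,n}^i-b[\cdot,\cdot]$ is only $O\big((M+M_0)\t\big)$, where one uses the Lipschitz estimate $|b[\cdot,\cdot]-b'(s_n^{i-1})|\le M_0\t$ from the proof of \Cref{M_ineq_LB}; conversely, when $b[\cdot,\cdot]$ is $O(\t)$-small it is multiplied by the small factor $b[\cdot,\cdot]$ in the expression above. I would therefore split into cases according to which of the three branches $b'(s_n^{i-1})+M\t$, $2M\t$, $1+\epsilon$ defines $L_{b,n}^i$, and likewise for $L_{B,n}^i$, and in each case combine the branch-dependent bound on the increments with $b[\cdot,\cdot]+B[\cdot,\cdot]\ge 1$ and $b[\cdot,\cdot],B[\cdot,\cdot]\le 1$; the conditions $M>M_0+L_F$ and $\epsilon<(M-M_0-L_F)\t$ are precisely what makes the three terms add up to at most $2M\t$ in every case. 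Carrying out this case analysis cleanly — in particular keeping track of how the $M_0\t$-Lipschitz errors interact with the truncation at $1+\epsilon$ — is the main obstacle; the remaining steps are routine algebra. Once the lemma is proved, \eqref{eq:conv-criteria} holds with $C=2M\t$ and $\xi=\epsilon M\t$, so \Cref{theo:MS} follows from \Cref{lemma:criteria} and \eqref{eq:main-ineq}, with \eqref{eq:linear-conv-MS} and \eqref{eq:lin-conv-MS} coming from the refined bounds $G_1^i\ge 2\ell$ and $G_3^i\le 2M\t$.
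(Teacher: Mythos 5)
Your reductions for $G_1^i\ge 2M\tau$ and $G_2^i\ge \epsilon M\tau$ are essentially sound and close to what the paper does (your rewriting $G_2^i=L^i_{B,n}(L^i_{b,n}-b[s_n^{i-1},s_n])+L^i_{b,n}(L^i_{B,n}-B[s_n^{i-1},s_n])$ combined with \Cref{M_ineq_LB} is clean). The genuine gap is in the two remaining bounds, $G_1^i\ge 2\ell$ and especially $G_3^i\le 2M\tau$: nowhere do you use the specific structure of the decomposition \eqref{eq:bBexpression} guaranteed by \ref{ass:Bphi} and \Cref{lemma:decomspoition}, namely that $b'\equiv 1$ on $(-\infty,u^*]$ and $B'\equiv 1$ on $[u^*,\infty)$, so that a.e.\ in $\Om$ either one of the difference quotients $b[s_n^{i-1},s_n],\,B[s_n^{i-1},s_n]$ equals $1$, or (when $I(s_n^{i-1},s_n)$ straddles $u^*$) both are $\ge 1-M_0\tau$ by \eqref{boundedness}; in either situation the corresponding factor is the truncated value $1+\epsilon$, so one of the two increments $L^i_{\cdot,n}-(\cdot)[s_n^{i-1},s_n]$ is at most $\epsilon+M_0\tau$ rather than $(M+M_0)\tau$. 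The paper's proof is organized entirely around this $u^*$-based case split, and that is what makes the estimates close under $M>M_0+L_F$ and $\epsilon<(M-M_0-L_F)\tau$.

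Without this input, the ingredients you list (quotients in $[0,1]$ with sum $\ge 1$, the two-sided bounds of \Cref{M_ineq_LB}, and branch bookkeeping for the $\min/\max$ in \Cref{tab:1}) are not enough, and the step ``the three terms add up to at most $2M\tau$ in every case'' fails. Concretely, take $\Phi=\mathrm{id}$ and the admissible (but non-canonical) decomposition $b(s)=B(s)=0.9\,s$, with $M_0=10$, $M=11$, $L_F=0$, $\tau=10^{-3}$, $\epsilon<(M-M_0)\tau$: then $b[\cdot,\cdot]=B[\cdot,\cdot]=0.9$, both factors sit on the $(\cdot)'(s_n^{i-1})+M\tau$ branch (no truncation, since $0.9+(M+M_0)\tau<1+\epsilon$), each increment can equal $(M+M_0)\tau$, and your decomposition gives $G_3^i=2\cdot 0.9\,(M+M_0)\tau\approx 0.038>0.022=2M\tau$, while $G_1^i\approx 2\cdot 0.9\cdot(0.9+M\tau)<1.8=2\ell$. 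So the conditions $M>M_0+L_F$, $\epsilon<(M-M_0-L_F)\tau$ alone do not rescue the case analysis; the bound is only true because the $b,B$ of \eqref{eq:bBexpression} exclude the configuration ``both quotients strictly between $O(\tau)$ and $1$''. Likewise, for $G_1^i\ge 2\ell$ your ingredients only yield $G_1^i\ge 2\,b[\cdot,\cdot]\,B[\cdot,\cdot]\ge 2\ell\max(\ell,1-\ell)$, which is short of $2\ell$; the paper again uses that one quotient is (within $M_0\tau$ of) $1$. Your plan can be repaired, but only by replacing the branch-based case split with (or supplementing it by) the paper's trichotomy $s_n^{i-1},s_n<u^*$, $s_n^{i-1},s_n>u^*$, and the straddling case, i.e., by explicitly invoking $\max(b',B')\equiv 1$ for the constructed $b,B$ together with \eqref{boundedness}.
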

 \begin{proof}
With $u^*\in (0,\vs)$ given in \ref{ass:Bphi}, we have the following cases:\\
\underline{If $s_n^{i-1},s_n< u^*$}:
\begin{figure}[h]
    \centering
\includegraphics[width=0.45\textwidth]{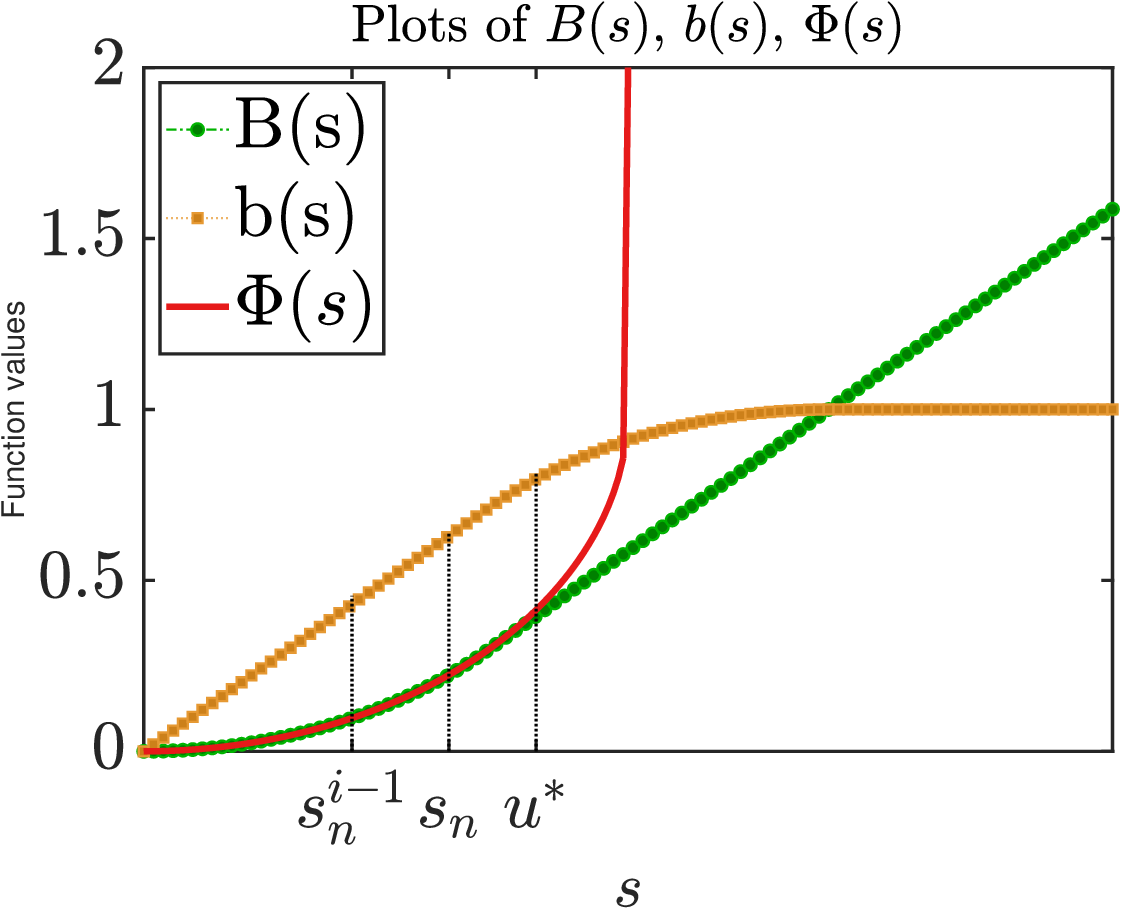}
\caption{The case $s^{i-1}_n,\,s_n<u^*$ in the proof of \Cref{M_ineq_LB}.}
    \label{fig:case1}
\end{figure}
In this case, the construction of $b$ in \Cref{lemma:decomspoition} gives $b'(s^{i-1}_n)=1$, also see \Cref{fig:case1}. Then, $L^i_{b,n}=\min(\max(1+M\t, 2M\t), 1+ \epsilon)=1+\epsilon$ since $\epsilon <(M-M_0)\t<M\t$. Moreover, $b[s_n^{i-1},s_n]=1$. Using this and the definition of $\ell$, one obtains
\begin{subequations}\label{eq:G_MS1}
    \begin{align}
        G_1^i&= b[s_n^{i-1},s_n]L^{i}_{B,n}+L^{i}_{b,n} B[s_n^{i-1},s_n]=L^{i}_{B,n}+(1+\epsilon)\, B[s_n^{i-1},s_n]\nonumber\\
        &\overset{\eqref{Mimp}}\geq \text{max}(2\ell+M\tau,2M\tau+\ell)\geq 2\max(\ell, M\t),\\
        G_2^i&= 2 L^{i}_{B,n} L^{i}_{b,n}-\big(b[s_n^{i-1},s_n]L^{i}_{B,n}+L^{i}_{b,n}  B[s_n^{i-1},s_n]\big)=(1+\epsilon)\left( L_{B,n}^i-\left(B[s_n^{i-1},s_n]\right)\right) +\epsilon  L_{B,n}^i\nonumber\\
&\overset{\eqref{Mimp}, \eqref{M_Cong_ineq1}}\geq (1+\epsilon)(M-M_0)\tau+\epsilon L^{i}_{B,n}\geq \epsilon (2M\tau).
    \end{align}
However, to estimate an upper bound for $G^i_3$ we need to further consider two cases, namely $B'(s_n^{i-1})< M\t$, and $B'(s_n^{i-1})\geq M\t$. In the former,  $L^i_{B,n}=2M\t$. Taking $\epsilon < \min \{1-\t L_F, (M - M_0 - L_F)\tau\}$ gets
\begin{align}
            G_3^i&= b[s_n^{i-1},s_n]L^{i}_{B,n}+L^{i}_{b,n} B[s_n^{i-1},s_n] -(2-\t L_F) b[s_n^{i-1},s_n]\,B[s_n^{i-1},s_n]\nonumber\\
        &= \bigg(\tau L_F-(1-\epsilon)\bigg) B[s_n^{i-1},s_n] + L^{i}_{B,n} \leq 2M\tau.  \label{eq:G3i_MS1}
\end{align}
On the other hand, if 
$B'(s_n^{i-1})\geq M\t$, then $L^i_{B,n}=B'(s_n^{i-1})+M\t$. Since $M>M_0 + L_F$, taking $\epsilon< (M-M_0-L_F)\t$ gives
\begin{align}
            G_3^i&= \bigg(\tau L_F-(1-\epsilon)\bigg) B[s_n^{i-1},s_n] + L^{i}_{B,n}= B'(s^{i-1}_n)-B[s_n^{i-1},s_n] + M\t + (\t L_F+\epsilon)B[s_n^{i-1},s_n]
            \nonumber\\
            &\overset{\eqref{eq:B-min-B}}\leq (M_0 + M)\t \overset{\eqref{eq:def_errors_c}}+ (\t L_F +\epsilon)\leq 2M\t,\label{eq:G3i_MS2}    
            \end{align}
    \end{subequations}
which proves \Cref{3.7} when $s^{i-1}_n,\, s_n<u^*$. \\
\underline{If $s_n^{i-1}, s_n > u^*$}: The proof follows similar arguments, with $B[s_n^{i-1},s_n]=1$ and $L^{i}_{B,n}=1+\epsilon$.\\ 
\underline{If $s_n^{i-1} < u^*<s_n$}:
\begin{figure}[h]
\centering
\includegraphics[width=0.45\textwidth]{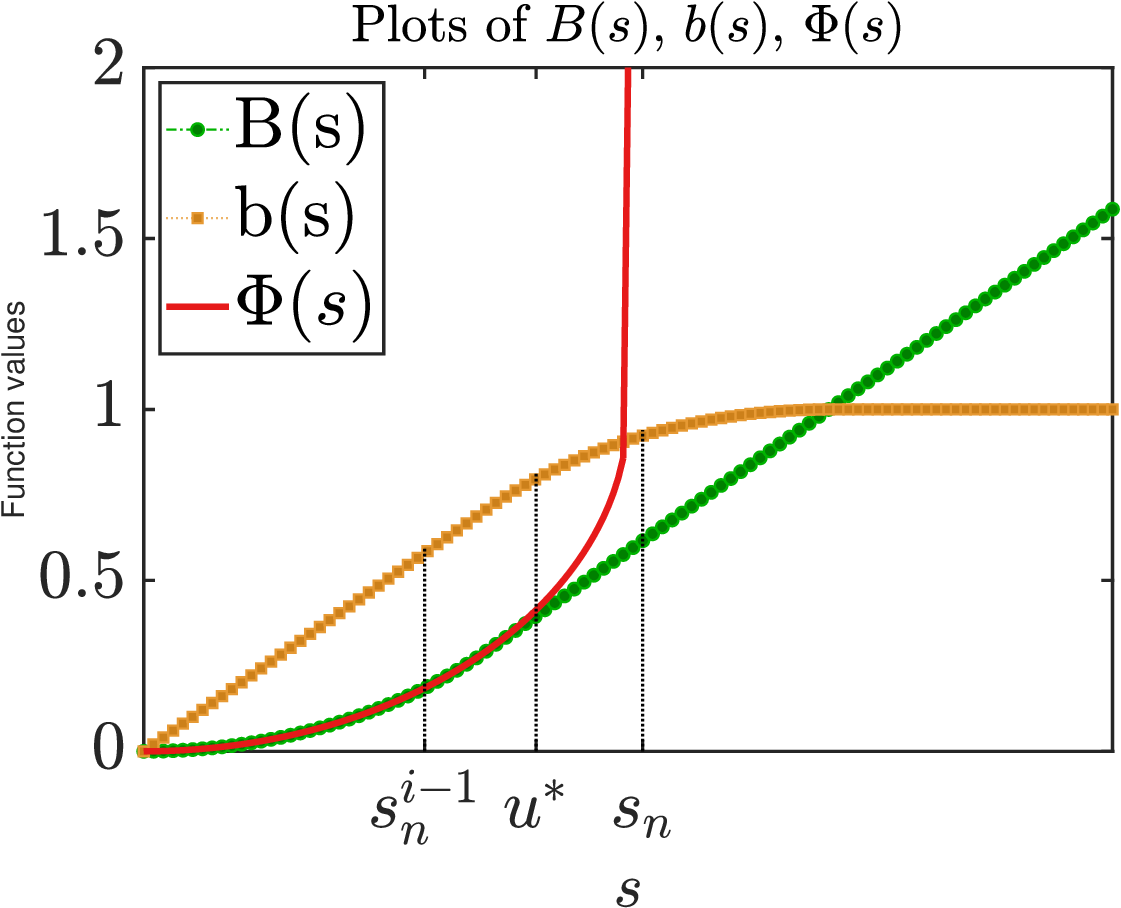}
\caption{The case $s^{i-1}_n<u^*<s_n$ in the proof of \Cref{M_ineq_LB}.}
\label{fig:case3}
\end{figure}
By \eqref{boundedness} one has
    $u^*-\Lambda \tau\leq s^{i-1}_n<u^*<s_n\leq u^*+\Lambda \tau.$
The construction of $b$ and $B$ in \Cref{lemma:decomspoition} (also see \Cref{fig:case3}) gives $B'(s_n)=1$ and $b'(s^{i-1}_n)=1$. Moreover,
\begin{align*}
    |B'(s^{i-1}_n)-B'(s_n)|\leq \snorm{B^{\prime}}_{\rm Lip} |s^{i-1}_n-s_n| \overset{\eqref{boundedness}}{\leq}\snorm{B^{\prime}}_{\rm Lip}\Lambda\tau \leq M_0\tau, 
\end{align*}
since $M_0 := \Lambda \max\{\snorm{b'}_{\rm Lip},\snorm{B'}_{\rm Lip}\}$. 
The inequality above along with $B'(s_n)=1$ give the following bounds, 
\begin{equation}\label{eq:Mobserves}
    1 - M_0 \tau \leq B^\prime(s_n^{i-1}), \  b[s^{i-1}_n,s_n], \ B[s^{i-1}_n,s_n] \leq 1 .
\end{equation}
Since $0\leq\epsilon<(M-M_0-L_F)\t$, one has that $b'(s_n^{i-1}) + M\tau \geq 1 + (M - M_0) \tau \geq 1 + \epsilon$, so $L^i_{b,n} = 1+\epsilon$ and, analogously,  $L^i_{B,n} = 1+\epsilon$. This gives 
\begin{subequations}\label{eq:G_MS2}
    \begin{align}
        G_1^i&= (1+\epsilon)\left(b[s_n^{i-1},s_n]+ B[s_n^{i-1},s_n]\right) 
        \geq 2\max\{\ell, M\tau\}.
\end{align}
In the last inequality, $G_1^i \geq 2 \ell$ follows from the definition of $\ell$, since $\epsilon \geq 0$. Further, by \eqref{eq:Mobserves}, $G_1^i \geq 2(1 + \epsilon)(1 - M_0 \t)$ and, if $\t \leq 1/(M + M_0)$, one gets that $G_1^i \geq 2 M \t$. We estimate $G_2^i$, $G_3^i$ as
        \begin{align}
        G_2^i&= (1+\epsilon)\left(2(1+\epsilon)-\left(b[s_n^{i-1},s_n]+B[s_n^{i-1},s_n]\right)\right)\geq 2\epsilon (1+\epsilon)\geq 2\epsilon M \tau, \\[1em]
        G_3^i&=b[s_n^{i-1},s_n](L^i_{B,n}-B[s_n^{i-1},s_n])+B[s_n^{i-1},s_n](L^i_{b,n}-b[s_n^{i-1},s_n]) + \t L_F b[s_n^{i-1},s_n] B[s_n^{i-1},s_n]\nonumber\\
        &\overset{\eqref{eq:Mobserves}}\leq 2 (1+\epsilon -(1-M_0\t)) + \t L_F 
        < 2M\t.
             \label{eq:G3i_MS}
     \end{align}
\end{subequations}
 In the last inequality, we used the inequaltities  $M > M_0 + L_F$ and $\epsilon \leq (M-M_0-L_F)\t$. \\
 \underline{If $s_n < u^*<s_n^{i-1}$}: This case is completely analogous to the one before. 
\\
With this, one can take $C=2M\tau$ and $\xi = 2\epsilon M \tau$ in \Cref{lemma:criteria} to obtain the convergence of MS in the doubly degenerate case, as stated in the first part of \Cref{theo:MS}. 

We continue the proof of \Cref{theo:MS} and consider the single degenerate case,  when $\inf B'=\ell_B>0$. The proof is similar to \Cref{sec:proofLscheme}. In this case, $0<\ell_B\leq B[s^{i-1}_n,s_n]\leq 1$.
Then, from \eqref{eq:b1} multiplying with $e^i_s$, we obtain by rearranging
\begin{align}
     e^i_w\, e^i_s&\overset{\eqref{eq:def_errors_c}}=(B[s^{i-1}_n,s_n]\, e^{i-1}_s +L^{i}_{B,n} (e^i_s-e^{i-1}_s))e^i_s\nonumber\\
&\overset{\eqref{eq:equality}}=\frac{L^{i}_{B,n}+ B[s^{i-1}_n,s_n]} {2}|e^i_s|^2 -\frac{L^{i}_{B,n}-B[s^{i-1}_n,s_n]}{2}|e^{i-1}_s|^2 +\frac{L^{i}_{B,n}-B[s^{i-1}_n,s_n]}{2}|e^i_s-e^{i-1}_s|^2.\nonumber
 \end{align}
We estimate the right hand side using Young's inequality \eqref{young} and the inequalities $0\leq L_{B,n}^i-B[s_n^{i-1},s_n]\leq 2M\tau\leq L^i_{B,n}$ proven in \Cref{M_ineq_LB}, which gives
\begin{align*}
    \left(2M\t +\ell_B\right)\|e^i_s\|^2 -2M\t\|e^{i-1}_s\|^2 \leq \frac{1}{2\ell_B}\|e^i_w\|^2 + \frac{\ell_B}{2}\|e^{i}_s\|^2.
\end{align*}
Employing the Poincare inequality \eqref{eq:P} gives 
\begin{align}
    \left(2M\t +\frac{\ell_B}{2}\right)\|e^i_s\|^2 -2M\t\|e^{i-1}_s\|^2 \leq \frac{1}{2\ell_B}\|e^i_w\|^2\overset{\eqref{eq:P}}\leq \frac{C^2_\Om h_\Om^2}{2\ell_B}\|\del e^i_w\|^2.
\end{align}
After multiplication by $\tau \ell_B/(C_\Omega^2 h_\Omega^2)$ and adding the result to \eqref{eq:main-ineq}, using $C=2M\t$ in \Cref{lemma:criteria} gives,
\begin{align}
    &\left[2M\t \left(1 + \frac{\t\ell_B}{C^2_\Om h_\Om^2}\right) + \frac{\t \ell_B^2}{2 C^2_\Om h_\Om^2}\right]\|e^i_s\|^2+ 2\epsilon M\t  \|e^i_s-e^{i-1}_s\|^2 + \frac{\t}{2}\|\del e^i_w\|^2\nonumber \\
    &\hskip 2em \leq 2M\t \left(1 + \frac{\t\ell_B}{C^2_\Om h_\Om^2}\right)\|e^{i-1}_s\|^2
\end{align}
Since $0<\ell_B,\,\epsilon\leq 1$, this yields \eqref{eq:linear-conv-MS} with  $\vr:=C^2_\Om h_\Om^2/(4 MC^2_\Om h_\Om^2+4M\t\ell_B+\ell_B^2)$ and $\Theta:=1/(4 MC^2_\Om h_\Om^2+4M\t\ell_B+\ell_B^2)$.

To conclude the proof of \Cref{theo:MS} we consider now 
the non-degenerate case. Since $G^i_1\geq 2\ell > 0$, \eqref{eq:main-ineq} immediately gives  \eqref{eq:lin-conv-MS}. 
 \end{proof}  
\begin{remark}
Note that for the convergence proof of LS, no additional assumption is made, which makes LS more general compared to MS. Next to this, since the  $L$-factors can be taken as constants for all time steps and iterations, the operators encountered in all iterations will remain the same, which can be used to design efficient algebraic solvers. However, this generality comes with the cost of a notably slower convergence. On the contrary, the assumptions employed for MS are based on mathematical reasoning and are likely to apply in most situations. In particular, the restriction on the time-step size $\t$ is mild and is not impacted by the spatial discretization or mesh. Moreover, the contraction rate of MS is positively influenced by $\t$ under these assumptions, in the sense that the smaller $\t$ is, the closer the rate is to 0. In practical terms, MS emerges as a significantly more competitive iterative solver in comparison to LS.
\end{remark}

\section{Adaptive estimation of linearization error}\label{sec:Adaptive}
Having proved the convergence of LS and MS, we focus now on the latter and turn our attention to the choice of the parameter $M$. As shown in \cite{mitra2019modified,smeets2024robust}, the value of the parameter $M$ plays a crucial role in determining the convergence speed of the MS. A larger $M$ value guarantees unconditional convergence of the scheme, whereas, a smaller value of $M$ makes the scheme closer to the NS which converges quadratically.

In particular, below we use a posteriori error estimation to show that this scheme can achieve unconditional convergence and, in many cases, outperform Newton's method. This is inspired by \cite{mitra2023guaranteed} where a precise identification of the linearization error was construed, and by   \cite{stokke2023adaptive}, where this identification was used in designing an adaptive linearization algorithm. Here we develop an adaptive M-scheme which chooses a quasi-optimal value of $M$ to expedite convergence.

We derive a posteriori estimates for the residual and linearization errors involving the space 
\begin{equation}\label{eq:V}
    \calV:=L^2(\Om)\times H^{1}_0(\Om) . 
\end{equation}
\subsection{Residual and linearization error}
\begin{definition}[Residual]
    Let $L^{i}_{b,n}:\Om \to (0,\infty)$ be a coefficient function that is bounded from above and below by positive constants. The residual $\calR_n^{i}:\calV\to \calV^{*}$ corresponding to Problem \ref{prob:time-disc} is defined as follows. Given $(s,w)\in \calV$, $\calR_n^{i}(s, w):\calV \to \R$ takes for any pair $(\psi,\varphi)\in \calV$ the value 
\begin{align}\label{residual}
\langle \calR^{i}_n((s,w)),(\psi,\varphi) \rangle &=\left( b(s)-u_{n-1}, \varphi\right)+ \t(\del w, \del \varphi) -\t(\bm{F}(b(s)),\del \f)-\t\langle f,\varphi\rangle \nonumber\\
&\quad +(L^{i}_{b,n}(B(s)-w),\psi). 
\end{align}\label{def:residual}
 \end{definition}
\vspace{-1em}
 Observe that $\calR^{i}_n((s,w))=0$ in $\calV^*$ if and only if $s=s_n$ and $w=B(s_n)=w_n$. Following the framework developed in \cite{mitra2023guaranteed}  to find the solution of $\calR_n^{i}=0$ based on iterative linearization, we can formulate the double-splitting scheme, i.e., Problem \ref{prob:3}, alternatively as follows.
\subsection{Alternative formulation of the double-splitting scheme}
Let $s^{i-1}_n\in L^2(\Om)$ be given, and $L^i_{b,n}, L^i_{B,n}:\Om \to \R$ be coefficient functions bounded above and below by positive constants, and computed using $s^{i-1}_n$. Consider the following bilinear form $\calB_n^{i} \calV\times \calV \to \R$, 
\begin{align}\label{eq:ip}
    \calB_n^{i}((s,w),(\psi,\f)):= (L^i_{b,n} s,\f) + \t (\del w,\del \f) + \left(L^i_{b,n}\left(L^i_{B,n} s-w \right),\psi\right)
\end{align}
Observe that $\calB_n^{i}$ satisfies the coercivity condition
\begin{align}\label{eq:B-coerce}
    \calB_n^{i}((s,w),(s,w))&= (L^i_{b,n} s, w) + \t (\del w, \del w) + \left(L^i_{b,n} \left(L^i_{B,n} s - w \right), s \right) \nonumber\\[1em]
    &= \int_\Om \left( L^i_{b,n} L^i_{B,n} |s|^2 + \t |\del w|^2 \right) \geq 0.
\end{align}
Since $L^i_{b,n}, L^i_{B,n}$ are bounded away from 0, we can define an iteration-dependent norm on $\calV$,
\begin{subequations}\label{eq:norm}
\begin{align}\label{eq:norm1}
    \norm{(s,w)}_{1,i}:= \calB_n^{i}((s,w),(s,w))^{\frac{1}{2}}= \left[ \int_\Om \left( L^i_{b,n} L^i_{B,n}|s|^2 + \t |\del w|^2 \right)\right]^{\frac{1}{2}}. 
\end{align}
The corresponding dual norm for a linear operator $\ell\in \calV^*$ is 
\begin{align}\label{eq:norm-1}
    \norm{\ell}_{-1,i}:= \sup_{(\psi,\f)\in \calV} \frac{\ell((\psi,\f))}{\norm{(\psi,\f)}_{1,i}}.
\end{align}
\end{subequations}

Eliminating $u_n^i$ from equation $\eqref{eq:linearization_gen.a}$ and $\eqref{eq:linearization_gen.b}$, we can represent the iterations in terms of the bilinear form $\calB_n^{i}$ and residual $\calR^{i}_n$ with unknowns $s_n^i$ and $w_n^i$:
\begin{problem}[Alternative formulation Problem \ref{prob:3}]\label{prob:alter}
    Let $s^{0}_n=s_{n-1}\in L^2(\Om)$ be given.  For some $i\in \N$, let $s^{i-1}_n\in L^2(\Om)$ be known. Find $(s^{i}_n,w^{i}_n)\in \calV$ solving
   \begin{align}\label{eq:reformulated-w}
\calB_n^{i}((s^{i}_n-s^{i-1}_n,w^{i}_n-w^{i-1}_n),(\psi,\f))=-\langle \calR^{i}_n((s^{i-1}_n,w^{i-1}_n)),(\psi,\f) \rangle_{\calV^*\times \calV}.
    \end{align}
for all $(\psi,\f)\in \calV$,    and update
    \begin{align}\label{eq:reformulated-Update}
 u^{i}_n=b(s^{i-1}_n) +    L^{i}_{b,n}(s^{i}_n-s^{i-1}_n) \quad\in L^2(\Om).
    \end{align}
\end{problem}

\begin{lemma}[Well-posedness of Problem \ref{prob:alter} and equivalence to Problem \ref{prob:3}] Let $s^0_n\in L^2(\Om)$ be given, $L^i_{b,n}, L^i_{B,n}:\Om \to \R$ be coefficient functions bounded uniformly above and below by positive numbers with respect to $i\in \N$. Then $\{(s^i_n,u^i_n,w^i_n)\}_{i\in \N} \subset \calZ$ solving Problem \ref{prob:alter} is well-posed and also solves Problem \ref{prob:3}.\label{lemma:equivalence}
\end{lemma}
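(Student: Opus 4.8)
The plan is to derive well-posedness of Problem~\ref{prob:alter} from the Lax--Milgram lemma on the Hilbert space $\calV=L^2(\Om)\times H^1_0(\Om)$, to define $u^i_n$ a posteriori via \eqref{eq:reformulated-Update}, and then to recover the three lines of Problem~\ref{prob:3} by inserting the test directions $(0,\f)$ and $(\psi,0)$ into \eqref{eq:reformulated-w}; throughout, one uses inductively that $w^{i-1}_n\in H^1_0(\Om)$, valid for the starting value $w^0_n=w_{n-1}$ and propagated by the argument. First I would verify the Lax--Milgram hypotheses for the form $\calB_n^{i}$ in \eqref{eq:ip}. Boundedness on $\calV\times\calV$ is immediate, since each term is a product of an $L^2$- or $H^1_0$-quantity with a coefficient ($L^i_{b,n}$ or $L^i_{b,n}L^i_{B,n}$) bounded from above. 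Coercivity is precisely the computation already recorded in \eqref{eq:B-coerce}: testing with $(s,w)$, the cross terms $(L^i_{b,n}s,w)$ and $-(L^i_{b,n}w,s)$ cancel and one is left with $\int_\Om\big(L^i_{b,n}L^i_{B,n}|s|^2+\t|\del w|^2\big)$; since $L^i_{b,n},L^i_{B,n}$ are bounded below by positive constants and $\t>0$, this dominates $c\,(\|s\|^2+\|\del w\|^2)$, and the Poincar\'e inequality \eqref{eq:P} upgrades $\|\del w\|^2$ to control of the full $H^1_0$-norm, so $\calB_n^{i}$ is coercive on $\calV$.

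The next and, I expect, most delicate step is to verify that $\calR_n^{i}((s^{i-1}_n,w^{i-1}_n))$ from \eqref{residual} is a bounded linear functional on $\calV$. Using $b(0)=B(0)=0$ and $0\leq b',B'\leq1$ from \eqref{eq:condition}, one gets $|b(s^{i-1}_n)|,|B(s^{i-1}_n)|\leq|s^{i-1}_n|$ a.e., so $b(s^{i-1}_n),B(s^{i-1}_n)\in L^2(\Om)$ and $L^i_{b,n}(B(s^{i-1}_n)-w^{i-1}_n)\in L^2(\Om)$; also $u_{n-1}\in L^2(\Om)$ and $f\in H^{-1}(\Om)$ by \ref{A.2}. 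The structural point is the advective term: by Assumption~\ref{A.2} together with $B=\Phi\circ b$ from \eqref{eq:condition}, $|\bm{F}(b(s^{i-1}_n))-\bm{F}(0)|^2\leq L_F\,|b(s^{i-1}_n)|\,|\Phi(b(s^{i-1}_n))|=L_F\,|b(s^{i-1}_n)|\,|B(s^{i-1}_n)|\leq L_F\,|s^{i-1}_n|^2$, whence $\bm{F}(b(s^{i-1}_n))\in\bm{L}^2(\Om;\R^d)$ and $\t(\bm{F}(b(s^{i-1}_n)),\del\f)$ is controlled by $\|\del\f\|$. Collecting these bounds gives $\langle\calR_n^{i}((s^{i-1}_n,w^{i-1}_n)),(\psi,\f)\rangle\leq C\|(\psi,\f)\|_{\calV}$, so $\calR_n^{i}((s^{i-1}_n,w^{i-1}_n))\in\calV^*$. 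Lax--Milgram then yields a unique $(s^i_n,w^i_n)\in\calV$ solving \eqref{eq:reformulated-w}, and \eqref{eq:reformulated-Update} produces $u^i_n\in L^2(\Om)$ since $b(s^{i-1}_n)\in L^2(\Om)$ and $L^i_{b,n}(s^i_n-s^{i-1}_n)\in L^2(\Om)$; hence $(s^i_n,u^i_n,w^i_n)\in\calZ$, and well-posedness is established.

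It remains to unfold \eqref{eq:reformulated-w}. Choosing $(\psi,\f)=(0,\f)$ annihilates every $\psi$-component of both $\calB_n^{i}$ and $\calR_n^{i}$; after rearranging, using $L^i_{b,n}(s^i_n-s^{i-1}_n)+b(s^{i-1}_n)=u^i_n$ from \eqref{eq:reformulated-Update}, and dividing by $\t$, one lands exactly on \eqref{eq:linearization_gen.a}. Choosing $(\psi,\f)=(\psi,0)$ leaves $\big(L^i_{b,n}\big[L^i_{B,n}(s^i_n-s^{i-1}_n)-(w^i_n-w^{i-1}_n)+B(s^{i-1}_n)-w^{i-1}_n\big],\psi\big)=0$ for all $\psi\in L^2(\Om)$; since $L^i_{b,n}>0$ a.e.\ and bounded, the bracketed function vanishes a.e., which reads $L^i_{B,n}(s^i_n-s^{i-1}_n)=w^i_n-B(s^{i-1}_n)$, i.e.\ \eqref{eq:linearization_gen.c}; finally \eqref{eq:linearization_gen.b} is just \eqref{eq:reformulated-Update} rewritten. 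Hence $(s^i_n,u^i_n,w^i_n)$ solves Problem~\ref{prob:3}, and uniqueness of the triple is inherited from the Lax--Milgram uniqueness. The main obstacle, as flagged, is the $L^2(\Om)$-boundedness of the advective contribution to $\calR_n^{i}$, which genuinely needs \ref{A.2} and the identity $B=\Phi\circ b$ rather than any stand-alone growth assumption on $\bm{F}$.
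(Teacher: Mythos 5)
Your proof is correct and follows essentially the same route as the paper: Lax--Milgram for the nonsymmetric form $\calB_n^{i}$ (coercivity being exactly \eqref{eq:B-coerce}, boundedness from the uniform bounds on $L^i_{b,n},L^i_{B,n}$), followed by unfolding \eqref{eq:reformulated-w} with test pairs $(0,\f)$ and $(\psi,0)$ to recover Problem \ref{prob:3}. You merely spell out details the paper leaves implicit, notably the verification that the residual lies in $\calV^*$ via \ref{A.2} and $B=\Phi\circ b$, and the explicit cancellation of the $w^{i-1}_n$ terms.
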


\begin{proof}
For $(s^{i-1}_n,w^{i-1}_n)\in \calV$, the right-hand side is a linear functional for all  $(\psi,\f)\in \calV$, and $\calB_n^{i}$ is a coercive bilinear form as seen in \eqref{eq:B-coerce}. Since $L^i_{b,n}, L^i_{B,n}:\Om \to \R$ are bounded above and below by positive numbers,  $\calB^i_n$ is also Lipschitz continuous: for a constant $L_{\calB,n}^i> 0$,
\begin{align}\label{eq:B-Lip}
    |\calB^i_n((s,w),(\psi,\f))|\leq L_{\calB,n}^{i} \norm{(s,w)}_{1,i} \norm{(\psi,\f)}_{1,i}.
\end{align}
Hence, by Lax-Milgram lemma, a unique $(s^{i}_n,w^{i}_n)\in \calV$ exists. Using the definitions of $\calR^{i}_n$ and $\calB_n^{i}$ in \eqref{eq:reformulated-w}, cancelling the common terms on both sides, and rearranging, it is straightforward to verify that $(s^i_n,u^i_n,w^i_n)$ solves Problem \ref{prob:3}.
\end{proof}
In \cite{mitra2023guaranteed} it was argued that \eqref{eq:reformulated-w} represents a general form that a linearization scheme must have. In fact, due to the reasons stated below, the linearization error was identified there as
\begin{align}\label{eq:lin-err}
    \linErr{i}:=  \norm{(s^{i}_n-s^{i-1}_n,w^i_n-w^{i-1}_n)}_{1,i}.
\end{align}


\begin{lemma}[Identification of linearization error] Let the residual $\calR^{i}_n:\calV\to \calV^{*}$ be as in \Cref{def:residual}, $s^{i-1}_n\in \calV$, and the norms $\norm{\cdot}_{\pm 1,i}$ be defined in \eqref{eq:ip}. Let, $(u^i_n, s^i_n, w^i_n)\in \calZ$ be obtained through solving Problem \ref{prob:alter}. Then, the linearization error $\linErr{i}$, defined in \eqref{eq:lin-err}, is equivalent to the dual norm of the residual $\norm{\calR_n(s^{i-1}_n)}_{-1,s^{i-1}_n}$, i.e., for $L_{\calB,n}^i>0$  in \eqref{eq:B-Lip},
    \[\linErr{i} \leq \norm{\calR_n^i((s^{i-1}_n,w^{i-1}_n))}_{-1,i} \leq L_{\calB,n}^i\,\linErr{i}.\]
    Consequently, $\calR^i_n((s^{i-1}_n,w^{i-1}_n))\to 0$ in $\calV^*$ if and only if $\linErr{i}\to 0$.\label{lemma:linearization-error}
\end{lemma}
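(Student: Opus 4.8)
The plan is to read off everything from the defining relation \eqref{eq:reformulated-w} of Problem \ref{prob:alter}, so the lemma becomes a routine two-sided comparison between an energy norm and a dual norm. Introduce the increment $\bm{\delta}^i:=(s^i_n-s^{i-1}_n,\,w^i_n-w^{i-1}_n)\in\calV$, so that by definition $\linErr{i}=\norm{\bm{\delta}^i}_{1,i}$, and \eqref{eq:reformulated-w} reads
\[
\calB_n^i(\bm{\delta}^i,(\psi,\f))=-\langle \calR_n^i((s^{i-1}_n,w^{i-1}_n)),(\psi,\f)\rangle_{\calV^*\times\calV}\qquad\text{for all }(\psi,\f)\in\calV.
\]
Both inequalities then follow by choosing suitable test functions in this identity.

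For the lower bound $\linErr{i}\le\norm{\calR_n^i(\cdot)}_{-1,i}$, I would test the identity with $(\psi,\f)=\bm{\delta}^i$ itself. By the ``coercivity with equality'' computation \eqref{eq:B-coerce} together with the definition \eqref{eq:norm1} of $\norm{\cdot}_{1,i}$, the left-hand side equals $\norm{\bm{\delta}^i}_{1,i}^2=(\linErr{i})^2$, while by the definition \eqref{eq:norm-1} of the dual norm the right-hand side is bounded by $\norm{\calR_n^i(\cdot)}_{-1,i}\,\norm{\bm{\delta}^i}_{1,i}=\norm{\calR_n^i(\cdot)}_{-1,i}\,\linErr{i}$. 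If $\linErr{i}=0$ the inequality is trivial; otherwise I divide by $\linErr{i}$. For the upper bound, I fix an arbitrary nonzero $(\psi,\f)\in\calV$, apply the identity once more, and use the Lipschitz continuity \eqref{eq:B-Lip} of $\calB_n^i$ (whose constant $L_{\calB,n}^i\in(0,\infty)$ is finite thanks to the uniform upper bounds on $L^i_{b,n},L^i_{B,n}$, already exploited in the proof of \Cref{lemma:equivalence}):
\[
\bigl|\langle \calR_n^i(\cdot),(\psi,\f)\rangle\bigr|=\bigl|\calB_n^i(\bm{\delta}^i,(\psi,\f))\bigr|\le L_{\calB,n}^i\,\norm{\bm{\delta}^i}_{1,i}\,\norm{(\psi,\f)}_{1,i}.
\]
Dividing by $\norm{(\psi,\f)}_{1,i}$ and taking the supremum over $(\psi,\f)\in\calV$ yields $\norm{\calR_n^i(\cdot)}_{-1,i}\le L_{\calB,n}^i\,\linErr{i}$.

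Finally, the equivalence $\calR_n^i((s^{i-1}_n,w^{i-1}_n))\to 0$ in $\calV^*$ iff $\linErr{i}\to 0$ is immediate from the sandwich $\linErr{i}\le\norm{\calR_n^i(\cdot)}_{-1,i}\le L_{\calB,n}^i\,\linErr{i}$: one direction holds unconditionally, and the other uses that the continuity constants $L_{\calB,n}^i$ remain uniformly bounded in $i$, which is guaranteed by the (uniform in $i$) two-sided positive bounds on the $L$-factors.

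I do not expect a genuine obstacle here; the only delicate point is careful bookkeeping with the pair of iteration-dependent norms $\norm{\cdot}_{\pm1,i}$ — in particular verifying that $\calB_n^i$ is coercive with constant exactly $1$ in $\norm{\cdot}_{1,i}$ (which is precisely why $\norm{\cdot}_{1,i}$ is defined as the square root of the diagonal of $\calB_n^i$ in \eqref{eq:norm1}–\eqref{eq:B-coerce}) and that its continuity constant $L_{\calB,n}^i$ is finite and, for the convergence equivalence, uniformly bounded in $i$.
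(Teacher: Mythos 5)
Your proposal is correct and follows essentially the same route as the paper: both exploit the identity \eqref{eq:reformulated-w} with the increment as test function together with the exact coercivity \eqref{eq:B-coerce} for the lower bound, the Lipschitz bound \eqref{eq:B-Lip} for the upper bound, and the uniform two-sided bounds on $L^i_{b,n},L^i_{B,n}$ (hence uniform equivalence of $\norm{\cdot}_{-1,i}$ with $\norm{\cdot}_{\calV^*}$) for the final convergence equivalence.
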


\begin{proof}
    Observe that, introducing $\d s^i_n:=s^{i}_n-s^{i-1}_n$ and $\d w^i_n:=w^{i}_n-w^{i-1}_n$,
    \begin{align}
        &\norm{\calR^{i}_n((s^{i-1}_n,w^{i-1}_n))}_{-1,i}\overset{\eqref{eq:norm}}=\sup_{(\psi,\f)\in \calV} \frac{\langle \calR^{i}_n((s^{i-1}_n,w^{i-1}_n)),(\psi,\f) \rangle_{\!_{\calV^*\times \calV}}}{\norm{(\psi,\f)}_{1,i}}\nonumber \\
        &\qquad\overset{\eqref{eq:reformulated-w}}=\sup_{(\psi,\f)\in \calV} \frac{-\calB_n^i((\d s^i_n,\d w^i_n),(\psi,\f))}{\norm{(\psi,\f)}_{1,i}}\overset{\eqref{eq:norm}}\geq \norm{(\d s^i_n,\d w^i_n)}_{1,i}\overset{\eqref{eq:lin-err}}=\linErr{i}.
    \end{align}
On the other hand, continuing the first line of the above relation,
\begin{align}
    \norm{\calR^{i}_n((s^{i-1}_n,w^{i-1}_n))}_{-1,i}\overset{\eqref{eq:B-Lip}}\leq L_{\calB,n}^i \norm{(\d s^i_n,\d w^i_n)}_{1,i}=L_{\calB,n}^i \linErr{i}.
\end{align}
Since $L^i_{b,n}$ and $L^i_{B,n}$ are uniformly bounded with respect to $i$, $\norm{\calR^{i}_n}_{-1,i}$ is uniformly equivalent  to $\norm{\calR^{i}_n}_{\calV^*}$. Hence, $\linErr{i}\to 0$ if and only if $\calR^{i}_n\to 0$ in $\calV^*$. 
\end{proof}

\subsection{A posteriori estimates of the linearization error of the M-scheme}
In this section, we derive a posteriori estimate for the linearization error. We will denote the linearization error of the $i^{\rm th}$ iteration of the MS corresponding to an $M>0$ by $ \linErr{i,M}$. Then the following holds,

\begin{theorem}[A posteriori estimate of the linearization error for a M-scheme step]\label{theo:a-posteriori}
    For $i>1$, let $\{(s^j_n,u_n^j,w_n^j)\}_{j=1}^{i-1}\subset \calZ$ be the solution to Problem \ref{prob:3} for some choice of $L^{j}_{b,n},\, L^{j}_{B,n}:\Om\to \R^+$ functions. Let $(s^i_n,u^i_n, w^i_n)\in \calZ$ be obtained through solving Problem \ref{prob:alter} with $L^i_{b,n}, \, L^i_{B,n}$ determined by the M-scheme \eqref{Mimp} with a fixed $\epsilon>0$ and a particular $M>0$. Let $\linErr{i, M}$ denote the linearization error defined in \eqref{eq:lin-err} corresponding to this choice of $M$. Introduce the estimators
    \begin{align}\label{eq:def_eta1}
      \eta_{\!_{\,{\rm lin},n,\pm}}^{{i},M}:&=  \Bigg(\left \| \left(\tfrac{L^{i}_{b,n}}{L^{i}_{B,n}} \right)^{\frac{1}{2}}(w_n^{i-1}-B(s^{i-1}_n)) \pm \left(\tfrac{L^{i}_{B,n}}{L^{i}_{b,n}} \right)^{\frac{1}{2}}(u^{i-1}_n- b(s^{i-1}_n))\right\|^2\nonumber\\
      &\qquad +\t\left\|\bm{F}(b(s^{i-1}_n))-\bm{F}(b(s^{i-2}_n))\right\|^2 \Bigg)^{\frac{1}{2}}.
    \end{align}
Then, one has    
 \begin{align}\label{eq:a-posteriori}
    \max\left(0, \frac{1}{2}\left(\eta_{\!_{\,{\rm lin},n,+}}^{i,M}-\eta_{\!_{\,{\rm lin},n,-}}^{i,M}\right)\right)\leq \linErr{i,M}\leq  \linEst{i,M}:=\frac{1}{2}\left(\eta_{\!_{\,{\rm lin},n,+}}^{i,M}+\eta_{\!_{\,{\rm lin},n,-}}^{i,M}\right).
\end{align}  
    \end{theorem}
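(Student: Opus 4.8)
The plan is to exploit Lemma~\ref{lemma:linearization-error}, which identifies $\linErr{i,M}$ with the dual norm $\norm{\calR_n^i((s_n^{i-1},w_n^{i-1}))}_{-1,i}$ (up to the equivalence constants, but since $\calB_n^i$ is the symmetric form underlying $\norm{\cdot}_{1,i}$, the relevant identity is actually an equality: $\linErr{i,M}=\norm{\calR_n^i}_{-1,i}$ because $-\calB_n^i((\d s_n^i,\d w_n^i),\cdot)$ attains the supremum at the Riesz representative $(\d s_n^i,\d w_n^i)$). So the first step is to write out $\langle \calR_n^i((s_n^{i-1},w_n^{i-1})),(\psi,\f)\rangle$ using \eqref{residual}, and rewrite $b(s_n^{i-1})-u_{n-1}$ and $\del w_n^{i-1}$ in terms of the previous iterates by subtracting \eqref{eq:linearization_gen.a} and \eqref{eq:linearization_gen.b} at index $i-1$ — exactly as in the commented-out proof's equation \eqref{eq:aposteriori-eq}. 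This should yield
\[
\langle \calR_n^i((s_n^{i-1},w_n^{i-1})),(\psi,\f)\rangle = \big(u_n^{i-1}-b(s_n^{i-1}),\,\varphi\big) - \t\big(\bm{F}(b(s_n^{i-1}))-\bm{F}(b(s_n^{i-2})),\del\varphi\big) + \big(L^i_{b,n}(w_n^{i-1}-B(s_n^{i-1})),\psi\big),
\]
after absorbing the $\del B(s_n^{i-1})$ term. (The exact grouping of the $L^i_{b,n}$ factors needs care; I would double-check signs against \eqref{eq:ip}.)

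Next, I would estimate the dual norm. The norm $\norm{(\psi,\f)}_{1,i}^2 = \int_\Om (L^i_{b,n}L^i_{B,n}|\psi|^2 + \t|\del\f|^2)$ decouples $\psi$ and $\f$, so the supremum over $(\psi,\f)$ splits into a supremum over $\psi$ (weighted $L^2$) times the $\psi$-part of the residual, plus a supremum over $\f$. For the $\psi$-part, $\sup_\psi (L^i_{b,n}(w_n^{i-1}-B(s_n^{i-1})),\psi)/\|(L^i_{b,n}L^i_{B,n})^{1/2}\psi\| = \|(L^i_{b,n}/L^i_{B,n})^{1/2}(w_n^{i-1}-B(s_n^{i-1}))\|$. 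For the $\f$-part, one has a term $(u_n^{i-1}-b(s_n^{i-1}),\f)$ which is not naturally controlled by $\|\del\f\|$ alone; this is why the two estimators $\eta_{\pm}$ appear, and where the $\pm$ combination comes in. The trick — which is the one genuinely non-obvious step — is that the $\psi$-supremum and the $\f$-supremum are \emph{not} independent in the way one would like, because the term $(u_n^{i-1}-b(s_n^{i-1}),\f)$ lives in the $\f$-slot but is most cheaply bounded by moving it into the $\psi$-slot via the update relation \eqref{eq:reformulated-Update}, or by a polarization-type identity that splits $ab = \tfrac14((a+b)^2-(a-b)^2)$. I expect the $\eta_{\pm}$ structure to arise precisely from choosing $\psi = \pm(L^i_{B,n}/L^i_{b,n})$-scaled multiples tied to $\f$, i.e. testing with correlated pairs, so that the cross term $(u_n^{i-1}-b(s_n^{i-1}),\f)$ gets redistributed. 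The cleanest route is probably: for the \emph{upper} bound, use $ab\le \tfrac12(\tfrac1\rho a^2 + \rho b^2)$ type splittings with $\rho$ chosen so the $u_n^{i-1}-b(s_n^{i-1})$ term lands in the $\psi$-weighted norm; for the \emph{lower} bound, test against the specific near-optimal pair and compute.

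The main obstacle I anticipate is reverse-engineering exactly how the symmetric combination $\tfrac12(\eta_+^2 - \cdots)$ emerges — i.e. proving that $\linErr{i,M}^2$ is bounded \emph{above} by $\tfrac14(\eta_+ + \eta_-)^2$ and \emph{below} by $\tfrac14(\eta_+-\eta_-)^2$. I would try the following concrete manoeuvre: introduce the two vectors $p_\pm := (L^i_{b,n}/L^i_{B,n})^{1/2}(w_n^{i-1}-B(s_n^{i-1})) \pm (L^i_{B,n}/L^i_{b,n})^{1/2}(u_n^{i-1}-b(s_n^{i-1}))$ and the vector $q := \bm{F}(b(s_n^{i-1}))-\bm{F}(b(s_n^{i-2}))$, so $\eta_\pm^{i,M} = (\|p_\pm\|^2 + \t\|q\|^2)^{1/2}$. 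Then rewrite the residual action as a bilinear pairing of $(p_+,p_-,q)$ against a suitably rescaled $(\psi,\f)$, and observe that the supremum of a sum/difference of two such square-root quantities over the constraint set gives exactly $\tfrac12(\eta_+ + \eta_-)$ for the sup and $\tfrac12(\eta_+ - \eta_-)$ (truncated at $0$) for the inf — this is the elementary fact that $\sup/\inf$ of $\alpha\cos\theta + \beta\sin\theta$-type expressions, or more directly that for scalars $\sup_{x}\{xa+ (1-x)(-b): \text{norm constraint}\}$ interpolates between $\|a\|$ and $\|b\|$. Getting the bookkeeping of the weights $L^i_{b,n}, L^i_{B,n}$ right throughout, and confirming that no term involving $s_n^i$ or $w_n^i$ sneaks in (so the estimator is truly computable from step $i-1$), is the part most likely to need careful attention. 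Once the pairing is set up correctly, the $\max(0,\cdot)$ lower bound and the averaged upper bound follow by the Cauchy--Schwarz / triangle inequality applied to $p_+$ and $p_-$ together with $2(w_n^{i-1}-B(s_n^{i-1})) $-type identities $p_+ + p_- = 2(L^i_{b,n}/L^i_{B,n})^{1/2}(w_n^{i-1}-B(s_n^{i-1}))$ and $p_+ - p_- = 2(L^i_{B,n}/L^i_{b,n})^{1/2}(u_n^{i-1}-b(s_n^{i-1}))$.
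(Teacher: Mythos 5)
Your route has a genuine gap at its very first step: the claimed identity $\linErr{i,M}=\norm{\calR_n^i((s_n^{i-1},w_n^{i-1}))}_{-1,i}$ is false in general, because $\calB_n^i$ is \emph{not} symmetric — its off-diagonal couplings are $(L^i_{b,n}s,\f)$ and $-(L^i_{b,n}w,\psi)$, which are not mirror images of each other — so the supremum in the dual norm is not attained at the increment $(\d s^i_n,\d w^i_n)$, and \Cref{lemma:linearization-error} only gives the two-sided equivalence with the continuity constant $L^i_{\calB,n}$. Consequently the dual-norm strategy cannot reproduce the exact constants of \eqref{eq:a-posteriori}: the upper bound would at best be recovered up to reorganization, and the lower bound $\tfrac12(\eta_+-\eta_-)\leq\linErr{i,M}$ would unavoidably pick up the factor $L^i_{\calB,n}$. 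A second symptom of the same problem is the term $(b(s_n^{i-1})-u_n^{i-1},\f)$ in your residual expression: in the dual norm, $\psi$ and $\f$ are independent test functions, so you cannot "move it into the $\psi$-slot", and bounding it by $\|\del\f\|$ requires Poincar\'e, which would inject $C_\Om h_\Om$ into the estimator — a constant that does not appear in $\eta_\pm$. Your polarization intuition ($p_\pm$, $q$, and the identities $p_++p_-$, $p_+-p_-$) is exactly the right algebra, but the "sup/inf over correlated test pairs" step is left as a conjecture and is not a proof.

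The paper avoids the residual altogether and works directly with the squared error. Subtracting \eqref{eq:linearization_gen.a} at iterations $i$ and $i-1$ and testing with $\d w^i_n$ gives $(\d u^i_n,\d w^i_n)+\t\|\del\d w^i_n\|^2=\t(\bm{F}(b(s_n^{i-1}))-\bm{F}(b(s_n^{i-2})),\del\d w^i_n)$; inserting this into the definition \eqref{eq:lin-err} and eliminating $\d u^i_n$, $\d w^i_n$ through the update relations $u^i_n-b(s^{i-1}_n)=L^i_{b,n}\d s^i_n$, $w^i_n-B(s^{i-1}_n)=L^i_{B,n}\d s^i_n$ yields the exact identity in which the quadratic terms in $\d s^i_n$ cancel, leaving a pairing of $p_+$ against $(L^i_{b,n}L^i_{B,n})^{1/2}\d s^i_n$, the advective term against $\del\d w^i_n$, and crucially the sign-indefinite scalar $-(u^{i-1}_n-b(s^{i-1}_n),\,w^{i-1}_n-B(s^{i-1}_n))$. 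Cauchy--Schwarz then gives the quadratic inequality $(\linErr{i,M})^2\leq\eta_+\,\linErr{i,M}-(u^{i-1}_n-b(s^{i-1}_n),w^{i-1}_n-B(s^{i-1}_n))$, and completing the square, $4(\linErr{i,M}-\tfrac12\eta_+)^2\leq\eta_+^2-4(u^{i-1}_n-b(s^{i-1}_n),w^{i-1}_n-B(s^{i-1}_n))=\eta_-^2$ (this is your polarization identity), which delivers both the upper and the lower bound simultaneously, with no duality constant. If you want to salvage your plan, replace the dual-norm characterization by this energy identity; the rest of your $p_\pm$ bookkeeping then goes through essentially verbatim.
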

\begin{remark}[Linearization estimator $\linEst{i,M}$ and the lower bound on error] Observe that, without needing to compute $(s^i_n,u^i_n,w^i_n)$, \eqref{eq:a-posteriori} still gives a fully computable estimate of the linearization error $\linErr{i,M}$ if M-scheme with a particular $M>0$ value is used in the $i^{\rm th}$ iteration. Hence,  $\linEst{i,M}$ can be used to choose the optimal value of $M>0$ which minimizes the linearization error. On the other hand, \eqref{eq:a-posteriori} also provides a lower bound on the linearization error $\linErr{i,M}$. However, the positivity of this lower bound cannot be guaranteed. 
\end{remark}
  
\begin{proof}[Proof of \Cref{theo:a-posteriori}]
We use again the shorthand $\delta s_n^{i} = s_n^{i} - s_n^{i-1}$ and $\delta w_n^{i} = w_n^{i} - w_n^{i-1}$. Subtracting equations \eqref{eq:linearization_gen.a} for iterations $({i+1})$ and $i$ and inserting the test function $\f=\delta w_n^{i}$, one has
\begin{align}
    \left(\delta u_n^{i}, \delta w_n^{i}\right)+\t \|\del \d w^{i}_n\|^2 =\t (\bm{F}(b(s_n^{i-1})) - \bm{F}(b(s_n^{i-2})),\del \delta w_n^{i}).
\end{align}
Using above, observe from (\ref{eq:norm}) and (\ref{eq:lin-err}) that
\begin{align}
  \left(\linErr{i,M} \right)^2 &=  \int_\Om \left( {L^{i}_{B,n}}{L^{i}_{b,n}}|\delta s_n^{i}|^2 + \t |\del \delta w_n^{i}|^2 \right) \nonumber\\
  &=  \left( {L^{i}_{B,n}}\delta s_n^{i},{L^{i}_{b,n}}\delta s_n^{i}\right)- \left(\delta u_n^{i}, \delta w_n^{i}\right)-\t (\bm{F}(b(s_n^{i-1})) - \bm{F}(b(s_n^{i-2})),\del \delta w_n^{i}).\label{eq:error}
\end{align}   
Now, we expand the first two terms to obtain
\begin{align*}
&\left( {L^{i}_{B,n}}\delta s_n^{i},{L^{i}_{b,n}}\delta s_n^{i}\right)- \left( u_n^{i}-b(s^{i-1}_n)-(u^{i-1}_n-b(s^{i-1}_n)), w_n^{i}-B(s^{i-1}_n)-(w^{i-1}_n -B(s^{i-1}_n)) \right)\nonumber\\
  &\overset{\eqref{eq:linearization_gen}}=\left( {L^{i}_{B,n}}\delta s_n^{i},{L^{i}_{b,n}}\delta s_n^{i}\right)- \left( {L^{i}_{b,n}}\delta s_n^{i}-(u^{i-1}_n-b(s^{i-1}_n)), {L^{i}_{B,n}}\delta s_n^{i}-(w^{i-1}_n -B(s^{i-1}_n)) \right)\\
  &=(u^{i-1}_n-b(s^{i-1}_n), L^i_{B,n} \d s^i_n  )+ (L^i_{b,n} \d s^i_n , w^{i-1}_n -B(s^{i-1}_n))- (u^{i-1}_n-b(s^{i-1}_n), w^{i-1}_n -B(s^{i-1}_n))\\
  &=\left({L^{i}_{b,n}} (w_n^{i-1}-B(s^{i-1}_n)) + {L^{i}_{B,n}} (u^{i-1}_n- b(s^{i-1}_n)), \d s^i_n \right).
\end{align*}
Inserting this back into \eqref{eq:error} we have
\begin{align}
  \left(\linErr{i,M} \right)^2 &=\left(\left(\tfrac{L^{i}_{b,n}}{L^{i}_{B,n}} \right)^{\frac{1}{2}}(w_n^{i-1}-B(s^{i-1}_n)) + \left(\tfrac{L^{i}_{B,n}}{L^{i}_{b,n}} \right)^{\frac{1}{2}}(u^{i-1}_n- b(s^{i-1}_n)), (L^{i}_{b,n}L^{i}_{B,n})^{\frac{1}{2}} \d s^i_n \right)\nonumber\\
  &\qquad -\t (\bm{F}(b(s_n^{i-1})) - \bm{F}(b(s_n^{i-2})),\del \delta w_n^{i})- (u^{i-1}_n-b(s^{i-1}_n), w^{i-1}_n -B(s^{i-1}_n))\nonumber\\
  & \leq \eta_{\!_{\,{\rm lin},n,+}}^{i,M}\left( \|\left( {L^{i}_{b,n}}{L^{i}_{B,n}} \right)^{\frac{1}{2}} \delta s_n^{i}\|^2 + \t\|\del \delta w_n^{i}\|^2\right)^{\frac{1}{2}}- (u^{i-1}_n-b(s^{i-1}_n), w^{i-1}_n -B(s^{i-1}_n)).\nonumber
\end{align} 
In the above, the Cauchy-Schwarz inequality along with the definition of $\eta_{\!_{\,{\rm lin},n,+}}^{i,M}$ has been used. Hence, from \eqref{eq:lin-err}, we get
\begin{align}\label{eq:express}
    \left(\linErr{i,M}\right)^2&\leq \linErr{i,M} \eta_{\!_{\,{\rm lin},n,+}}^{i,M}  -  (u^{i-1}_n-b(s^{i-1}_n), w^{i-1}_n -B(s^{i-1}_n)).
\end{align}
Hence, we get that
\begin{align*}
    &4\left(\linErr{i,M}-\frac{1}{2}\eta_{\!_{\,{\rm lin},n,+}}^{i,M}\right)^2 \leq \big(\eta_{\!_{\,{\rm lin},n,+}}^{i,M}\big)^2-4\big(u^{i-1}_n-b(s^{i-1}_n), w^{i-1}_n -B(s^{i-1}_n)\big) \nonumber\\
    & = \left \| \left(\tfrac{L^{i}_{b,n}}{L^{i}_{B,n}} \right)^{\frac{1}{2}}(w_n^{i-1}-B(s^{i-1}_n)) + \left(\tfrac{L^{i}_{B,n}}{L^{i}_{b,n}} \right)^{\frac{1}{2}}(u^{i-1}_n- b(s^{i-1}_n))\right\|^2-4\big(u^{i-1}_n-b(s^{i-1}_n), w^{i-1}_n -B(s^{i-1}_n)\big)\nonumber\\
      & \qquad +\t\left\|\bm{F}(b(s^{i-1}_n))-\bm{F}(b(s^{i-2}_n))\right\|^2\nonumber\\
     &=  \left \| \left(\tfrac{L^{i}_{b,n}}{L^{i}_{B,n}} \right)^{\frac{1}{2}}(w_n^{i-1}-B(s^{i-1}_n)) - \left(\tfrac{L^{i}_{B,n}}{L^{i}_{b,n}} \right)^{\frac{1}{2}}(u^{i-1}_n- b(s^{i-1}_n))\right\|^2+\t\left\|\bm{F}(b(s^{i-1}_n))-\bm{F}(b(s^{i-2}_n))\right\|^2\\
     &\overset{\eqref{eq:def_eta1}}=[\eta_{\!_{\,{\rm lin},n,-}}^{i,M}]^2
\end{align*}
Taking the square root and rearranging, we finally get \eqref{eq:a-posteriori}.
\end{proof}

\subsection{$M$-Adaptive algorithm}
Based on the above estimate, the algorithm elaborating the flow-chart in \Cref{fig:flow-chart} reads: 
 \begin{algorithm}[H]
  \caption{$M$-Adaptive algorithm}
  \label{alg:M-Adap}
  \begin{algorithmic}
  \REQUIRE $n\geq 1$, $s_{n-1}\in L^2(\Om)$ given
\ENSURE $s^0_n=s_{n-1}$, $M = 1$, and stopping criteria $\epsilon_{\rm stop}\ll 1$ 
\FOR{$i=1, 2, \dots$}
\STATE \textbf{Solve} \eqref{eq:linearization_gen} 
\STATE \textbf{Update} error $ \linErr{i}:=  \norm{(s^{i}_n-s^{i-1}_n,w^i_n-w^{i-1}_n)}_{1,i}.$
      \IF{error $>\epsilon_{\rm stop}$ and $i>1$}\vspace{0.5em}
           \FOR{$j=-10,-9, \dots,-2$}  
           \IF{$\eta^{i+1,10^j}_{{\rm lin},n}\leq \linErr{i}$}
           \STATE break 
           \ENDIF
           \ENDFOR
        \STATE Set $M=10^j$.
\vspace{0.5em}
    \ELSIF{error $<\epsilon_{\rm stop}$}
        \STATE Set $s_n = s_n^i$ and $w_n = w_n^i$
        \STATE break
    \ENDIF
\ENDFOR
  \end{algorithmic}\label{algo:Adaptive}
\end{algorithm}

\section{Numerical results}\label{section5}

In this section, we investigate the proposed iterative schemes numerically.
For solving the linear elliptic PDEs corresponding to each iteration, we will use a two-point flux approximation finite volume scheme with rectangular grids having mesh size $h>0$. For the M-schemes, since $L^{i}_{B,n}\geq 2M\t>0$ in all of $\Om$, we solve Problem \ref{prob:alter} since in this formulation $w^i_n$ can be solved first and $s^i_{n}$ updated subsequently. On the other hand, for Newton scheme, we solve Problem \ref{prob:3} since $L^i_{B,n}=0$ occurs in a subdomain, and thus, the two formulations are no longer equivalent. 
The code is based on Matlab and is available on GitHub\footnote{Link to the GitHub repository: \href{https://github.com/ayeshajaved00/Doubly-Degenerate-Non-Linear-Advection-Diffusion-Reaction-equation}{https://github.com/ayeshajaved00/Doubly-Degenerate-Non-Linear-Advection-Diffusion-Reaction-equation}}.

We consider four different test cases with increasing complexity:
\vspace{-.5em}
\begin{enumerate}[label=(\roman*)]
\setlength{\itemsep}{0pt}
    \item The porous medium equation ($\Phi(u)=u^m$).
    \item A double degenerate toy-model where $\Phi$ is multivalued at $\vs=1$.
    \item The biofilm growth model: $\Phi'$ vanishes at $0$ and $\Phi$ becomes infinite at $\vs=1$.
    \item The Richards equation with van Genuchten parametrization for unsaturated flow through soil (double degenerate and with nonlinear advection).
\end{enumerate}
\vspace{-.5em}
The last two test cases are examples of double degenerate models in real-life applications.

We investigate the above problems in one and two space dimensions (referred to as 1D and 2D cases henceforth) with the corresponding numerical domains being $(-10,10)$, and $(-10,10)^2$ respectively.
For all four test cases, we have opted for the Barenblatt solution \cite{vazquez2007porous}
\begin{align}\label{eq:Barenblatt}
  u_{\,\!_{\rm BB}}(\bm{x},t) = (1 + t)^{-\nu} \left[\max\left(\gamma - \frac{\nu(m-1)|\bm{x}|^2}{2dm(t+1)^{2\nu/d}}, 0\right)\right]^{1/(m-1)} \;\;\text{ with }\;\; \nu = \frac{1}{(m-1 + \frac{2}{d})},
\end{align}
 at $t=0$ as our initial condition, see \Cref{fig:Barenblatt}. Here, $d$ is the space dimension, $m>1$ is a parameter, $\bm{x}$ the space variable, and $t$ time.  Since $u_{\,\!_{\rm BB}}$ is an exact solution of the porous medium equation with $\Phi(u)=u^m$, this choice allows us to verify our code. Furthermore, it also enables us to compare schemes since $u_{\,\!_{\rm BB}}$ possesses a sharp front and can be made to reach 1 by altering $\gamma$, see \Cref{fig:Barenblatt}. In the simulations, $m=6$ is used unless stated otherwise.
\begin{figure}[H]
\centering
    \begin{subfigure}[t]{0.48\textwidth}   \includegraphics[width=\textwidth]{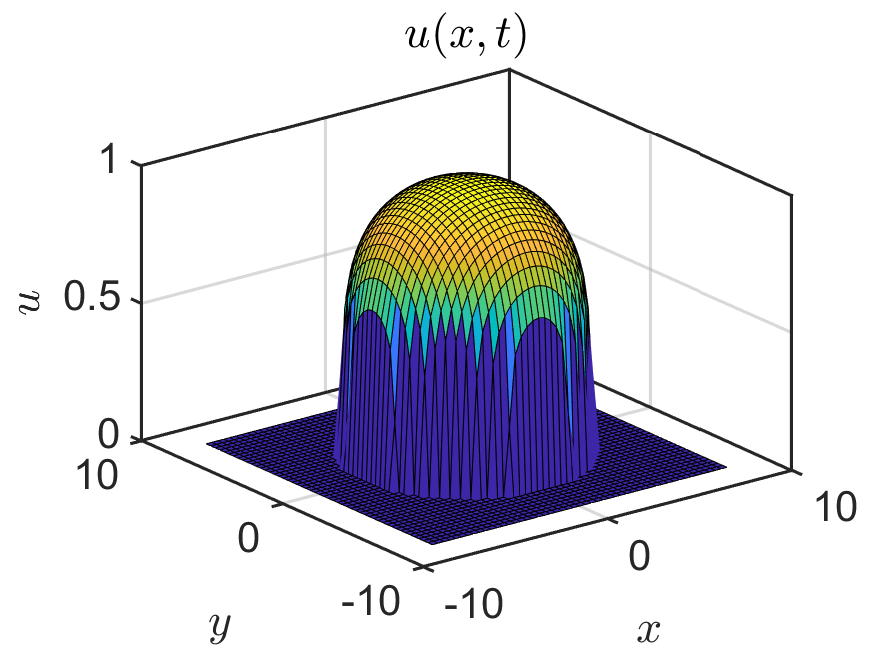}
    \end{subfigure}
    \begin{subfigure}[t]{0.48\textwidth}      \includegraphics[width=\textwidth]{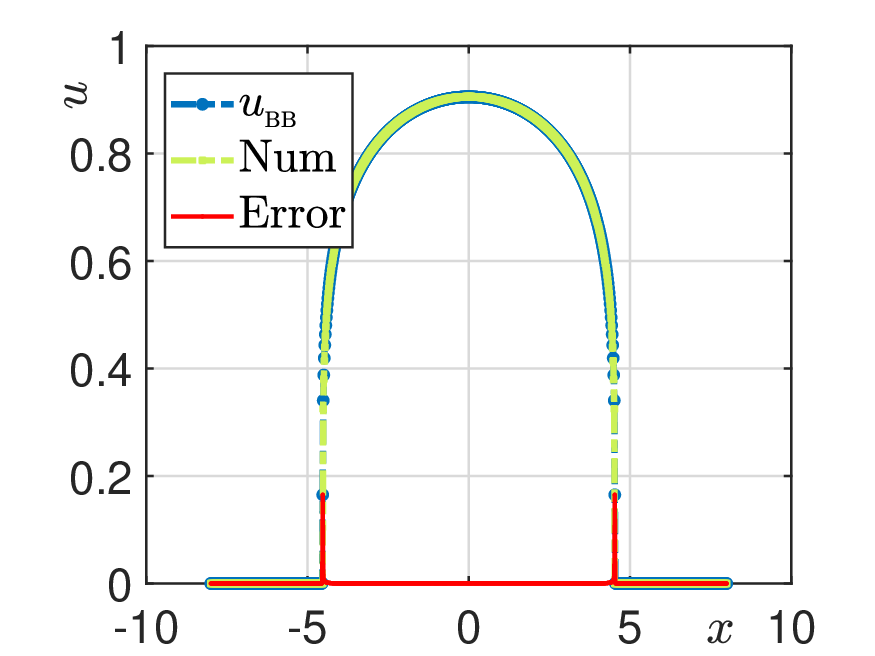}
    \end{subfigure}
\caption{\textbf{(left)} Barenblatt solution for $d=2$, $m = 6$, $\gamma=1$. \textbf{(right)} A comparison between the exact and numerical solutions for $h=0.016$, $\tau = 0.1$, $T=1.0$ at the cross-section $y=0$.}
\label{fig:Barenblatt}
\end{figure}

For all the test cases, the L-scheme was found to be much slower compared to the other schemes, although it converged in every instance. Hence, it is not represented in this section. For the fixed M-scheme, the value of $M=0.01$ is fixed unless stated otherwise. This value yields converge for all test cases considered. However, as shown in \cite{mitra2019modified,smeets2024robust}, the convergence behavior can really be improved by choosing an optimal value of $M$. The adaptive scheme solves the issue of finding the optimal $M$  by choosing it on the fly.  For a small, given tolerance \(\epsilon_{\rm stop} > 0\), we aim to ensure that the numerical scheme converges to a sufficiently accurate solution. To achieve this, we define a stopping criterion and terminate the iterations when the error measure $\linErr{i}$, introduced in \eqref{eq:lin-err}, satisfies
\begin{align}\label{eq:Stop}
   \linErr{i} := \left( \int_\Omega \left( L^i_{b,n} L^i_{B,n} |s^i_n - s^{i-1}_n|^2 \, + \tau\left| \nabla \left( w_n^i - w_n^{i-1} \right) \right|^2 \right)\,\right)^{\frac{1}{2}} \leq \epsilon_{\rm stop}.
\end{align}
To analyze the convergence behavior of the Newton scheme, M-scheme, and adaptive M-scheme with respect to discretization parameters, we present the averaged iteration counts across varying time-step sizes $\tau$ and mesh sizes $h$ for examples illustrated in Figures \ref{fig:PME_meshplots}, \ref{fig:DD_meshplot}, \ref{Biofilm_meshplot}, and \ref{Richards_meshplots}.
 Furthermore, to assess the convergence rates and order of convergence of the schemes, we consider an error $\mathcal{E}_{\mathrm{fix},n}^{i}$ similar to $\linErr{i}$ but in a fixed norm independent of $L^i_{b/B,n}$, i.e.,
 \begin{align}\label{eq:standard_L2}
     \mathcal{E}_{\mathrm{fix},n}^{i} := \left( \int_\Omega \left( |s^i_n - s^{i-1}_n|^2 \, + \tau \left| \nabla \left( w_n^i - w_n^{i-1} \right) \right|^2 \right)\right)^{\frac{1}{2}}.
\end{align}
The overall contraction rate $\a$ of the scheme at a given time-step $n\in \N$ is computed as the mean of $\alpha^i$, which are the ratios of $\mathcal{E}_{\mathrm{fix},n}^{i} $ between consecutive iterations:
\begin{equation}
\alpha^i := { \mathcal{E}_{\mathrm{fix},n}^{i}}\slash{ \mathcal{E}_{\mathrm{fix},n}^{i-1}},\qquad \forall\, i\in \N.
\label{eq:contraction}
\end{equation}
The mean is over the last three $\a^i$ values until criteria \eqref{eq:Stop} is satisfied. A smaller value of \( \alpha \) indicates faster convergence. 
The order of convergence for each scheme is defined as 
\begin{equation}
p:={\log(\alpha_i)}\slash {\log(\alpha_{i-1})},
\label{convergence_order}
\end{equation}
for the last iteration $i$ before meeting criteria \eqref{eq:Stop}.
The contraction rates and convergence orders, derived from \eqref{eq:contraction} and \eqref{convergence_order}, are shown on the left and right sides of \Cref{fig:PME_cont_order,fig:DD_cont_order,fig:Biofilm_cont_order,fig:Richars_Cont_order} respectively.
\subsection{The porous medium equation (PME)}\label{sec:PME}
First, we consider the porous medium equation
\begin{equation*}
\partial_t u = \Delta u^m \quad \text{ for }\quad  m>1
\end{equation*}
which corresponds to $\Phi(u)=u^m$, $f=0$, and $\bm{F}=\bm{0}$.
It shows degeneracy at $u = 0$ since $\Phi'$ vanishes. Observe that the Barenblatt solution $u_{\,\!_{\rm BB}}$ in \eqref{eq:Barenblatt} solves the PME exactly. Hence, it is used as a control for our code, see \Cref{fig:Barenblatt} (right).
\begin{figure}[h!]
    \centering
    \begin{subfigure}[b]{0.32\textwidth}
        \centering
        \subcaption{1D, $\tau = 10^{-1}$}
        \includegraphics[width=\textwidth]{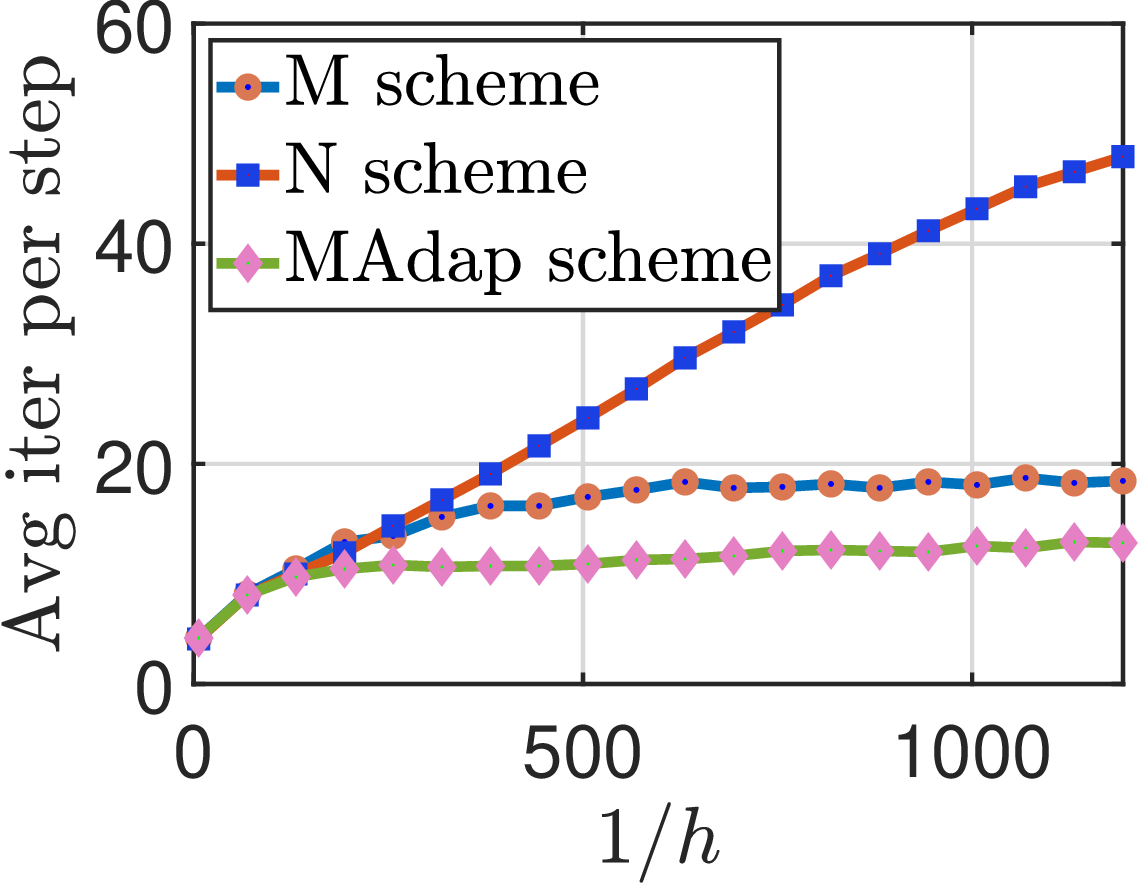}
    \end{subfigure}
    \hfill
    \begin{subfigure}[b]{0.32\textwidth}
        \centering
        \subcaption{1D, $\tau = 10^{-1.5}$}
\includegraphics[width=\textwidth]{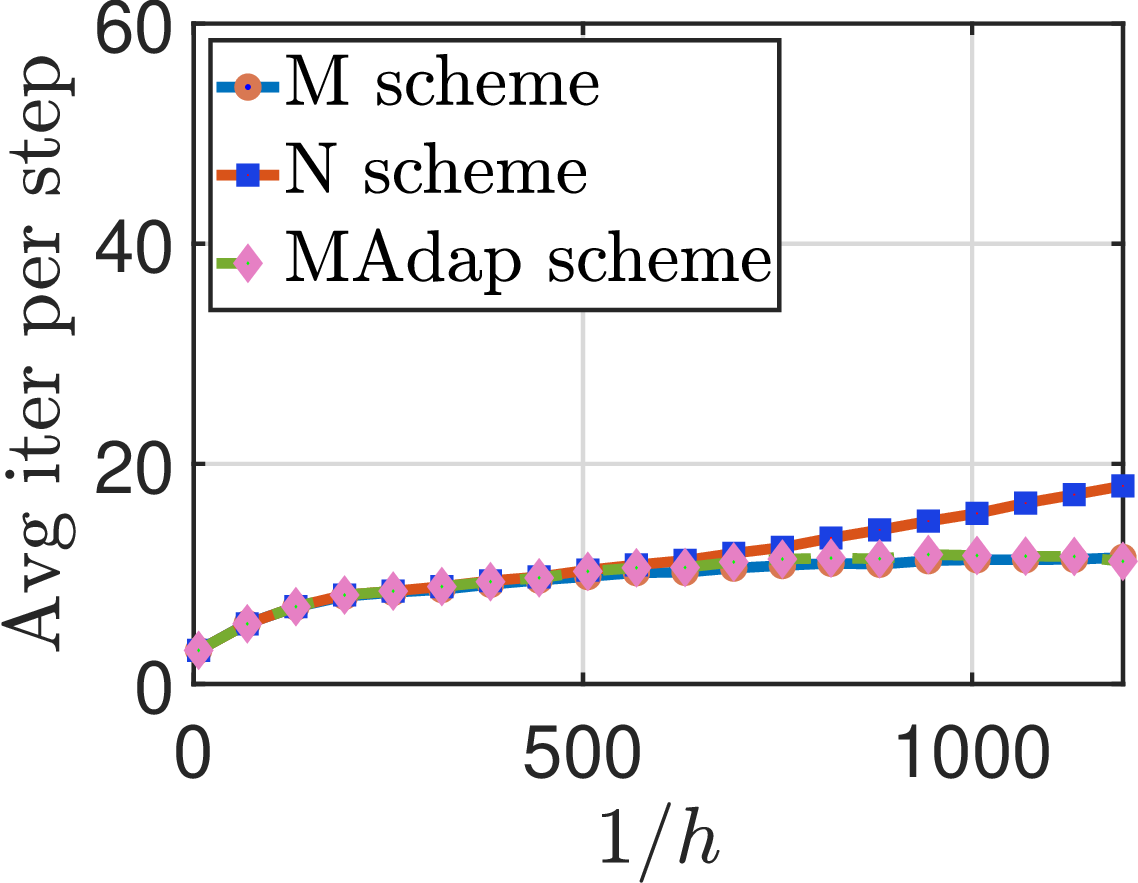}
    \end{subfigure}
    \hfill
    \begin{subfigure}[b]{0.32\textwidth}
        \centering
        \subcaption{1D, $\tau = 10^{-2}$}
        \includegraphics[width=\textwidth]{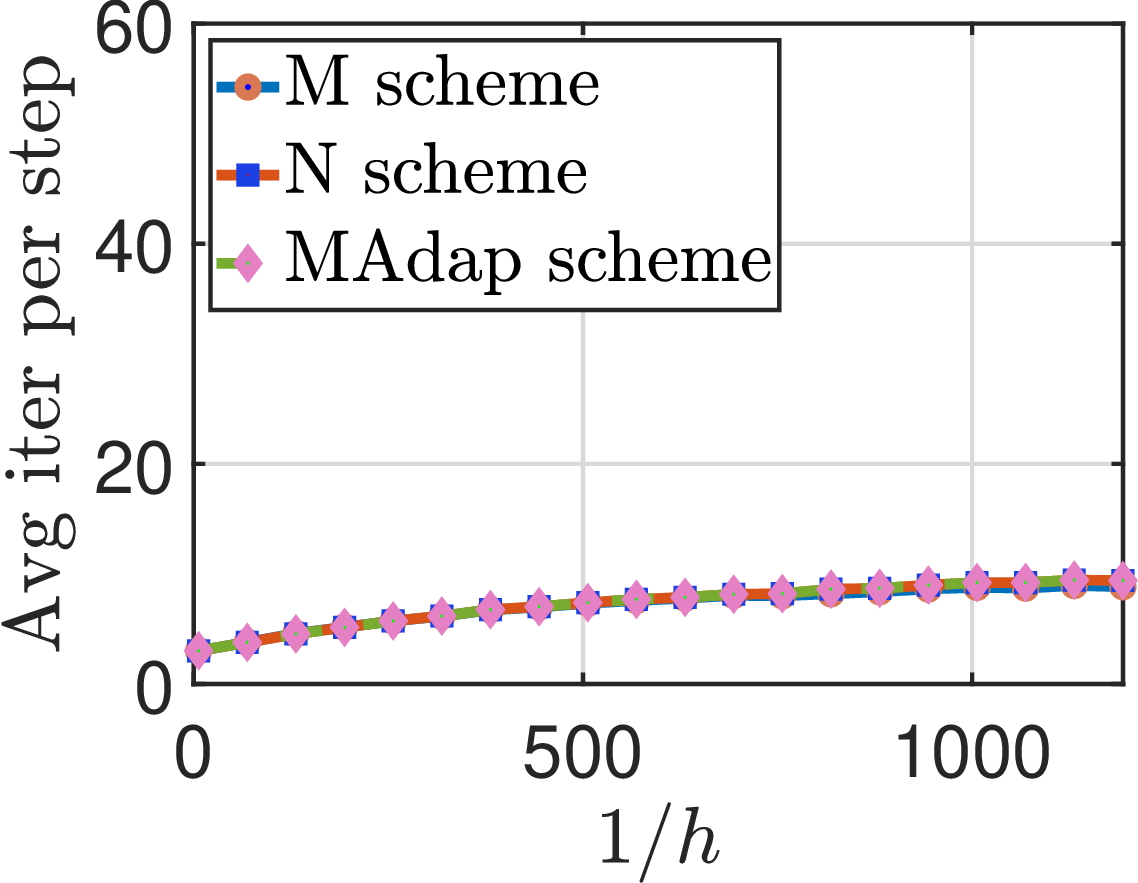}
    \end{subfigure}

    \begin{subfigure}[t]{0.32\textwidth}
        \centering
        \subcaption{2D, $\tau = 10^{-1}$}  \includegraphics[width=\textwidth]{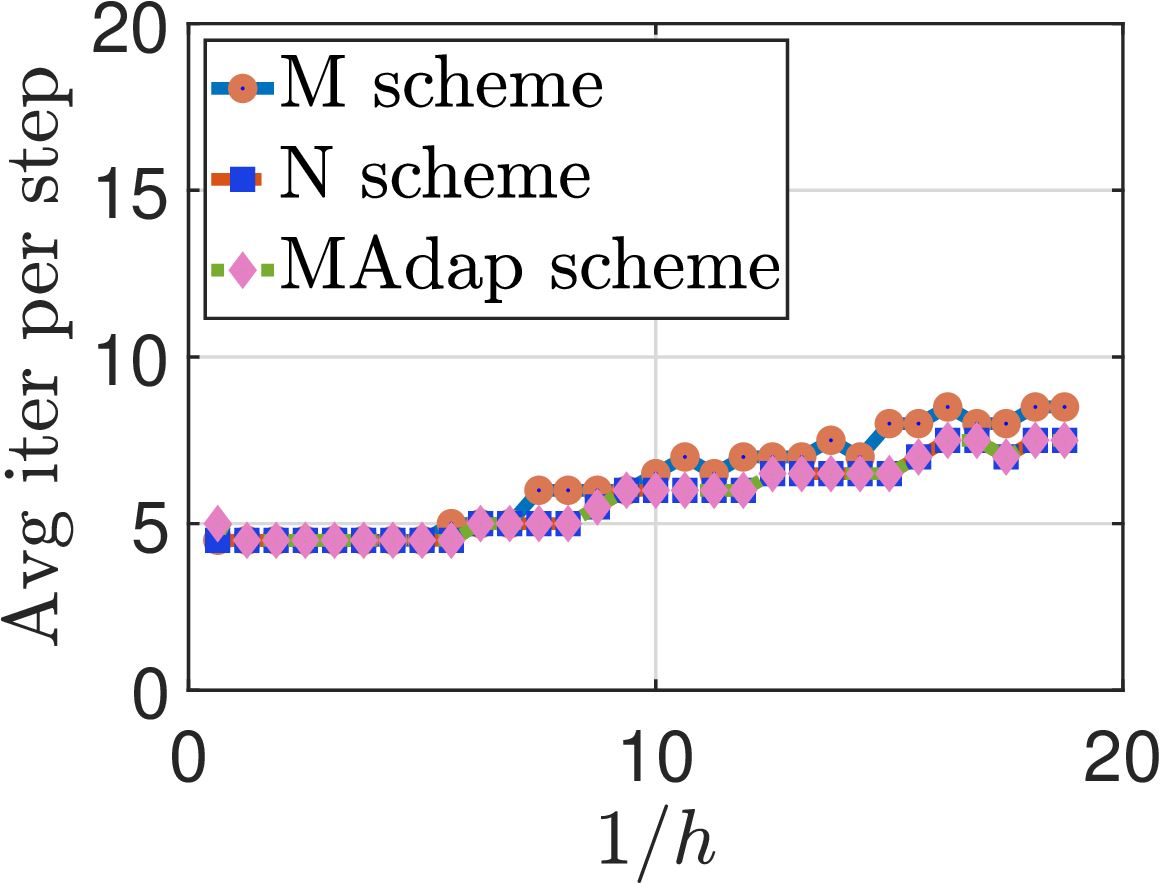}
    \end{subfigure}
    \hfill
    \begin{subfigure}[t]{0.32\textwidth}
        \centering
        \subcaption{2D, $\tau = 10^{-1.5}$}
\includegraphics[width=\textwidth]{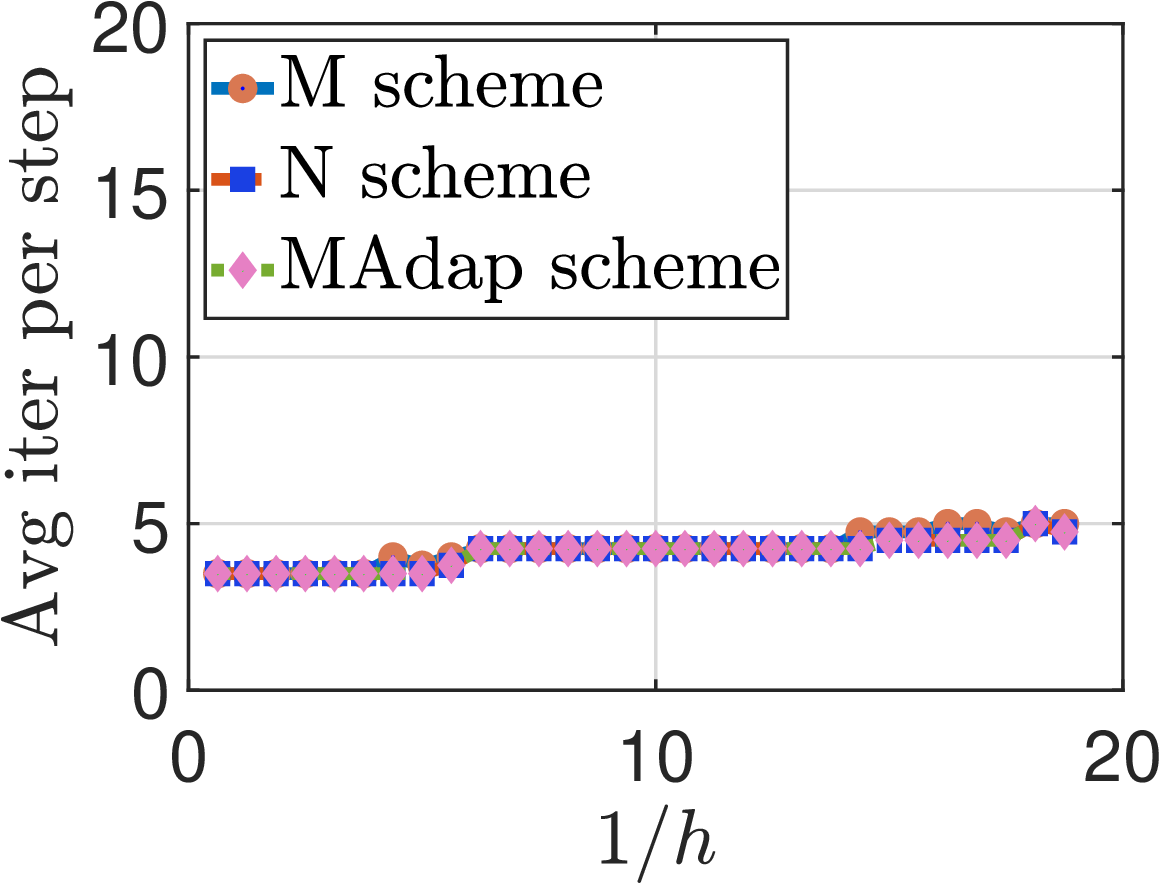}
    \end{subfigure}
    \hfill
    \begin{subfigure}[t]{0.32\textwidth}
        \centering
                \subcaption{2D, $\tau = 10^{-2}$}
        \includegraphics[width=\textwidth]{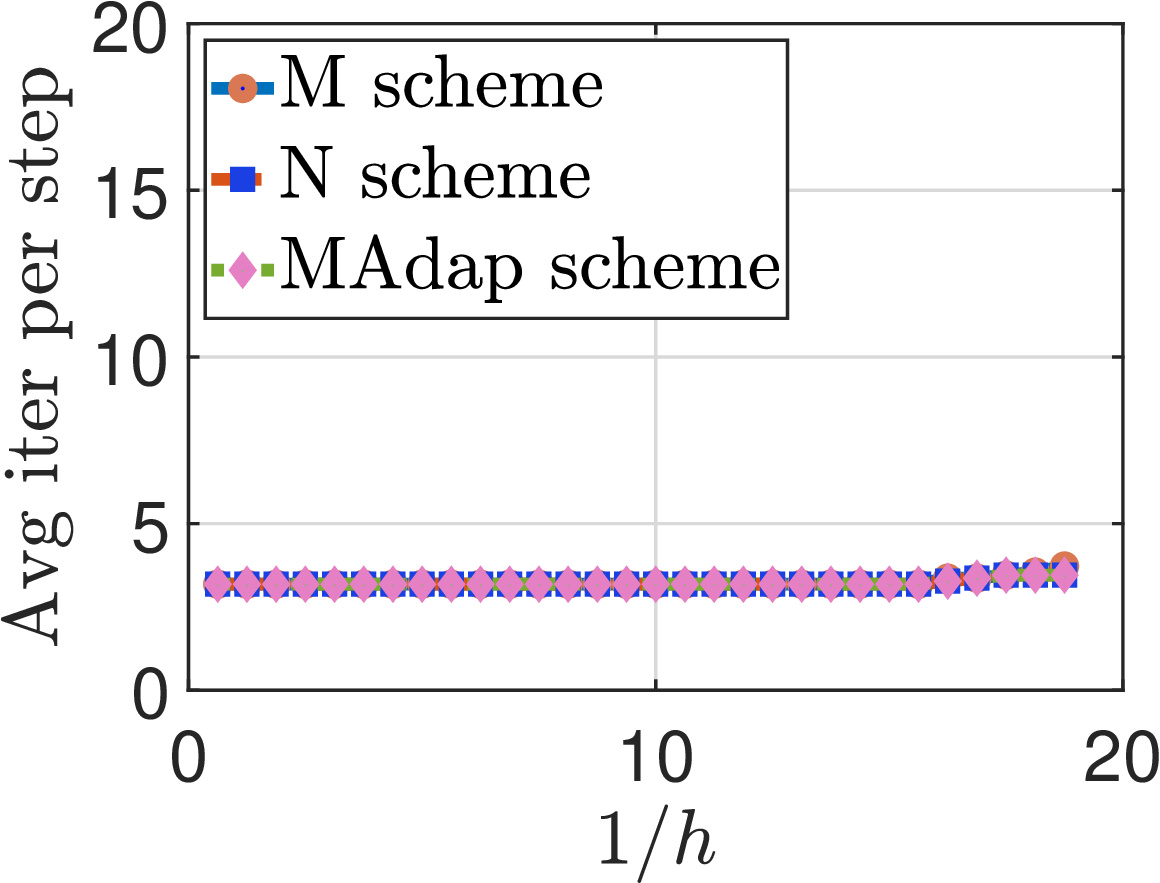}
    \end{subfigure}
        \caption{[\Cref{sec:PME}] Average iterations required per time-step for PME in 1D \textbf{(top row)} and 2D \textbf{(bottom row)} for varying mesh sizes. The stopping criterion is based on \eqref{eq:Stop}, with a tolerance of \( \epsilon_{\rm stop} = 10^{-6} \). Here, $T=1$ for 1D and $0.1$ for 2D.}
    \label{fig:PME_meshplots}
\end{figure}
The results of the performance of the Newton scheme, M-scheme, and adaptive M-scheme for 1D and 2D cases are shown in \Cref{fig:PME_meshplots}. We note that both in 1D and 2D, as $\tau$ gets smaller, the amount of iterations decreases. Moreover, the number of iterations required for the M-scheme remains consistent across different mesh sizes $(h)$. In contrast, the adaptive M-scheme exhibits superior performance in iteration count.  However, the key observation is that our proposed schemes outperform the Newton scheme in 1D for finer mesh sizes. The iterative schemes perform similarly to each other for smaller time-step sizes since the $M\t$ term becomes negligible.
\begin{figure}[h!]
\begin{subfigure}[t]{0.49\textwidth}
        \centering
  \includegraphics[width=\textwidth]{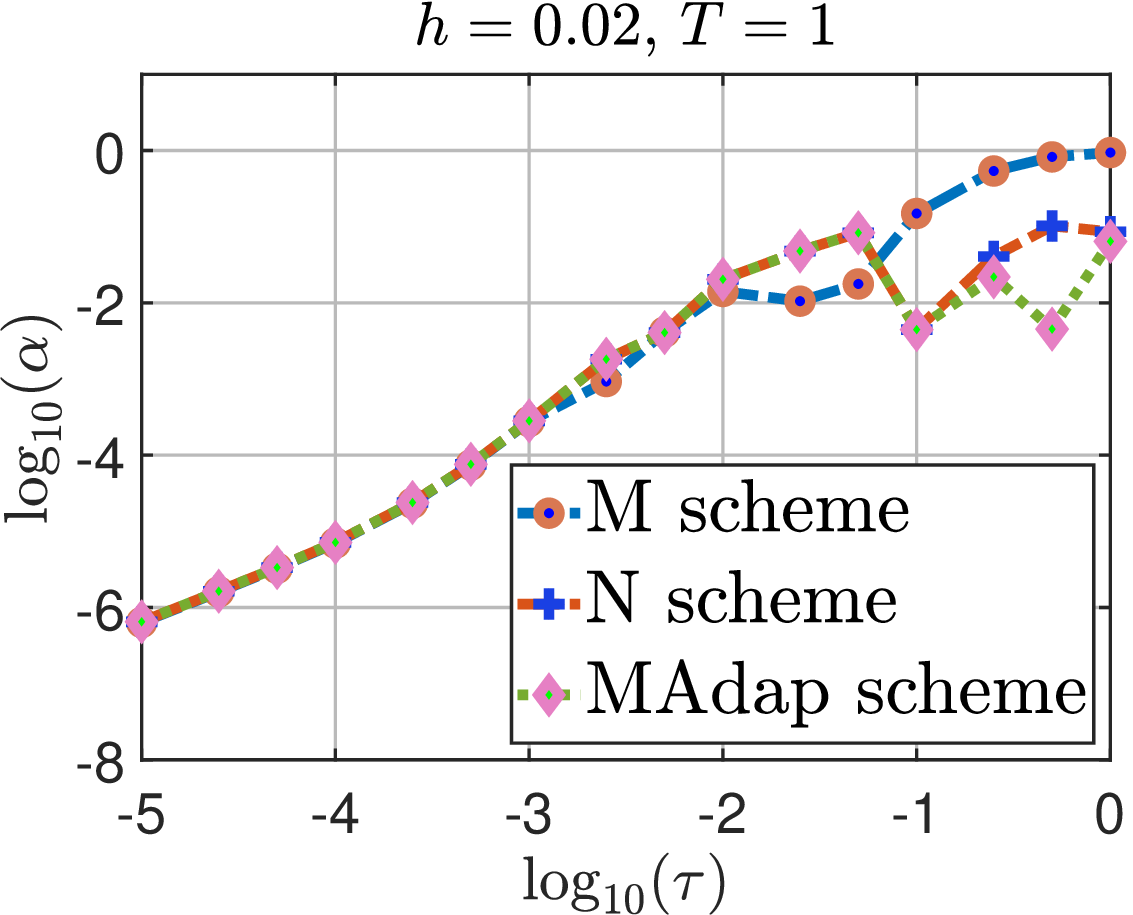}
\end{subfigure}
\hfill
\begin{subfigure}[t]{0.49\textwidth}
        \centering
  \includegraphics[width=\textwidth]{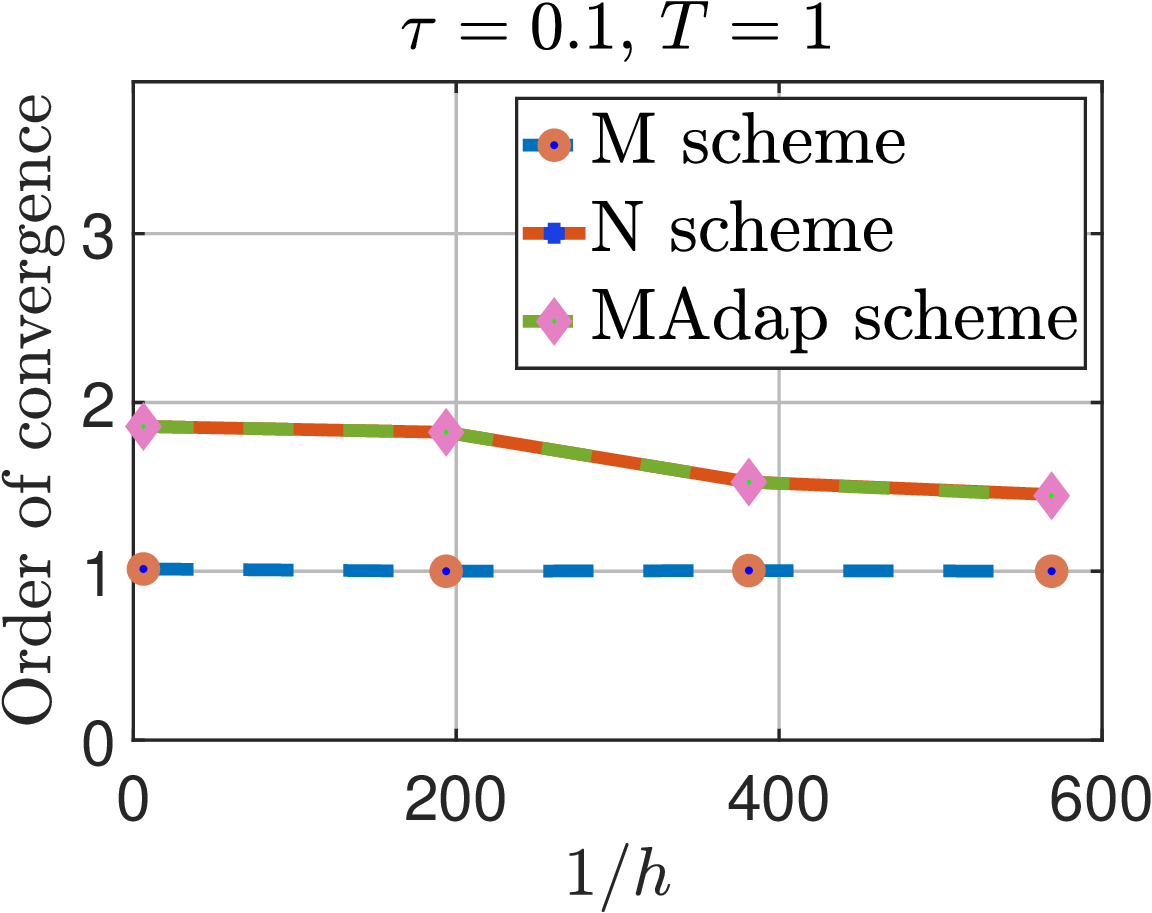}
\end{subfigure}
\caption{[\Cref{sec:PME}]\textbf{(left)} Average contraction rate $(\alpha)$ vs. time-step size $(\tau)$ for the 1D case. The stopping criterion here uses a tolerance of \( \epsilon_{\rm stop} = 10^{-10} \). \textbf{(right)} order of convergence of the iterative methods. }
    \label{fig:PME_cont_order}
    \end{figure}
    
\Cref{fig:PME_cont_order} (left) shows how the contraction rate, as defined in \eqref{eq:contraction}, varies with $\tau$ for different iterative schemes. It is observed that for small enough time-step sizes, $\a$ scales superlinearly with $\t$!! This is despite the non-degeneracy condition required for linear convergence not being satisfied. However, in 1D, there are only two points at the sharp front, and hence, at least linear scaling with $\t$ was expected. For more complicated problems, we will see this scaling being violated, see e.g. \Cref{sec:toy}. Another observation is that along with Newton, the adaptive scheme also shows quadratic convergence when the error is small, a fact validated by subsequent numerical results.  
\begin{figure}[h!]
    \centering
    \begin{subfigure}[b]{0.48\textwidth}
        \centering
         \subcaption{1D, $\tau = 0.1$}\includegraphics[width=\textwidth]{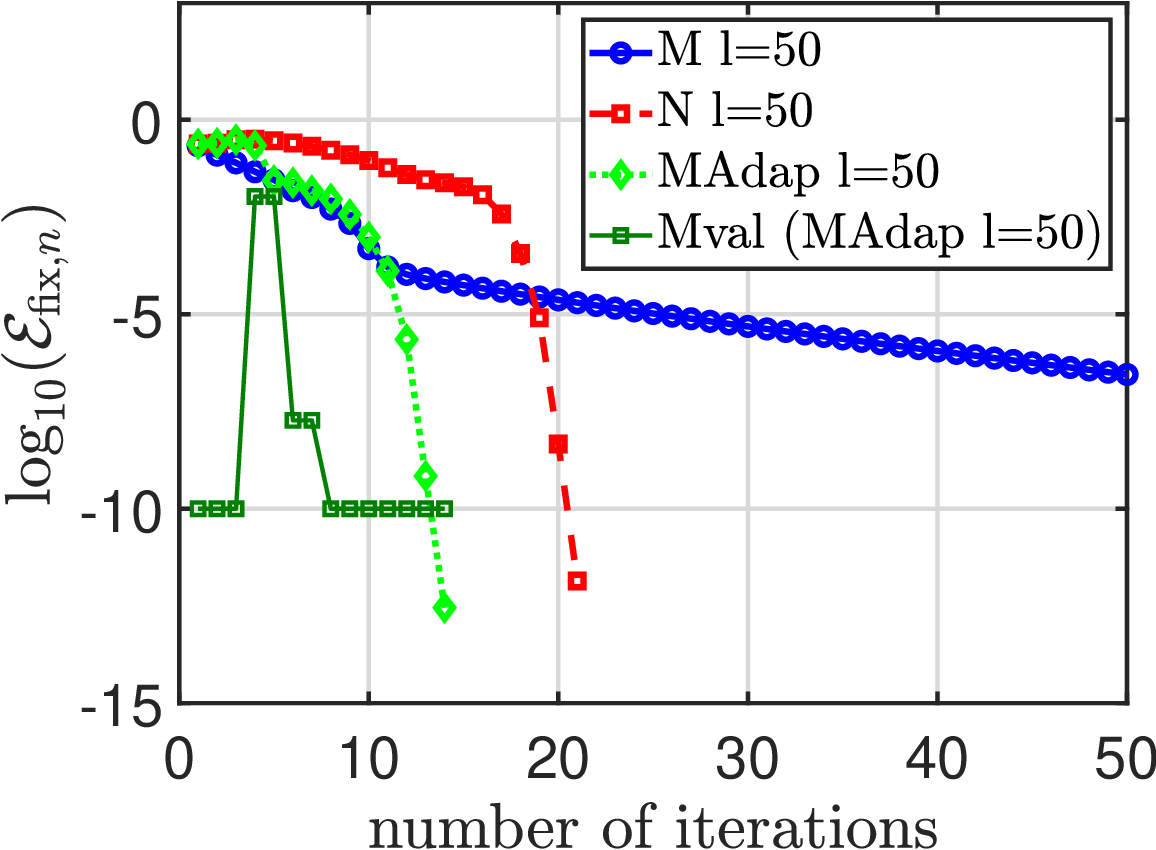}
    \end{subfigure}
    \begin{subfigure}[b]{0.48\textwidth}
        \centering
         \subcaption{1D, $\tau = 0.1$}\includegraphics[width=\textwidth]{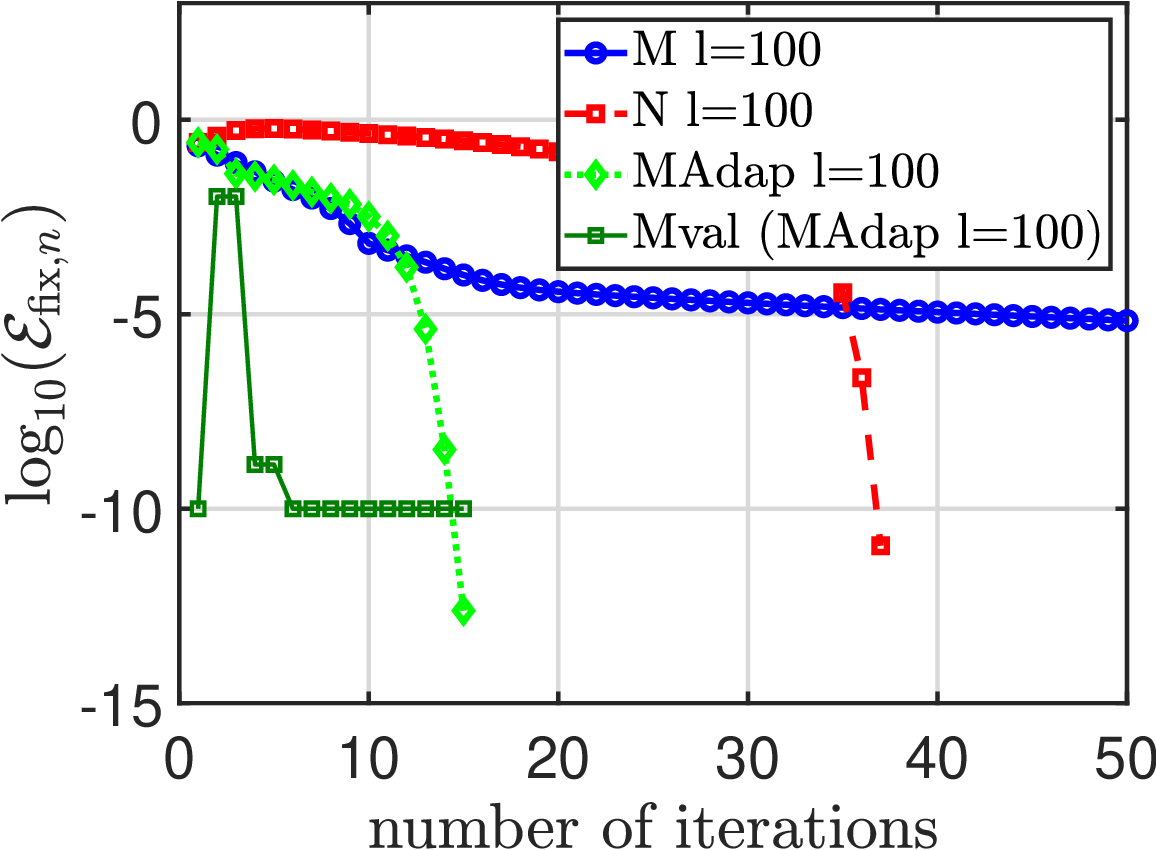}
    \end{subfigure}
    \caption{[\Cref{sec:PME}] Error $\mathcal{E}_{\mathrm{fix},n}^{i}$ vs. iteration $i$ for different iterative schemes for the first time-step ($\tau=0.1$) and mesh size $h=0.16/l$. 
    The Mval quantity shows how the $M$ varies with iteration for the adaptive scheme. }
        \label{fig:PME_ErrVsiter}
\end{figure}
\begin{figure}[h!]
    \centering
    \begin{subfigure}[b]{0.48\textwidth}
        \centering
         \includegraphics[width=\textwidth]{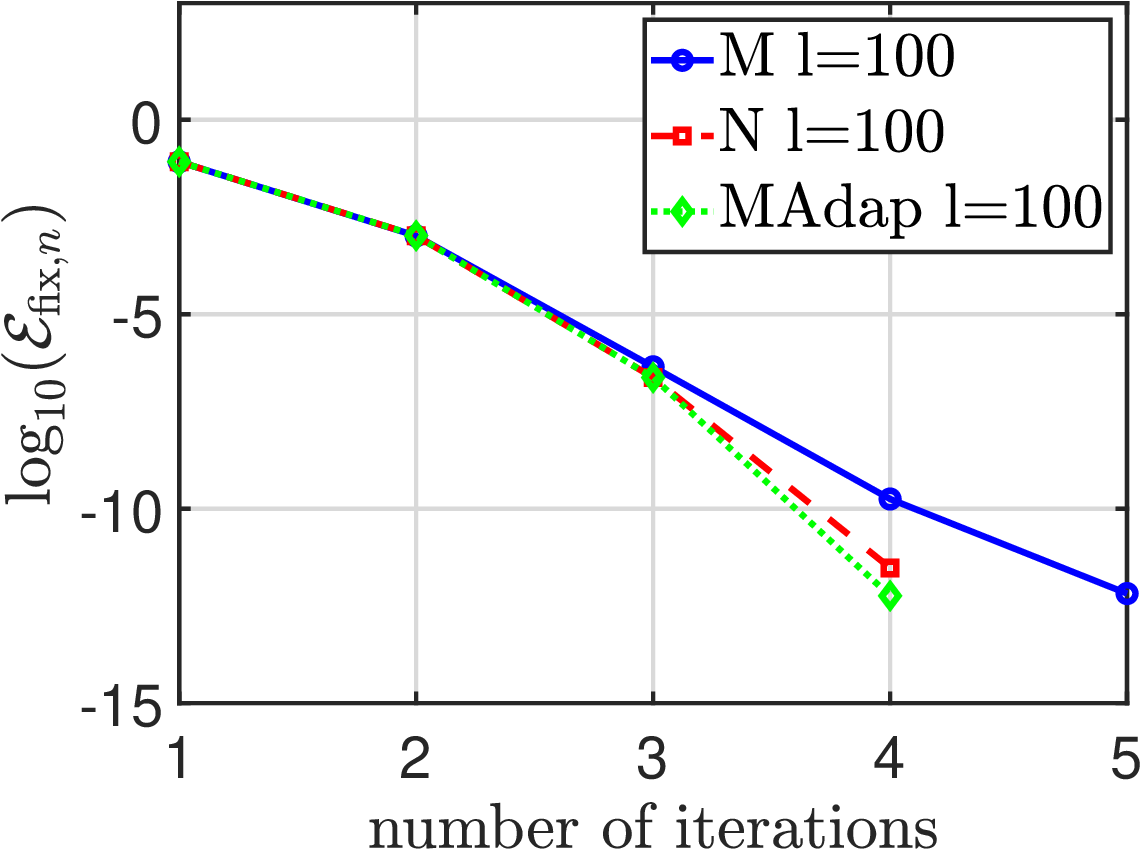}
         \subcaption{Nonlinear diffusion without degeneracy}
    \end{subfigure}
    \begin{subfigure}[b]{0.48\textwidth}
        \centering
        \includegraphics[width=\textwidth]{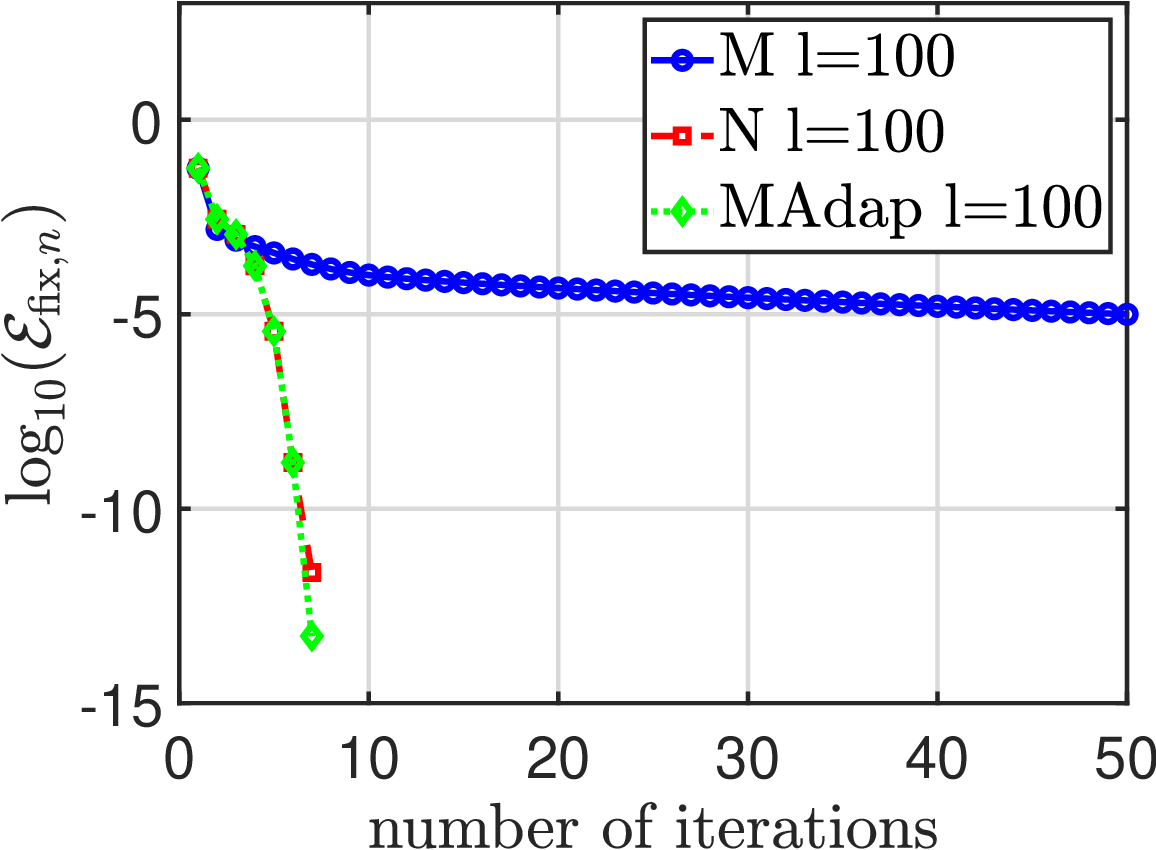}
        \subcaption{Degenerate diffusion}
    \end{subfigure}
    \caption{[\Cref{sec:PME}] Influence of diffusion function $B(s)$ on schemes behavior for the first time-step ($\tau=0.1$) and mesh size $h=0.16/l$: (left) With a nonlinear choice of $B(s)$, all schemes converge smoothly. (right) Introducing a degenerate $B(s)$ leads to plateauing of M-scheme.}
        \label{fig:PME_ErrVsiter_ModifyB(s)}
\end{figure}
\Cref{fig:PME_ErrVsiter} shows how the error $\mathcal{E}_{\mathrm{fix},n}^{i}$ decays with iterations for two mesh sizes $h$ and a given $\tau=0.1$. Newton's scheme exhibits quadratic convergence. However, as the mesh is refined, the Newton scheme requires significantly more iterations to achieve the same error level, highlighting its sensitivity to mesh refinement. 
In contrast, the fixed M-scheme asymptotically reaches a linear convergence regime when the errors are approximately of the order of the $M\t$ term, it demonstrates remarkably fast convergence in the absence of degenerate diffusion (see \Cref{fig:PME_ErrVsiter_ModifyB(s)}), reaching low error levels within just a few iterations. On the other hand, the adaptive scheme is slower than M-scheme in the beginning, but after some iterations the $M$-values become less and less, and the scheme converges quadratically. Thus, it reaches error levels below $10^{-10}$ faster, and it reaches every error level faster than Newton.

\subsection{A double degenerate toy-model}\label{sec:toy}
Now, we investigate a double degenerate toy-model where $\Phi$ becomes multivalue at $\vs=1$:
\begin{align}\label{eq:toy}
\frac{\partial u}{\partial t} = \Delta \Phi(u) +  \frac{1}{2}\, u, \quad \text{ where } \quad \Phi(u) =
\begin{cases}
0 & \text{if } u \leq 0, \\
1-\sqrt{1-u^2} & \text{if } 0 \leq u < 1,\\
 [1, \infty] & \text{if } u = 1.
\end{cases}
\end{align}
For this problem, using \eqref{eq:bBexpression}, the functions  $b,\, B$ can be expressed explicitly, i.e.,
\begin{align*}
    b(s):=\begin{cases}
        s &\text{ if } s\leq 1/\sqrt{2},\\
        \sqrt{1-(\sqrt{2}-s)^2} &\text{ if } 1/\sqrt{2}\leq s\leq \sqrt{2},\\
        1 &\text{ otherwise},
    \end{cases}
\quad
    B(s):=\begin{cases}
        0 &\text{ if } s\leq 0,\\
        1-\sqrt{(1-s^2)} &\text{ if } 0\leq s\leq 1/\sqrt{2},\\
        s+1-\sqrt{2} &\text{ otherwise}.
    \end{cases}
\end{align*}
\Cref{fig:doubledegenerate} (left) shows the functions $\Phi$, $b$, and $B$, whereas, the (right) plot shows the numerical solution for this case which has a plateau at 1. This example is designed to show the effect of the parabolic-elliptic degeneracy at $\vs=1$, and hence $\g=1.5$ is chosen in the initial condition \eqref{eq:Barenblatt}, and a reaction term of $\frac{1}{2}u$ is added to stabilize the plateau.
\begin{figure}[h!]
    \centering
    \begin{subfigure}[t]{0.45\textwidth}
        \centering
    \includegraphics[width=\textwidth]{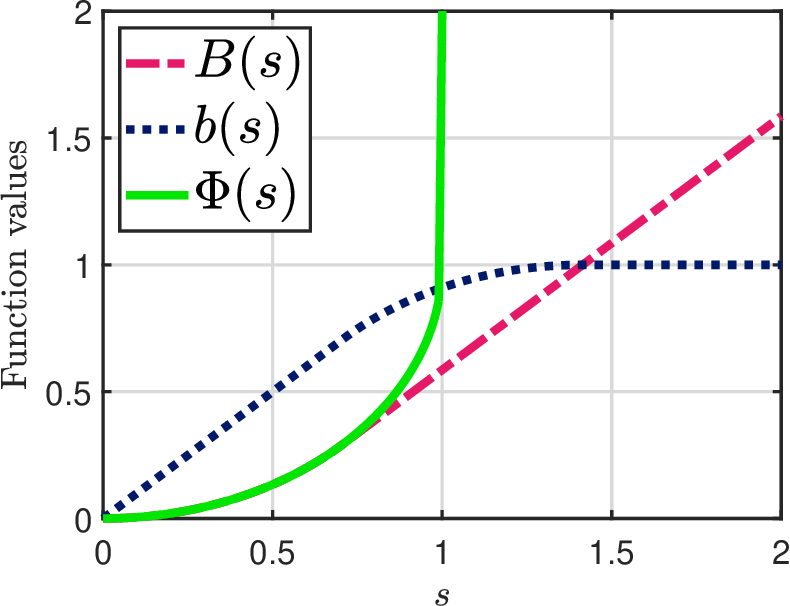}
    \end{subfigure}
    \begin{subfigure}[t]{0.5\textwidth}
        \centering
    \includegraphics[width=\textwidth]{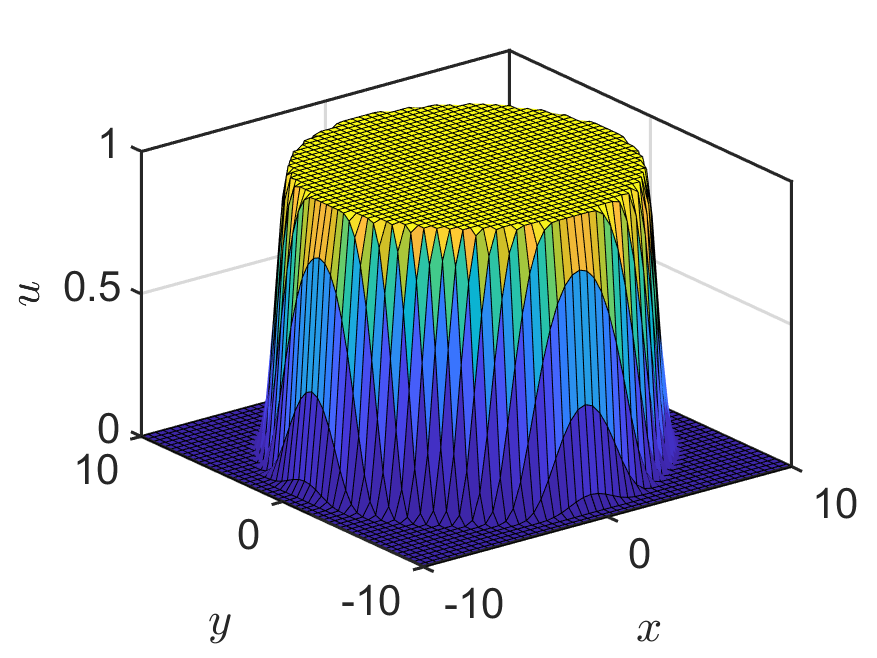}
    \end{subfigure}
    \caption{[\Cref{sec:toy}]\textbf{(left)} The functions $b$ and $B$ computed from $\Phi$ in \eqref{eq:toy}. \textbf{(right)} Numerical solution at $T = 1$ with a time-step size of $\tau = 0.1$. 
    }
    \label{fig:doubledegenerate}
\end{figure}

\begin{figure}[h!]
    \centering
    \begin{subfigure}[b]{0.32\textwidth}
        \centering
        \subcaption{1D, $\tau = 10^{-1}$}
        \includegraphics[width=\textwidth]{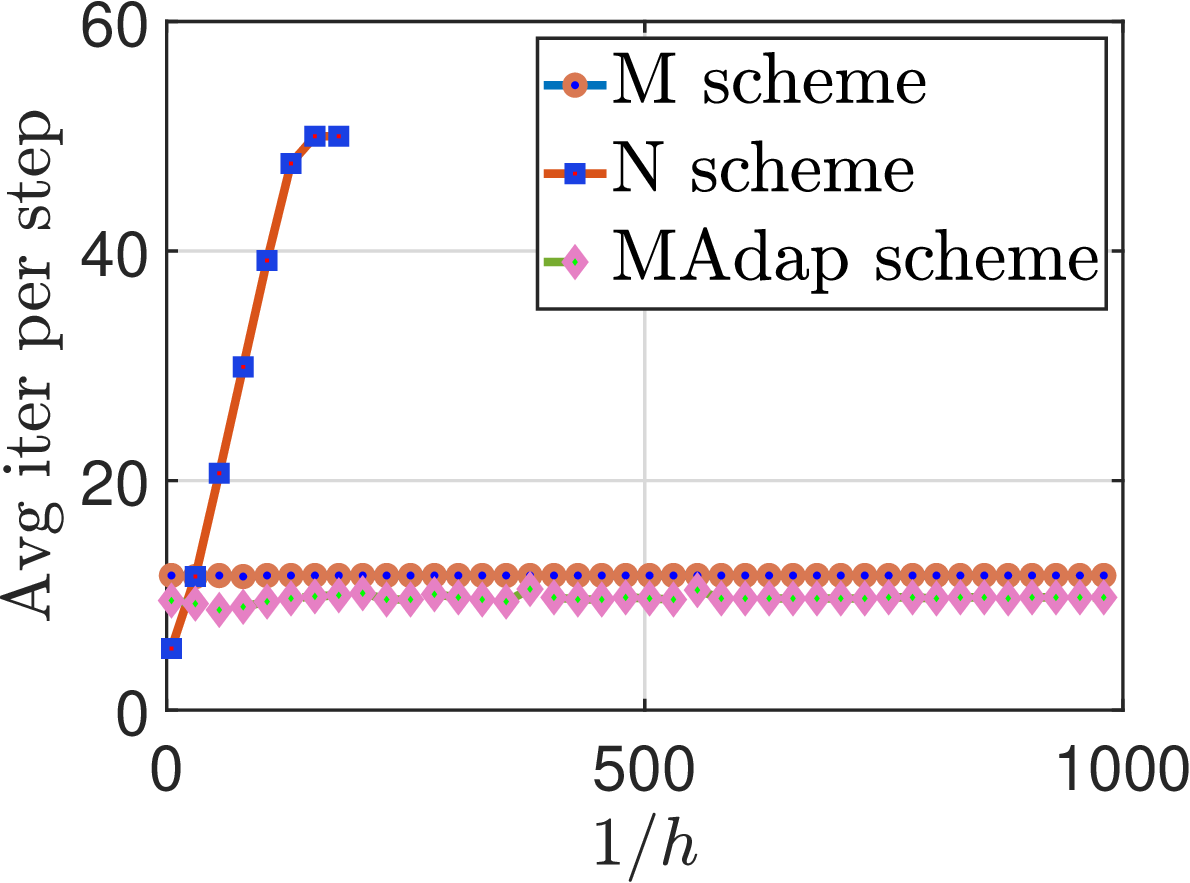}

    \end{subfigure}
    \hfill
    \begin{subfigure}[b]{0.32\textwidth}
        \centering
        \subcaption{1D, $\tau = 10^{-1.5}$}
        \includegraphics[width=\textwidth]{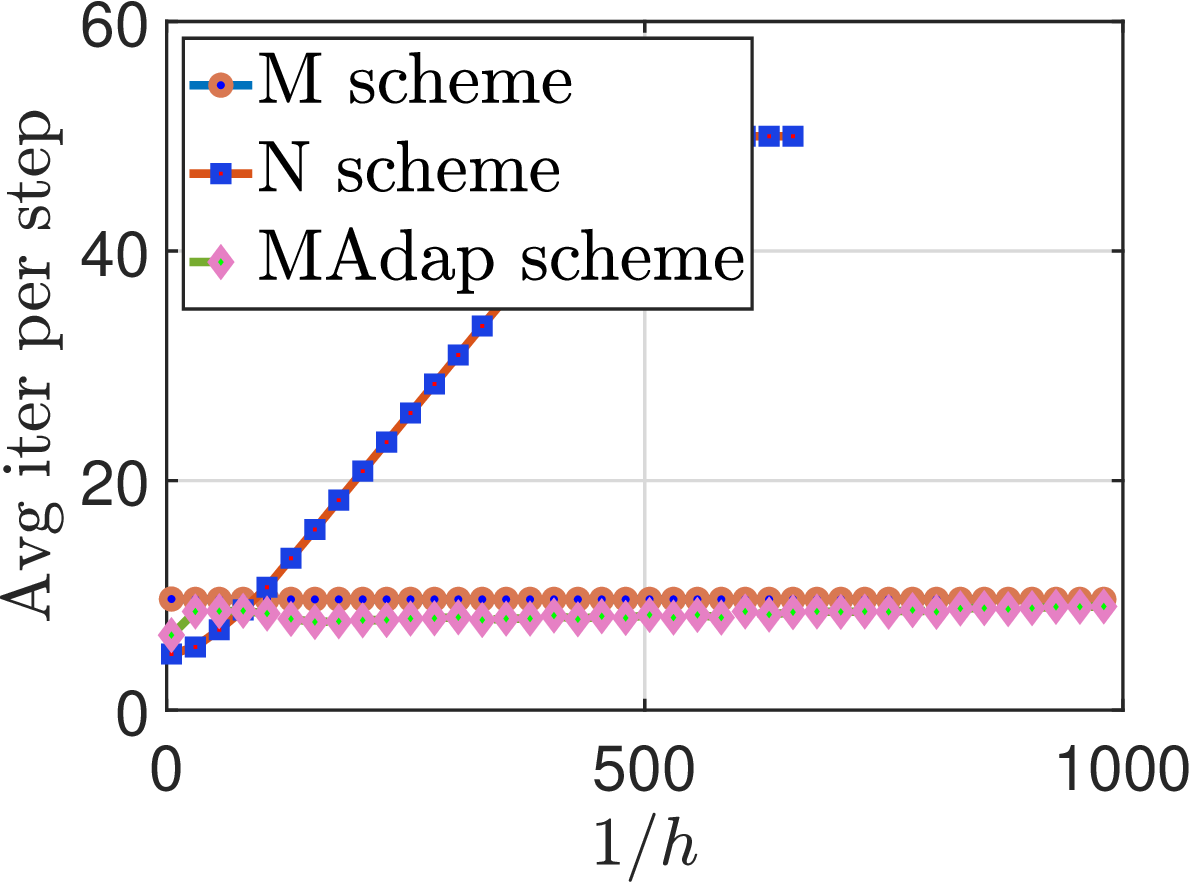}
    \end{subfigure}
    \hfill
    \begin{subfigure}[b]{0.32\textwidth}
        \centering
        \subcaption{1D, $\tau = 10^{-2}$}
        \includegraphics[width=\textwidth]{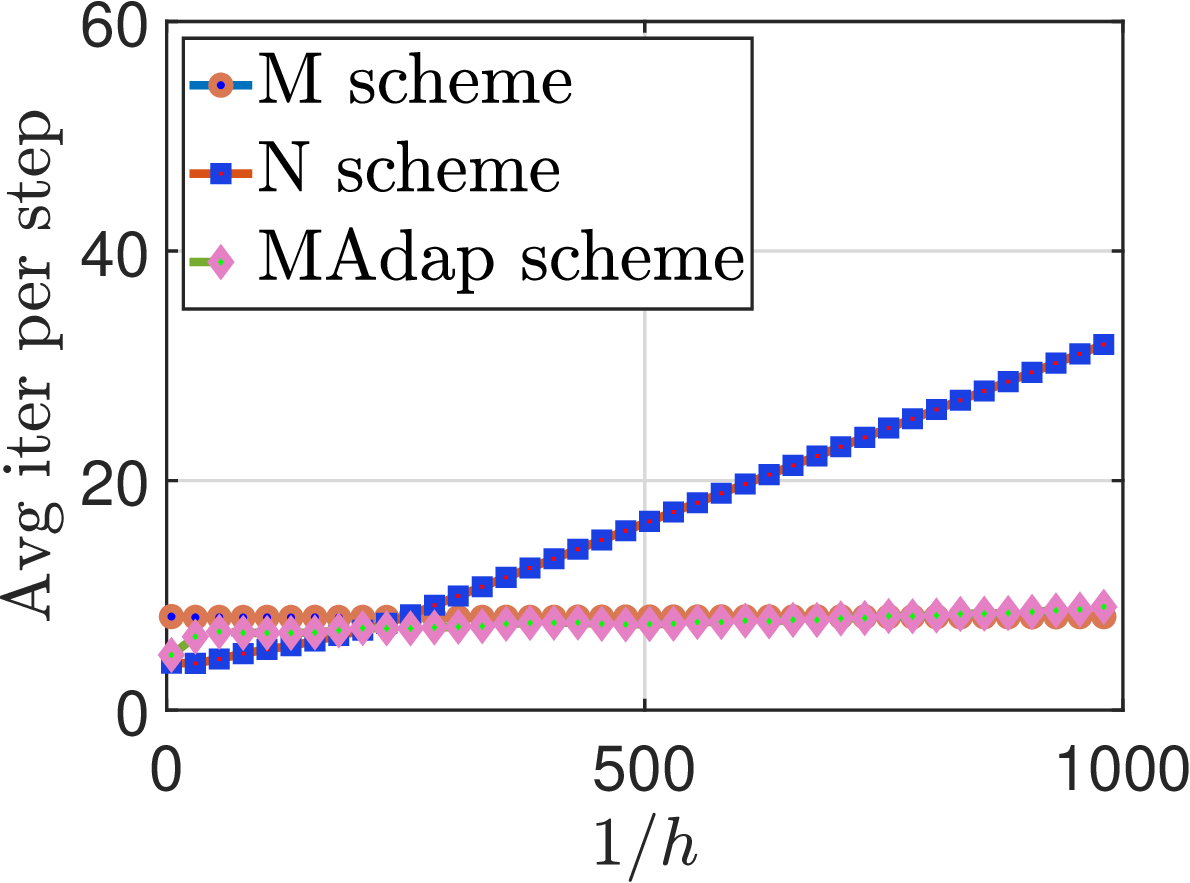}
    \end{subfigure}

    \begin{subfigure}[t]{0.32\textwidth}
        \centering
        \subcaption{2D, $\tau = 10^{-1}$}
        \includegraphics[width=\textwidth]{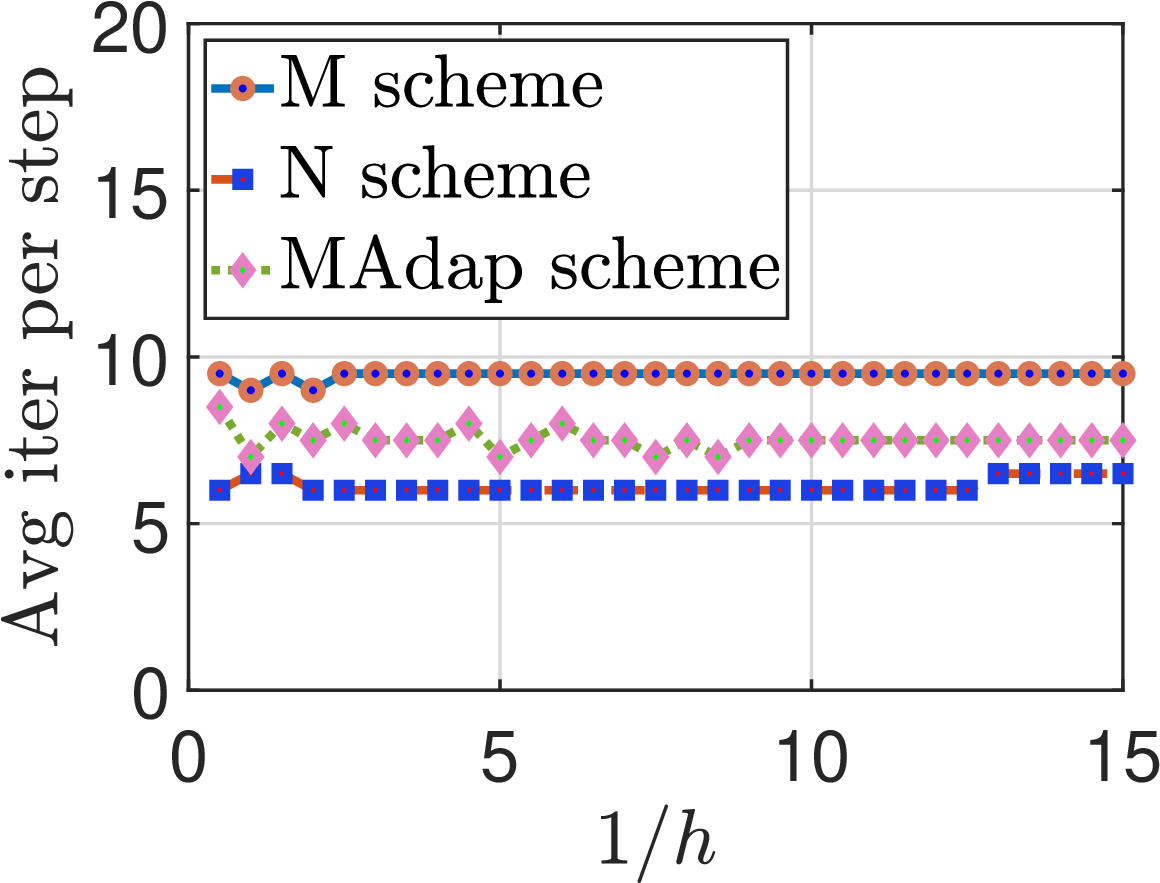}
    \end{subfigure}
    \hfill
    \begin{subfigure}[t]{0.32\textwidth}
        \centering
        \subcaption{2D, $\tau = 10^{-1.5}$}
        \includegraphics[width=\textwidth]{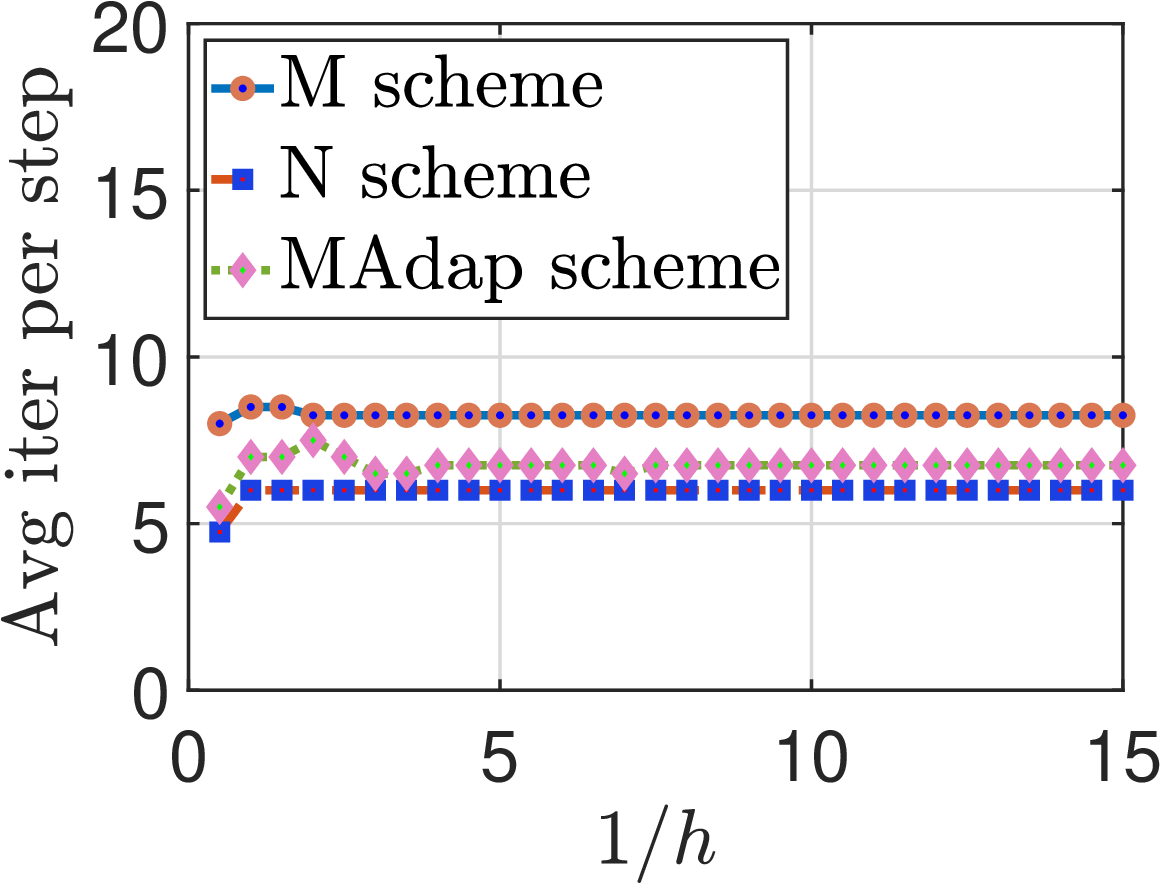}
    \end{subfigure}
    \hfill
    \begin{subfigure}[t]{0.32\textwidth}
        \centering
                \subcaption{2D, $\tau = 10^{-2}$}
        \includegraphics[width=\textwidth]{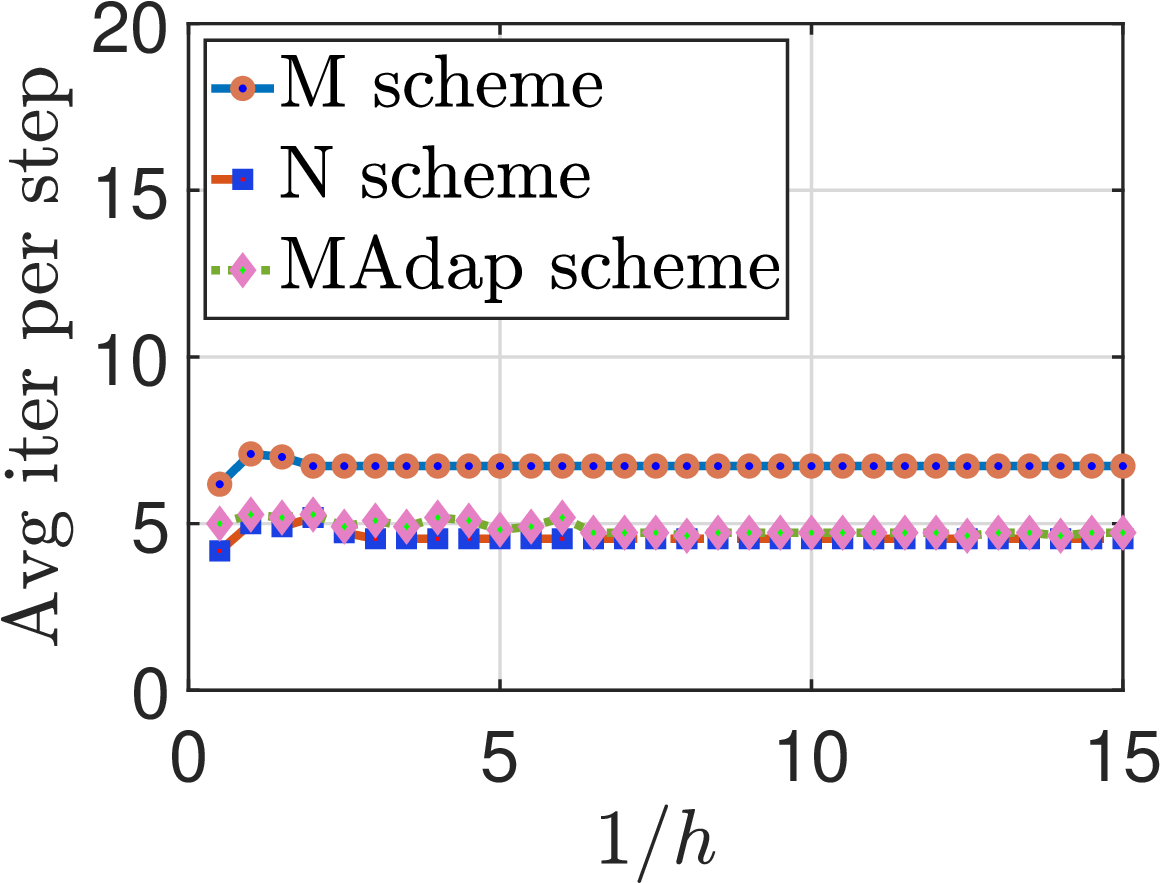}
    \end{subfigure}
     \caption{[\Cref{sec:toy}] Average iterations required per time-step for the double degenerate toy-model in $1D$ \textbf{(top row)} and 2D \textbf{(bottom row)} with varying mesh sizes $h$. The stopping criterion is based on \eqref{eq:Stop}, with a tolerance of \( \epsilon_{\rm stop} = 10^{-6} \). Here, $T=1$ for 1D and $0.1$ for 2D.}
    \label{fig:DD_meshplot}
\end{figure}
\Cref{fig:DD_meshplot} presents a comparison of the three schemes, highlighting that in 1D, although Newton is faster for coarser meshes, it is outperformed by both the adaptive and fixed M-schemes as the mesh is refined. In fact, in 1D, for mesh fine enough, the Newton starts diverging. On the contrary, the M-schemes exhibit mesh-independent behavior demonstrating their robustness to changes in mesh size. 
\begin{figure}[h]
\centering
    \begin{subfigure}[b]{0.49\textwidth}
        \centering     \includegraphics[width=\textwidth]{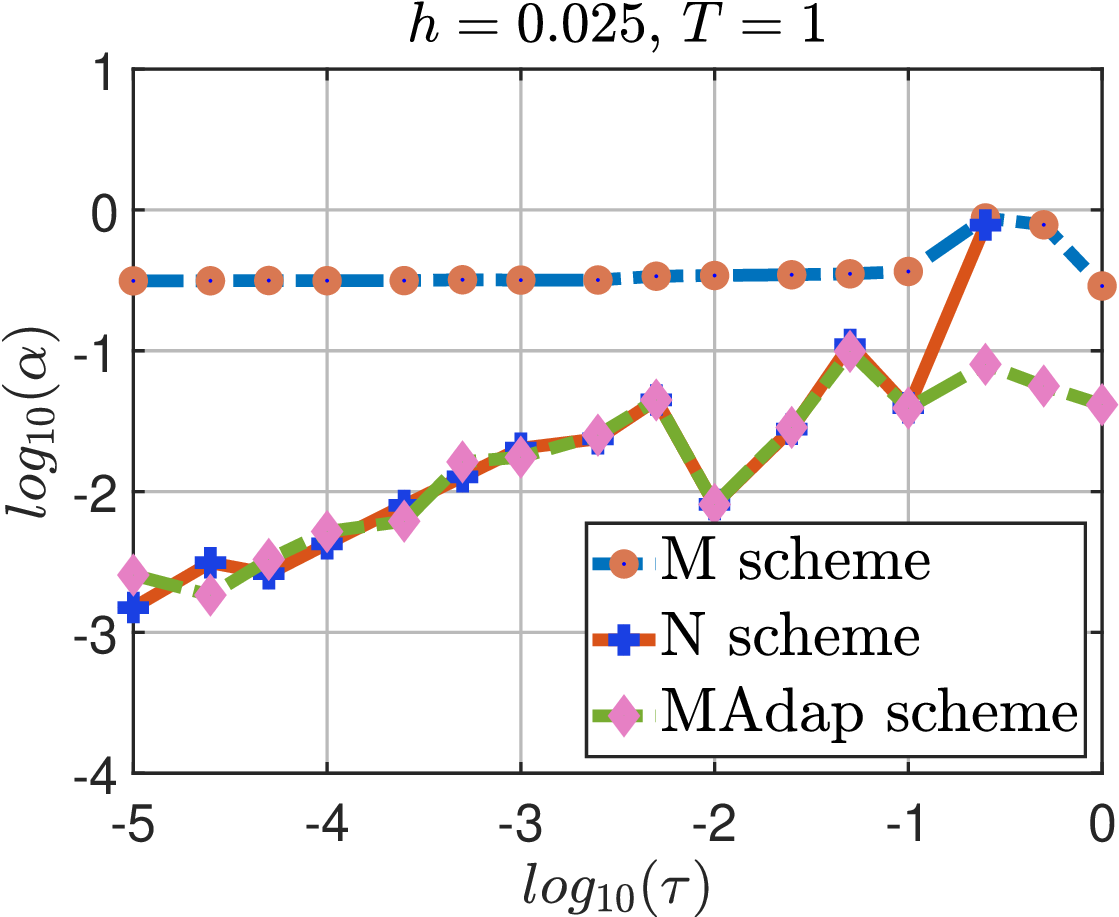}
\end{subfigure}
    \begin{subfigure}[b]{0.49\textwidth}
        \centering     \includegraphics[width=\textwidth]{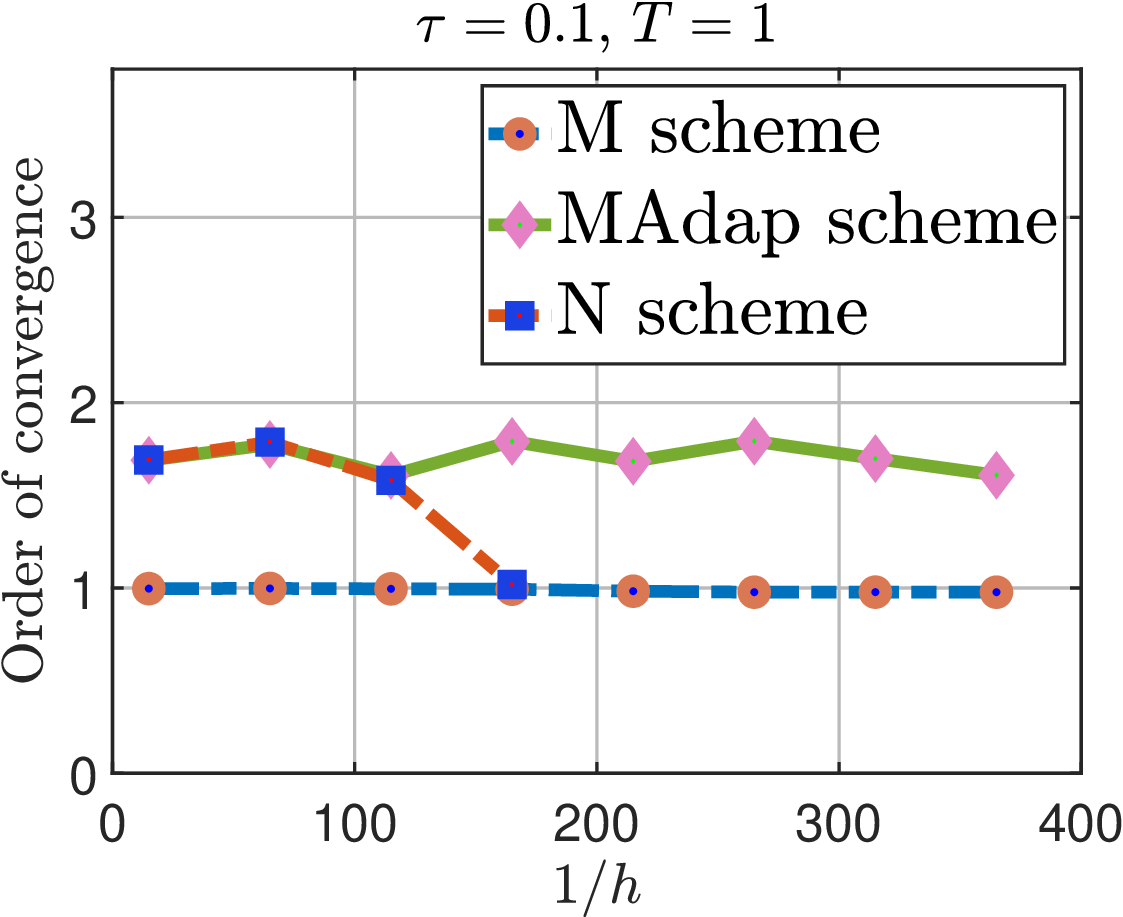}
\end{subfigure}    
\caption{[\Cref{sec:toy}] \textbf{(left)} Average contraction rate \((\alpha)\) vs. time-step size \((\tau)\) with mesh size \(h = 0.025\) for the 1D case. 
The stopping criterion here uses a tolerance of \( \epsilon_{\rm stop} = 10^{-10} \). 
\textbf{(right)} Order of convergence of the iterative methods.}
    \label{fig:DD_cont_order}
\end{figure}

\Cref{fig:DD_cont_order} (left) shows that the linear scaling of the convergence rate $\a$ with $\t$ is lost for the fixed M-scheme. We believe this is due to the presence of a large degenerate region at $\vs=1$, see \Cref{fig:doubledegenerate} (right). Thus, not all conditions specified in \Cref{theo:MS} for linear scaling are satisfied. The M-scheme maintains a nearly constant contraction rate $\alpha$ regardless of $\tau$. The adaptive M-scheme and Newton scheme, however, demonstrate a clear improvement in convergence for smaller $\tau$ and for coarser mesh. 
 \Cref{fig:DD_cont_order}(right) presents the order of convergence of the iterative methods computed from the last three iterations. The M-scheme exhibits linear convergence, as indicated by its consistent first-order behavior. The Newton scheme, shows a quasi-quadratic convergence behavior when reaching $\epsilon_{\rm stop}$, implying that the quadratic regime only becomes apparent much later. However, when \( h \) drops below \( 1/180 \), the Newton scheme diverges. The quasi-quadratic behavior is also shown by the adaptive scheme, but for the full range of mesh sizes.
 \begin{figure}[h!]
    \begin{subfigure}[b]{0.49\textwidth}
       \centering
        \subcaption{1D, $\tau = 0.1$}\includegraphics[width=\linewidth]{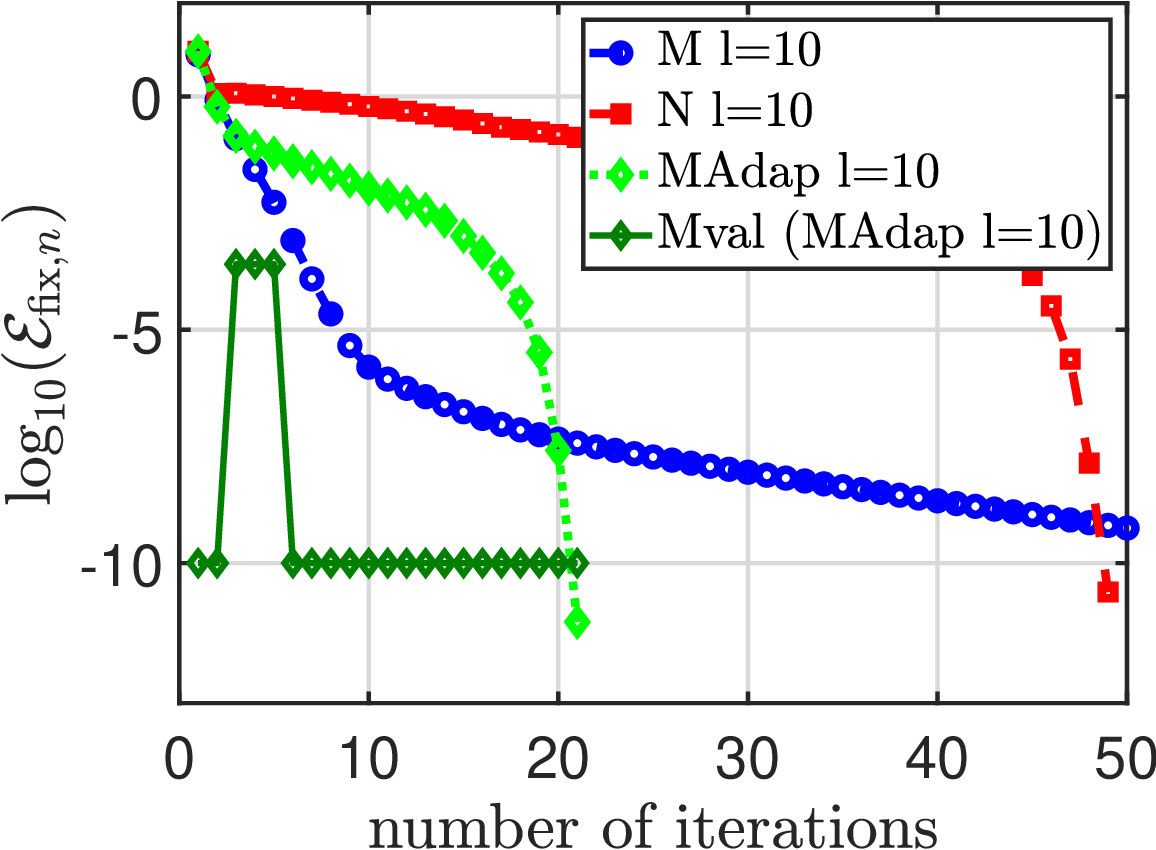}
   \end{subfigure}
    \begin{subfigure}[b]{0.49\textwidth}
        \centering
        \subcaption{1D, $\tau = 0.1$}        \includegraphics[width=\linewidth]{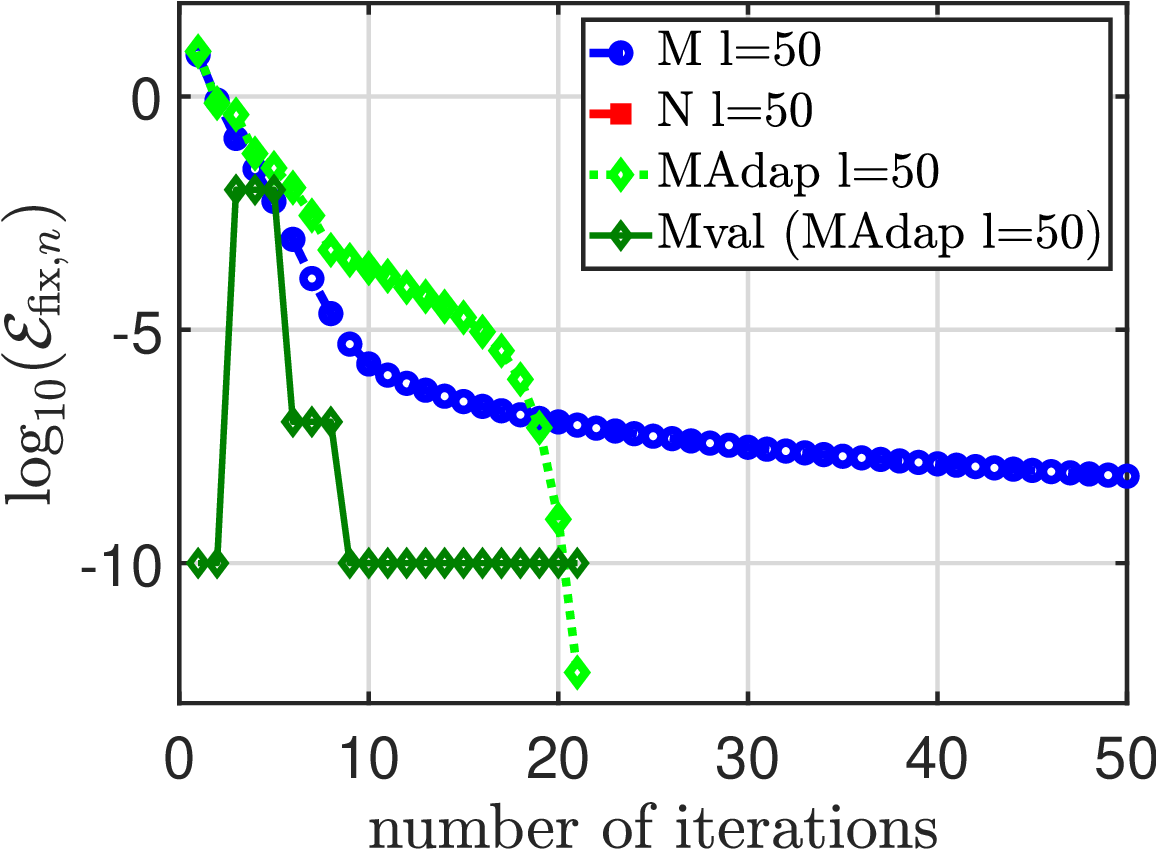}
   \end{subfigure}
    \caption{[\Cref{sec:toy}] Error $\mathcal{E}_{\mathrm{fix},n}^{i}$ vs. iteration $i$ for different iterative schemes for the first time-step ($\tau=0.1$) and mesh size $h=0.1/l$. 
    The Mval quantity shows how the $M$ varies with iteration for the adaptive scheme. }
\label{fig:DD_ErrVsiter}
\end{figure}

\Cref{fig:DD_ErrVsiter} shows how the error decreases with iteration for different iterative schemes. The M-scheme demonstrates rapid initial convergence showing its computational efficiency in such regimes. 
However, it hits a linear convergence regime after reaching a certain error level (approximately $10^{-6}$), which causes its error decay to plateau. In contrast, the adaptive M-scheme converges slowly in the beginning, but after some iterations, the $M$-values start decreasing and the adaptive scheme converges (quasi)-quadratically. Thus, it reaches lower error levels faster. Newton also shows (quasi)-quadratic behavior, for coarser mesh values; however, for finer mesh values, the error fails to decay and instead diverges. The M-schemes, on the other hand, are more stable in this respect.

\subsection{The biofilm equation}\label{sec:Bio}
Next, we consider an equation modeling the growth of biofilms \cite{van2002mathematical}, where the reaction term is of the Fisher type (logistic growth). It corresponds to $\Phi$ being singular at $\vs=1$ which represents the increasing tendency of the bacteria in the biofilm colony to spread when the maximum packing density is reached: 
\begin{align}
\frac{\partial u}{\partial t} = \Delta w +  \frac{1}{2}\, u(1 - u), \quad \text{ where } w =\Phi(u) \text{ and } \Phi(u)=\frac{u^m}{(1-u)^m}.
\end{align}
For the initial condition $\g=0.5$ and $m=6$ are chosen in \eqref{eq:Barenblatt}. The $\g=0.5$ value guarantees that the solution is reasonably far from the singularity at $t=0$, and the Fischer reaction term ensures that the singularity is never reached despite the biofilm growing. For the parameters chosen, $u^* = 0.36778$ in \ref{ass:Bphi} is computed.
\begin{figure}[h!]
    \centering
    \begin{subfigure}[t]{0.46\textwidth}
        \centering
        \includegraphics[width=\textwidth]{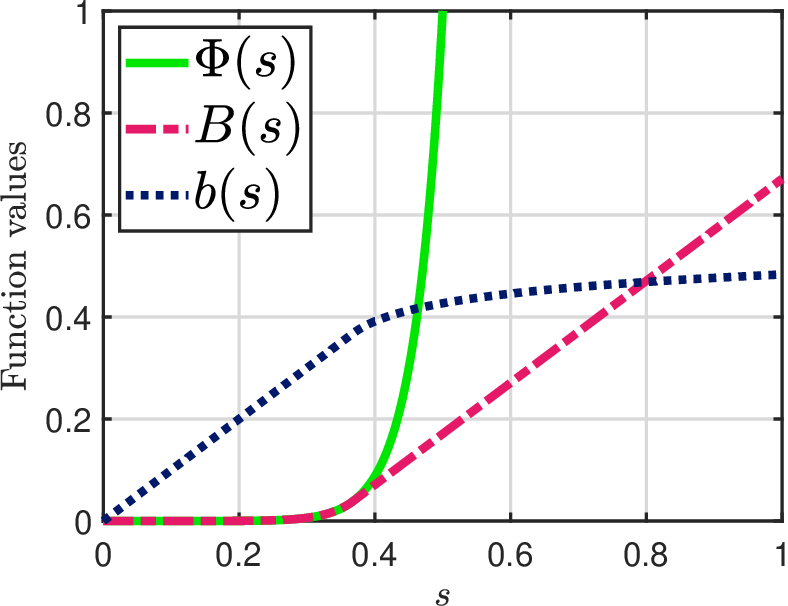}
    \end{subfigure}%
    \begin{subfigure}[t]{0.53\textwidth}
        \centering
        \includegraphics[width=\textwidth]{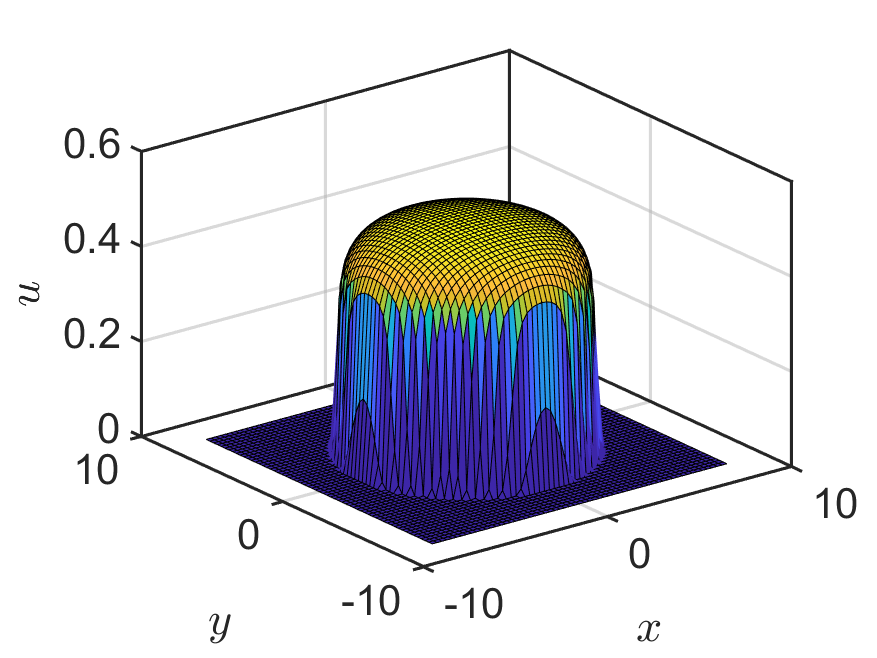}
    \end{subfigure}
    \caption{[\Cref{sec:Bio}] \textbf{(left)} The functions $b$ and $B$ computed from $\Phi$ in \eqref{eq:toy}. \textbf{(right)} Numerical solution at $T = 1$ with a time-step size of $\tau = 0.1$.}
    \label{Bio_Bb_Exact}
\end{figure}

\begin{figure}[h!]
    \centering
    \begin{subfigure}[t]{0.33\textwidth}
        \centering
                \subcaption{$\tau = 10^{-1}$}

        \includegraphics[width=\textwidth]{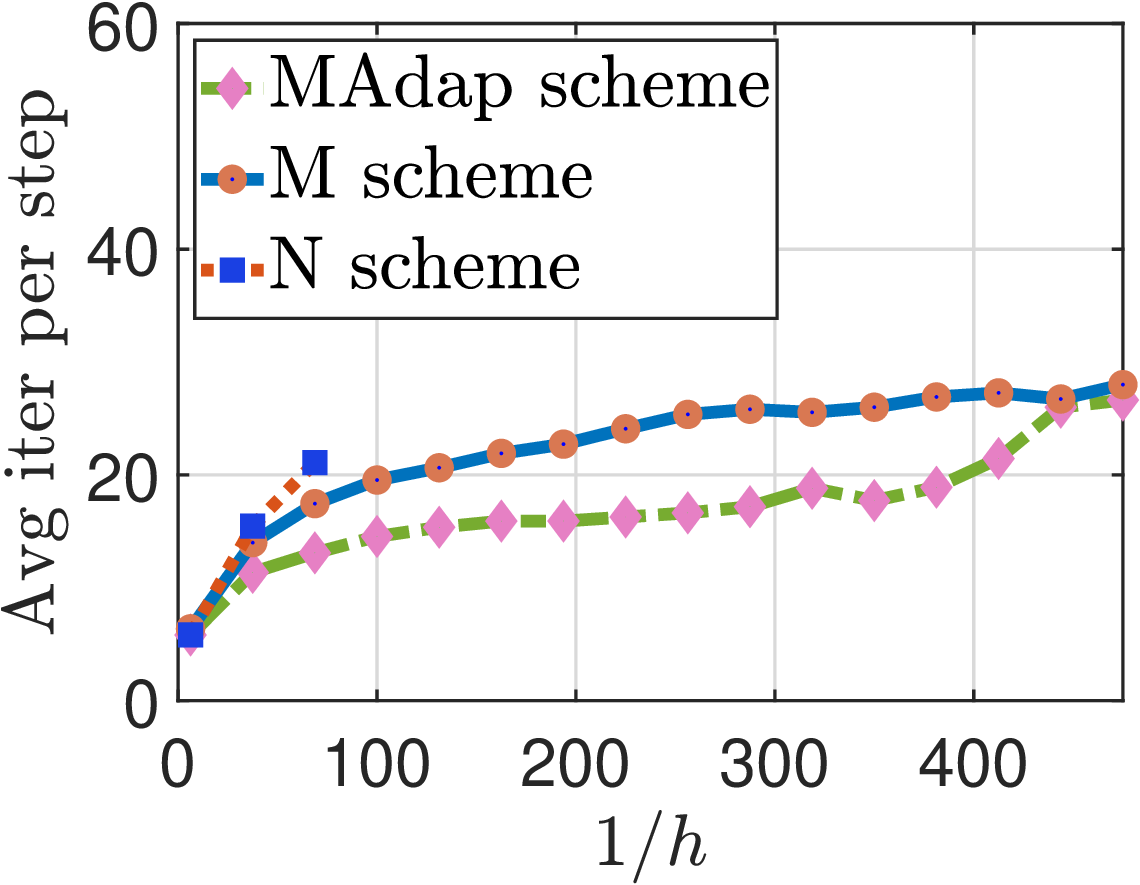}
    \end{subfigure}
    \hfill
    \begin{subfigure}[t]{0.32\textwidth}
        \centering
                \subcaption{$\tau = 10^{-1.5}$}

        \includegraphics[width=\textwidth]{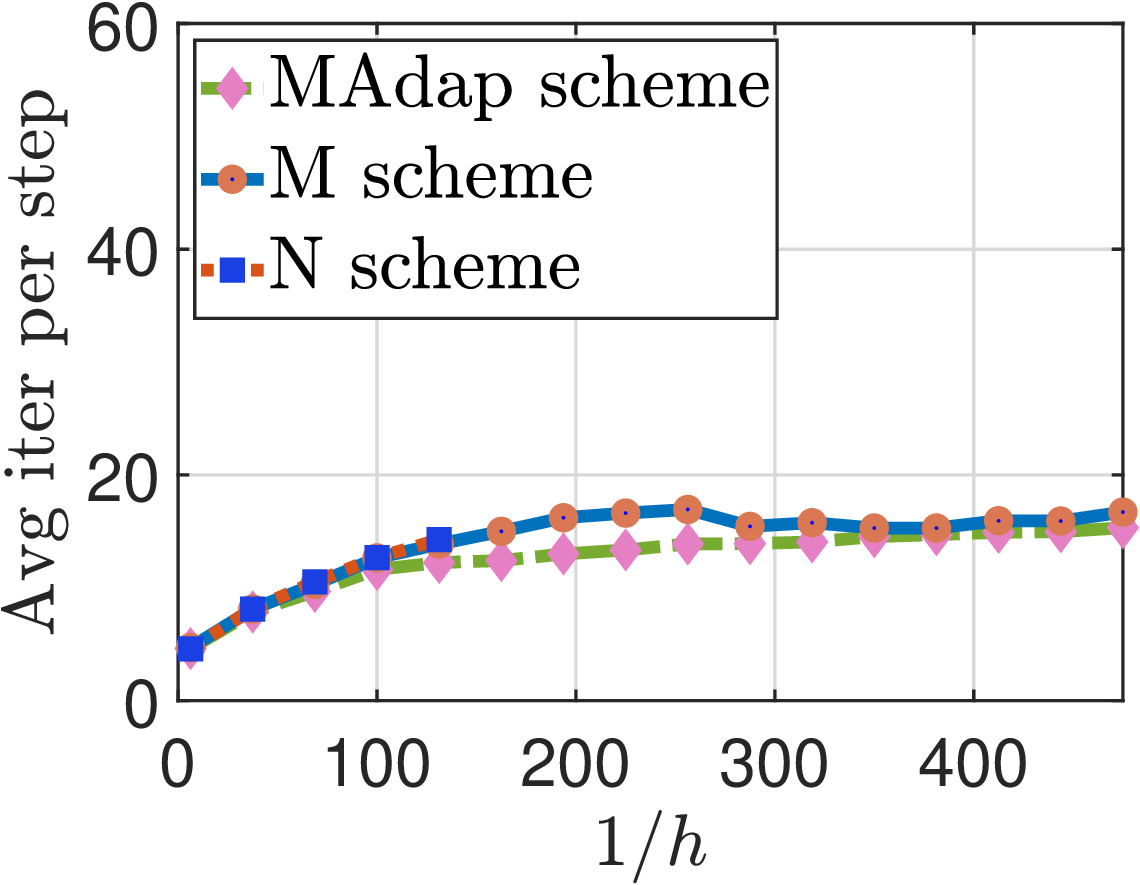}
    \end{subfigure}
    \hfill
    \begin{subfigure}[t]{0.33\textwidth}
        \centering
                \subcaption{$\tau = 10^{-2}$}

        \includegraphics[width=\textwidth]{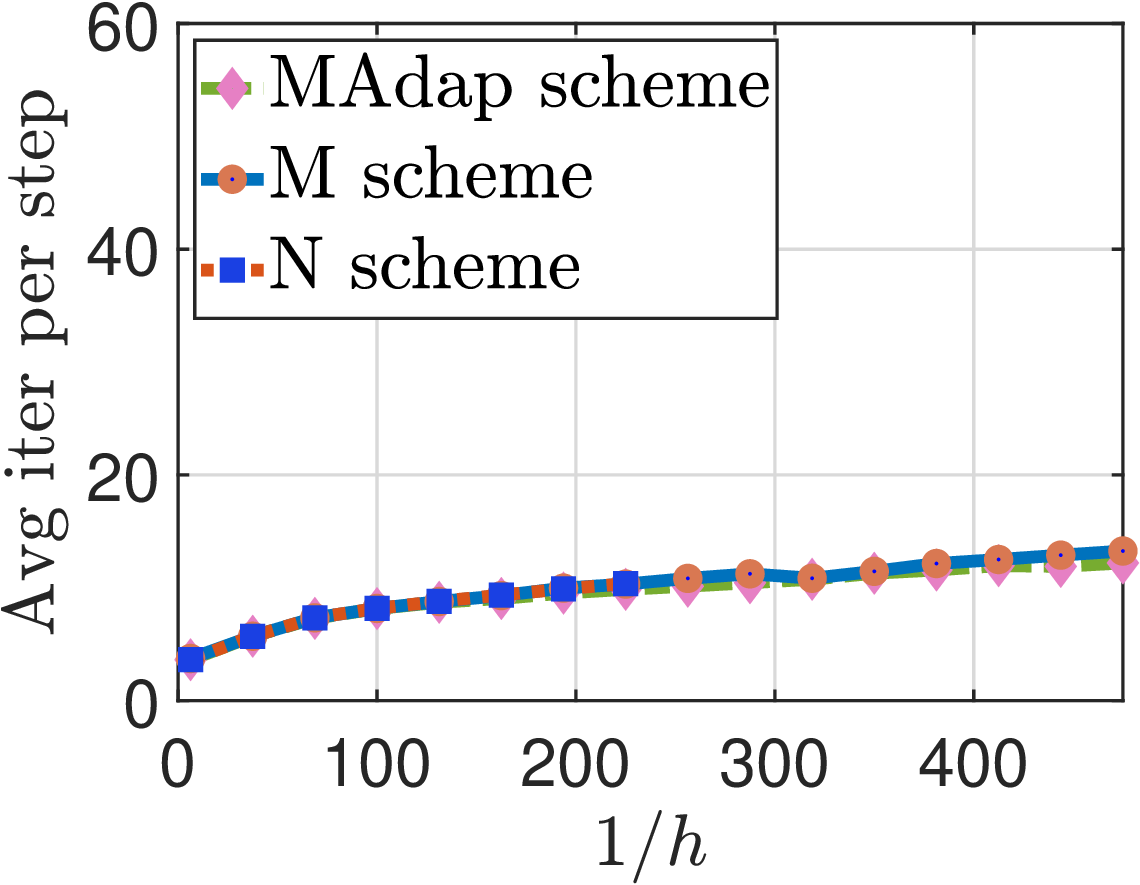}
    \end{subfigure}
    
    \begin{subfigure}[t]{0.32\textwidth}
        \centering
                \subcaption{$\tau = 10^{-1}$}

        \includegraphics[width=\textwidth]{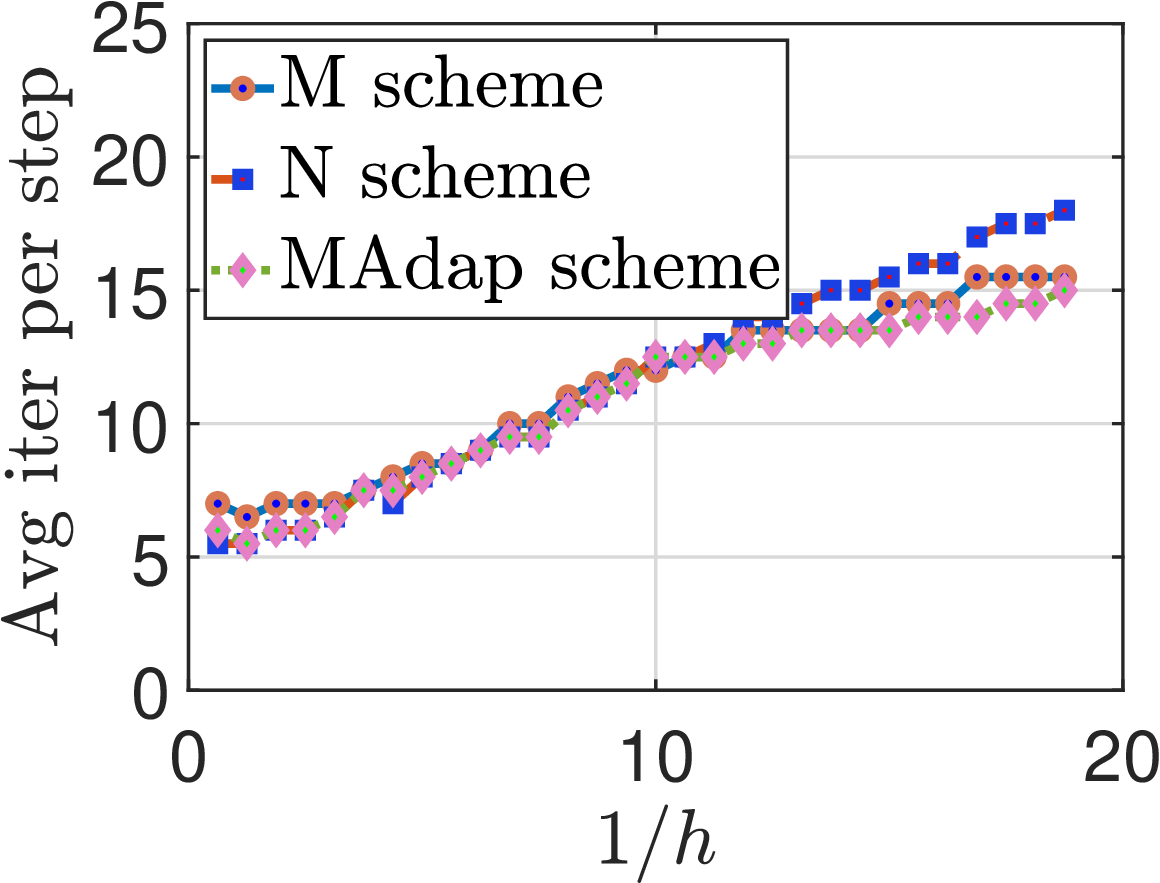}
    \end{subfigure}
    \hfill
    \begin{subfigure}[t]{0.32\textwidth}
        \centering
                \subcaption{$\tau = 10^{-1.5}$}

        \includegraphics[width=\textwidth]{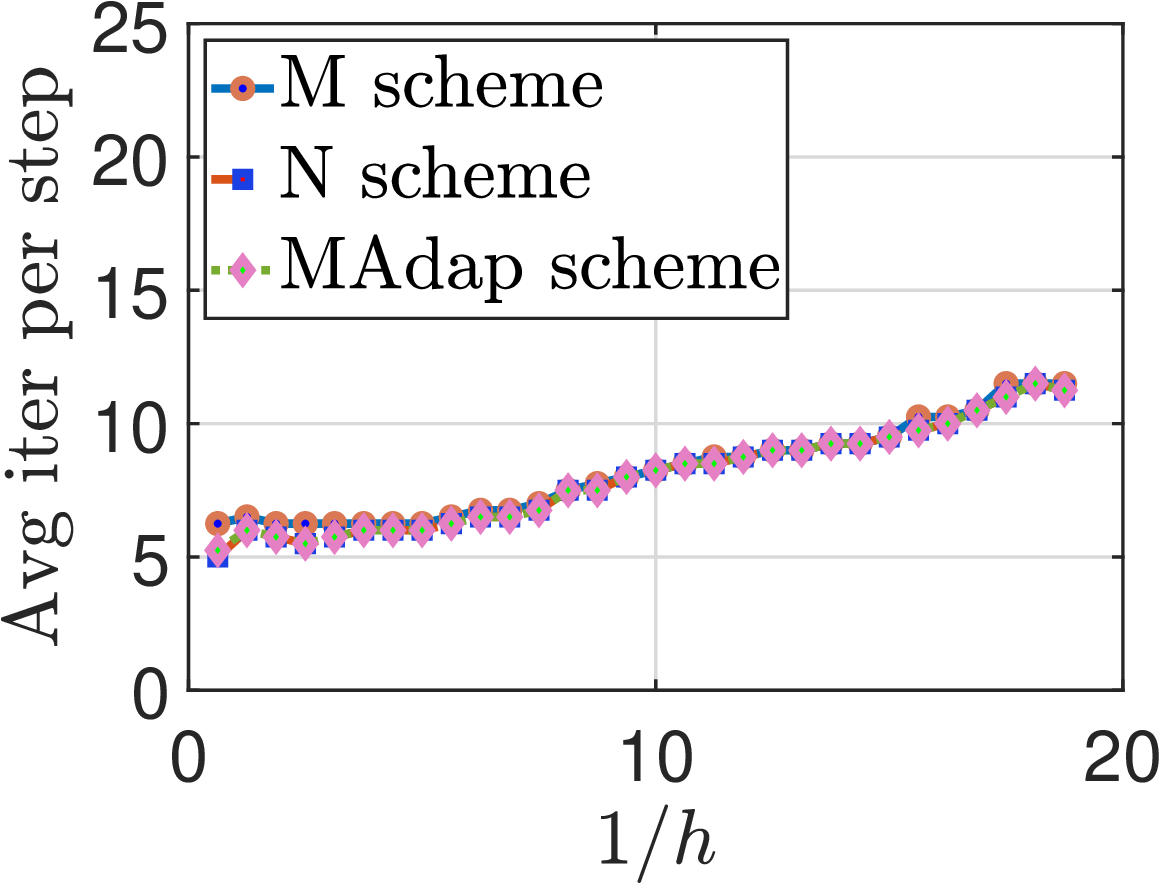}
    \end{subfigure}
    \hfill
    \begin{subfigure}[t]{0.32\textwidth}
        \centering
                \subcaption{$\tau = 10^{-2}$}

        \includegraphics[width=\textwidth]{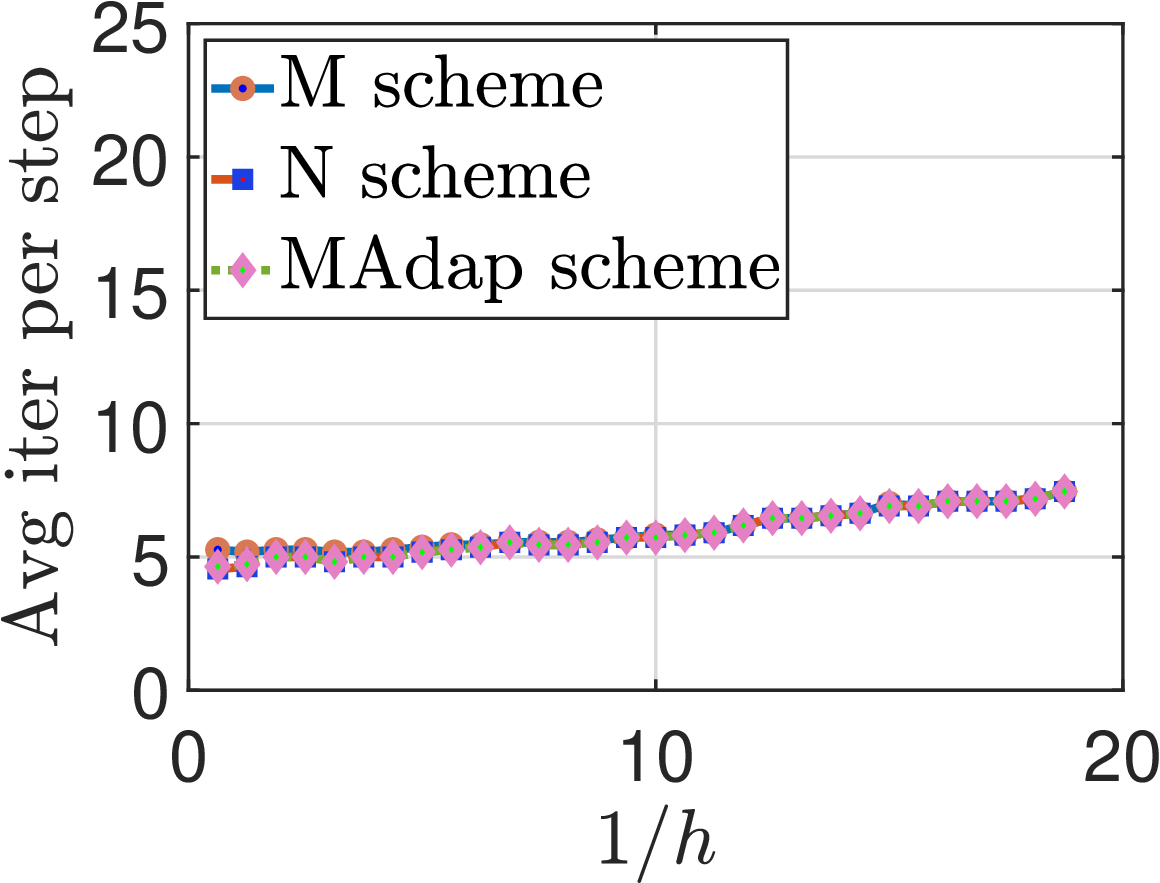}
    \end{subfigure}
    \caption{[\Cref{sec:Bio}]Average iterations required per time-step for the  biofilm equation in $1D$ \textbf{(top row)} and $2D$ \textbf{(bottom row)} with varying mesh sizes $h$. The stopping criterion is based on \eqref{eq:Stop} with a tolerance of \( \epsilon_{\rm stop} = 10^{-6} \). Here, $T=1$ for 1D and $0.1$ for 2D.}
    \label{Biofilm_meshplot}
\end{figure}
\Cref{Biofilm_meshplot} presents comparisons between the $1D$ and $2D$ results obtained for the biofilm model. In the 1D case, the Newton scheme converges for larger time steps (\(\tau\)) only on coarser meshes (\(h\)). Reducing \(\tau\) improves convergence on finer meshes, but when \(h < \frac{1}{250}\), divergence occurs even for small \(\tau\). Similarly, in the 2D scenario with a refined mesh, when $\tau = 0.1$, the Newton scheme required more iterations to achieve convergence. However, for smaller time steps, all schemes demonstrated comparable performance due to the $M\t$ term becoming small. The M-schemes converged in all cases, and the adaptive scheme required the least iterations in almost all cases.

\begin{figure}[h!]
    \centering
         \begin{subfigure}[b]{0.49\textwidth}
        \centering
    \includegraphics[width=\textwidth]{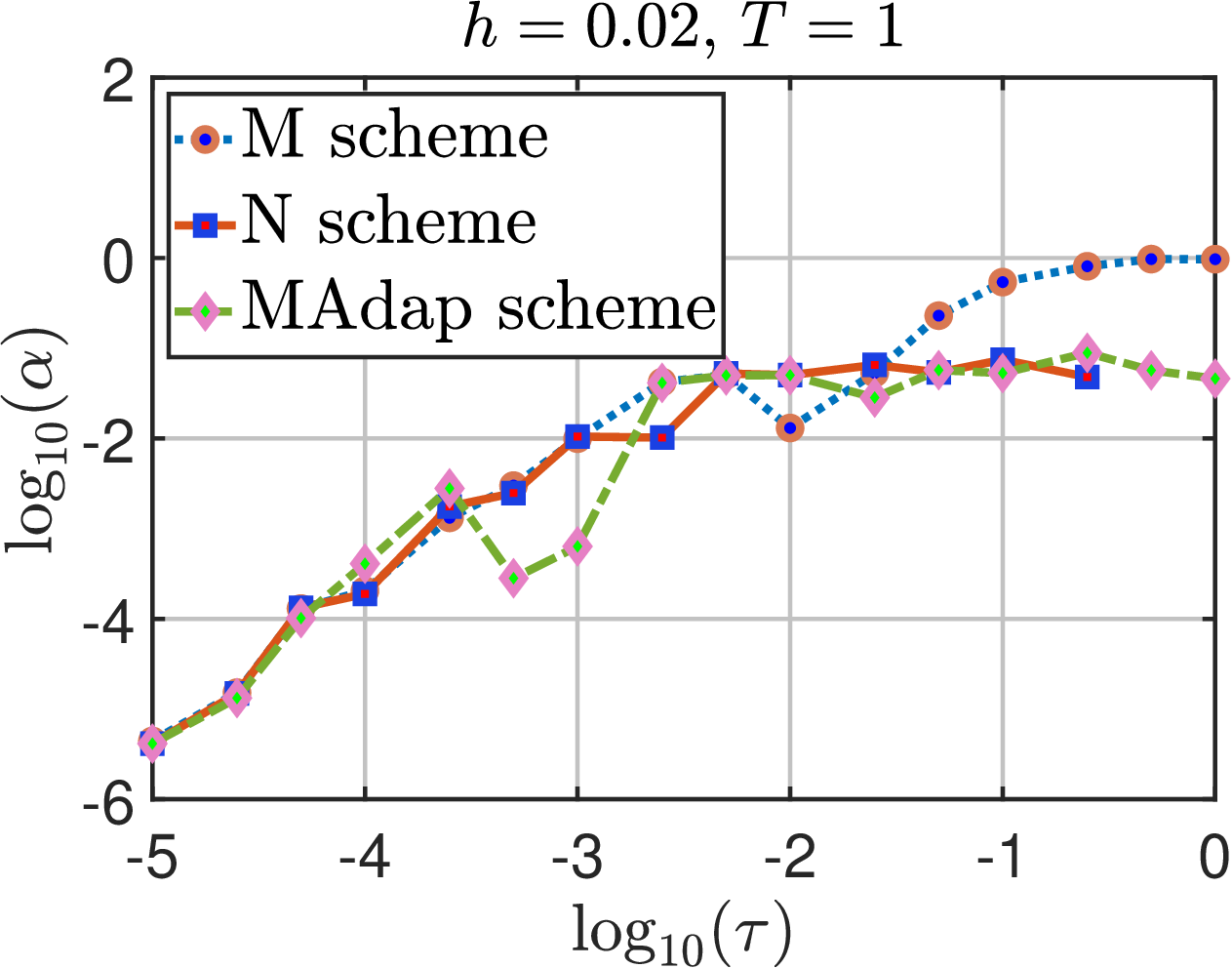}
    \end{subfigure}
    \begin{subfigure}[b]{0.49\textwidth}
        \centering
     \includegraphics[width=\textwidth]{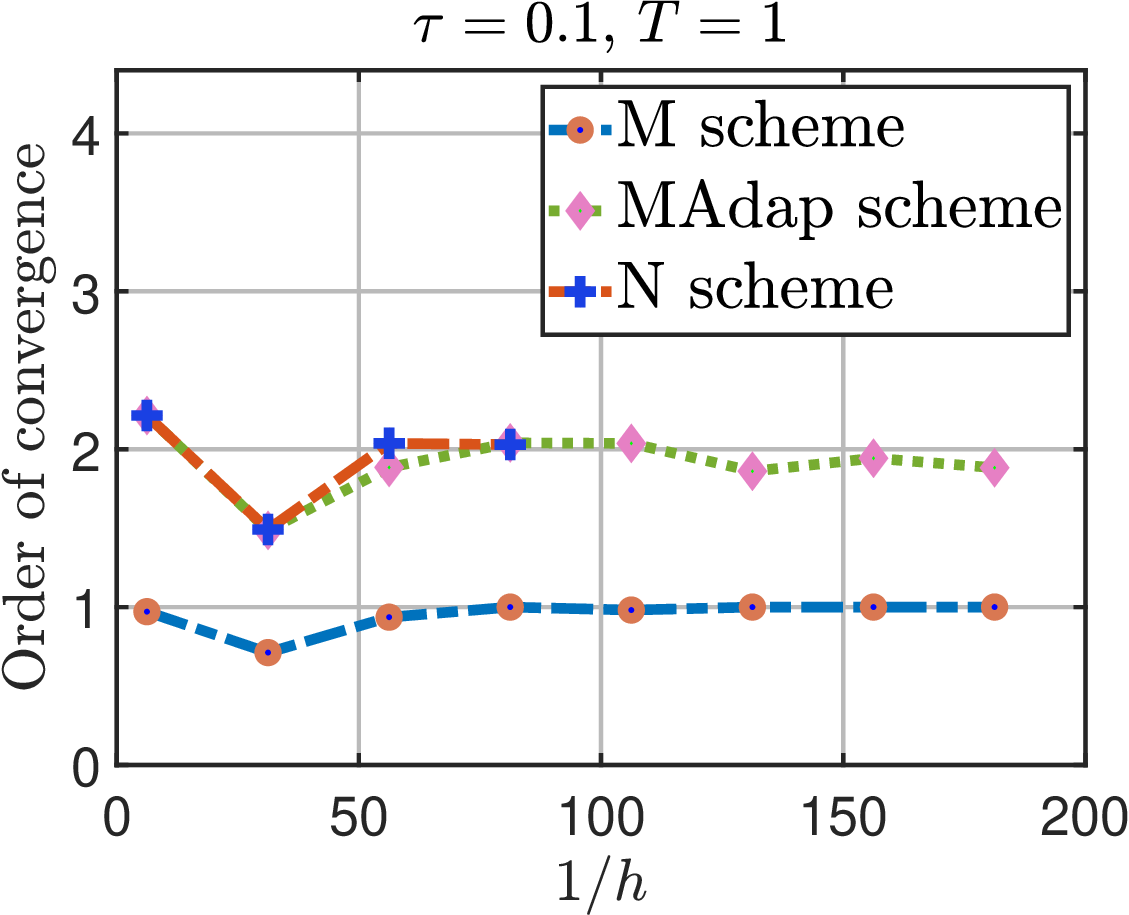}
    \end{subfigure}
   
    \caption{[\Cref{sec:Bio}] \textbf{(left)} Average contraction rate \((\alpha)\) vs. time-step size \((\tau)\) with mesh size \(h = 0.02\) for the 1D case. 
The stopping criterion here uses a tolerance of \( \epsilon_{\rm stop} = 10^{-10} \). 
\textbf{(right)} Order of convergence of the iterative methods.}
    \label{fig:Biofilm_cont_order}
\end{figure}

\Cref{fig:Biofilm_cont_order} (left) analyzes the contraction rates of different schemes for varying time-step sizes \( \tau \). For small time steps, the contraction rate $\a$ is seen again to increase superlinearly with $\t$. \Cref{fig:Biofilm_cont_order} (right) shows that in the asymptotic limit, the M-scheme is indeed linear, whereas, the adaptive scheme is quadratic. As before, this is supported by \Cref{fig:Bio_ErrVsiter}. For smaller error levels, the fixed M-scheme enters a linear convergence regime, whereas, the adaptive scheme enters a quadratic regime. Newton also shows quadratic convergence in the case that it converges, i.e. the (left) case, albeit the convergence is much slower than the adaptive scheme.

\begin{figure}[h!]
    \centering
    \begin{subfigure}[b]{0.48\textwidth}
        \centering
       \subcaption{1D, $\tau = 0.1$}\includegraphics[width=\textwidth]{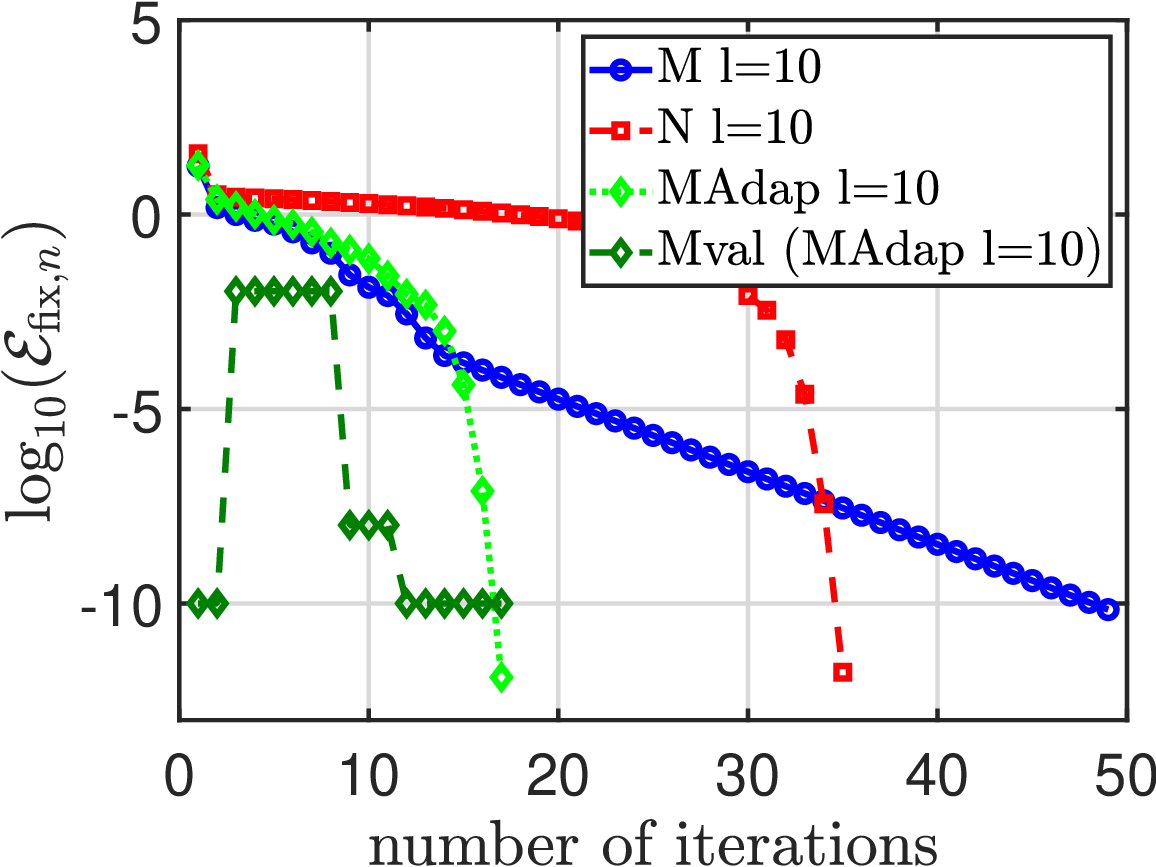}
    \end{subfigure}
    \begin{subfigure}[b]{0.48\textwidth}
        \centering
       \subcaption{1D, $\tau = 0.1$}\includegraphics[width=\textwidth]{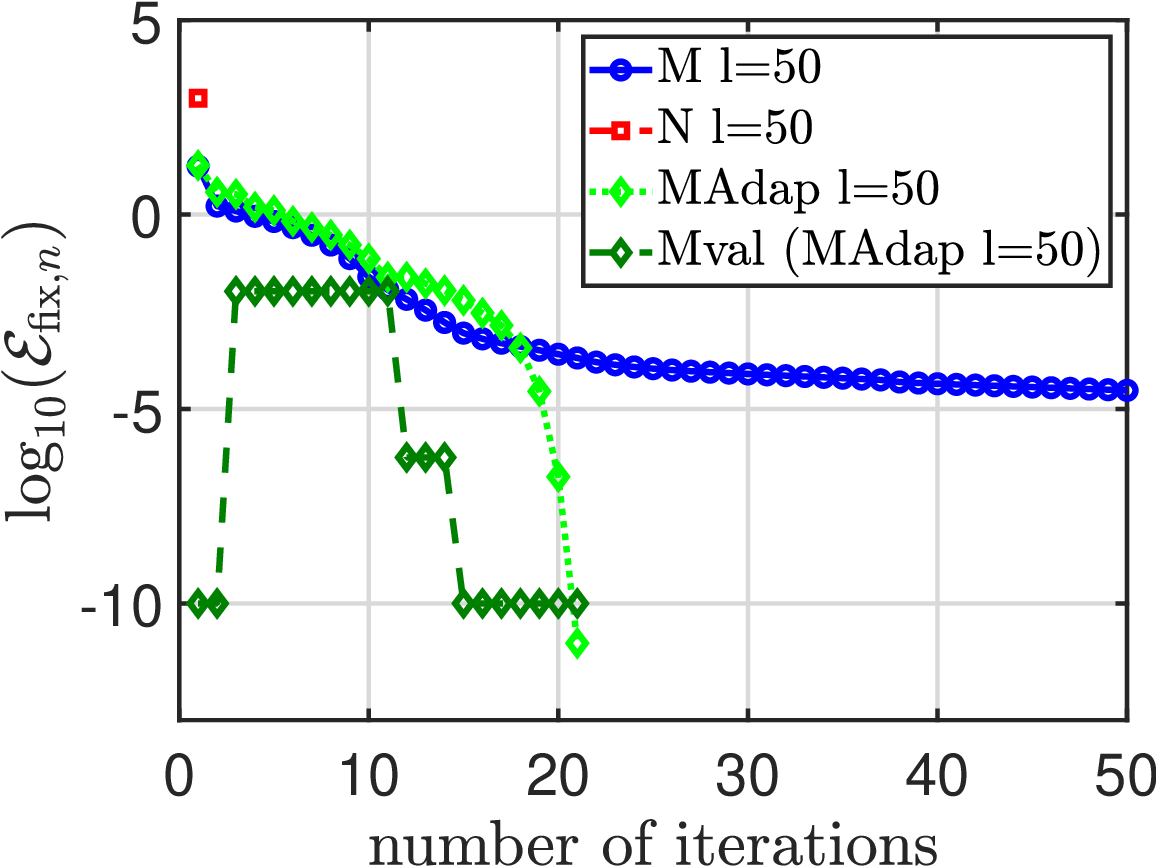}
    \end{subfigure}
    \caption{[\Cref{sec:Bio}] Error $\mathcal{E}_{\mathrm{fix},n}^{i}$ vs. iteration $i$ for different iterative schemes for the first time-step ($\tau=0.1$) and mesh size $h=0.16/l$. 
    The Mval quantity shows how the $M$ varies with iteration for the adaptive scheme.}
    \label{fig:Bio_ErrVsiter}
\end{figure}

 \subsection{The Richards equation}\label{sec:Richards}
Finally, we consider the Richards equation, which is widely used in groundwater modelling. In terms of the capillary pressure $p$, the non-dimensional Richards equation we would solve is:
\begin{align}
\frac{\partial (S(p))}{\partial t} = \nabla \cdot \left( \kappa(S(p))  (\nabla p-\hat{\bm{g}}) \right)+ C S(p)
\end{align}
Here, \(\hat{\bm{g}}\) represents the unit vector along the direction of gravity which we have taken to be the $y$-direction.
The Richards equation involves nonlinearities in all the terms. The saturation function \(S(p)\) is increasing, and the permeability function \(\kappa(S(p))\) takes non-negative values. 
The saturation function \(S(p)\) and the permeability function \(\kappa(s)\) are modeled using the Van Genuchten parametrization \cite{van1980closed} as expressed in a nondimensional setting for \(\lambda \in (0, 1)\). In this work, we consider $\lambda = 0.8$:
\begin{align}
S(p) = \left(1 + (1 - p)^{\frac{1}{1 - \lambda}}\right)^{ - \lambda},
\quad
\kappa(s) = \sqrt{s} \left(1 - \left(1 -s^\frac{1}{\lambda}\right)^\lambda\right)^2
\end{align}
We use the parametrization discussed in \Cref{introduction}, in which $u =S(p)$ and $\Phi(S(p))$ is defined as: 
\begin{align*}
\Phi(S(p)) = \int_{0}^{p} \kappa(S(q)) \, dq.
\end{align*}
There is no analytical expression known for $\Phi$. Thus, the integral is evaluated numerically in an extremely fine grid, and for arbitrary values of the argument, it is recovered using interpolation between the tabulated points. The functions $b(s)$ and $B(s)$ derived from $\Phi$ using \eqref{eq:bBexpression} are constructed subsequently by numerical differentiation and integration.  The resultant functions are plotted in  \Cref{fig:Richards_Bb_Exact} (left). The (right) plot illustrates a converged discrete numerical solution obtained using the M-scheme with $M = 0.01$. The dissymmetry in the solution stems from the nonlinear advection term.

\begin{figure}[h!]
    \centering
    \begin{subfigure}[t]{0.44\textwidth}
        \centering
        \includegraphics[width=\textwidth]{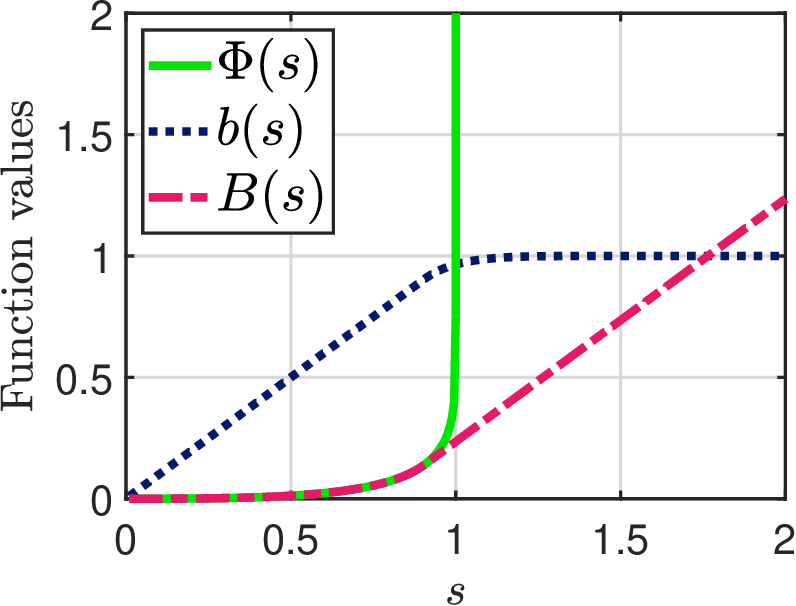}
    \end{subfigure}
    \begin{subfigure}[t]{0.55\textwidth}
        \centering
        \includegraphics[width=\textwidth]{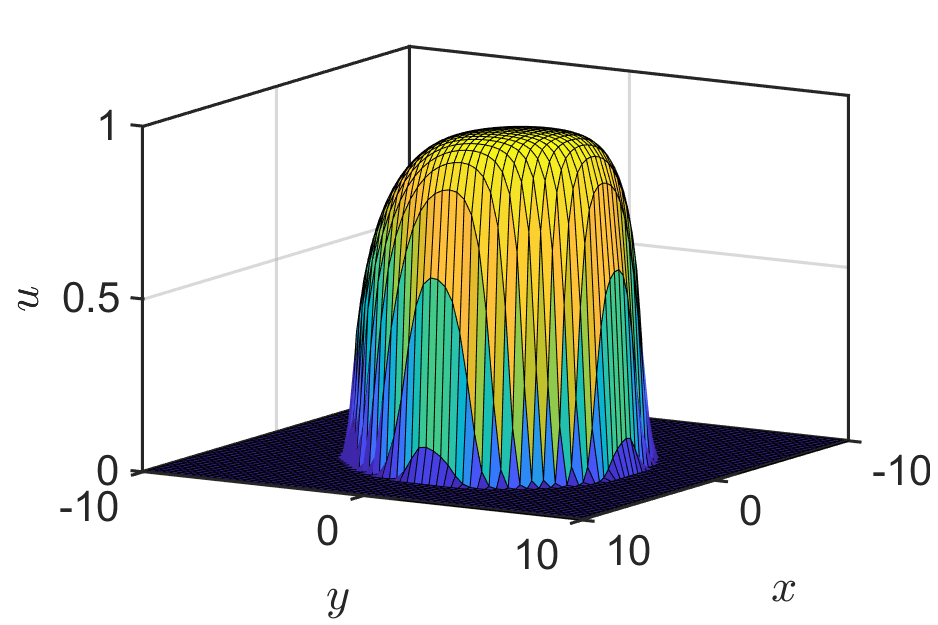}
    \end{subfigure}
    \caption{[\Cref{sec:Richards}] \textbf{(left)} The functions $b$ and $B$ computed from $\Phi$ in \eqref{eq:toy}. \textbf{(right)} Numerical solution at $T = 1$ with a time-step size of $\tau = 0.1$.}
    \label{fig:Richards_Bb_Exact}
\end{figure}

In \Cref{Richards_meshplots}, the average number of iterations for different choices of the mesh size ($h$) with time-step size $(\tau)$ are presented. As expected, the M-scheme is robust and converges in each scenario. Unfortunately, particularly in 1D, the Newton scheme with $M = 0$ does not converge if the time-step size is increased or the mesh is refined. In 2D, since we could not look into very fine mesh sizes, Newton outperformed the M-schemes. However, as $\t$ got smaller, the difference became less since the extra $M\t$ term became less important.

 \begin{figure}[h!]
    \centering
    \begin{subfigure}[t]{0.32\textwidth}
        \centering
                \subcaption{$\tau = 10^{-1}$}
     \includegraphics[width=\textwidth]{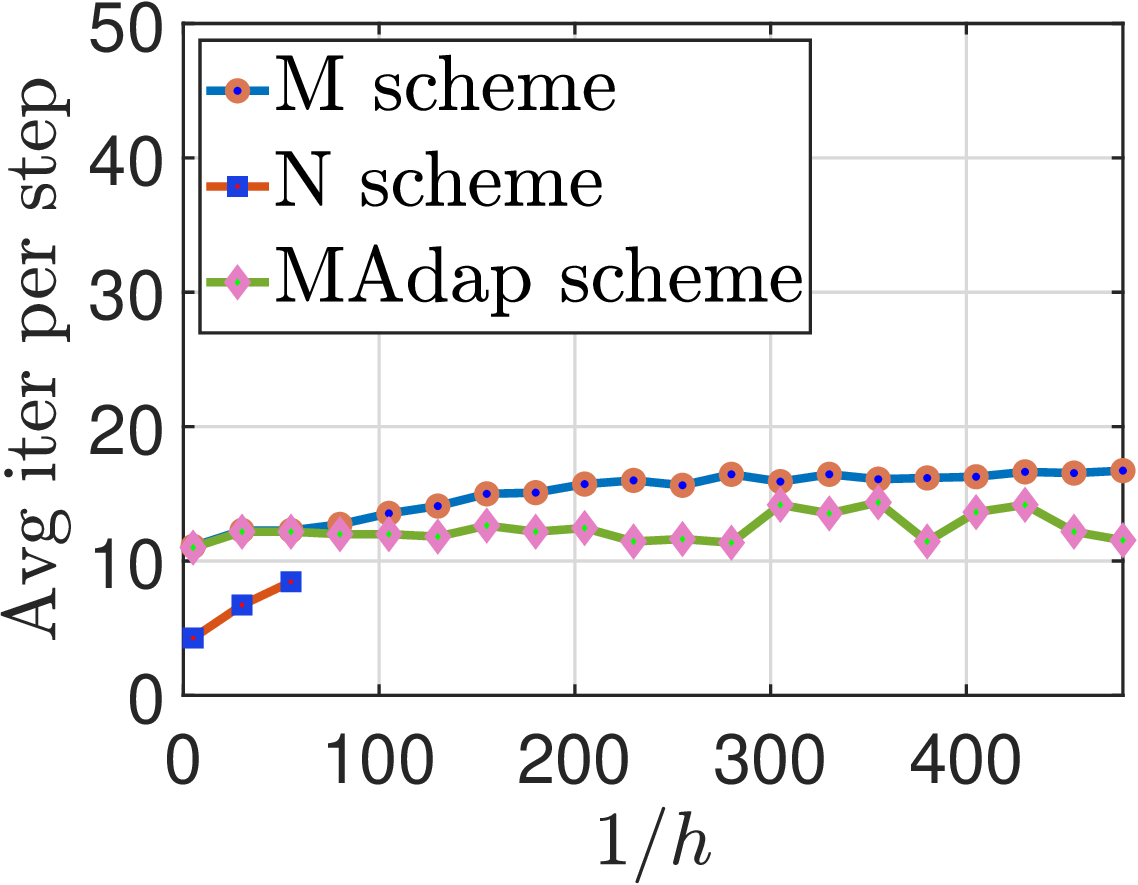}
    \end{subfigure}
    \hfill
    \begin{subfigure}[t]{0.32\textwidth}
        \centering
                \subcaption{$\tau = 10^{-1.5}$}       \includegraphics[width=\textwidth]{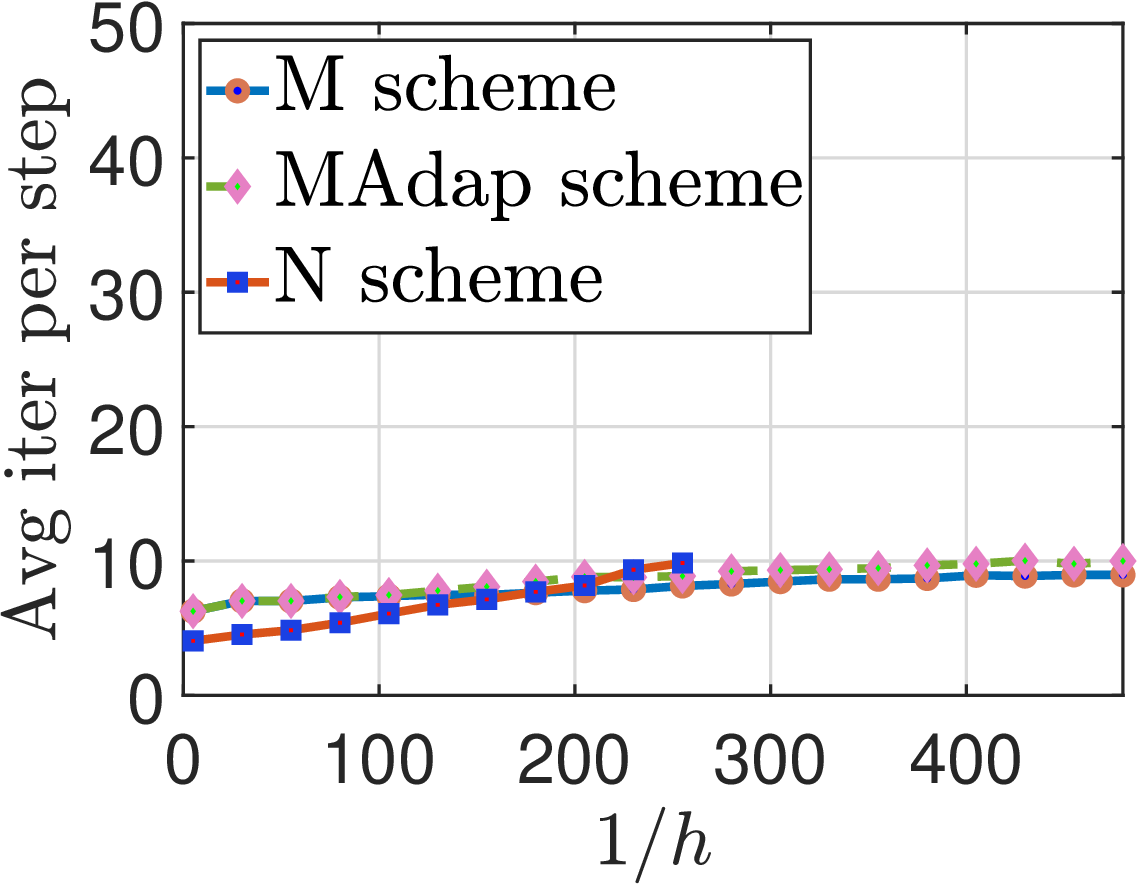}
    \end{subfigure}
    \hfill
    \begin{subfigure}[t]{0.32\textwidth}
        \centering
                \subcaption{$\tau = 10^{-2}$}        \includegraphics[width=\textwidth]{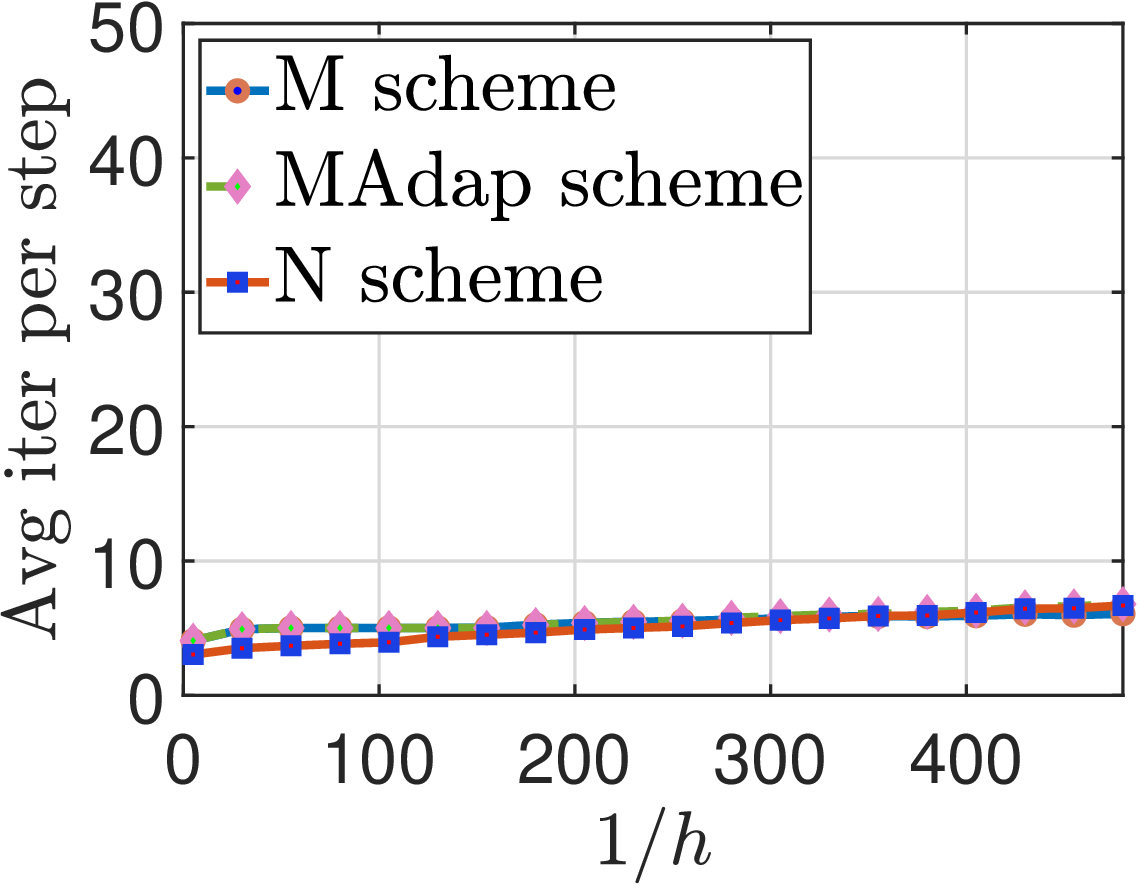}
    \end{subfigure}
    \begin{subfigure}[t]{0.32\textwidth}
        \centering
                \subcaption{$\tau = 10^{-1}$}       \includegraphics[width=\textwidth]{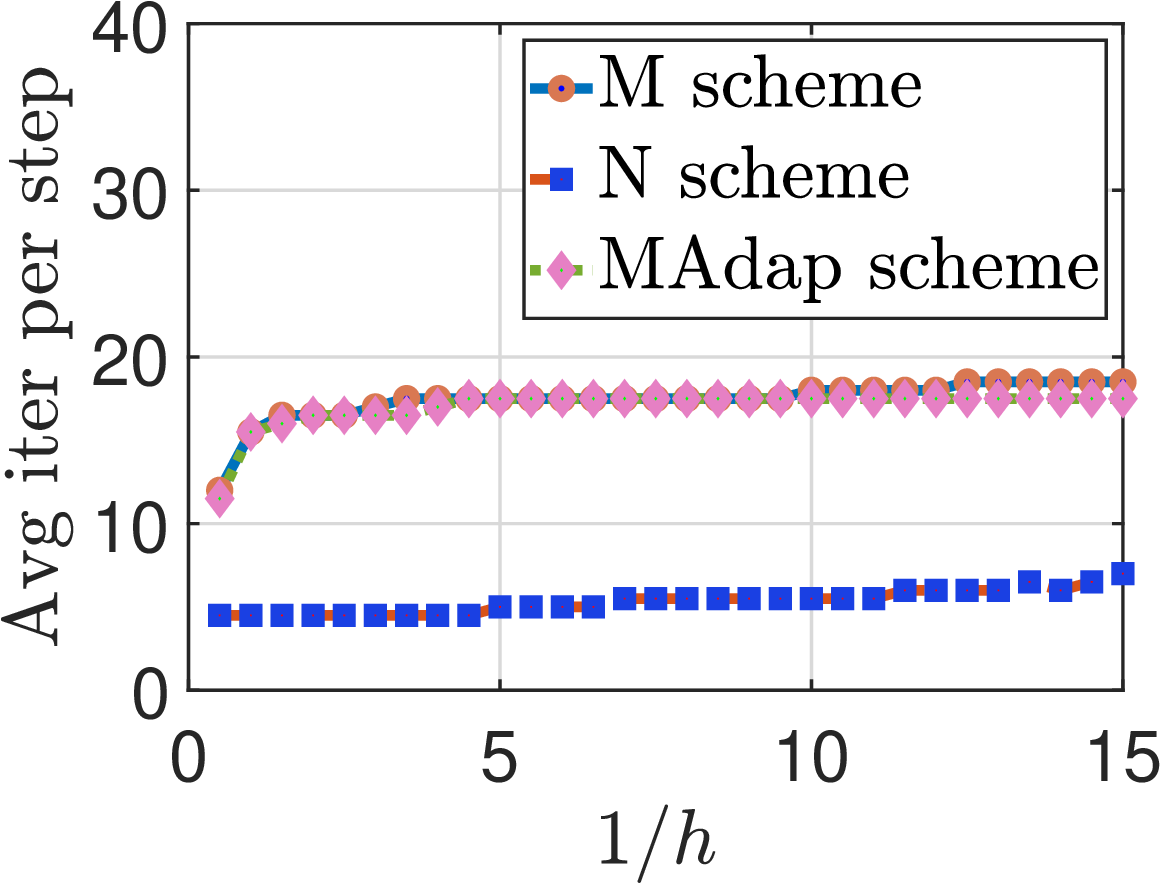}
    \end{subfigure}
    \hfill
    \begin{subfigure}[t]{0.32\textwidth}
        \centering
                \subcaption{$\tau = 10^{-1.5}$}
    \includegraphics[width=\textwidth]{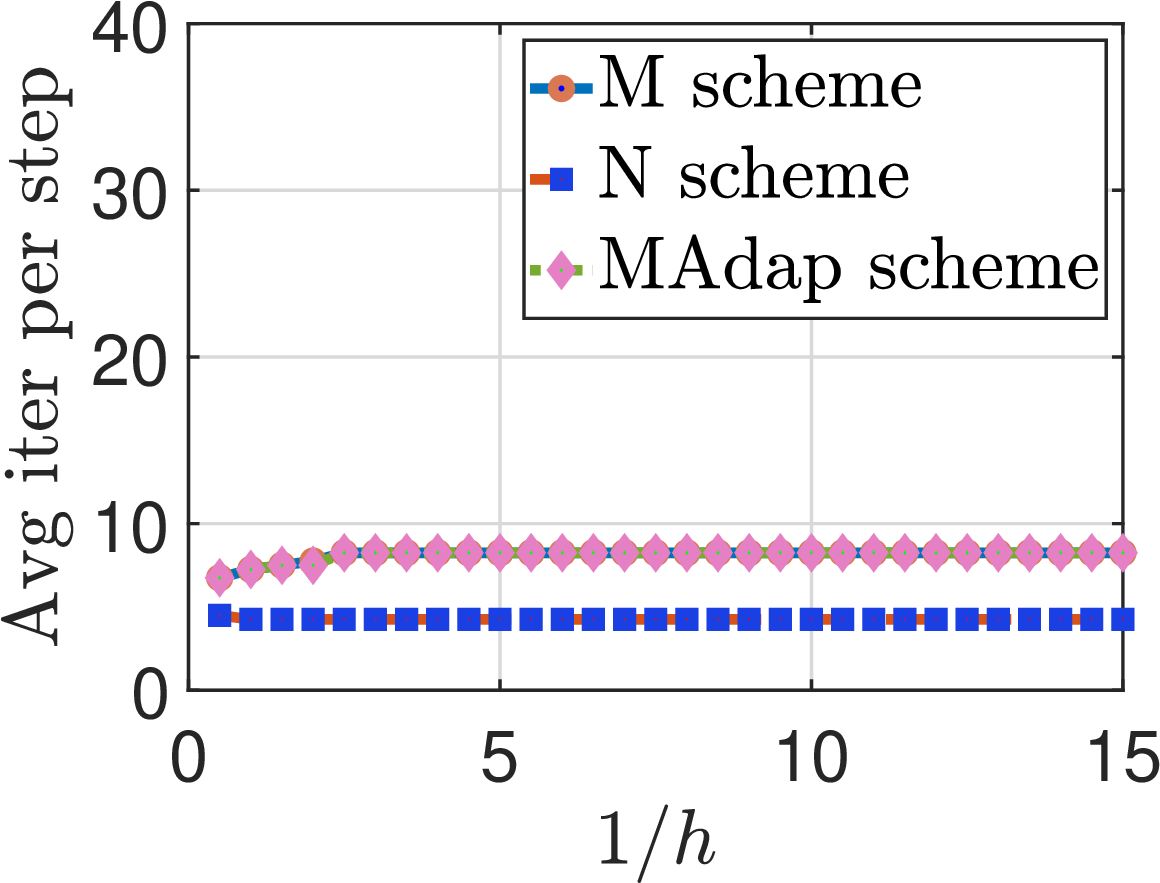}
    \end{subfigure}
    \hfill
    \begin{subfigure}[t]{0.32\textwidth}
        \centering
                \subcaption{$\tau = 10^{-2}$}
\includegraphics[width=\textwidth]{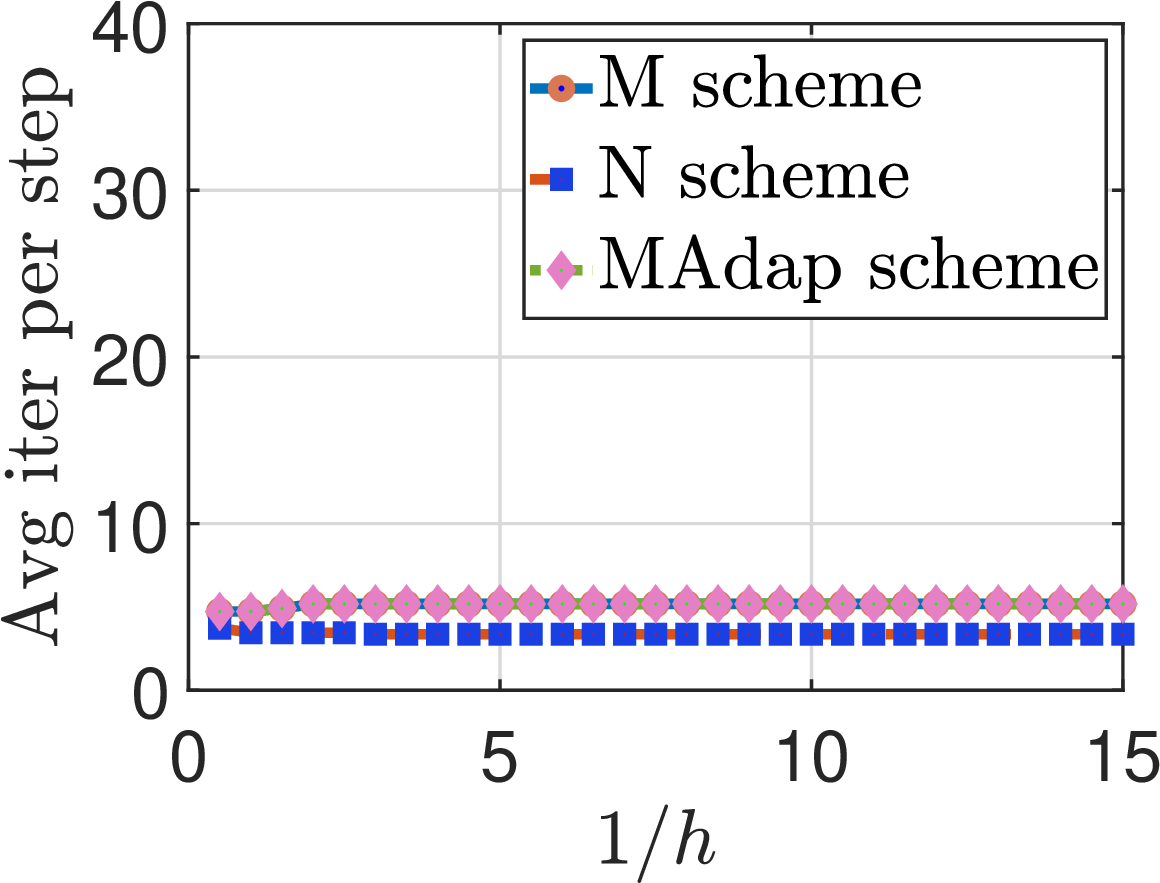}
    \end{subfigure}   \caption{[\Cref{sec:Richards}] Average iterations required per time-step for the  Richards equation in $1D$ \textbf{(top row)} and $2D$ \textbf{(bottom row)} with varying mesh size $h$.  The stopping criterion is based on \eqref{eq:Stop}, with a tolerance of \( \epsilon_{\rm stop} = 10^{-6} \). Here, $T=1$ for 1D and $0.1$ for 2D.}
    \label{Richards_meshplots}
\end{figure}
\begin{figure}[h!]
    \centering
    \begin{subfigure}[t]{0.49\textwidth}
            \centering
        \includegraphics[width=\textwidth]{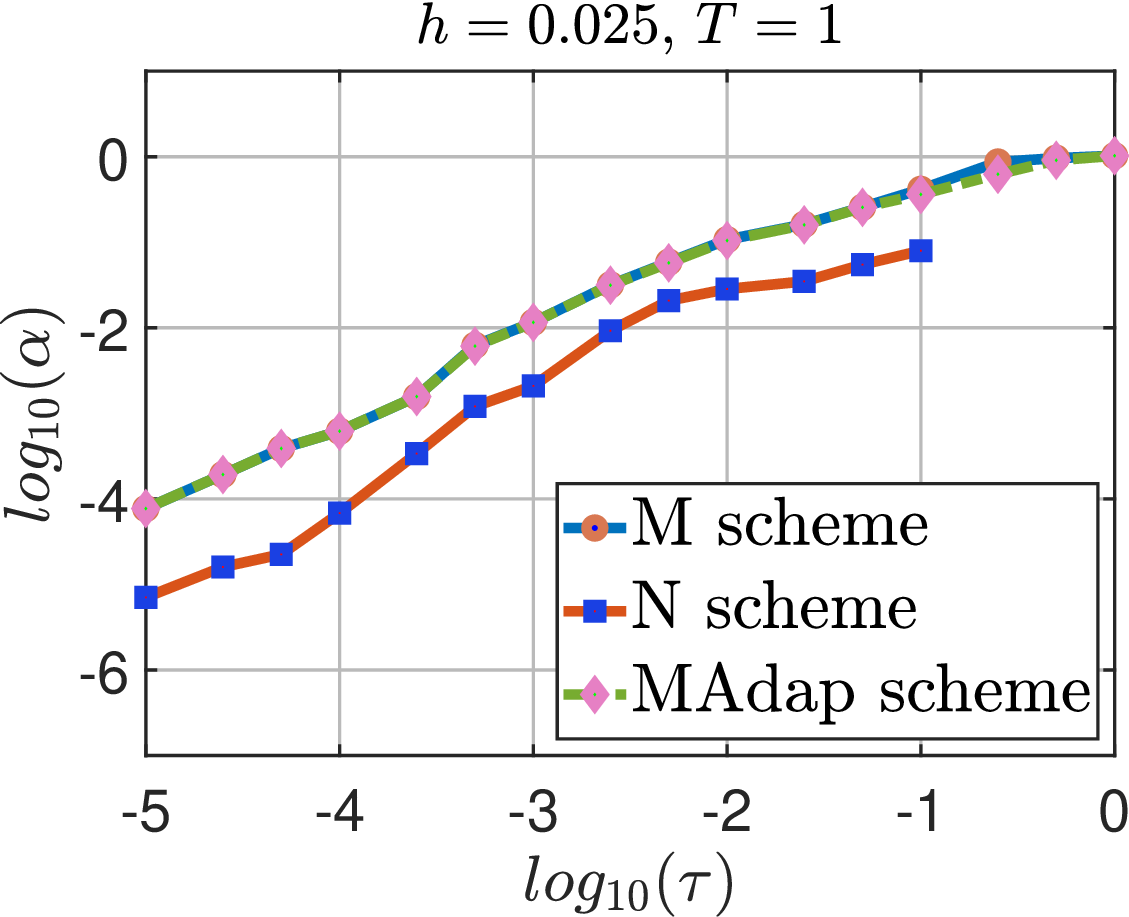}
    \end{subfigure}
     \hfill
    \begin{subfigure}[t]{0.49\textwidth}
       \centering
         \includegraphics[width=\textwidth]{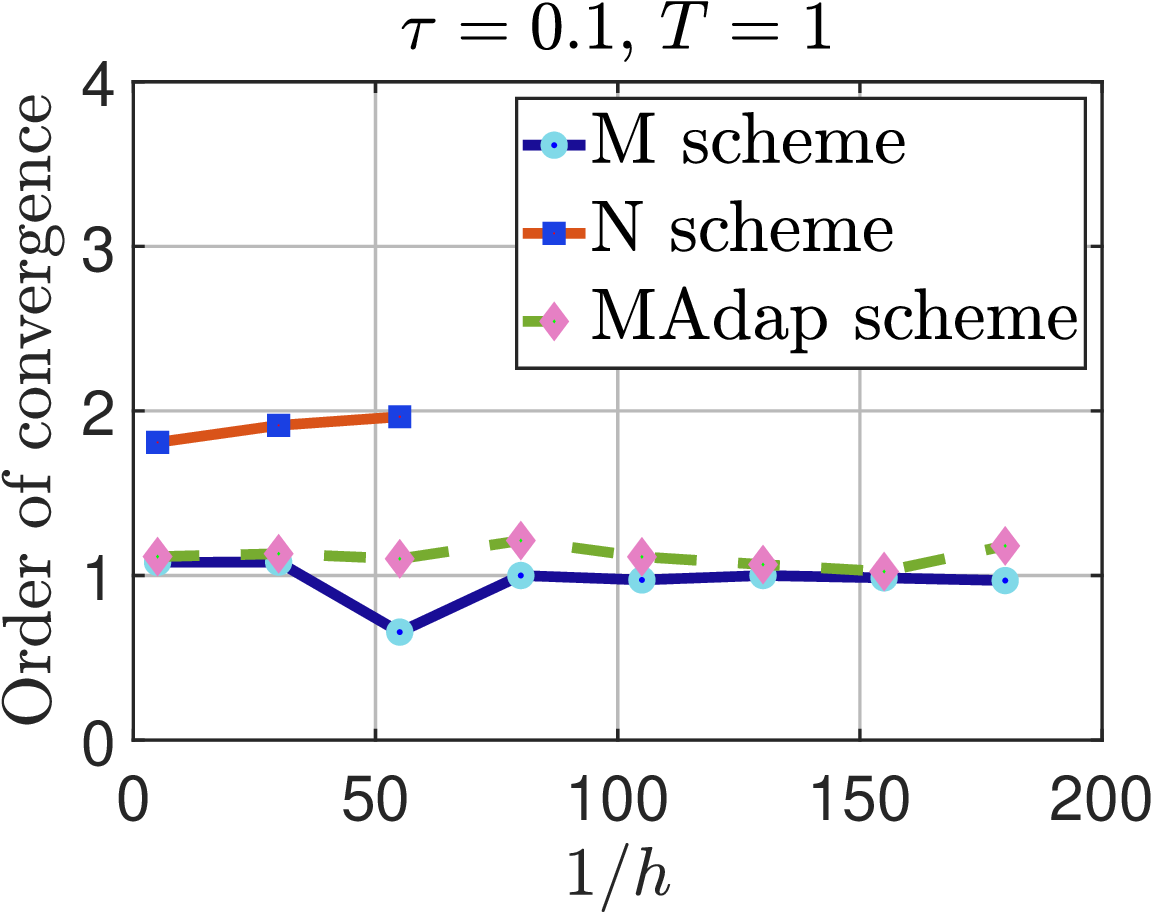}
    \end{subfigure}
    \caption{[\Cref{sec:Richards}] \textbf{(left)} Average contraction rate \((\alpha)\) vs. time-step size \((\tau)\) with mesh size \(h = 0.025\) for the 1D case. 
The stopping criterion here uses a tolerance of \( \epsilon_{\rm stop} = 10^{-10} \). 
\textbf{(right)} Order of convergence of the iterative methods. }
    \label{fig:Richars_Cont_order}
\end{figure}
\Cref{fig:Richars_Cont_order} illustrates both the contraction rate and the order of convergence. The contraction rate $\a$ truly scales linearly with $\t$ for smaller time-step sizes for the M-schemes. The results also indicate that the M-scheme and the adaptive M-scheme exhibit linear convergence, whereas Newton’s method, when it converges, achieves quadratic convergence. \Cref{fig:Richards_ErrVsiter} shows the error decay with iterations for the different schemes. Newton does not converge in both cases. Both the M-schemes show linear convergence until the error level $\epsilon_{\rm stop}=10^{-10}$. However, the adaptive scheme has a steeper descent. 


 \begin{figure}[h!]
    \centering
    \begin{subfigure}[t]{0.48\textwidth}
        \centering
        \subcaption{1D, $\tau = 0.1$}\includegraphics[width=\linewidth]{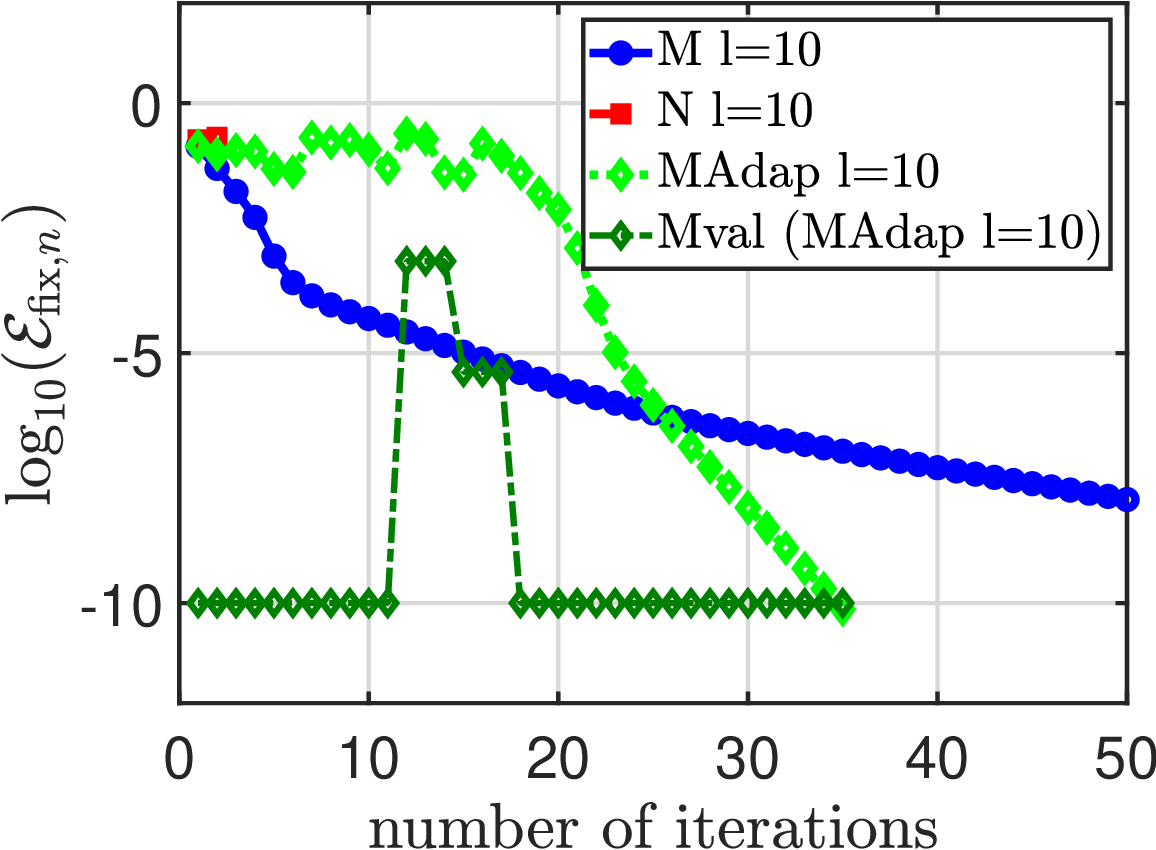}
    \end{subfigure}
    \begin{subfigure}[t]{0.48\textwidth}
        \centering
\subcaption{1D, $\tau = 0.1$}        \includegraphics[width=\linewidth]{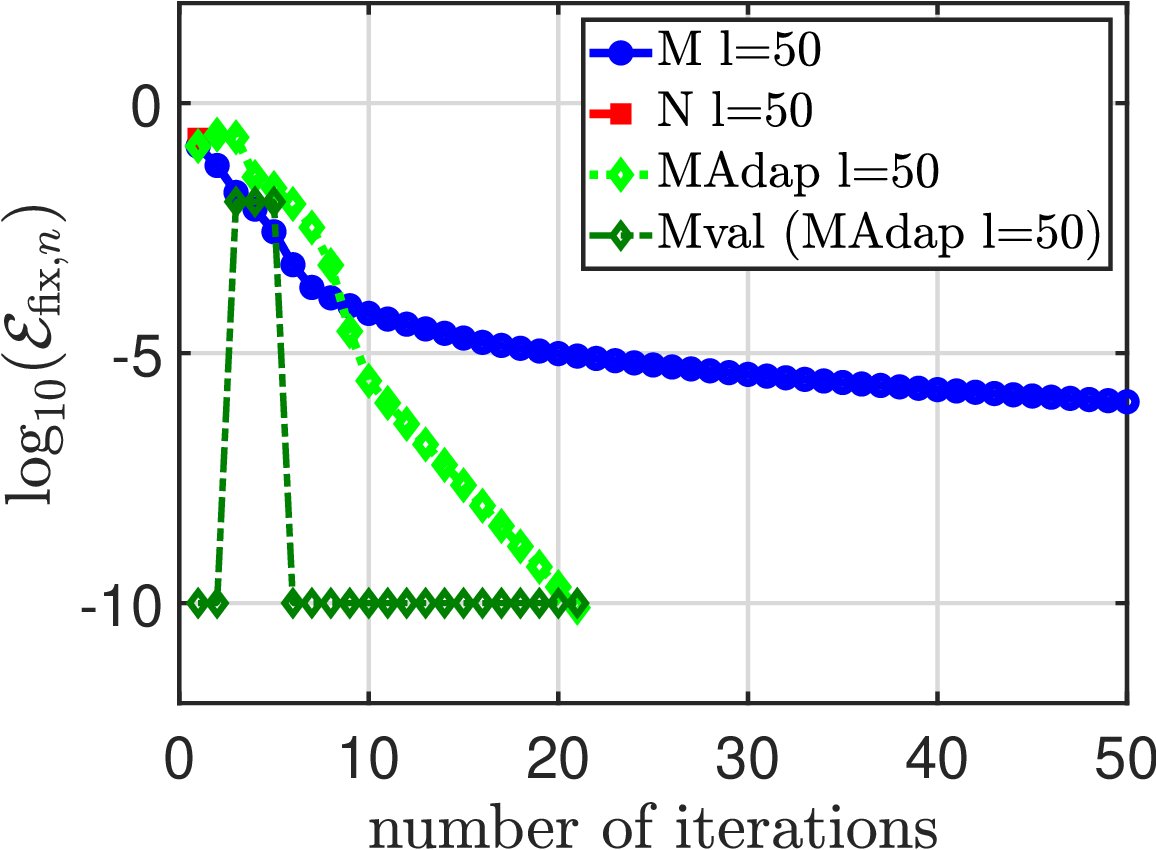}\end{subfigure}
\caption{[\Cref{sec:Richards}] Error $\mathcal{E}_{\mathrm{fix},n}^{i}$ vs. iteration $i$ for different iterative schemes for the first time-step ($\tau=0.1$) and mesh size $h=0.1/l$. 
    The Mval quantity shows how the $M$ varies with iteration for the adaptive scheme.
}
        \label{fig:Richards_ErrVsiter}
\end{figure}   

The departure of the adaptive scheme from asymptotic quadratic convergence is due to the advection term in the Richards equation. The Newton scheme takes a first-order expansion of this term and thus can become quadratic. However, for stability, the M-schemes only use zeroth-order approximation of the term, and therefore, can at most be linear. This is supported by \Cref{fig:Richards _(No Adv)} which shows that if advection is absent, then the quadratic convergence of the adaptive scheme is recovered. Newton also converges in this case even for finer meshes, although the adaptive scheme is always faster.

\begin{figure}[h!]
    \centering
 \begin{subfigure}[t]{0.5\textwidth}
    \centering   \includegraphics[width=\textwidth]{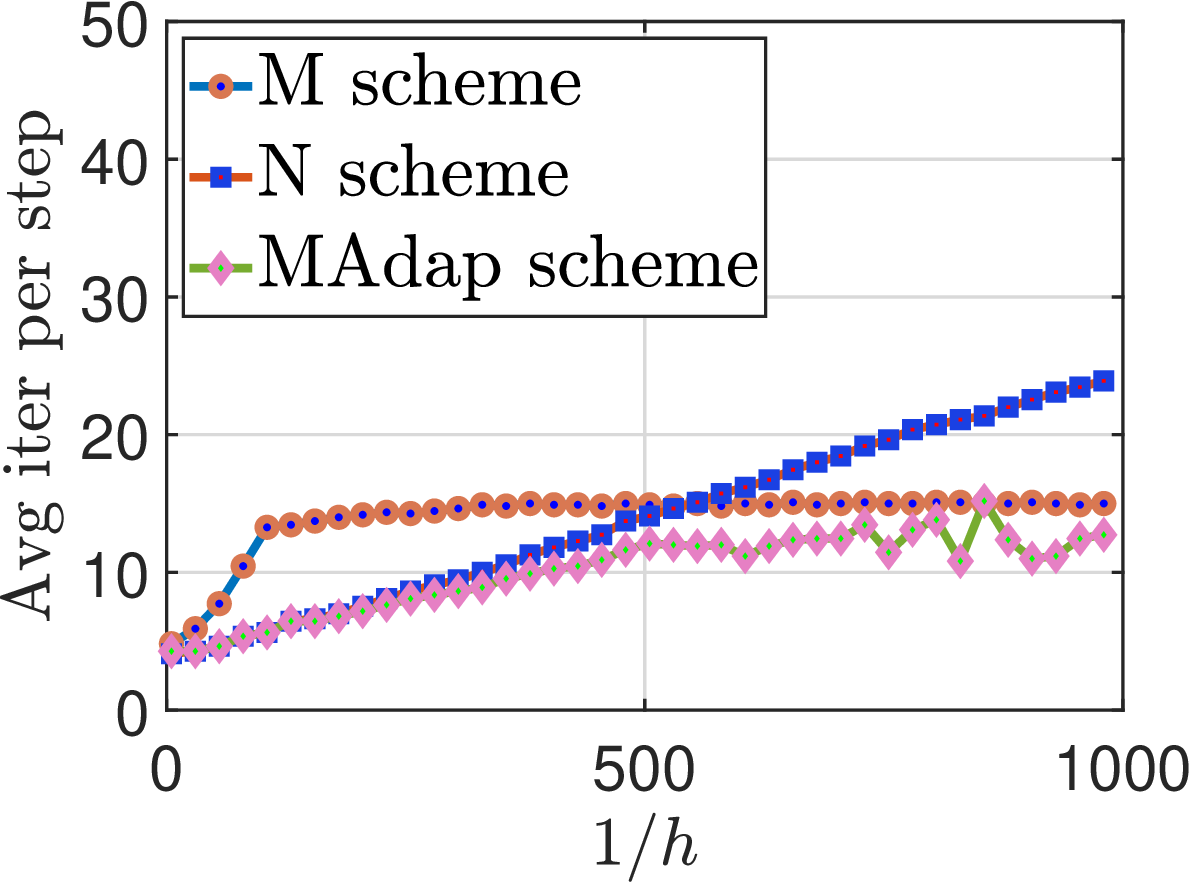}
\end{subfigure}
\hfill
 \begin{subfigure}[t]{0.49\textwidth}
       \centering       \includegraphics[width=\textwidth]{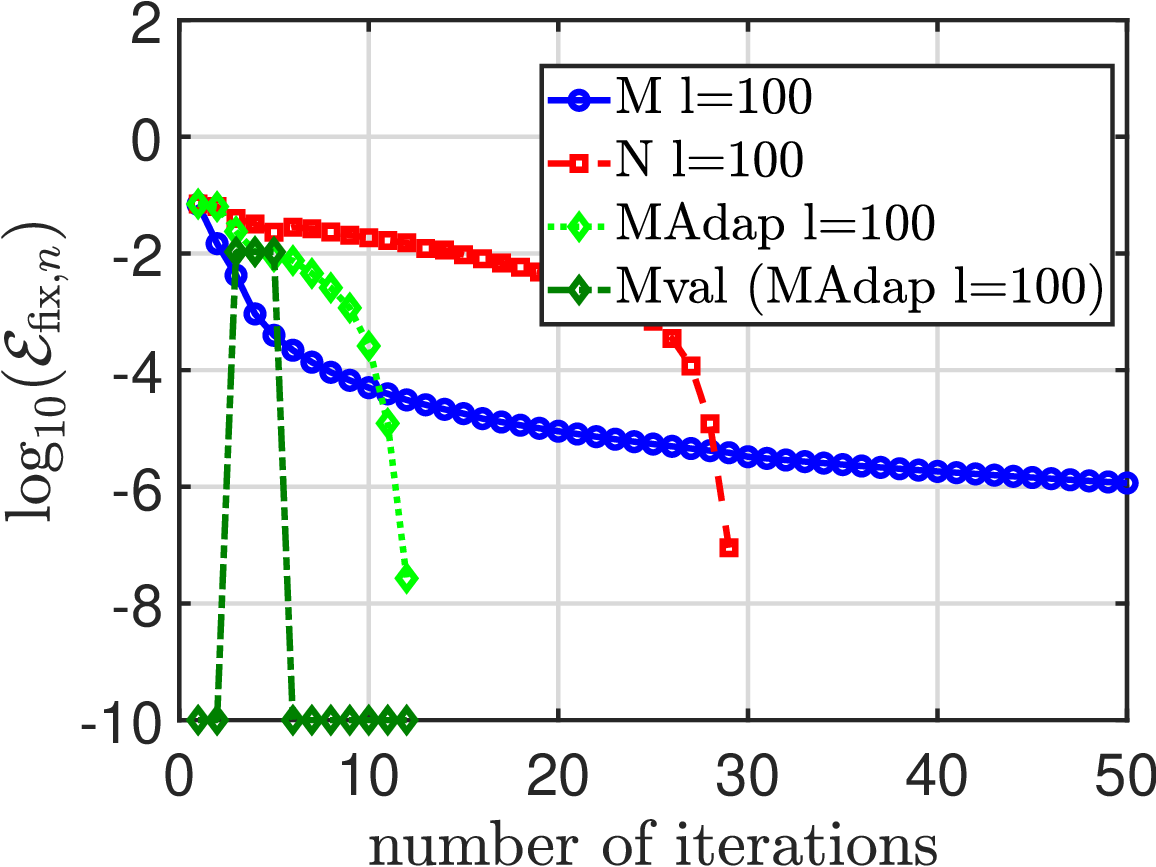}
   \end{subfigure}    
        \caption{[\Cref{sec:Richards}] Results for Richards equation without advection, i.e., $\hat{\bm{g}}=\bm{0}$. \textbf{(left)} Average iterations required per time-step for Richards equation without advection. The stopping criterion is based on the norm defined in \eqref{eq:Stop} \textbf{(right)} Mesh study of different iterative schemes for one time-step ($\tau =0.1$) and mesh size $h = 0.1/l$ in $1D$.}
            \label{fig:Richards _(No Adv)}
    \end{figure}

\section{Conclusion}\label{sec:conclusion}
In this study, we proposed a robust and efficient linearization scheme that can be applied to various nonlinear parabolic partial differential equations \eqref{eq:1}. Our approach effectively tackles the challenges associated with solving degenerate and nonlinear problems by splitting the nonlinearities as algebraic terms \eqref{eq:3} based on a reformulation of the problem \eqref{eq:2}. To ensure stability despite limited solution regularity, we adopt the Euler implicit method for time-discretization. 

In the splitted format, we investigate three different iterative schemes to linearize the problem: the Newton method, the L-scheme which uses a constant linearization coefficient for stability, and the M-scheme, which can be interpreted as the combination of both. While convergence of the Newton scheme cannot be guaranteed, especially for degenerate cases, it is proven that the L-scheme converges irrespective of the initial guess, even in double degenerate cases (\Cref{theo:LS}). The convergence is linear if the problem is single degenerate, although the contraction rate is predicted to become larger for finer time-steps. On the other hand, M-scheme, under additional boundedness assumptions and mild restrictions on the parameter values, also converges for double degenerate cases (\Cref{theo:MS}). The convergence is linear for single or non-degenerate cases, and for the non-degenerate case, the contraction rate scales with the time-step size.  Thus, convergence improves for smaller time-steps.

The performance of the M-scheme strongly depends on the value of the parameter $M$ chosen ($M=0$ corresponding to Newton, and $M$ large being the L-scheme).  Thus, to expedite convergence, we developed an adaptive $M$ selection approach based on a posteriori estimator. The a posteriori estimator provides an upper bound of linearization error for a given choice of $M$ (\Cref{theo:a-posteriori}). This is used to  ensure stability while selecting the smallest possible $M$-value, see
\Cref{alg:M-Adap}. Numerical results reveal that the M-adaptive scheme recovers the quadratic convergence property of Newton while being actually faster and more stable.

The numerical results demonstrate the performance of the double-splitting schemes and verify our predictions. We consider four numerical examples in \Cref{sec:PME,sec:toy,sec:Bio,sec:Richards}, including a well-known benchmark problem, a toy model, along with two additional examples (Biofilm and Richards equations) featuring realistic- parameters. These examples encompass both 1D and 2D cases.

The convergence of the Newton method is found to be highly dependent on discretization, which can make it slower than the M-scheme up to a certain error threshold. In fact, Newton diverges in several cases for fine meshes. L-scheme is limitingly slow in terms of iterations, although it is unconditionally converging. The M-scheme converges for all cases, and conforms with the predictions of \Cref{theo:MS}. It is also more stable in terms of mesh-size, and meets the stopping criteria in similar number of iterations as compared to Newton. However, the clear winner among the schemes is the M-Adapive algorithm, as it converges in all cases, takes the least amount of iterations, and achieves quadratic convergence asymtotically (see \Cref{fig:PME_ErrVsiter,fig:DD_ErrVsiter} e.g.).
 
\textbf{Acknowledgement} AJ acknowledge the financial support of the  HEC grant: 1(2)/HRD/OSS-III/BATCH-3/2022/HEC/384. The work of ISP was supported by the Research Foundation - Flanders (FWO), project G0A9A25N and the German Research Foundation (DFG) through the SFB 1313, project number 327154368.

\appendix
\section{Proof of \Cref{prop:well-posedness}}
\label{Appendix:proof}
\begin{proof}
For a given $s\in L^2(\Om)$, let $(S_s,U_s,W_s)\in \calZ$ solve for all  $(\psi,\phi,\f)\in\calZ$,
\begin{align}\label{eq:fixed-point-argument}
\begin{cases}
            \left(U_s-u_{n-1}, \varphi\right) + \t(\nabla W_s, \nabla \varphi) = \t(\bm{F}(b(s)),\nabla \varphi) + \t\langle f,\f\rangle,\\
        (U_s, \phi) = (b(S_s),\phi),\\    (W_s,\psi)= (B(S_s),\psi).
\end{cases}
\end{align}
The existence and uniqueness of $(S_s,U_s,W_s)\in \calZ$ is proven in Theorem A.1 of \cite{droniou2020high} using the gradient discretization method, and in Theorem 3.1 of \cite{smeets2024robust} using minimization of a convex functional. For $s_1,s_2\in L^2(\Om)$, subtracting the two versions of \eqref{eq:fixed-point-argument}, and using the test function $\varphi=W_{s_1}-W_{s_2}$ in the first equation yields
\begin{align*}
    &(b(S_{s_1})-b(S_{s_2}),B(S_{s_1})-B(S_{s_2})) + \t \|\del (W_{s_1}-W_{s_2})\|^2 \\&=\left(U_{s_1}-U_{s_2}, W_{s_1}-W_{s_2}\right) + \t(\nabla (W_{s_1}-W_{s_2}), \nabla (W_{s_1}-W_{s_2})) \\
    &=\t(\bm{F}(b({s_1}))-\bm{F}(b({s_2})),\nabla (W_{s_1}-W_{s_2}))\leq \frac{\t}{2}\|\bm{F}(b({s_1}))-\bm{F}(b({s_2}))\|^2 + \frac{\t}{2}\|\nabla (W_{s_1}-W_{s_2})\|^2\\
    &\overset{\ref{A.2}}{\leq } \frac{\t L_F}{2} (b({s_1})-b({s_2}),B({s_1})-B({s_2}))+ \frac{\t}{2}\|\nabla (W_{s_1}-W_{s_2})\|^2.
\end{align*}
In the last inequality, the monotonicity of $b,\,B$ functions and $\Phi=B\circ b^{-1}$ has been used.
Hence, if $\t L_F\leq 1$, then we have the contraction result 
\begin{align*}
    &(b(S_{s_1})-b(S_{s_2}),B(S_{s_1})-B(S_{s_2})) + \frac{\t}{2} \|\del (W_{s_1}-W_{s_2})\|^2\leq \frac{1}{2}(b({s_1})-b({s_2}),B({s_1})-B({s_2})).
\end{align*}
Repeating the iterative process $s\mapsto (S_s,U_s,W_s)$ by switching $s$ with $S_s$, one then obtains that $W_s\in H^1_0(\Om)$ must converge to a fixed point. 
\end{proof}

\begin{bibdiv}
\begin{biblist}
\bib{alt1983quasilinear}{article}{
  author={Alt, H. W.},
  author={Luckhaus, S.},
  title={Quasilinear elliptic-parabolic differential equations},
  journal={Mathematische Zeitschrift},
  volume={183},
  number={3},
  date={1983},
  pages={311--341},
  note = {\href{https://doi.org/10.1007/BF01176474}{DOI:10.1007/BF01176474}},
}
\bib{pop2011regularization}{article}{
  author={Pop, I. S.},
  author={Schweizer, B.},
  title={Regularization schemes for degenerate {R}ichards equations and outflow conditions},
  journal={Mathematical Models and Methods in Applied Sciences},
  volume={21},
  number={08},
  date={2011},
  pages={1685--1712},
  publisher={World Scientific},
  note={\href{https://doi.org/10.1142/S0218202511005532}{DOI:10.1142/S0218202511005532}},
}
\bib{mitra2019modified}{article}{
  author={Mitra, K.},
  author={Pop, I. S.},
  title={A modified {L}-scheme to solve nonlinear diffusion problems},
  journal={Computers \& Mathematics with Applications},
  volume={77},
  number={6},
  date={2019},
  pages={1722--1738},
  publisher={Elsevier},
  note={\href{https://doi.org/10.1016/j.camwa.2018.09.042}{DOI:10.1016/j.camwa.2018.09.042}},
}

\bib{list2016study}{article}{
  author={List, F.},
  author={Radu, F. A.},
  title={A study on iterative methods for solving {R}ichards’ equation},
  journal={Computational Geosciences},
  volume={20},
  date={2016},
  pages={341--353},
  publisher={Springer},
  note={\href{https://doi.org/10.1007/s10596-016-9566-3}{DOI:10.1007/s10596-016-9566-3}},
}

\bib{baughman1993co}{article}{
  author={Baughman, L. A.},
  author={Walkington, N. J.},
  title={Co-volume methods for degenerate parabolic problems},
  journal={Numerische Mathematik},
  volume={64},
  number={1},
  date={1993},
  pages={45--67},
  publisher={Springer},
  note={\href{https://doi.org/10.1007/BF01388680}{DOI:10.1007/BF01388680}},
}

\bib{eymard2002convergence}{article}{
  author={Eymard, R.},
  author={Gallou{\"i}t, T.},
  author={Herbin, R.},
  author={Michel, A.},
  title={Convergence of a finite volume scheme for nonlinear degenerate parabolic equations},
  journal={Numerische Mathematik},
  volume={92},
  number={1},
  date={2002},
  pages={41--82},
  publisher={Springer},
  note={\href{https://doi.org/10.1007/s002110100342}{DOI:10.1007/s002110100342}},
}
\bib{Malgo}{article}{
  author={Vohra, N.},
  author={Peszynska, M.},
  title={Robust conservative scheme and nonlinear solver for phase transitions in heterogeneous permafrost},
  journal={Journal of Computational and Applied Mathematics},
  volume={442},
  date={2024},
  pages={115719},
  note={\href{https://doi.org/10.1016/j.cam.2023.115719}{DOI:10.1016/j.cam.2023.115719}},
}
\bib{Quenjel}{article}{
  author={Quenjel, E. H.},
  author={Saad, M.},
  author={Ghilani, M.},
  author={Bessemoulin-Chatard, M.},
  title={Convergence of a positive nonlinear {DDFV} scheme for degenerate parabolic equations},
  journal={Calcolo},
  volume={57},
  number={2},
  date={2020},
  pages={19},
  note={\href{https://doi.org/10.1007/s10092-020-00367-5}{DOI:10.1007/s10092-020-00367-5}},
  url={https://doi.org/10.1007/s10092-020-00367-5},
}
\bib{Dolejsi}{article}{
  author={Dolej{\v s}{\'i}, V.},
  author={Shin, H.-G.},
  author={Vlas{\'a}k, M.},
  title={Error estimates and adaptivity of the space-time discontinuous {G}alerkin method for solving the {R}ichards equation},
  journal={Journal of Scientific Computing},
  volume={101},
  number={1},
  date={2024},
  pages={11},
  note={\href{https://doi.org/10.1007/s10915-024-02650-x}{DOI:10.1007/s10915-024-02650-x}},
  url={https://doi.org/10.1007/s10915-024-02650-x},
}
\bib{droniou2020high}{article}{
  author={Droniou, J.},
  author={Eymard, R.},
  title={High-order mass-lumped schemes for nonlinear degenerate elliptic equations},
  journal={SIAM Journal on Numerical Analysis},
  volume={58},
  number={1},
  date={2020},
  pages={153--188},
  note={\href{https://doi.org/10.1137/19M1244500}{DOI:10.1137/19M1244500}},
}
\bib{lehmann1998comparison}{article}{
  author={Lehmann, F.},
  author={Ackerer, P.H.},
  title={Comparison of iterative methods for improved solutions of the fluid flow equation in partially saturated porous media},
  journal={Transport in Porous Media},
  volume={31},
  date={1998},
  pages={275--292},
  publisher={Springer},
  note={\href{https://doi.org/10.1023/A:1006555107450}{DOI:10.1023/A:1006555107450}},
}
\bib{WANG2013114}{article}{
  author={Wang, X.},
  author={Tchelepi, H. A.},
  title={Trust-region based solver for nonlinear transport in heterogeneous porous media},
  journal={Journal of Computational Physics},
  volume={253},
  date={2013},
  pages={114--137},
  issn={0021-9991},
  note={\href{https://doi.org/10.1016/j.jcp.2013.06.041}{DOI:10.1016/j.jcp.2013.06.041}},
  url={https://www.sciencedirect.com/science/article/pii/S0021999113004725},
}

\bib{karlsen2002upwind}{article}{
  author={Karlsen, K. H.},
  author={Risebro, N. H.},
  author={Towers, J. D.},
  title={Upwind difference approximations for degenerate parabolic convection--diffusion equations with a discontinuous coefficient},
  journal={IMA Journal of Numerical Analysis},
  volume={22},
  number={4},
  date={2002},
  pages={623--664},
  publisher={OUP},
  note={\href{https://doi.org/10.1093/imanum/22.4.623}{DOI:10.1093/imanum/22.4.623}},
}

\bib{jager1995solution}{article}{
  author={J{\"a}ger, W.},
  author={Ka{\v{c}}ur, J.},
  title={Solution of doubly nonlinear and degenerate parabolic problems by relaxation schemes},
  journal={ESAIM: Mathematical Modelling and Numerical Analysis},
  volume={29},
  number={5},
  date={1995},
  pages={605--627},
  publisher={EDP Sciences},
  note={\href{https://doi.org/10.1051/m2an/1995290506051}{DOI:10.1051/m2an/1995290506051}},
}

\bib{vazquez2007porous}{book}{
  author={V{\'a}zquez, J. L.},
  title={The porous medium equation: mathematical theory},
  date={2007},
  publisher={Oxford University Press},
  note={\href{https://doi.org/10.1093/acprof:oso/9780198569039.001.0001}{DOI:10.1093/acprof:oso/9780198569039.001.0001}},
}

\bib{radu2006newton}{inproceedings}{
  author={Radu, F. A.},
  author={Pop, I. S.},
  author={Knabner, P.},
  title={Newton-type methods for the mixed finite element discretization of some degenerate parabolic equations},
  booktitle={Numerical Mathematics and Advanced Applications: Proceedings of ENUMATH 2005},
  date={2006},
  pages={1192--1200},
  organization={Springer},
  note={\href{https://doi.org/10.1007/978-3-540-34288-5_120}{DOI:10.1007/978-3-540-34288-5120}},
}

\bib{eymard2000finite}{article}{
  author={Eymard, R.},
  author={Gallou{\"e}t, T.},
  author={Herbin, R.},
  title={Finite volume methods},
  journal={Handbook of Numerical Analysis},
  volume={7},
  date={2000},
  pages={713--1018},
  publisher={Elsevier},
  note={\href{https://doi.org/10.1016/S1570-8659(00)07005-8}{DOI:10.1016/S1570-8659(00)07005-8}},
}

\bib{celia1990general}{article}{
  author={Celia, M. A.},
  author={Bouloutas, E. T.},
  author={Zarba, R. L.},
  title={A general mass-conservative numerical solution for the unsaturated flow equation},
  journal={Water Resources Research},
  volume={26},
  number={7},
  date={1990},
  pages={1483--1496},
  publisher={Wiley Online Library},
  note={\href{https://doi.org/10.1029/WR026i007p01483}{DOI:10.1029/WR026i007p01483}},
}

\bib{pop2004mixed}{article}{
  author={Pop, I. S.},
  author={Radu, F.},
  author={Knabner, P.},
  title={Mixed finite elements for the {R}ichards’ equation: linearization procedure},
  journal={Journal of Computational and Applied Mathematics},
  volume={168},
  number={1},
  date={2004},
  pages={365--373},
  publisher={Elsevier},
  note={\href{https://doi.org/10.1016/j.cam.2003.04.008}{DOI:10.1016/j.cam.2003.04.008}},
}

\bib{pop2002error}{article}{
  author={Pop, I. S.},
  title={Error estimates for a time discretization method for the {R}ichards' equation},
  journal={Computational Geosciences},
  volume={6},
  date={2002},
  pages={141--160},
  publisher={Springer},
  note={\href{https://doi.org/10.1023/A:1019936917350}{DOI:10.1023/A:1019936917350}},
}

\bib{radu2008error}{article}{
  author={Radu, F. A.},
  author={Pop, I. S.},
  author={Knabner, P.},
  title={Error estimates for a mixed finite element discretization of some degenerate parabolic equations},
  journal={Numerische Mathematik},
  volume={109},
  date={2008},
  pages={285--311},
  publisher={Springer},
  note={\href{https://doi.org/10.1007/s00211-008-0139-9}{DOI:10.1007/s00211-008-0139-9}},
}

\bib{otto1996l1}{article}{
  author={Otto, F.},
  title={L1-contraction and uniqueness for quasilinear elliptic--parabolic equations},
  journal={Journal of Differential Equations},
  volume={131},
  number={1},
  date={1996},
  pages={20--38},
  publisher={Elsevier},
  note={\href{https://doi.org/10.1006/jdeq.1996.0155}{DOI:10.1006/jdeq.1996.0155}},
}

\bib{van2002mathematical}{article}{
  author={Van Loosdrecht, M. C. M.},
  author={Heijnen, J. J.},
  author={Eberl, H.},
  author={Kreft, J.},
  author={Picioreanu, C.},
  title={Mathematical modelling of biofilm structures},
  journal={Antonie van Leeuwenhoek},
  volume={81},
  date={2002},
  pages={245--256},
  publisher={Springer},
  note={\href{https://doi.org/10.1023/A:1020527020464}{DOI:10.1023/A:1020527020464}},
}

\bib{stokke2023adaptive}{article}{
  author={Stokke, J. S.},
  author={Mitra, K.},
  author={Storvik, E.},
  author={Both, J. W.},
  author={Radu, F. A.},
  title={An adaptive solution strategy for {R}ichards' equation},
  journal={Computers \& Mathematics with Applications},
  volume={152},
  date={2023},
  pages={155--167},
  note={\href{https://doi.org/10.1016/j.camwa.2023.10.020}{DOI:10.1016/j.camwa.2023.10.020}},
}
\bib{cances2021error}{incollection}{
  author={Canc{\`e}s, C.},
  author={Droniou, J.},
  author={Guichard, C.},
  author={Manzini, G.},
  author={Olivares, M. B.},
  author={Pop, I. S.},
  title={Error Estimates for the Gradient Discretisation Method on Degenerate Parabolic Equations of Porous Medium Type},
  booktitle={Polyhedral Methods in Geosciences},
  editor={Di Pietro, D. A.},
  editor={Formaggia, L.},
  editor={Masson, R.},
  series={SEMA SIMAI Springer Series},
  volume={27},
  date={2021},
  pages={37--72},
  publisher={Springer},
  address={Cham},
  note={\href{https://doi.org/10.1007/978-3-030-69363-3_2}{DOI:10.1007/978-3-030-69363-32}},
}

\bib{brenner2017improving}{article}{
  author={Brenner, K.},
  author={Canc{\`e}s, C.},
  title={Improving {N}ewton's method performance by parametrization: the case of the {R}ichards equation},
  journal={SIAM Journal on Numerical Analysis},
  volume={55},
  number={4},
  date={2017},
  pages={1760--1785},
  publisher={SIAM},
  note={\href{https://doi.org/10.1137/16M1083414}{DOI:10.1137/16M1083414}},
}

\bib{bergamaschi1999mixed}{article}{
  author={Bergamaschi, L.},
  author={Putti, M.},
  title={Mixed finite elements and Newton-type linearizations for the solution of {R}ichards' equation},
  journal={International Journal for Numerical Methods in Engineering},
  volume={45},
  number={8},
  date={1999},
  pages={1025--1046},
  publisher={Wiley Online Library},
  note={\href{https://doi.org/10.1002/(SICI)1097-0207(19990720)45:8<1025::AID-NME615>3.0.CO;2-G}{DOI:10.1002/(SICI)1097-0207(19990720)45:8<1025::AID-NME615>3.0.CO;2-G}},
}

\bib{seu2018linear}{article}{
  author={Seus, D.},
  author={Mitra, K.},
  author={Pop, I. S.},
  author={Radu, F. A.},
  author={Rohde, C.},
  title={A linear domain decomposition method for partially saturated flow in porous media},
  journal={Computer Methods in Applied Mechanics and Engineering},
  volume={333},
  date={2018},
  pages={331--355},
  publisher={Elsevier},
  note={\href{https://doi.org/10.1016/j.cma.2018.01.029}{DOI:10.1016/j.cma.2018.01.029}},
}

\bib{mitra2023guaranteed}{article}{
  author={Mitra, K.},
  author={Vohral{\'\i}k, M.},
  title={Guaranteed, locally efficient, and robust a posteriori estimates for nonlinear elliptic problems in iteration-dependent norms. An orthogonal decomposition result based on iterative linearization},
  date={2023},
}

\bib{mitra2023wellposedness}{article}{
  author={Mitra, K.},
  author={Sonner, S.},
  title={Well-posedness and qualitative properties of quasilinear degenerate evolution systems},
  journal={arXiv preprint},
  volume={2304.00175},
  date={2023},
  note={\href{https://doi.org/10.48550/arXiv.2304.00175}{DOI:10.48550/arXiv.2304.00175}},
}

\bib{jager1991solution}{article}{
  author={J{\"a}ger, W.},
  author={Ka{\v{c}}ur, J.},
  title={Solution of porous medium type systems by linear approximation schemes},
  journal={Numerische Mathematik},
  volume={60},
  number={1},
  date={1991},
  pages={407--427},
  publisher={Springer},
  note={\href{https://doi.org/10.1007/BF01385729}{DOI:10.1007/BF01385729}},
}

\bib{smeets2024robust}{article}{
  author={Smeets, R. K. H.},
  author={Mitra, K.},
  author={Pop, I.S.},
  author={Sonner, S.},
  title={Robust time-discretisation and linearisation schemes for singular and degenerate evolution systems modelling biofilm growth},
  journal={IMA Journal of Numerical Analalysis},
  date={2025},
  note={accepted},
}

\bib{petrosyants2024speeding}{article}{
  author={Petrosyants, M.},
  author={Trifonov, V.},
  author={Illarionov, E.},
  author={Koroteev, D.},
  title={Speeding up the reservoir simulation by real time prediction of the initial guess for the Newton-Raphson’s iterations},
  journal={Computational Geosciences},
  volume={28},
  date={2024},
  pages={605--613},
  publisher={Springer},
  note={\href{https://doi.org/10.1007/s10596-024-10284-z}{DOI:110.1007/s10596-024-10284-z}},
}

\bib{ahmed2024efficient}{article}{
  author={Ahmed, E.},
  author={Amdouni, S.},
  title={Equilibrated flux a posteriori error estimates and adaptivity for nonlinear and doubly degenerate elliptic problems},
  journal={Computers \& Mathematics with Applications},
  volume={195},
  date={2025},
  pages={239--264},
  publisher={Elsevier},
  note={\href{https://doi.org/10.1016/j.camwa.2025.07.019}{DOI:10.1016/j.camwa.2025.07.019}},
}

\bib{mitra2024posteriori}{article}{
  author={Mitra, K.},
  author={Vohral{\'\i}k, M.},
  title={A posteriori error estimates for the {R}ichards equation},
  journal={Mathematics of Computation},
  volume={93},
  number={347},
  date={2024},
  pages={1053--1096},
  note={\href{https://doi.org/10.1090/mcom/3932}{DOI:10.1090/mcom/3932}},
}

\bib{duvnjak2006time}{article}{
  author={Duvnjak, A.},
  author={Eberl, H.J.},
  title={Time-discretisation of a degenerate reaction-diffusion equation arising in biofilm modeling},
  journal={Electronic Transactions on Numerical Analysis},
  volume={23},
  date={2006},
  pages={15--38},
}

\bib{Carrillo2004}{incollection}{
  author={Carrillo, J.},
  editor={Arendt, W.},
  editor={Br{\'e}zis, H.},
  editor={Pierre, M.},
  title={Conservation laws with discontinuous flux functions and boundary condition},
  booktitle={Nonlinear Evolution Equations and Related Topics: Dedicated to Philippe B{\'e}nilan},
  date={2004},
  pages={283--301},
  publisher={Birkh{\"a}user},
  address={Basel},
  isbn={978-3-0348-7924-8},
  note={\href{https://doi.org/10.1007/978-3-0348-7924-8_15}{DOI:10.1007/978-3-0348-7924-8-15}},
}

\bib{RaduIMA}{article}{
  author={Radu, F. A.},
  author={Kumar, K.},
  author={Nordbotten, J. M.},
  author={Pop, I. S.},
  title={A robust, mass conservative scheme for two-phase flow in porous media including {H}{\"o}lder continuous nonlinearities},
  journal={IMA Journal of Numerical Analysis},
  volume={38},
  number={2},
  date={2017},
  pages={884--920},
  issn={0272-4979},
  url={https://doi.org/10.1093/imanum/drx032},
  eprint={https://academic.oup.com/imajna/article-pdf/38/2/884/24655001/drx032.pdf},
  note={\href{https://doi.org/10.1093/imanum/drx032}{DOI:10.1093/imanum/drx032}},
}

\bib{RaduKlausen}{article}{
  author={Klausen, R. A.},
  author={Radu, F. A.},
  author={Eigestad, G. T.},
  title={Convergence of {MPFA} on triangulations and for {R}ichards' equation},
  journal={International Journal for Numerical Methods in Fluids},
  volume={58},
  number={12},
  date={2008},
  pages={1327--1351},
  eprint={https://onlinelibrary.wiley.com/doi/pdf/10.1002/fld.1787},
  note={\href{https://doi.org/10.1002/fld.1787}{DOI:10.1002/fld.1787}},
}

\bib{CancesNabet}{article}{
  author={Canc{\`e}s, C.},
  author={Nabet, F.},
  author={Vohralik, M.},
  title={Convergence and a posteriori error analysis for energy-stable finite element approximations of degenerate parabolic equations},
  journal={Mathematics of Computation},
  volume={90},
  date={2021},
  pages={517--563},
  url={https://doi.org/10.1090/mcom/3577},
  note={\href{https://doi.org/10.1090/mcom/3577}{DOI:10.1090/mcom/3577}},
}

\bib{CancesPop}{article}{
  author={Canc{\`e}s, C.},
  author={Pop, I. S.},
  author={Vohralik, M.},
  title={An a posteriori error estimate for vertex-centered finite volume discretizations of immiscible incompressible two-phase flow},
  journal={Mathematics of Computation},
  volume={83},
  date={2014},
  pages={153--188},
  url={https://doi.org/10.1090/S0025-5718-2013-02723-8},
  note={\href{https://doi.org/10.1090/S0025-5718-2013-02723-8}{DOI:10.1090/S0025-5718-2013-02723-8}},
}

\bib{Bassetto}{article}{
  author={Bassetto, S.},
  author={Canc{\`e}s, C.},
  author={Ench{\'e}ry, G.},
  author={Tran, Q.-H.},
  title={Upstream mobility finite volumes for the {R}ichards equation in heterogenous domains},
  journal={ESAIM: Mathematical Modelling and Numerical Analysis},
  volume={55},
  date={2021},
  pages={2101--2139},
  eprint={https://doi.org/10.1051/m2an/2021047},
  note={\href{https://doi.org/10.1051/m2an/2021047}{DOI:10.1051/m2an/2021047}},
}

\bib{EymardGradient}{article}{
  author={Eymard, R.},
  author={Guichard, C.},
  author={Herbin, R.},
  author={Masson, R.},
  title={Gradient schemes for two-phase flow in heterogeneous porous media and {R}ichards equation},
  journal={ZAMM - Journal of Applied Mathematics and Mechanics},
  volume={94},
  number={7-8},
  date={2014},
  pages={560--585},
  issn={0044-2267,1521-4001},
  note={\href{https://doi.org/10.1002/zamm.201200206}{DOI:10.1002/zamm.201200206}},
  url={https://doi.org/10.1002/zamm.201200206},
}

\bib{DroniouLe}{article}{
  author={Droniou, J.},
  author={Le, K.-N.},
  title={The gradient discretization method for slow and fast diffusion porous media equations},
  journal={SIAM Journal on Numerical Analysis},
  volume={58},
  number={3},
  date={2020},
  pages={1965--1992},
  note={\href{https://doi.org/10.1137/19M1260165}{DOI:10.1137/19M1260165}},
  url={https://doi.org/10.1137/19M1260165},
}

\bib{Arbogast}{article}{
  author={Arbogast, T.},
  author={Wheeler, M. F.},
  author={Zhang, N.-Y.},
  title={A nonlinear mixed finite element method for a degenerate parabolic equation arising in flow in porous media},
  journal={SIAM Journal on Numerical Analysis},
  volume={33},
  number={4},
  date={1996},
  pages={1669--1687},
  note={\href{https://doi.org/10.1137/S0036142994266728}{DOI:10.1137/S0036142994266728}},
  url={https://doi.org/10.1137/S0036142994266728},
}

\bib{DroniouEymardUniform}{article}{
  author={Droniou, J.},
  author={Eymard, R.},
  title={Uniform-in-time convergence of numerical methods for non-linear degenerate parabolic equations},
  journal={Numerische Mathematik},
  volume={132},
  number={4},
  date={2016},
  pages={721--766},
  note={\href{https://doi.org/10.1007/s00211-015-0733-6}{DOI:10.1007/s00211-015-0733-6}},
  url={https://doi.org/10.1007/s00211-015-0733-6},
}

\bib{Albuja}{article}{
  author={Albuja, G.},
  author={Ávila, A. I.},
  title={A family of new globally convergent linearization schemes for solving {R}ichards' equation},
  journal={Applied Numerical Mathematics},
  volume={159},
  date={2021},
  pages={281--296},
  note={\href{https://doi.org/10.1016/j.apnum.2020.09.012}{DOI:10.1016/j.apnum.2020.09.012}},
}

\bib{CarilloEntropy}{article}{
  author={Carrillo, J.},
  title={Entropy Solutions for Nonlinear Degenerate Problems},
  journal={Archive for Rational Mechanics and Analysis},
  volume={147},
  number={4},
  date={1999},
  pages={269--361},
  language={English},
  note={\href{https://doi.org/10.1007/s002050050152}{DOI:10.1007/s002050050152}},
}
\bib{van1980closed}{article}{
  author={Van Genuchten, M. Th.},
  title={A closed-form equation for predicting the hydraulic conductivity of unsaturated soils},
  journal={Soil Science Society of America Journal},
  volume={44},
  number={5},
  date={1980},
  pages={892--898},
  publisher={Wiley Online Library},
  note={\href{https://doi.org/10.2136/sssaj1980.03615995004400050002x}{DOI:10.2136/sssaj1980.03615995004400050002x}},
}
\bib{ZouExistence}{article}{
  author={Zou, W.},
  author={Wang, W.},
  title={Existence of solutions for some doubly degenerate parabolic equations with natural growth terms},
  journal={Nonlinear Analysis. Theory, Methods \& Applications},
  volume={125},
  date={2015},
  pages={150--166},
  note={\href{https://doi.org/10.1016/j.na.2015.05.007}{DOI:10.1016/j.na.2015.05.007}},
}
\bib{CARRILLO199993}{article}{
  author={Carrillo, J.},
  author={Wittbold, P.},
  title={Uniqueness of Renormalized Solutions of Degenerate Elliptic–Parabolic Problems},
  journal={Journal of Differential Equations},
  volume={156},
  number={1},
  date={1999},
  pages={93--121},
  note={\href{https://doi.org/10.1006/jdeq.1998.3597}{DOI:10.1006/jdeq.1998.3597}},
}
\bib{Andreianov}{article}{
  author={Anreianov, B.},
  author={Bendahmane, M.},
  author={Karlsen, K.H.},
  title={Discrete Duality Finite Volume Schemes for Doubly Nonlinear Degenerate Hyperbolic--Parabolic Equations},
  journal={Journal of Hyperbolic Differential Equations},
  volume={7},
  number={1},
  date={2010},
  pages={1--67},
  note={\href{https://doi.org/10.1142/S0219891610002062}{DOI:10.1142/S0219891610002062}},
}

\bib{DroniouEymardTalbot}{article}{
  author={Droniou, J.},
  author={Eymard, R.},
  author={Talbot, K.S.},
  title={Convergence in {$C([0,T];L^2(\Omega))$} of weak solutions to perturbed doubly degenerate parabolic equations},
  journal={Journal of Differential Equations},
  volume={260},
  number={11},
  date={2016},
  pages={7821--7860},
  note={\href{https://doi.org/10.1016/j.jde.2016.02.004}{DOI:10.1016/j.jde.2016.02.004}},
}

\bib{ANDREIANOV20173633}{article}{
  author={Andreianov, B.},
  author={Cancès, C.},
  author={Moussa, A.},
  title={A nonlinear time compactness result and applications to discretization of degenerate parabolic--elliptic PDEs},
  journal={Journal of Functional Analysis},
  volume={273},
  number={12},
  date={2017},
  pages={3633--3670},
  note={\href{https://doi.org/10.1016/j.jfa.2017.08.010}{DOI:10.1016/j.jfa.2017.08.010}},
}

\bib{Clint}{article}{
  author={Dawson, C.},
  title={A continuous/discontinuous Galerkin framework for modeling coupled subsurface and surface water flow},
  journal={Computational Geosciences},
  volume={12},
  number={4},
  date={2008},
  pages={451--472},
  note={\href{https://doi.org/10.1007/s10596-008-9085-y}{DOI:10.1007/s10596-008-9085-y}},
}

\bib{Beatrice}{article}{
  author={Joshaghani, M.S.},
  author={Riviere, B.},
  author={Sekachev, M.},
  title={Maximum-principle-satisfying discontinuous Galerkin methods for incompressible two-phase immiscible flow},
  journal={Computer Methods in Applied Mechanics and Engineering},
  volume={391},
  date={2022},
  pages={114550},
  note={\href{https://doi.org/10.1016/j.cma.2021.114550}{DOI:10.1016/j.cma.2021.114550}},
}

\bib{Woodw}{article}{
  author={Woodward, C.S.},
  author={Dawson, C.N.},
  title={Analysis of expanded mixed finite element methods for a nonlinear parabolic equation modeling flow into variably saturated porous media},
  journal={SIAM Journal on Numerical Analysis},
  volume={37},
  number={3},
  date={2000},
  pages={701--724},
  note={\href{https://doi.org/10.1137/S0036142996311040}{DOI:10.1137/S0036142996311040}},
}

\bib{MartinRegularization}{article}{
  author={Févotte, F.},
  author={Rappaport, A.},
  author={Vohralík, M.},
  title={Adaptive regularization for the Richards equation},
  journal={Computational Geosciences},
  volume={28},
  date={2024},
  pages={1371--1388},
  note={\href{https://doi.org/10.1007/s10596-024-10309-7}{DOI:10.1007/s10596-024-10309-7}},
}

\bib{Casulli}{article}{
  author={Casulli, V.},
  author={Zanolli, P.},
  title={A nested Newton-type algorithm for finite volume methods solving Richards' equation in mixed form},
  journal={SIAM Journal on Scientific Computing},
  volume={32},
  number={4},
  date={2010},
  pages={2255--2273},
  note={\href{https://doi.org/10.1137/100786320}{DOI:10.1137/100786320}},
}

\bib{PopYong}{inproceedings}{
  author={Pop, I.S.},
  author={Yong, W.A.},
  title={A maximum principle based numerical approach to porous medium equations},
  booktitle={ALGORITMY’97, 14$^{\text{th}}$ Conference on Scientific Computing},
  pages={207--218},
  date={1997},
  organization={STU Bratislava, Slovakia},
}

\bib{Slodicka}{article}{
  author={Slodicka, M.},
  title={A Robust and Efficient Linearization Scheme for Doubly Nonlinear and Degenerate Parabolic Problems Arising in Flow in Porous Media},
  journal={SIAM Journal on Scientific Computing},
  volume={23},
  number={5},
 date={2002},
  pages={1593--1614},
  note={\href{https://doi.org/10.1137/S1064827500381860}{DOI:10.1137/S1064827500381860}},
}



\end{biblist}
\end{bibdiv}
\end{document}